\newtheorem{theorem}{Theorem}[section]
\newtheorem{lemma}[theorem]{Lemma}
\newtheorem{proposition}[theorem]{Proposition}
\newtheorem{example}[theorem]{Example}
\newtheorem{question}[theorem]{Question}
\theoremstyle{definition}}
\theoremstyle{definition}}
\theoremstyle{definition}\newtheorem{remark}[theorem]{Remark}}
\newtheorem*{que1}{Question~IS}
\numberwithin{equation}{section}
\def\C{{\mathbb C}}
\def\N{{\mathbb N}}
\def\Z{{\mathbb Z}}
\def\K{{\mathbb K}}
\def\kappa{\varkappa}
\def\phi{\varphi}
\def\leq{\leqslant}
\def\geq{\geqslant}
\def\dim{{\rm dim}\,}
\def\spann{\hbox{\tt span}\,}
\def\deg{\hbox{\tt deg}\,}
\def\pd#1#2{\frac{{\partial\,} #1}{{\partial\,} #2}}
\def\cial#1{{\smash{#1}}^\rcirclearrowleft\!\!\!\!\!^{\scriptscriptstyle\alpha}}
\def\tabb#1#2#3#4#5#6#7{\noindent{\scalebox{0.7}{\hbox{\vrule\,{\makebox[0.75cm][l]{\,#1}\,\vrule\,
\makebox[5.5cm][l]{#2}\,\vrule\,\makebox[4.3cm][l]{#3}\,\vrule\,\makebox[2.7cm][l]{#4}\,\vrule\,
\makebox[5cm][l]{#5}\,\vrule\,\makebox[3cm][l]{#6}\,\vrule\,\makebox[1.7cm][l]{#7}\,\vrule}}}}\hrule}
\def\hhhhh{\vrule height12pt depth5pt width0pt}
\def\tabbb#1#2#3#4#5#6#7{\noindent{\scalebox{0.7}{\hbox{\vrule\,{\makebox[0.75cm][l]{\,#1}\,\vrule\,
\makebox[4.7cm][l]{#2}\,\vrule\,\makebox[5.3cm][l]{#3}\,\vrule\,\makebox[3cm][l]{#4}\,\vrule\,
\makebox[5cm][l]{#5}\,\vrule\,\makebox[3cm][l]{#6}\,\vrule\,\makebox[1.2cm][l]{#7}\,\vrule}}}}\hrule}
\def\tabbbb#1#2#3#4#5#6{\noindent{\scalebox{0.7}{\hbox{\vrule\,{\makebox[0.75cm][l]{\,#1}\,\vrule\,
\makebox[8.4cm][l]{#2}\,\vrule\,\makebox[8cm][l]{#3}\,\vrule\,\makebox[2cm][l]{#4}\,\vrule\,
\makebox[3cm][l]{#5}\,\vrule\,\makebox[1cm][l]{#6}\,\vrule}}}}\hrule}
\def\hhh{\vrule height10pt depth5pt width0pt}
\def\hhhh{\vrule height12pt depth7pt width0pt}
\def\taBB#1#2#3#4#5#6#7{\noindent{\scalebox{0.62}{\hbox{\vrule\,{\makebox[0.75cm][l]{\,#1}\,\vrule\,
\makebox[6cm][l]{#2}\,\vrule\,\makebox[6.95cm][l]{#3}\,\vrule\,\makebox[2.7cm][l]{#4}\,\vrule\,
\makebox[5cm][l]{#5}\,\vrule\,\makebox[3cm][l]{#6}\,\vrule\,\makebox[1.7cm][l]{#7}\,\vrule}}}}\hrule}
\title{Potential algebras with few generators}
\author{Natalia Iyudu and Stanislav Shkarin}
\date{}
\begin{document}

\maketitle

\begin{abstract}We give a complete description of quadratic potential and twisted potential algebras on 3 generators as well as cubic potential and twisted potential algebras on 2 generators up to graded algebra isomorphisms under the assumption that the ground field is algebraically closed and has characteristic different from $2$ or $3$.

 We also prove that for two generated potential algebra necessary condition of finite-dimensionality is that potential contains terms of degree three, this
answers a question of Agata Smoktunowicz and the first named author, formulated in \cite{AN}. We clarify situation in case of arbitrary number of generators as well.
\end{abstract}

\small \noindent{\bf MSC:} \ \ 17A45, 16A22

\noindent{\bf Keywords:} \ \ Potential algebras, Quadratic algebras, Koszul algebras, Hilbert series, Sklyanin algebras, Artin Schelter regulas algebras  \normalsize

\section{Introduction \label{s1}}\rm

Throughout this paper $\K$ is an algebraically closed field of characteristic different from $2$ or $3$.
 If $B$ is a $\Z_+$-graded vector space, $B_m$ always stands for the $m^{\rm th}$ component of $B$. We only deal with the situation when each $B_m$ is finite dimensional, which allows to consider
 the polynomial generating function of the sequence of dimensions of graded components, called

$$
\textstyle \text{the {\it Hilbert series} of $B$:}\qquad H_B(t)=\sum\limits_{j=0}^\infty \dim B_m\,\,t^m.
$$

The classical potential algebras are defined as $\K[x_1,\dots,x_n]/I_L$, where $I_L$ is the ideal generated by all first order partial derivatives $\frac{\partial L}{\partial x_j}$ of $L\in \K[x_1,\dots,x_n]$, called the potential. Potential algebras have been defined in the non-commutative setting by Kontsevich \cite{kon}, see also \cite{BW} (an alternative equivalent definition was suggested by Ginsburg \cite{xx}). An element $F\in \K\langle x_1,\dots,x_n\rangle$ is called {\it cyclicly invariant} if it is invariant for the linear map $C:\K\langle x_1,\dots,x_n\rangle\to \K\langle x_1,\dots,x_n\rangle$ defined on monomials by $C(1)=1$ and $C(x_ju)=ux_j$ for all $j$ and all monomials $u$. For example, $x^2y+xyx+yx^2$ and $x^3$ are cyclicly invariant, while $xy-yx$ is not. The symbol $\K^{\rm cyc}\langle x_1,\dots,x_n\rangle$ stands for the vector space of all cyclicly invariant elements of $\K\langle x_1,\dots,x_n\rangle$. We define noncommutative left or right (respectively) derivatives as linear maps $\delta_{x_j}:\K\langle x_1,\dots,x_n\rangle\to \K\langle x_1,\dots,x_n\rangle$ and $\delta^R_{x_j}:\K\langle x_1,\dots,x_n\rangle\to \K\langle x_1,\dots,x_n\rangle$ by their action on monomials:
$$
\delta_{x_j}\,u=\left\{\begin{array}{ll}v&\text{if}\ \ u=x_jv;\\ 0&\text{otherwise},\end{array}\right.\qquad
\delta^R_{x_j}\,u=\left\{\begin{array}{ll}v&\text{if}\ \ u=vx_j;\\ 0&\text{otherwise}.\end{array}\right.
$$
Note that
$$
\text{$F\in\K\langle x_1,\dots,x_n\rangle$ is cyclicly invariant if and only if $\delta_{x_j}\,F=\delta^R_{x_j}\,F$ for $1\leq j\leq n$}.
$$
For $F\in \K^{\rm cyc}\langle x_1,\dots,x_n\rangle$, the {\it potential algebra} $A_F$ is defined as $\K\langle x_1,\dots,x_n\rangle/I$, where $I$ is the ideal generated by $\delta_{x_j}F$ for $1\leq j\leq n$. We shall call $F$ the {\it potential} for $A_F$.

Note that if the characteristic of $\K$ is either 0 or is greater than the top degree of non-zero homogeneous components of $F\in\K^{\rm cyc}\langle x_1,\dots,x_n\rangle$, then  $F=G^\rcirclearrowleft$ for some (non-unique) $G\in\K\langle x_1,\dots,x_n\rangle$, where the linear map $G\mapsto G^\rcirclearrowleft$ from $\K\langle x_1,\dots,x_n\rangle$ to $\K^{\rm cyc}\langle x_1,\dots,x_n\rangle$ is defined by its action on homogeneous elements by
$$
\text{$u^\rcirclearrowleft=u+Cu+{\dots}+C^{d-1}u$, where $d$ is the degree of $u$.}
$$
For example, ${x^4}^\rcirclearrowleft=4x^4$ and ${x^2y}^\rcirclearrowleft=x^2y+xyx+yx^2$. One easily sees that the usual partial derivatives $\pd{G^{\rm ab}}{x_j}$ of the abelianization $G^{\rm ab}$ of $G\in \K\langle x_1,\dots,x_n\rangle$ ($G^{\rm ab}$ is the image of $G$ under the canonical map from $\K\langle x_1,\dots,x_n\rangle$ onto $\K[x_1,\dots,x_n]$) are the abelianizations of $\delta_{x_j}(G^\rcirclearrowleft)$. Thus commutative potential algebras are exactly the abelianizations of the non-commutative ones. The above definitions and observations immediately yield the following lemma.

\begin{lemma}\label{gen1} For every $F\in\K\langle x_1,\dots,x_n\rangle$ with trivial zero degree component $(F_0=0),$
\begin{equation}\notag
\smash{F=\sum_{j=1}^n x_j(\delta_{x_j}F)=\sum_{j=1}^n (\delta^R_{x_j}F)x_j.}
\end{equation}
Thus $F$ is cyclicly invariant if and only if $\smash{F=\sum\limits_{j=1}^n (\delta_{x_j}F)x_j}$. In particular,
\begin{align}\notag
&F=\sum_{j=1}^n x_j(\delta_{x_j}F)=\sum_{j=1}^n(\delta_{x_j}F)x_j\ \ \text{for every $F\in\K^{\rm cyc}\langle x_1,\dots,x_n\rangle$ with $F_0=0,$}
\\
&\smash{\sum_{j=1}^n \bigl[x_j,\delta_{x_j}F\bigr]=0\ \ \text{for every $F\in\K^{\rm cyc}\langle x_1,\dots,x_n\rangle$}}. \label{syz}
\end{align}
\end{lemma}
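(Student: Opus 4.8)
The plan is to reduce everything to the elementary ``first-letter'' and ``last-letter'' decompositions of the free algebra $\K\langle x_1,\dots,x_n\rangle$ and then to read the cyclic invariance criterion off these.

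First I would prove the displayed identity by linearity, so that it suffices to verify it for a single monomial $u$ with $\deg u\geq 1$ (the hypothesis $F_0=0$ is precisely what makes all monomials occurring in $F$ of positive degree). Such a $u$ factors uniquely as $u=x_iv$, where the index $i$ and the monomial $v$ are determined by the first letter of $u$; by the definition of $\delta_{x_j}$ one has $\delta_{x_j}u=v$ if $j=i$ and $\delta_{x_j}u=0$ otherwise, whence $\sum_{j}x_j(\delta_{x_j}u)=x_iv=u$. The symmetric factorization $u=wx_k$ according to the last letter gives, in the same way, $\sum_j(\delta^R_{x_j}u)x_j=wx_k=u$. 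Summing over the monomials of $F$ gives $F=\sum_j x_j(\delta_{x_j}F)=\sum_j(\delta^R_{x_j}F)x_j$.

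Next I would invoke the vector space decomposition $\K\langle x_1,\dots,x_n\rangle=\K\oplus\bigoplus_{j=1}^{n}\K\langle x_1,\dots,x_n\rangle x_j$, which is immediate from sorting monomials by their last letter, together with the evident injectivity of right multiplication by $x_j$; hence $\sum_j g_j x_j=0$ forces $g_1=\dots=g_n=0$. Combined with the equivalence noted before the lemma, that $F$ is cyclicly invariant if and only if $\delta_{x_j}F=\delta^R_{x_j}F$ for all $j$, this yields the criterion: if $F$ is cyclicly invariant then $\sum_j(\delta_{x_j}F)x_j=\sum_j(\delta^R_{x_j}F)x_j=F$; conversely, if $F=\sum_j(\delta_{x_j}F)x_j$, subtracting the already established identity $F=\sum_j(\delta^R_{x_j}F)x_j$ gives $\sum_j(\delta_{x_j}F-\delta^R_{x_j}F)x_j=0$, so $\delta_{x_j}F=\delta^R_{x_j}F$ for each $j$ and $F$ is cyclicly invariant.

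For the remaining ``in particular'' assertions I would simply combine the pieces: for cyclicly invariant $F$ with $F_0=0$, the first display gives $F=\sum_j x_j(\delta_{x_j}F)$ and the criterion gives $F=\sum_j(\delta_{x_j}F)x_j$; subtracting the two proves $\sum_j[x_j,\delta_{x_j}F]=0$, and since $\delta_{x_j}$ kills the constant term, this last identity \eqref{syz} is valid for every cyclicly invariant $F$, with or without $F_0=0$. I do not anticipate a real obstacle; the only points needing a moment's care are the uniqueness of the first-/last-letter factorizations (which underlies both the basic identity and the converse direction of the criterion) and keeping track of the fact that $F_0=0$ is used only to keep the basic identity honest, not the $[x_j,\delta_{x_j}F]$ relation.
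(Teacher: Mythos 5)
Your proof is correct and is essentially the argument the paper has in mind: the paper offers no written proof (it declares the lemma to follow "immediately" from the definitions), and your monomial-by-monomial verification via the unique first-/last-letter factorizations, together with the directness of the decomposition $\K\langle x_1,\dots,x_n\rangle=\K\oplus\bigoplus_j\K\langle x_1,\dots,x_n\rangle x_j$ and the stated equivalence of cyclic invariance with $\delta_{x_j}F=\delta^R_{x_j}F$, is exactly the routine check being suppressed. No gaps.
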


We consider a larger class of algebras. We call $F\in\K\langle x_1,\dots,x_n\rangle$ a {\it twisted potential} if  the linear span of $\delta_{x_1}F,\dots,\delta_{x_n}F$ coincides with the linear span of $\delta^R_{x_1}F,\dots,\delta^R_{x_n}F$. Just as for potentials,  if $F$ is a twisted potential, the corresponding {\it twisted potential algebra} $A_F$ is given by generators $x_1,\dots,x_n$ and relations $\delta_{x_1}F,\dots,\delta_{x_n}F$. Clearly, the same algebra is presented by the relations $\delta^R_{x_1}F,\dots,\delta^R_{x_n}F$.
Note that there is a number of other generalizations of the concept of a potential algebra. For instance, one can replace the free algebra in the above definition by a (directed) graph algebra \cite{BW}. Our definition then corresponds to the case of the $n$-petal rose (one vertex with $n$ loops) graph.

There is a complex of right $A$-modules associated to each twisted potential algebra $A=A_F$ with $F_0=F_1=0$ ($F$ starts in degree $\geq2$). Namely, we consider the sequence of right $A$-modules:
\begin{equation}\label{comq}
\begin{array}{c}
0\to A\mathop{\longrightarrow}\limits^{d_3}A^n\mathop{\longrightarrow}\limits^{d_2}A^n
\mathop{\longrightarrow}\limits^{d_1}A\mathop{\longrightarrow}\limits^{d_0}\K\to 0,\qquad\text{where}\ \ d_2(u_1,\dots,u_n)_j=\sum\limits_{k=1}^n (\delta_{x_j}\delta^R_{x_k} F)u_k,
\\
\text{$d_0$ is the augmentation map, $d_1(u_1,\dots,u_n)=x_1u_1+{\dots}+x_nu_n$ and $d_3(u)=(x_1u,\dots,x_nu)$.}
\end{array}
\end{equation}
We say that the twisted potential algebra $A=A$ is {\it exact} if (\ref{comq}) is an exact complex. For the sake of completeness, we shall verify in Section~3 that (\ref{comq}) is indeed a complex and that it is always exact at its three rightmost terms. Obviously, exactness of (\ref{comq}) is preserved under linear substitutions and therefore is an isomorphism invariant as long as degree-graded twisted potential algebras are concerned. Note also that the {\it superpotential} algebras \cite{BW} are also particular cases of twisted potential algebras: for them $\delta^R_{x_j}F=\pm \delta_{x_j}F$.

\begin{remark}\label{rem0}
It is an elementary linear algebra exercise to verify that if $F\in\K\langle x_1,\dots,x_n\rangle$ is a twisted potential for which the dimension of the linear span of $\delta_{x_1}F,\dots,\delta_{x_n}F$ is $m<n$, then one can choose an $m$-dimensional subspace $M$ in $V=\spann\{x_1,\dots,x_n\}$ such that $F$ belongs to the tensor algebra of $M$. In other words, there is a basis $y_1,\dots,y_n$ in $V$ such that only $y_1,\dots,y_m$ feature in $F$ when written in terms of $y_1,\dots,y_n$. Thus we have the twisted potential algebra $B$ with generators $y_1,\dots,y_m$ and twisted potential $F$, while the original $A_F$ is the free product of $B$ and the free $\K$-algebra on $n-m$ generators. One easily sees that such an $A_F$ is never exact. Moreover $A_F$ is Koszul or PBW or a domain \cite{popo} if and only if $B$ is of the same type. Finally, if $F$ is homogeneous, the Hilbert series of $A_F$ and $B$ are related by $H_{A_F}(t)=(H_B(t)^{-1}-(n-m)t)^{-1}$.
\end{remark}

We say that a twisted potential $F\in\K\langle x_1,\dots,x_n\rangle$ is {\it non-degenerate} if $\delta_{x_1}F,\dots,\delta_{x_n}F$ are linearly independent. According to the above remark, in order to describe all twisted potential algebras with $n$ generators, it is enough to describe non-degenerate twisted potential algebras with $\leq n$ generators.

Note that if $F\in\K\langle x_1,\dots,x_n\rangle$ is a non-degenerate twisted potential, then there is a unique matrix $M\in GL_n(\K)$ such that
\begin{equation}\label{qua}
\left(\begin{array}{c}\delta_{x_1}^R\,F\\ \vdots\\ \delta_{x_n}^R\,F\end{array}\right)
=M\left(\begin{array}{c}\delta_{x_1}\,F\\ \vdots\\ \delta_{x_n}\,F\end{array}\right).
\end{equation}
We say that $M$ {\it provides the twist} or {\it is the twist}. By Lemma~\ref{gen1}, cyclic invariance happens precisely when (\ref{qua}) is satisfied with $M$ being the identity matrix. That is,  every non-degenerate potential $F\in\K^{\rm cyc}\langle x_1,\dots,x_n\rangle$ is a non-degenerate twisted potential with trivial twist. Note that the definition of non-degenerate twisted potential algebras is very similar to that of algebras defined by multilinear forms of Dubois-Violette \cite{DV1,DV2}. In fact, our definition generalizes the latter.

\begin{remark}\label{rer}
Assume that $F$ is a non-degenerate twisted potential with the twist $M\in GL_n(\K)$. If we perform a non-degenerate linear substitution $x_j=\sum_{k} c_{j,k}y_k$, then in the new variables $y_j$, $F$ remains a non-degenerate twisted potential. Furthermore, the corresponding twist changes in a very specific way: the new twist is the conjugate of $M$ by the transpose of the substitution matrix $C$. We leave this elementary calculation for the reader to verify. One useful consequence of this observation is that by means of a linear substitution, $M$ can be replaced by a convenient conjugate matrix. For instance, $M$ can be transformed into its Jordan normal form.
\end{remark}

We say that a twisted potential $F\in\K\langle x_1,\dots,x_n\rangle$ is {\it proper} if the equality
\begin{equation}\label{syz1}
\sum_{j=1}^n x_j(\delta_{x_j}F)=\sum_{j=1}^n (\delta^R_{x_j}F)x_j
\end{equation}
of Lemma~\ref{gen1} provides the only linear dependence of the $2n^2$  elements $x_k(\delta_{x_j}F)$ and $(\delta^R_{x_j}F)x_k$ with $1\leq j,k\leq n$ of $\K\langle x_1,\dots,x_n\rangle$ up to a scalar multiple. Note that in this case $\delta_{x_j}F$ are automatically linearly independent and therefore $F$ is non-degenerate.

\begin{lemma}\label{gen2} Let $F\in\K\langle x_1,\dots,x_n\rangle$ be a homogeneous twisted potential of degree $k\geq 3$ and $A=A_F$ be the corresponding twisted potential algebra. Then  $\dim A_{k}\geq n^k-2n^2+1$. Moreover, $F$ is non-degenerate if an only if $\dim A_{k-1}=n^{k-1}-n$ and $F$ is proper if and only if $\dim A_{k}= n^k-2n^2+1$. Furthermore, if $F$ is proper, then $F$ is uniquely determined by $A_F$ up to a scalar multiple and any linear substitution providing a graded algebra isomorhpism between $A_F$ and another twisted potential algebra $A_G$ must  transform $F$ to $G$ up to a scalar multiple.
\end{lemma}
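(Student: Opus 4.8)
The plan is to read off the low-degree part of the Hilbert function of $A=A_F$ directly from the presentation, and then to recognise $\K F$ as an intrinsically defined line. Write $T=\K\langle x_1,\dots,x_n\rangle$, $V_1=\spann\{x_1,\dots,x_n\}$, $r_j=\delta_{x_j}F$, and $R=\spann\{r_1,\dots,r_n\}=\spann\{\delta^R_{x_1}F,\dots,\delta^R_{x_n}F\}$ (the last equality being exactly twistedness). Since the $r_j$ have degree $k-1$, the ideal $I$ they generate has $I_m=0$ for $m<k-1$, $I_{k-1}=R$, and $I_k=U+V$ where $U=V_1R$ and $V=RV_1$: a product $ar_jb$ has degree $k$ only if $\deg a+\deg b=1$, and $Rx_i=\spann\{(\delta^R_{x_j}F)x_i:j\}$. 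Thus $I_k$ is spanned by the $2n^2$ elements $x_k(\delta_{x_j}F)$ and $(\delta^R_{x_j}F)x_k$, while Lemma~\ref{gen1} supplies $F=\sum_j x_j r_j=\sum_j(\delta^R_{x_j}F)x_j$, a nonzero member of $U\cap V$ and a nontrivial linear dependence among those $2n^2$ vectors. Hence $\dim I_k\leq 2n^2-1$ and $\dim A_k=n^k-\dim I_k\geq n^k-2n^2+1$, which is the first assertion.

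Next I would establish the two equality criteria. As $\dim A_{k-1}=n^{k-1}-\dim R$ with $0\leq\dim R\leq n$, we get $\dim A_{k-1}=n^{k-1}-n$ iff the $r_j$ are independent, i.e.\ iff $F$ is non-degenerate. For properness: $\dim A_k=n^k-2n^2+1$ says precisely that the $2n^2$ spanning vectors of $I_k$ admit only a one-dimensional space of dependences, which then must be spanned by the Lemma~\ref{gen1} relation — and that is the definition of properness; conversely properness gives $\dim I_k=2n^2-1$. I would also record that properness forces non-degeneracy, since a dependence $\sum_j c_j r_j=0$ with $(c_j)\ne 0$ would, after left multiplication by some $x_i$, produce a dependence among the $x_i r_j$ alone, which is not proportional to the Lemma~\ref{gen1} relation.

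For the final assertion the key point is that, when $F$ is proper, $U\cap V=\K F$. Indeed $F$ is then non-degenerate, so $\dim R=n$; the map $V_1\otimes R\to T_k$, $x_i\otimes r\mapsto x_i r$, is injective (distinct first letters of monomials), so $\dim U=n^2$, and symmetrically $\dim V=n^2$; since $\dim(U+V)=\dim I_k=2n^2-1$, this gives $\dim(U\cap V)=1$, and as $0\ne F\in U\cap V$ we conclude $U\cap V=\K F$. Now the relation space is recovered from the algebra as $R=\ker(T_{k-1}\to A_{k-1})$ (under the tautological identification $T_1=A_1$), hence so are $U=V_1R$, $V=RV_1$ and the line $\K F=U\cap V$; this yields that $F$ is determined by $A_F$ up to a scalar. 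If $\Phi$ is a linear substitution inducing a graded isomorphism $A_F\to A_G$, then $\Phi$ is a degree-preserving automorphism of $T$ with $\Phi(I_F)=I_G$, hence $\Phi(R_F)=R_G$, $\Phi(U_F)=V_1\Phi(R_F)=U_G$ and $\Phi(V_F)=V_G$; moreover $\dim(A_G)_k=\dim(A_F)_k=n^k-2n^2+1$, so $G$ is proper as well, and therefore $\Phi(\K F)=\Phi(U_F\cap V_F)=U_G\cap V_G=\K G$, i.e.\ $\Phi(F)=cG$ for some $c\in\K\setminus\{0\}$.

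I expect the only delicate point — and it is a mild one — to be the bookkeeping behind $\dim U=\dim V=n^2$ and $\dim(U+V)=2n^2-1$: one must make sure that left, respectively right, multiplication by the generators does not collapse $R$ (this is exactly where non-degeneracy enters) and that the sole coincidence forced among the $2n^2$ vectors $\{x_i r_j,\ (\delta^R_{x_j}F)x_i\}$ is the one coming from Lemma~\ref{gen1}. Once those dimensions are in hand, the identity $U\cap V=\K F$ and the transport-of-structure along $\Phi$ are purely formal.
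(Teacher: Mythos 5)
Your proposal is correct and follows essentially the same route as the paper: bound $\dim I_k$ by exhibiting the Lemma~\ref{gen1} dependence among the $2n^2$ spanning elements of $V_1R+RV_1$, characterize non-degeneracy and properness by the resulting dimension counts, and in the proper case identify $\K F=V_1R\cap RV_1$ via $\dim(V_1R)=\dim(RV_1)=n^2$ and $\dim(V_1R+RV_1)=2n^2-1$, then transport this line along any graded isomorphism. The extra details you supply (injectivity of $V_1\otimes R\to T_k$, and the argument that properness forces non-degeneracy) are correct refinements of points the paper leaves implicit.
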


\begin{proof}Let $V$ be the linear span of $x_j$ for $1\leq j\leq n$, $R_F$ be the linear span of $\delta_{x_j}F$ for $1\leq j\leq n$ and $I$ be the ideal of relations for $A$: $I$ is the ideal in $\K\langle x_1,\dots,x_n\rangle$ generated by $R_F$. Obviously, $F$ is non-degenerate if and only if $\dim R_F=n$ if and only if $\dim A_{k-1}=n^{k-1}-n$. Clearly $I_k$ is spanned by $2n^2$ elements $x_j\delta_{x_m}F$ and $\delta_{x_m}Fx_j$ for $1\leq j,m\leq n$. The equation (\ref{syz1}) provides a non-trivial linear dependence of these elements. Hence $\dim I_k\leq 2n^2-1$ and therefore $\dim A_{k}\geq n^k-2n^2+1$. Clearly, the equality $\dim A_{k}= n^k-2n^2+1$ holds if and only if there is no linear dependence of $x_j\delta_{x_m}F$ and $\delta_{x_m}Fx_j$ other than (\ref{syz1}) (up to a scalar multiple). That is, $F$ is proper if and only if $\dim A_{k}= n^k-2n^2+1$.

Now let $F$ be proper. Then  $\dim I_k=\dim(VR_F+R_FV)=2n^2-1$.  By Lemma~\ref{gen1}, $F\in VR_F\cap R_FV$. Since $\delta_{x_j}F$ are linearly independent,  $\dim VR_F=\dim R_FV=n^2$. Hence $\dim (VR_F\cap R_FV)=1$. Thus $VR_F\cap R_FV$ is the one-dimensional space spanned by $F$. It follows that $F$ is uniquely determined by $A$ up to a scalar multiple. If, additionally, a linear substitution provides an isomorphism between $A$ and another twisted potential algebra $A_G$, then the said substitution must transform $VR_F\cap R_FV$ to $VR_G\cap R_GV$. Since the first of these spaces is the one-dimensional space spanned by $F$ and the second contains $G$, it must also be one-dimensional and must be spanned by $G$. Hence our substitution transforms $F$ into $G$ up to a scalar multiple.
\end{proof}

\begin{remark}\label{proper}
We stress that any homogeneous twisted potential, when proper, is uniquely (up to a scalar multiple) determined by the corresponding twisted potential algebra. We dub such algebras {\it proper twisted potential algebras}. By the above lemma, proper and non-proper degree-graded twisted potential algebras can not be isomorphic. Similarly, we say that a twisted potential algebra is {\it degenerate} if it is given by a degenerate twisted potential. Again, the concept is well-defined and a non-degenerate degree-graded twisted potential algebra can not be isomorphic to a degenerate one. Note that we are talking of isomorphisms in the category of graded algebras (=isomorphisms provided by linear substitutions). As we have already mentioned, a proper degree-graded twisted potential algebra is always non-degenerate. We shall see later that every exact degree-graded twisted potential algebra is proper.
\end{remark}

The main objective of this paper is to provide a complete classification up to graded algebra isomorphisms of twisted potential algebras in two cases: when the twisted potential is a homogeneous (non-commutative) polynomial of degree 3 on three variables and when it is a homogenenous (non-commutative) polynomial of degree 4 on two variables. This task resonates with the Artin--Schelter classification result \cite{AS}: many algebras we deal with are indeed Artin--Schelter regular. However there are two differences. For one, the classes are not exactly the same. The main difference though is that Artin and Schelter have never provided a classification up to an isomorphism.

For the sake of convenience, we introduce the following notation. For integers $n,k$ satisfying $n\geq 2$ and $k\geq 3$ and $M\in GL_n(\K)$,
\begin{equation}\label{pnkm}
\begin{array}{c}
\text{${\cal P}_{n,k}(M)$ is the set of homogeneous degree $k$ elements}
\\
\text{$F\in \K\langle x_1,\dots,x_n\rangle$ for which (\ref{qua}) is satisfied.}
\end{array}
\end{equation}
Obviously, ${\cal P}_{n,k}(M)$ is a vector space. However, ${\cal P}_{n,k}(M)$ is often trivial. For instance, it is trivial if the eigenvalues of $M$ are algebraically independent over the subfield of $\K$ generated by $1$. We denote
\begin{equation}\label{pnk}
{\cal P}_{n,k}={\cal P}_{n,k}({\rm Id}).
\end{equation}
In other words, ${\cal P}_{n,k}$ consists of homogeneous degree $k$ elements of $\K^{\rm cyc}\langle x_1,\dots,x_n\rangle$. Finally,
\begin{equation}\label{pnkmst}
\text{${\cal P}^*_{n,k}$ is the set of all homogeneous degree $k$ twisted potentials in $\K\langle x_1,\dots,x_n\rangle$.}
\end{equation}

In case of cubic twisted potentials, we deal with quadratic algebras and together with classification we provide the information whether algebras in question are Koszul and/or PBW. The latter, as defined in \cite{popo}, is the property to have a Gr\"obner basis in the ideal of relations (with respect to some compatible ordering and some choice of degree 1 generators) consisting exclusively of quadratic elements. The results are presented in tables. The first column provides a label for further references. The letter $P$ in the label indicates that we have a potential algebra, while the letter $T$ indicates that the algebra is twisted potential and non-potential. The exceptions column says which values of the parameters are excluded. The isomorphism column provides generators of a group action on the space of parameters such that corresponding algebras are isomorphic precisely when the parameters are in the same orbit. The Koszul/PBW/Exact column says whether algebras in question are Koszul or PBW or exact. For instance, the Y/N/Y entry means that the algebra is Koszul, exact but not PBW. We introduce some notation for the rest of the paper. Let
$$
\xi_8\ \ \text{and}\ \ \xi_9\ \ \text{be fixed elements of $\K^*$ of multiplicative orders $8$ and $9$ respectively.}
$$
Note that such elements exist since $\K$ is algebraically closed and has characteristic different from $2$ or $3$. We also denote
$$
\theta=\xi_9^3\ \ \text{and}\ \ i=\xi_8^2.
$$
Obviously,
$$
\theta^3=1\neq\theta\ \ \ \text{and}\ \ \ i^2=-1.
$$

\begin{theorem}\label{main33-p} $A$ is a potential algebra on three generators given by a homogeneous degree $3$ potential if and only if $A$ is isomorphic $($as a graded algebra$)$ to an algebra from the following table. The algebras from different rows of the table are non-isomorphic. Algebras from {\rm (P1--P9)} are proper, algebras from {\rm (P10--P14)} are non-proper and non-degenerate, while algebras from {\rm (P15--P18)} are degenerate.
\bigskip
\hrule
\tabb{\hhh}{\rm The potential $F$}{$\!\!\!\begin{array}l \text{\rm Defining}\\ \text{\rm Relations of $A_F$}\end{array}$}{\rm Exceptions}{\rm Isomorphisms}{\rm Hilbert series}{$\!\!\!\begin{array}l \text{\rm Koszul/}\\ \text{\rm PBW/}\\ \text{\rm Exact}\end{array}$}
\tabb{\rm P1}{$x^3+y^3+z^3+axyz^\rcirclearrowleft+bxzy^\rcirclearrowleft$}{$\begin{array}l xx+ayz+bzy;\\ yy+azx+bxz;\\ zz+axy+byx\end{array}$}
{$\begin{array}l (a,b)\neq(0,0)\\ (a^3,b^3)\neq(1,1)\\ (a+b)^3+1\neq 0\end{array}$}{$\begin{array}l (a,b)\mapsto (\theta a,\theta b)\\ (a,b)\mapsto \bigl(\frac{\theta a+\theta^2b+1}{a+b+1},\frac{\theta^2 a+\theta b+1}{a+b+1}\bigr)\end{array}$}{$(1-t)^{-3}$}{\rm Y/N/Y}
\tabb{\rm P2}{$xyz^\rcirclearrowleft+axzy^\rcirclearrowleft$}{$\begin{array}l yz+azy;\\ zx+axz;\\ xy+ayx\end{array}$}
{$a\neq0$}{$a\mapsto a^{-1}$}{$(1-t)^{-3}$}{\rm Y/Y/Y}
\tabb{\rm P3}{$(y+z)^3+xyz^\rcirclearrowleft+axzy^\rcirclearrowleft$}{$\begin{array}l yz+azy;\\ axz+zx+(y+z)^2;\\ xy+ayx+(y+z)^2\end{array}$}
{$a\neq 0$, $a\neq -1$}{$a\mapsto a^{-1}$}{$(1-t)^{-3}$}{\rm Y/Y/Y}
\tabb{\rm P4}{$z^3+xyz^\rcirclearrowleft+axzy^\rcirclearrowleft$}{$\begin{array}l yz+azy;\\ axz+zx;\\ xy+ayx+zz\end{array}$}
{$a\neq 0$}{$a\mapsto a^{-1}$}{$(1-t)^{-3}$}{\rm Y/Y/Y}
\tabb{\rm P5}{$y^3+{xz^2}^\rcirclearrowleft+xyz^\rcirclearrowleft-xzy^\rcirclearrowleft$}{$\begin{array}l yz-zy+zz;\\ -xz+zx+yy;\\ xy-yx+xz+zx\end{array}$}{\rm none}{\rm trivial}{$(1-t)^{-3}$}{\rm Y/Y/Y}
\tabb{\rm P6}{${xz^2}^\rcirclearrowleft+{y^2z}^\rcirclearrowleft+xyz^\rcirclearrowleft-xzy^\rcirclearrowleft$}{$\begin{array}l yz-zy+zz;\\ -xz+zx+yz+zy;\\ xy-yx+xz+zx+yy\end{array}$}{\rm none}{\rm trivial}{$(1-t)^{-3}$}{\rm Y/Y/Y}
\tabb{\rm P7}{$y^3+z^3+xyz^\rcirclearrowleft-xzy^\rcirclearrowleft$}{$\begin{array}l yz-zy;\\ -xz+zx+yy;\\ xy-yx+zz\end{array}$}{\rm none}{\rm trivial}{$(1-t)^{-3}$}{\rm Y/Y/Y}
\tabb{\rm P8}{${yz^2}^\rcirclearrowleft+xyz^\rcirclearrowleft-xzy^\rcirclearrowleft$}{$\begin{array}l yz-zy;\\ -xz+zx+zz;\\ xy-yx+yz+zy\end{array}$}{\rm none}{\rm trivial}{$(1-t)^{-3}$}{\rm Y/Y/Y}
\tabb{\rm P9}{$(y+z)^3+xyz^\rcirclearrowleft$}{$\begin{array}l yz;\\ zx+(y+z)^2;\\ xy+(y+z)^2\end{array}$}
{\rm none}{\rm trivial}{$\frac{(1+t)(1+t^2)(1+t+t^2)}{1-t-t^3-2t^4}$}{\rm N/N/N}
\tabb{\rm P10}{${xz^2}^\rcirclearrowleft+y^3$}{$\begin{array}l zz;\\ yy;\\ xz+zx\end{array}$}{\rm none}{\rm trivial}{$\frac{1+t}{1-2t}$}{\rm Y/Y/N}
\tabb{\rm P11}{$x^3+y^3+z^3$}{$\begin{array}l xx;\\ yy;\\ zz\end{array}$}{\rm none}{\rm trivial}{$\frac{1+t}{1-2t}$}{\rm Y/Y/N}
\tabb{\rm P12}{$xyz^\rcirclearrowleft$}{$\begin{array}l yz;\\ zx;\\ xy\end{array}$}{\rm none}{\rm trivial}{$\frac{1+t}{1-2t}$}{\rm Y/Y/N}
\tabb{\rm P13}{${xz^2}^\rcirclearrowleft+{y^2z}^\rcirclearrowleft$}{$\begin{array}l zz;\\ yz+zy;\\ xz+zx+yy\end{array}$}{\rm none}{\rm trivial}{$\frac{1+t}{1-2t}$}{\rm Y/Y/N}
\tabb{\rm P14}{$z^3+xyz^\rcirclearrowleft$}{$\begin{array}l yz;\\ zx;\\ xy+zz\end{array}$}
{\rm none}{\rm trivial}{$\frac{1+t+t^2+t^3+t^4}{1-2t+t^2-t^3-t^4}$}{\rm N/N/N}
\tabb{\rm P15}{$y^3+z^3$}{$\begin{array}l yy;\\ zz\end{array}$}{\rm none}{\rm trivial}{$\frac{1+t}{1-2t-t^2}$}{\rm Y/Y/N}
\tabb{\rm P16}{${yz^2}^\rcirclearrowleft$}{$\begin{array}l zz;\\ yz+zy\end{array}$}{\rm none}{\rm trivial}{$\frac{1+t}{1-2t-t^2}$}{\rm Y/Y/N}
\tabb{\rm P17\hhhh}{$z^3$}{$\,z^2$}{\rm none}{\rm trivial}{$\frac{1+t}{1-2t-2t^2}$}{\rm Y/Y/N}
\tabb{\rm P18\hhhh}{$0$}{\rm \,none}{\rm none}{\rm trivial}{$(1-3t)^{-1}$}{\rm Y/Y/N}
\end{theorem}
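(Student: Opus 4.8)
\emph{Strategy.} The space $\K^{\rm cyc}\langle x,y,z\rangle_3$ of cubic potentials on three generators is $11$-dimensional, while the abelianization map onto $\K[x,y,z]_3$ (of dimension $10$) is surjective with one-dimensional kernel, spanned by $\omega:=xyz^\rcirclearrowleft-xzy^\rcirclearrowleft$. Hence every cubic potential can be written $F=\widetilde P+c\,\omega$ with $c\in\K$, where $\widetilde P$ is a chosen cyclic lift of the plane cubic form $P=F^{\rm ab}\in\K[x,y,z]_3$. A linear substitution with matrix $C\in GL_3(\K)$ transforms $P$ as a ternary cubic form and transforms $\omega$ into $(\det C)\,\omega$: the kernel of abelianization is $GL_3(\K)$-stable, so $\omega$ is sent to $\chi(C)\,\omega$ for a character $\chi$ of $GL_3(\K)$, and testing on scalar matrices shows $\chi=\det$. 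Thus the $GL_3(\K)$-orbit of $F$ is governed by the projective equivalence class of $P$ together with the residual action of the stabiliser of $P$ on the parameter $c$. By Lemma~\ref{gen2}, for a proper potential the graded-isomorphism type of $A_F$ coincides with the $GL_3(\K)$-orbit of $F$ up to a scalar, so for rows {\rm P1--P9} the classification reduces to this orbit analysis; the non-proper and degenerate rows {\rm P10--P18} are handled by classifying the relation subspace $R_F\subseteq\K[x,y,z]_2$ directly, invoking the free-product description of Remark~\ref{rem0} whenever $F$ genuinely involves at most two variables.

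\emph{Exhaustiveness.} I would run through the classical list of plane cubics over an algebraically closed field and, for each type, reduce $F=\widetilde P+c\,\omega$ to normal form. For a smooth cubic, put $P$ in Hesse form $x^3+y^3+z^3+\lambda xyz$; adding the $\omega$-term and using that the stabiliser of a generic Hesse cubic in $PGL_3$ is finite produces precisely the two-parameter family {\rm P1} together with the displayed identifications of $(a,b)$ (one generator being a diagonal substitution scaling a single variable by $\theta$, which has determinant $\theta$ and hence multiplies both the Hesse and the $\omega$ parts accordingly; the other being a non-monomial element of the Hesse group), while the excluded loci $(a,b)=(0,0)$, $(a^3,b^3)=(1,1)$ and $(a+b)^3=-1$ are exactly those where $P$ degenerates or $F$ drops in rank and so falls into a later row. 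The remaining cases — nodal, cuspidal, conic plus transversal line, conic plus tangent line, triangle of lines, three concurrent lines, line plus double line, triple line, and the zero form — are treated by picking a normal form for $\widetilde P$, adding $c\,\omega$, and exploiting the now-larger stabiliser to pin down $c$; the non-degenerate outcomes distribute among {\rm P2--P9} and {\rm P10--P14}, and the cases where after reduction $F$ lies in the tensor algebra on two variables (or the $\delta_{x_j}F$ become dependent) fall under Remark~\ref{rem0} and yield {\rm P15--P18}. At each step one records which alternative of Lemma~\ref{gen2} holds, i.e.\ whether $\dim A_3=27-18+1=10$ (proper), whether only $\dim A_2=6$ (non-degenerate), or neither, matching the partition {\rm P1--P9}/{\rm P10--P14}/{\rm P15--P18}.

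\emph{Invariants of each algebra.} For {\rm P2--P8}, {\rm P10--P13} and {\rm P15--P17} I would exhibit a quadratic Gr\"obner basis of the relation ideal for a suitable admissible order; this simultaneously yields the PBW property, the stated Hilbert series and Koszulity. For {\rm P1} no quadratic Gr\"obner basis exists, so instead one proves that the complex~(\ref{comq}) is a finite free resolution of $\K$: a standard Euler-characteristic computation then forces $H_{A_F}(t)=(1-t)^{-3}$, and linearity of the resolution gives Koszulity; alternatively, since $F^{\rm ab}$ is a smooth Hesse cubic, one invokes the known results on Sklyanin-type and Artin--Schelter regular algebras of global dimension $3$ \cite{AS}. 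For {\rm P9}, {\rm P14} and {\rm P18} one argues directly: {\rm P18} is free, and for {\rm P9} and {\rm P14} the reduced Gr\"obner basis contains, besides the quadratic leading terms, an element of degree $\geq3$, which produces the displayed rational Hilbert series and the failure of PBW and Koszulity. Exactness of~(\ref{comq}) holds for {\rm P1--P8} (proper with the correct Hilbert series) and fails for all of {\rm P9--P18}, using that an exact degree-graded twisted potential algebra is necessarily proper and Koszul, as noted in Remark~\ref{proper} and verified in Section~3.

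\emph{Pairwise non-isomorphism and the main obstacle.} The Hilbert series is a graded-isomorphism invariant and already separates most rows, and the proper/non-proper/degenerate trichotomy of Lemma~\ref{gen2} and Remark~\ref{proper} separates {\rm P1--P9} from {\rm P10--P14} from {\rm P15--P18}. Among rows with the same Hilbert series one uses a finer invariant: for the families {\rm P2--P8} (all with series $(1-t)^{-3}$) the projective equivalence class of $F^{\rm ab}$ — equivalently the point-scheme of the quadratic algebra — refined by the $\omega$-parameter distinguishes them, while within {\rm P1} a graded isomorphism must, by Lemma~\ref{gen2}, carry the potential to a scalar multiple of the potential, hence be a linear substitution preserving the Hesse pencil; such substitutions form a finite group whose induced action on $(a,b)$ is exactly the one displayed, so $(a,b)$ modulo that action is a complete invariant. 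I expect the main obstacles to be the sheer length of the case analysis over degenerate cubics together with the bookkeeping of residual stabilisers, and — conceptually — the Hilbert-series and Koszulity verification for the generic family {\rm P1}, where the Gr\"obner-basis route is unavailable and one must instead establish exactness of the Koszul-type complex~(\ref{comq}).
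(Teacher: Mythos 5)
Your overall strategy is the one the paper follows: decompose $F$ as a cyclic lift of its abelianization plus a multiple of $\omega=xyz^\rcirclearrowleft-xzy^\rcirclearrowleft$ (Lemma~\ref{cyab33}), reduce the ternary cubic to Weierstrass normal form (Lemma~\ref{co3-3}), use the residual stabiliser to normalise the $\omega$-coefficient (Lemma~\ref{nco3-3}), certify most rows by exhibiting quadratic Gr\"obner bases, treat the Sklyanin family (P1) by invoking the known Koszulity and Hilbert-series results, and separate rows by Hilbert series, by the proper/non-proper/degenerate trichotomy of Lemma~\ref{gen2}, and by ad hoc inspection of the quadratic relation spaces for (P10--P13) and (P15--P16). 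Your remark that $\omega$ transforms by the character $\det$ is a cleaner packaging of what the paper verifies substitution by substitution.

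The one step that fails as written is your treatment of (P9) and (P14): the presence of a degree $\geq 3$ element in the reduced Gr\"obner basis for one choice of order does not imply failure of PBW (which quantifies over all compatible orders and all choices of degree-one generators), and it certainly does not imply failure of Koszulity --- your own row (P1) is Koszul but not PBW, so ``no quadratic Gr\"obner basis'' cannot by itself be the reason an algebra is non-Koszul. The paper's actual argument (Lemmas~\ref{g9} and~\ref{g10}) is homological: compute $H_A$ from the finite Gr\"obner basis, compute the Hilbert series of the Koszul dual $A^!$ directly ($1+3t+3t^2+2t^3$ for (P14) and $1+3t+3t^2+t^3$ for (P9)), and observe that the duality identity $H_A(t)H_{A^!}(-t)=1$, which every Koszul algebra must satisfy, fails; non-Koszulity then yields non-PBW for free. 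One could instead rule out PBW via Lemma~\ref{ome0}, but for Koszulity some input of this kind is unavoidable. With that substitution your outline matches the paper's proof.
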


\vfill\break

\begin{theorem}\label{main33-t} $A$ is a non-potential twisted potential algebra on three generators given by a homogeneous degree $3$ twisted potential if and only if  $A$ is isomorphic $($as a graded algebra$)$ to an algebra from the following table. The algebras from different rows of the table are non-isomorphic.
\bigskip
\hrule
\taBB{\hhh}{\rm Twisted potential $F$}{\rm Defining Relations of $A_F$}{\rm Exceptions}{\rm Isomorphisms}{\rm Hilbert series}{$\!\!\!\begin{array}l \text{\rm Koszul/}\\ \text{\rm PBW/}\\ \text{\rm Exact}\end{array}$}
\taBB{\rm T1}{$\begin{array}{l}bxyz +ayzx+czxy\\ -abyxz-bcxzy-aczyx\end{array}$}{$\begin{array}l xy-ayx;\\ zx-bxz;\\ yz-czy\end{array}$}{$\begin{array}{l}abc\neq0\\
\left(\!\!\begin{array}{l} a{-}b\\ a{-}c\end{array}\!\!\right)\neq \left(\!\!\begin{array}{l} 0\\ 0\end{array}\!\!\right)\end{array}$}{$\begin{array}l (a,b,c)\mapsto (b,c,a)\\ (a,b,c)\mapsto(a^{-1},c^{-1},b^{-1})\end{array}$}{$(1-t)^{-3}$}{\rm Y/Y/Y}
\taBB{\rm T2}{$\begin{array}{l}axyz +byzx+azxy\\ -abyxz-a^2xzy-abzyx-az^3\end{array}$}{$\begin{array}l xy-byx-zz;\\ zx-axz;\\ yz-azy\end{array}$}{$\begin{array}{l}ab\neq0\\ a\neq b\end{array}$}{$(a,b)\mapsto(a^{-1},b^{-1})$}{$(1-t)^{-3}$}{\rm Y/Y/Y}
\taBB{\rm T3}{$\!\!\begin{array}{l} xzy^\rcirclearrowleft{-}xyz^\rcirclearrowleft{+}a(xz^2{+}z^2x{+}z^2y)
\\+\frac{1-a}{2}(y^2z{+}zy^2{-}2zxz{-}zyz){-}\frac{1{+}a}{2}yzy\end{array}$}{$\!\!\begin{array}l yz-zy-azz;\\ xz-zx-azy+\frac{a(1-a)}{2}zz;\\
xy{-}yx{+}(1{-}2a)zx{+}\frac{a{-}1}2 yy\\ \qquad\qquad\quad{+}\frac{(1{+}a)(1{-}2a)}{4}zy{+}\frac{a^2(1{-}a)}{2}zz {\vrule height0pt depth7pt width0pt} \end{array}$}{$a\neq\frac13$}{\rm trivial}{$(1-t)^{-3}$}{\rm Y/Y/Y}
\taBB{\rm T4}{$\begin{array}{l} {\vrule height12pt depth8pt width0pt} xzy^\rcirclearrowleft{-}xyz^\rcirclearrowleft{+}\frac13xz^2{+}\frac13z^2x
\\{-}\frac23zxz+\frac13y^2z{+}\frac13zy^2{-}\frac23yzy\\ {+}\frac13z^2z{-}\frac{1}{3}zyz{+}\frac{a}{27}z^3 {\vrule height12pt depth7pt width0pt}\end{array}$}{$\begin{array}l {\vrule height12pt depth8pt width0pt} yz-zy-\frac13zz;\\ xz-zx-\frac13zy-\frac19zz;\\ xy-yx-\frac13yy+\frac13zx+\frac29zy+\frac{1-a}{27}zz {\vrule height12pt depth7pt width0pt} \end{array}$}{\rm none}{\rm trivial}{$(1-t)^{-3}$}{\rm Y/Y/Y}
\taBB{\rm T5}{$\begin{array}{l} {\vrule height11pt depth0pt width0pt} zyx+byxz+b^2xzy\\-bzxy-yzx-b^2xyz\\+(ab-1)zxz+azzx+ab^2xzz\end{array}$}{$\begin{array}{l} bxy{+}(1{-}ab)xz{-}yx{-}azx;\\ bxz-zx;\\
yz-zy-azz\end{array}$}{$b\neq 0$}{\rm trivial}{$(1-t)^{-3}$}{\rm Y/Y/Y}
\taBB{\rm T6}{$\begin{array}{l} yxz-xzy+zyx+yzx\\-xyz-zxy+(a-1)yzy\\+ayyz+azyy+zzz\end{array}$}{$\begin{array}{l} -xy+yx+ayy+zz;\\ xz+zx+(a{-}1)zy+ayz;\\
yz+zy\end{array}$}{\rm none}{\rm trivial}{$(1-t)^{-3}$}{\rm Y/Y/Y}
\taBB{\rm T7}{$\begin{array}{l} xzy^\rcirclearrowleft-xyz^\rcirclearrowleft-yzy\\+ayyz^\rcirclearrowleft+by^3+z^3\end{array}$}{\!\!$\begin{array}{l} -xy+yx+ayy+zz;\\ xz{+}byy{+}ayz{-}zx{+}(a{-}1)zy;\\ yz-zy\end{array}$}{\rm none}{$(a,b)\mapsto (a,-b)$}{$(1-t)^{-3}$}{\rm Y/Y/Y}
\taBB{\rm T8}{$xzy^\rcirclearrowleft-xyz^\rcirclearrowleft-yzy+yzz^\rcirclearrowleft+ay^3$}{$\begin{array}{l} -xy+yx+yz+zy;\\ xz+ayy-zx-zy+zz;\\ yz-zy\end{array}$}{$a\neq 0$}{\rm trivial}{$(1-t)^{-3}$}{\rm Y/Y/Y}
\taBB{\rm T9}{$\begin{array}{l} {\vrule height11pt depth0pt width0pt}a^2xyz+yzx+azxy\\-a^2xzy-zyx-ayxz\\+a^2xzz+zyz+azxz\end{array}$}{$\begin{array}{l} axy-yx+2zx;\\ axz-zx;\\ yz-zy+zz\end{array}$}{$a\neq 0$}{\rm trivial}{$(1-t)^{-3}$}{\rm Y/Y/Y}
\taBB{\rm T10}{$\begin{array}{l} xyz-yzx+zxy\\-yxz+xzy-zyx+yyz\\-yzy+zyy+zzz\end{array}$}{$\begin{array}{l} xy-yx+yy+zz;\\ xz+zx+2zy;\\ yz+zy\end{array}$}{\rm none}{\rm trivial}{$(1-t)^{-3}$}{\rm Y/Y/Y}
\taBB{\rm T11}{$\begin{array}{l} xxz+axzx+a^2 zxx\\ +yyz-ayzy+a^2zyy\end{array}$}{$\begin{array}{l} xz+azx;\\ yz-azy;\\ xx+yy\end{array}$}{$a\neq 0$}{$a\mapsto -a$}{$(1-t)^{-3}$}{\rm Y/Y/Y}
\taBB{\rm T12}{$\begin{array}{l} zzy+izyz-yzz+yyx\\-yxy+xyy+x^3\end{array}$}{$\begin{array}{l} xx+yy;\\ xy-yx+zz;\\ zy+iyz\end{array}$}{\rm none}{\rm trivial}{$(1-t)^{-3}$}{\rm Y/N/Y}
\taBB{\rm T13}{$\begin{array}{l} zzy-izyz-yzz+yyx\\-yxy+xyy+x^3\end{array}$}{$\begin{array}{l} xx+yy;\\ xy-yx+zz;\\ zy-iyz\end{array}$}{\rm none}{\rm trivial}{$(1-t)^{-3}$}{\rm Y/N/Y}
\taBB{\rm T14}{$\begin{array}{l} xyx+yxy+zyx+yzy+zyz\\+\theta xzy+\theta zxz+\theta^2xzx+\theta^2yxz \end{array}$}{$\begin{array}{l} {\vrule height11pt depth0pt width0pt}yx+\theta zy+\theta^2zx;\\ xy+zy+\theta^2xz;\\  yx+yz+\theta xz\end{array}$}{\rm none}{\rm trivial}{$(1-t)^{-3}$}{\rm Y/N/Y}
\taBB{\rm T15}{$\begin{array}{l} xyx+yxy+zyx+yzy+zyz\\+\theta^2 xzy+\theta^2 zxz+\theta xzx+\theta yxz \end{array}$}{$\begin{array}{l} {\vrule height11pt depth0pt width0pt}yx+\theta^2 zy+\theta zx;\\ xy+zy+\theta xz;\\  yx+yz+\theta^2 xz\end{array}$}{\rm none}{\rm trivial}{$(1-t)^{-3}$}{\rm Y/N/Y}
\taBB{\rm T16}{${y^2z}^\rcirclearrowleft+z^3+x^2z-xzx+zx^2$}{$\begin{array}{l} xx+yy+zz;\\  xz-zx;\\ yz+zy\end{array}$}{\rm none}{\rm trivial}{$(1-t)^{-3}$}{\rm Y/Y/Y}
\taBB{\rm T17}{${xy^2}^\rcirclearrowleft+y^3+xz^2-zxz+z^2x$}{$\begin{array}{l} xz-zx;\\ xy+yx+yy;\\  yy+zz\end{array}$}{\rm none}{\rm trivial}{$(1-t)^{-3}$}{\rm Y/Y/Y}
\taBB{\rm T18}{$y^3+{yz^2}^\rcirclearrowleft+az^3+x^2z-xzx+zx^2$}{$\begin{array}{l} xz-zx;\\ yz+zy+xx+azz;\\  yy+zz\end{array}$}{$a^2+4\neq 0$}{$a\mapsto -a$}{$(1-t)^{-3}$}{\rm Y/N/Y}
\taBB{\rm T19}{$x^2y+axyx+a^2yx^2+z^3$}{$\begin{array}l xx;\\ xy+ayx;\\ zz\end{array}$}{$\begin{array}l a\neq 0\\ a\neq 1\end{array}$}{\rm trivial}{$\frac{1+t}{1-2t}$}{\rm Y/Y/N}
\taBB{\rm T20}{$\begin{array}{l}xy^2+ayxy+a^2y^2x\\ +x^2z+a^2xzx+a^4zx^2\end{array}$}{$\begin{array}l xx;\\ xy+ayx;\\ xz+a^2zx+yy\end{array}$}{$\begin{array}l a\neq 0\\ a\neq 1\end{array}$}{\rm trivial}{$\frac{1+t}{1-2t}$}{\rm Y/Y/N}
\taBB{\rm T21}{$y^3+z^3+x^2z-xzx+zx^2$}{$\begin{array}l yy;\\ xx+zz;\\ xz-zx\end{array}$}{$\begin{array}l a\neq 0\\ a\neq 1\end{array}$}{\rm trivial}{$\frac{1+t}{1-2t}$}{\rm Y/Y/N}
\taBB{\rm T22}{$x^2y+axyx+a^2yx^2$}{$\begin{array}l xy+ayx;\\ xx\end{array}$}{$\begin{array}l a\neq 0\\ a\neq 1\end{array}$}{\rm trivial}{$\frac{1+t}{1-2t-t^2}$}{\rm Y/Y/N}
\taBB{\rm T23}{$x^2y-xyx+yx^2+y^3$}{$\begin{array}l xy-yx;\\ xx+yy\end{array}$}{\rm none}{\rm trivial}{$\frac{1+t}{1-2t-t^2}$}{\rm Y/Y/N}
\end{theorem}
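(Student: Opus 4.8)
The plan is to classify, up to linear substitution and overall rescaling, all homogeneous degree $3$ twisted potentials $F\in\K\langle x,y,z\rangle$, and then for each equivalence class to identify $A_F$ together with its Hilbert series, its PBW/Koszul status, and whether the complex (\ref{comq}) is exact. This reduction is legitimate because, as noted in Remark~\ref{proper}, a graded isomorphism of connected algebras generated in degree one \emph{is} a linear substitution, so graded isomorphism classes of $A_F$ correspond to orbits of equivalence classes of $F$; and because, by Lemma~\ref{gen2}, a non-degenerate proper $F$ is recovered up to a scalar from $A_F$, which both separates proper algebras coming from distinct orbits and pins down the group actions in the ``Isomorphisms'' column (the non-proper and degenerate rows require the extra bookkeeping indicated below). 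Since the potential algebras are already listed in Theorem~\ref{main33-p}, it suffices to consider \emph{non-potential} twisted potentials: non-degenerate $F$ whose twist $M\in GL_3(\K)$ from (\ref{qua}) is not conjugate to the identity, together with the degenerate twisted potentials.

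The computation is organised by the twist. By Remark~\ref{rer} a linear substitution conjugates $M$ by the transpose of the substitution matrix, so $M$ may be assumed in Jordan normal form, and we go through the Jordan types in $GL_3(\K)$: three distinct eigenvalues; two distinct eigenvalues, either diagonalisable or with one $2\times2$ block; and one eigenvalue, scalar or with a $2\times2$ or a $3\times3$ block. For each type one solves the linear system that defines $\pp_{3,3}(M)$: writing $F=\sum_{i,j,k}c_{ijk}x_ix_jx_k$, the condition $\delta^R_{x_\ell}F=\sum_m M_{\ell m}\delta_{x_m}F$ is linear in the $27$ coefficients. For diagonal $M=\mathrm{diag}(\mu_1,\mu_2,\mu_3)$ it already reads $c_{pq\ell}=\mu_\ell c_{\ell pq}$, which upon iteration forces $\mu_p\mu_q\mu_\ell=1$ whenever $c_{pq\ell}\neq0$; in general $\pp_{3,3}(M)$ is trivial unless the eigenvalues of $M$ satisfy a multiplicative resonance (some product or ratio of them equal to $1$ or to a root of unity of bounded order). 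Enumerating the resonances gives the finite list of admissible twists, hence the finite list of rows. Inside each resonance stratum one factors out the residual symmetry --- the substitutions preserving the chosen normal form of $M$, together with the scalings --- to get normal forms for $F$, and tests non-degeneracy via $\dim A_2=6$ and properness via $\dim A_3=10$ (Lemma~\ref{gen2}). The degenerate twisted potentials are handled through Remark~\ref{rem0}: they come from non-degenerate twisted potentials on $\leq2$ variables and yield, via the free-product Hilbert series formula of that remark, the rows {\rm T22}--{\rm T23}; the remaining low-dimensional rows {\rm T19}--{\rm T21} arise as non-degenerate twisted potentials inside diagonal-twist strata whose relation space block-decomposes, so that $A_F$ is a free product of a two-variable twisted potential algebra with $\K[z]/(z^2)$, which is precisely why their Hilbert series are not $(1-t)^{-3}$.

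For each algebra on the resulting list one computes the four invariants. On the PBW rows one exhibits a degree-compatible monomial order and a quadratic Gr\"obner basis of the ideal of relations; the normal words then give the Hilbert series, PBW-ness is immediate, and Koszulity follows from the PBW property. Exactness of (\ref{comq}) for a PBW row with Hilbert series $(1-t)^{-3}$ I would deduce from the exactness at its three rightmost terms (established in Section~3) together with the Hilbert series: the graded Euler characteristic of (\ref{comq}) vanishes exactly when $H_{A_F}=(1-t)^{-3}$, so the homologies at the two remaining spots have equal Hilbert series, and this common homology vanishes since $d_3$ is injective (these algebras are domains). The PBW rows whose Hilbert series is not $(1-t)^{-3}$ are the free products above and are never exact by Remark~\ref{rem0}.

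I expect the main obstacle to be the non-PBW rows {\rm T12}--{\rm T15} and {\rm T18}, which are twists of exceptional $3$-dimensional quadratic Artin--Schelter regular algebras (Sklyanin-type algebras at special parameter values) and for which one must establish $H_{A_F}=(1-t)^{-3}$ and Koszulity \emph{without} a quadratic Gr\"obner basis. The plan is to prove exactness of (\ref{comq}) for these algebras directly --- a linear free resolution of $\K$ of length $3$ forces both Koszulity and the Hilbert series --- by checking exactness at its two leftmost terms by hand; the Euler-characteristic shortcut of the previous paragraph is not available there because the Hilbert series is not yet known, so an independent argument is required (e.g.\ identifying $A_F$ with one of the algebras of \cite{AS} and importing its regularity, or a direct Bergman-style normal-form computation). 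Finally, non-isomorphism among the remaining (non-proper, degenerate, and free-product) rows is settled by comparing Hilbert series and, when these agree, finer invariants: the conjugacy class of the twist where it is defined, the number of independent ``square'' relations inside $R\subseteq V\otimes V$, and the free-product decomposition type. Assembling the per-case verifications gives the table.
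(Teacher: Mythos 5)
Your overall strategy --- reduce graded isomorphism of the $A_F$ to orbit equivalence of the twisted potentials via Lemma~\ref{gen2}, organize by the Jordan form of the twist using Remark~\ref{rer}, solve the linear resonance conditions defining ${\cal P}_{3,3}(M)$, normalize by the residual symmetries, handle the degenerate case through Remark~\ref{rem0}, and read off Hilbert series, PBW-ness and exactness from Gr\"obner bases together with the Euler-characteristic/right-annihilator criterion (which is precisely Lemma~\ref{commin}) --- is exactly the route the paper takes. But one conclusion required by the statement has no argument in your proposal: the ``N'' entries in the PBW column for rows {\rm T12}--{\rm T15} and {\rm T18}. You treat ``non-PBW'' as a given description of these rows and concentrate on establishing their positive properties, but failing to find a quadratic Gr\"obner basis in the given presentation does not show that none exists, since PBW-ness quantifies over all choices of degree-one generators and all compatible orderings. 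The paper needs a separate and genuinely nontrivial argument here (Lemma~\ref{non-pbwb-all}): one first shows (Lemma~\ref{ome0}) that a PBW quadratic algebra with $H_A=(1-t)^{-3}$ must admit a basis and ordering for which the set of leading monomials of the relations is $\{xy,xz,yz\}$ up to the listed symmetries, and then one runs through all linear substitutions and excludes this configuration for each of the five algebras. Without some such argument the table is not proved.

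Two smaller points. Your justification that $d_3$ is injective on the PBW rows (``these algebras are domains'') asserts more than you establish; the check actually needed is weaker and is available from the Gr\"obner basis you already have --- no leading monomial begins with the distinguished variable, so left multiplication by it is injective and there are no nontrivial right annihilators. And your explanation of why {\rm T19}--{\rm T21} have Hilbert series $\frac{1+t}{1-2t}$ (a free-product decomposition with $\K[z]/(z^2)$) is false for {\rm T20}: the relation $xz+a^2zx+yy$ ties all three generators together and no splitting $V=U\oplus W$ places the relation space inside $U^{\otimes2}\oplus W^{\otimes2}$. The conclusion still holds, because the three defining relations form a quadratic Gr\"obner basis, but the reason you give does not apply.
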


\vfill\break

\begin{theorem}\label{main24-p} $A$ is a potential algebra on two generators given by a homogeneous degree $4$ potential if and only if $A$ is isomorphic $($as a graded algebra$)$ to an algebra from the following table. The algebras from different rows of the table are non-isomorphic. Algebras from {\rm (P19--P23)} are proper, algebras from {\rm (P24--P26)} are non-proper and non-degenerate, while algebras from {\rm (P27--P28)} are degenerate.
\bigskip
\hrule
\tabbb{\hhh}{\rm Potential $F$}{\rm Defining relations of $A_F$}{\rm Exceptions}{\rm Isomorphisms}{\rm Hilbert series}{\rm Exact}
\tabbb{\rm P19\hhh}{$x^4+a{x^2y^2}^\rcirclearrowleft+bxyxy^\rcirclearrowleft+y^4$}{$\begin{array}l x^3+axy^2+ay^2x+2byxy;\\ ax^2y+ayx^2+2bxyx+y^3\end{array}$}{$\begin{array}l 4(a+b)^2\neq1\\ (a,b)\neq(0,0)\\ (a,b)\neq\pm(1,1/2)\end{array}$}{$\begin{array}l (a,b)\mapsto (-a,-b)\\ (a,b)\mapsto \bigl(\frac{1-2b}{1+2a+2b},\frac{1-2a+2b}{2(1+2a+2b)}\bigr)\end{array}$}{$(1+t)^{-1}(1-t)^{-3}$}{\rm Y}
\tabbb{\rm P20\hhh}{${x^2y^2}^\rcirclearrowleft+\frac{a}{2}xyxy^\rcirclearrowleft$}{$\begin{array}l {\vrule height11pt depth0pt width0pt}xy^2+y^2x+ayxy;\\ x^2y+yx^2+axyx\end{array}$}
{\rm none}{\rm trivial}{$(1+t)^{-1}(1-t)^{-3}$}{\rm Y}
\tabbb{\rm P21\hhh}{$x^4+{x^2y^2}^\rcirclearrowleft+\frac{a}{2}xyxy^\rcirclearrowleft$}{$\begin{array}l {\vrule height11pt depth0pt width0pt}x^3+xy^2+y^2x+ayxy;\\ x^2y+yx^2+axyx\end{array}$}{\rm none}{\rm trivial}{$(1+t)^{-1}(1-t)^{-3}$}{\rm Y}
\tabbb{\rm P22\hhh}{$x^3y^\rcirclearrowleft+{x^2y^2}^\rcirclearrowleft-xyxy^\rcirclearrowleft$}{$\begin{array}l x^2y^\rcirclearrowleft+{xy^2}^\rcirclearrowleft-3yxy;\\ x^3+x^2y+yx^2-2xyx\end{array}$}{\rm none}{\rm trivial}{$(1+t)^{-1}(1-t)^{-3}$}{\rm Y}
\tabbb{\rm P23\hhh}{$x^4+\frac12xyxy$}{$\begin{array}l x^3+yxy;\\ xyx\end{array}$}{\rm none}{\rm trivial}{$\frac{(1+t^2)(1-t^5)}{(1-t-t^4-t^5)(1-t)}$}{\rm N}
\tabbb{\rm P24\hhh}{$x^4+y^4$}{$\begin{array}l {\vrule height11pt depth0pt width0pt}x^3;\\ y^3\end{array}$}{\rm none}{\rm trivial}{$\frac{1+t+t^2}{1-t-t^2}$}{\rm N}
\tabbb{\rm P25\hhh}{$x^3y^\rcirclearrowleft$}{$\begin{array}l {\vrule height11pt depth0pt width0pt}x^2y+yx^2+xyx;\\ x^3\end{array}$}{\rm none}{\rm trivial}{$\frac{1+t+t^2}{1-t-t^2}$}{\rm N}
\tabbb{\rm P26\hhh}{$xyxy^\rcirclearrowleft$}{$\begin{array}l yxy;\\ xyx\end{array}$}{\rm none}{\rm trivial}{$\frac{1+t+t^2}{1-t-t^2}$}{\rm N}
\tabbb{\rm P27\hhh}{$x^4$}{$x^3$}{\rm none}{\rm trivial}{$\frac{1+t+t^3}{1-t-t^2-t^3}$}{\rm N}
\tabbb{\rm P28\hhh}{$0$}{\rm none}{\rm none}{\rm trivial}{$(1-2t)^{-1}$}{\rm N}
\end{theorem}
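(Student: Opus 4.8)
\emph{Overall strategy.} The plan is to reduce the classification to that of the $GL_2(\K)$-orbits on the space ${\cal P}_{2,4}$ of homogeneous degree $4$ cyclically invariant elements of $\K\langle x,y\rangle$, stratify by degeneracy and properness, and compute Hilbert series and exactness of (\ref{comq}) orbit by orbit. The starting point: the $16$ degree $4$ monomials in $x,y$ form $6$ orbits under the cyclic shift $C$, so ${\cal P}_{2,4}$ is $6$-dimensional, with basis $x^4,y^4,{x^3y}^\rcirclearrowleft,{xy^3}^\rcirclearrowleft,{x^2y^2}^\rcirclearrowleft,{xyxy}^\rcirclearrowleft$, and the abelianisation map ${\cal P}_{2,4}\to\K[x,y]_4$ is onto with one-dimensional kernel spanned by ${x^2y^2}^\rcirclearrowleft-{xyxy}^\rcirclearrowleft$. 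I would fix coordinates on ${\cal P}_{2,4}$ accordingly and write down $\delta_xF$, $\delta_yF$ and the binary quartic $F^{\rm ab}$ in terms of them.

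\emph{The degenerate and non-degenerate non-proper strata.} If $\delta_xF,\delta_yF$ are linearly dependent, then by Remark~\ref{rem0} $F$ lies in $\K\langle z\rangle$ for a single variable $z$, hence $F=cz^4$; rescaling gives exactly P27 ($c\neq0$) and P28 ($c=0$), whose Hilbert series come from the free-product formula of Remark~\ref{rem0} and for which (\ref{comq}) fails to be exact because of the free factor. If instead $\delta_xF,\delta_yF$ are independent, then $\dim(A_F)_3=6$ and, by Lemma~\ref{gen2}, $F$ is proper iff $\dim(A_F)_4=9$, i.e.\ iff the $8\times16$ matrix of coordinates of the eight elements $x_k\delta_{x_j}F$, $(\delta^R_{x_j}F)x_k$ has rank $7$ (it can never exceed $7$, by (\ref{syz})). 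A direct computation of this rank identifies the locus where it drops to $6$ and shows that, up to $GL_2(\K)$ and rescaling, the non-degenerate non-proper potentials are precisely $x^4+y^4$, ${x^3y}^\rcirclearrowleft$ and ${xyxy}^\rcirclearrowleft$, giving P24--P26; their Hilbert series $\frac{1+t+t^2}{1-t-t^2}$ and non-exactness follow from a monomial Gr\"obner basis of the relations.

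\emph{The proper stratum (the heart).} By Lemma~\ref{gen2}, two proper potential algebras are isomorphic as graded algebras iff their potentials are related by a linear substitution together with a scalar, so on the proper locus the problem is literally the classification of $GL_2(\K)$-orbits on an open subset of $\mathbb{P}({\cal P}_{2,4})$. Since abelianisation is $GL_2$-equivariant, I would first bring $F^{\rm ab}$ to a normal form of a binary quartic (finitely many root-multiplicity patterns, plus one modulus in the four-distinct-roots case); the lifts of a given normal-form quartic to ${\cal P}_{2,4}$ form a line (the coset of the kernel of abelianisation), on which the residual stabiliser of the quartic in $PGL_2(\K)$ acts. In the generic case this produces the two-parameter family $x^4+a{x^2y^2}^\rcirclearrowleft+b{xyxy}^\rcirclearrowleft+y^4$ of P19, whose residual symmetry group is generated by $y\mapsto iy$, giving $(a,b)\mapsto(-a,-b)$, and by a further substitution mixing $x$ and $y$, giving $(a,b)\mapsto\bigl(\frac{1-2b}{1+2a+2b},\frac{1-2a+2b}{2(1+2a+2b)}\bigr)$; the excluded parameter values are exactly those where $F^{\rm ab}$ acquires a repeated root ($4(a+b)^2=1$) or the algebra fails to be proper ($(a,b)=(0,0)$ or $\pm(1,1/2)$). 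The quartic patterns with a repeated root, rerun through the same reduction, yield the one-parameter families P20, P21 and the rigid potentials P22, P23. Because of the Lemma~\ref{gen2} rigidity, an isomorphism of proper potential algebras must carry one potential to a scalar multiple of the other, so the statement that these normal forms lie in distinct $GL_2(\K)$-orbits (with only the two stated identifications within P19) is exactly the non-isomorphism of the rows P19--P23.

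\emph{Hilbert series, exactness, and the last non-isomorphisms.} For P19--P22 a Gr\"obner/PBW basis of the ideal of relations yields the Hilbert series $(1+t)^{-1}(1-t)^{-3}=\frac1{1-2t+2t^3-t^4}$ — the only series compatible with exactness of (\ref{comq}), since it is the Euler characteristic of that complex — together with an explicit free resolution of $\K$ of length $3$ matching (\ref{comq}), whence (\ref{comq}) is exact (entry \textbf{Y}). For P23--P28 the Hilbert series differs from $\frac1{1-2t+2t^3-t^4}$, so (\ref{comq}) is not exact. The Hilbert series already separates $\{$P19--P22$\}$, $\{$P23$\}$, $\{$P24--P26$\}$, $\{$P27$\}$ and $\{$P28$\}$; within $\{$P24--P26$\}$ the three algebras are distinguished by the number of one-dimensional subspaces $\K v\subseteq A_1$ with $v^3=0$ in $A$, equal to $2$, $1$ and $0$ respectively. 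The step I expect to be the main obstacle is the proper stratum: the $GL_2(\K)$-reduction of the six-parameter potential, and above all the exhaustive analysis of the degenerate sub-strata needed to certify that P19--P23 with precisely the stated exceptions is a complete and irredundant list, and that the two displayed parameter transformations generate the full isomorphism relation on the P19 family.
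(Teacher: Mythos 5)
Your classification strategy is the same as the paper's: abelianise to a binary quartic, use the canonical forms of Lemma~\ref{co2-4}, lift along the one-dimensional kernel spanned by ${x^2y^2}^\rcirclearrowleft-xyxy^\rcirclearrowleft$, and let the residual stabiliser (the group $H$ of Lemma~\ref{co2-4}, analysed further in Lemma~\ref{S3}) act on the line of lifts. The treatment of the degenerate and non-proper strata and the separation of P24--P26 are also sound; the paper distinguishes P24--P26 by monomiality and by the presence of cubes in the space of relations, but your count of lines $\K v\subseteq A_1$ with $v^3=0$ works equally well. Likewise, deducing non-exactness for P23--P28 from the mismatch of Hilbert series with $(1+t)^{-1}(1-t)^{-3}$ is exactly Lemma~\ref{commin}.

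The gap is in the sentence claiming that for P19--P22 ``a Gr\"obner/PBW basis of the ideal of relations yields the Hilbert series $(1+t)^{-1}(1-t)^{-3}$.'' For P20--P22 this is true: there the leading monomials of the two relations have a single overlap, so by Lemma~\ref{sy} the defining relations themselves form a Gr\"obner basis (this is Lemma~\ref{smth}). But for a generic member of the two-parameter family P19 the reduced Gr\"obner basis is not finite and cannot simply be written down, and establishing $H_A=(1+t)^{-1}(1-t)^{-3}$ together with the absence of non-trivial right annihilators (which, by Lemma~\ref{commin}, is what exactness amounts to) is the single hardest step of the whole theorem. The paper does it in Lemma~\ref{nwei} by exhibiting a central element $g$ of degree $4$, passing to $B=A/(g)$ and the cyclic right module $M=B/yB$, and running a delicate induction (the properties $(\Omega_k)$ and the case split (O1)--(O3)) to prove $\dim M_j=2$ for all $j$; moreover an entire divisor's worth of special parameter values, those with $ab(a^2-1)(4b^2-a^2)(4b^2-1)(4b^2-a^4)(4b^2-2a^2+1)=0$, must be handled separately in Lemma~\ref{wei}, each subcase with its own central element and a finite Gr\"obner basis computed for the quotient $B$ rather than for $A$ itself. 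You flag the orbit classification as the expected obstacle, but that part is comparatively routine given Lemma~\ref{co2-4}; the real difficulty, which your proposal treats as a computation, is the Hilbert-series and exactness claim for P19.
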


\vskip.5cm

\begin{theorem}\label{main24-t} $A$ is a non-potential twisted potential algebra on two generators given by a homogeneous degree $4$ potential if and only if  $A$ is isomorphic $($as a graded algebra$)$ to an algebra from the following table. Distinct algebras anywhere in the table are non-isomorphic. Algebras from {\rm (T24--T33)} are proper, while the algebras in {\rm (T34)} are non-proper and non-degenerate.
\bigskip
\hrule
\tabbbb{\hhh}{\rm Twisted potential $F$}{\rm Defining relations of $A_F$}{\rm Exceptions}{\rm Hilbert series}{\rm Exact}
\tabbbb{\rm T24\hhh}{$x^2y^2+a^2y^2x^2+axy^2x+ayx^2y+bxyxy+abyxyx$}{$\begin{array}l {\vrule height11pt depth0pt width0pt}a^2yx^2+ax^2y+ab xyx;\\ xy^2+ay^2x+byxy\end{array}$}{$\begin{array}{l}a\neq 0\\ a\neq 1\end{array}$}{$(1+t)^{-1}(1-t)^{-3}$}{\rm Y}
\tabbbb{\rm T25\hhh}{$\begin{array}l {\vrule height11pt depth0pt width0pt}x^2y^2+y^2x^2-xy^2x-yx^2y+(a-1)x^2yx\\ +(1-a)xyx^2+ayx^3-ax^3y+\frac{a}2x^4\end{array}$}{$\begin{array}l {\vrule height11pt depth0pt width0pt}xy^2{-}y^2x{+}(a{-}1)xyx{+}(1{-}a)yx^2{-}ax^2y{+}\frac{a}2x^3;\\ yx^2-x^2y+ax^3\end{array}$}{\rm none}{$(1+t)^{-1}(1-t)^{-3}$}{\rm Y}
\tabbbb{\rm T26\hhh}{$\begin{array}{l}{x^2y^2}^\rcirclearrowleft-xyxy^\rcirclearrowleft+ayx^3\\ +ax^3y+(a-1)xyx^2+(a+1)x^2yx\end{array}$}{$\begin{array}l xy^2{+}y^2x{-}2yxy{+}ax^2y{+}(a{-}1)yx^2{+}(a{+}1)xyx;\\ ax^3+x^2y+yx^2-2xyx\end{array}$}{\rm none}{$(1+t)^{-1}(1-t)^{-3}$}{\rm Y}
\tabbbb{\rm T27\hhh}{${x^2y^2}^\rcirclearrowleft-xyxy^\rcirclearrowleft-xyx^2+x^2yx+ax^4$}{$\begin{array}l {\vrule height11pt depth0pt width0pt}x^2y+yx^2-2xyx;\\ xy^2+y^2x-2yxy-yx^2+xyx+ax^3\end{array}$}{\rm none}{$(1+t)^{-1}(1-t)^{-3}$}{\rm Y}
\tabbbb{\rm T28\hhh}{$x^2y^2+a^2y^2x^2+axy^2x-ayx^2y$}{$\begin{array}l {\vrule height11pt depth0pt width0pt}a^2yx^2-ax^2y;\\ xy^2+ay^2x\end{array}$}{\rm $a\neq 0$}{$(1+t)^{-1}(1-t)^{-3}$}{\rm Y}
\tabbbb{\rm T29\hhh}{$x^3y+yx^3+\theta xyx^2+\theta^2x^2yx+y^4$}{$\begin{array}l {\vrule height11pt depth0pt width0pt}x^2y+\theta yx^2+\theta^2xyx;\\ x^3+y^3\end{array}$}{\rm none}{$(1+t)^{-1}(1-t)^{-3}$}{\rm Y}
\tabbbb{\rm T30\hhh}{$x^3y+yx^3+\theta^2xyx^2+\theta x^2yx+y^4$}{$\begin{array}l {\vrule height11pt depth0pt width0pt}x^2y+\theta^2yx^2+\theta xyx;\\ x^3+y^3\end{array}$}{\rm none}{$(1+t)^{-1}(1-t)^{-3}$}{\rm Y}
\tabbbb{\rm T31\hhh}{$x^4-iyx^3-y^2x^2+iy^3x+y^4+xy^3+x^2y^2+x^3y$}{$\begin{array}l {\vrule height11pt depth0pt width0pt}x^3+x^2y+xy^2+y^3;\\ -ix^3-yx^2+iy^2x+y^3\end{array}$}{\rm none}{$(1+t)^{-1}(1-t)^{-3}$}{\rm Y}
\tabbbb{\rm T32\hhh}{$x^4+iyx^3-y^2x^2-iy^3x+y^4+xy^3+x^2y^2+x^3y$}{$\begin{array}l {\vrule height11pt depth0pt width0pt}x^3+x^2y+xy^2+y^3;\\ ix^3-yx^2-iy^2x+y^3\end{array}$}{\rm none}{$(1+t)^{-1}(1-t)^{-3}$}{\rm Y}
\tabbbb{\rm T33\hhh}{$\begin{array}{l}{\vrule height11pt depth0pt width0pt}x^2y^2-yx^2y+y^2x^2-xy^2x\\ +y^3x-xy^3+yxy^2-y^2xy\end{array}$}{$\begin{array}l {\vrule height11pt depth0pt width0pt}-x^2y+yx^2+y^2x+xy^2-yxy;\\ xy^2-y^2x-y^3\end{array}$}{\rm none}{$(1+t)^{-1}(1-t)^{-3}$}{\rm Y}
\tabbbb{\rm T34\hhh}{$x^3y+ax^2yx+a^2xyx^2+a^3yx^3$}{$\begin{array}l {\vrule height11pt depth0pt width0pt}x^2y+axyx+a^2yx^2;\\ x^3\end{array}$}{$\begin{array}{l}a\neq 0\\ a\neq 1\end{array}$}{$\!\!(1{+}t{+}t^2)(1{-}t{-}t^2)^{-1}$}{\rm N}
\end{theorem}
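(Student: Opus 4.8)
The plan is to reduce to non-degenerate twisted potentials, normalise the twist to Jordan form, solve a linear system to list all admissible degree-$4$ potentials together with their twists, and then quotient by the residual linear symmetry; properness, and hence via the Euler characteristic of (\ref{comq}) the Hilbert series, will be read off from Lemma~\ref{gen2} throughout. For the first reduction, by Remark~\ref{rem0} a nonzero degenerate degree-$4$ twisted potential on two generators lies in the tensor algebra of a one-dimensional subspace, so after a linear substitution $F=cx^4$ with $c\ne0$; then $\delta_xF=4cx^3=\delta^R_xF$ (using $\mathrm{char}\,\K\ne2$), so $F$ is cyclically invariant and $A_F$ is a potential algebra, covered by Theorem~\ref{main24-p} rather than here. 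Hence every algebra in the statement is non-degenerate, with a well-defined twist $M\in GL_2(\K)$ via (\ref{qua}); by Remark~\ref{rer} a linear substitution with matrix $C$ conjugates $M$ by $C^\top$, so we may take $M$ in Jordan normal form, either $M=\mathrm{diag}(\alpha,\beta)$ or $M=\bigl(\begin{smallmatrix}\alpha&1\\0&\alpha\end{smallmatrix}\bigr)$ with $\alpha,\beta\in\K^*$.

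For a fixed normalised $M$, condition (\ref{qua}) is a pair of identities between homogeneous degree-$3$ elements (eight monomials each), hence a linear system in the sixteen coefficients of a general homogeneous degree-$4$ element $F$; its solution space is ${\cal P}_{2,4}(M)$. For generic $M$ this space is trivial, and its dimension jumps only when the eigenvalue(s) satisfy one of finitely many relations of the form $\alpha^a\beta^b=1$ with small exponents; enumerating these coincidences is exactly where small-order roots of unity — and with them the constants $\theta$ ($\theta^3=1\ne\theta$) and $i$ ($i^2=-1$) of the table — enter. Here one must be careful, in characteristic $\ne2,3$, to capture \emph{all} such coincidences, to treat the confluent diagonal case $\alpha=\beta$, the scalar case $M=\lambda\,\mathrm{Id}$ and the Jordan-block case on their own, and to discard the orbit of $M=\mathrm{Id}$ (whose potentials are cyclically invariant). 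The output is, for each admissible $M$, an explicit finite-parameter family of candidate non-degenerate potentials.

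It remains to pass to isomorphism classes. Two non-degenerate twisted potentials with twists $M,M'$ give isomorphic graded algebras precisely when some $C\in GL_2(\K)$ satisfies $C^\top M(C^\top)^{-1}=M'$ and carries $F$ to a scalar multiple of $G$ (Lemma~\ref{gen2} forces the second condition once $F$ is proper). So, with $M$ fixed in Jordan form, we classify the admissible $F$ modulo rescaling and modulo the transposed centraliser $\{C:C^\top M=MC^\top\}$ of $M$ (together with the swap $\alpha\leftrightarrow\beta$ when $M=\mathrm{diag}(\alpha,\beta)$), which yields the parameters and the ``Exceptions'' column. For each resulting family, Lemma~\ref{gen2} says $A_F$ is proper iff $\dim A_4=2^4-2\cdot2^2+1=9$, which we verify by showing the $8\times16$ matrix expressing the elements $x_ju$ and $vx_j$ (with $u,v$ running over the two basis relations) in the monomial basis has rank $7$; the single family where this rank stays $\le6$ for all parameters is the non-proper non-degenerate case (T34). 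For the proper families, exactness of (\ref{comq}) forces by its Euler characteristic $H_A(t)=(1+t)^{-1}(1-t)^{-3}$, so it remains only to establish exactness — e.g.\ by producing an Anick/Gr\"obner-basis resolution of $\K$, or by matching with the cubic Artin--Schelter regular algebras on two generators \cite{AS} — while for T34 a short Gr\"obner basis computation gives $H_A(t)=(1+t+t^2)(1-t-t^2)^{-1}$ and shows (\ref{comq}) is not exact. Finally, distinct rows are pairwise non-isomorphic: a proper algebra cannot be isomorphic to a non-proper one (Lemma~\ref{gen2}); among proper families an isomorphism forces an equivalence of potentials and hence agreement of the listed parameters; and the remaining coincidences between families of equal Hilbert series (notably T34 versus the non-proper potential algebras of Theorem~\ref{main24-p}) are excluded by a direct invariant — for the relation shapes that occur here any graded isomorphism is forced to preserve a distinguished generator, after which the relation coefficients themselves form a complete invariant.

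\textbf{Main obstacle.} The heart of the argument, and the likeliest source of a gap, is the exhaustive linear-algebra bookkeeping of the middle two steps: pinning down \emph{every} Jordan-normal-form twist $M$ for which ${\cal P}_{2,4}(M)$ contains a non-degenerate element — the small root-of-unity coincidences must be found without omission, and the confluent, scalar and Jordan-block cases treated one by one — and then computing each orbit space under the transposed centraliser so that the final list is genuinely irredundant with the displayed symmetries capturing all graded-algebra isomorphisms.
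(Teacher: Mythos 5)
Your proposal follows essentially the same route as the paper: reduce to non-degenerate potentials, put the twist in Jordan form via Remark~\ref{rer}, solve the linear system defining ${\cal P}_{2,4}(M)$ case by case on the eigenvalue coincidences, quotient by the transposed centraliser, and use Lemma~\ref{gen2} for properness and uniqueness of the potential together with the complex (\ref{comq}) for exactness and the Hilbert series. The only minor difference is the order of the last step: the paper first computes Gr\"obner bases to get $H_A=(1+t)^{-1}(1-t)^{-3}$ and the absence of non-trivial right annihilators, and then concludes exactness from Lemma~\ref{commin}, rather than establishing exactness directly and reading off the series from the Euler characteristic.
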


\vfill\break

\begin{remark}\label{CY0} Recall \cite{xx} that a $\K$-algebra $A$ is called $n$-Calabi--Yau if $A$ admits a projective $A$-bimodule resolution $0\to P_0\to{\dots}\to P_n\to A\to 0$ such that the dual sequence $0\to {\rm Hom}(P_n,A)\to {\dots}\to {\rm Hom}(P_0,A)\to 0$ is quasi-isomorphic to $0\to P_0\to{\dots}\to P_n\to 0$ (this effect is known as Poincar\'e duality). Now algebras from (P1--P8) and (P19--P27) are $3$-Calabi--Yau with the required resolution provided by tensoring the complex (\ref{comq}) by $A$ (over $\K$) on the left and interpreting the result as a bimodule complex. Actually, this captures (up to an isomorphism) all $3$-Calabi--Yau algebras which are also potential with the potential from ${\cal P}_{3,3}$ or ${\cal P}_{2,4}$. This augments the coarse description of Bockland \cite{yy} of graded $3$-Calabi--Yau algebras. What Bockland provides is a description of directed graphs and degrees such that there exists a homogeneous potential $F$ of given degree with the quotient of the graph path algebra by the relations $\delta_{x_j}F$ ($F$ is assumed to be written in terms of generators of the path algebra) being $3$-Calabi--Yau and proves that (in the category of degree graded algebras) every $3$-Calabi--Yau algebra emerges this way. On the other hand, we take two specific situations: $3$-petal rose and degree $3$ and $2$ petal rose and degree $4$ and describe the corresponding $3$-Calabi--Yau algebras themselves up to an isomorphism.
\end{remark}

\begin{remark}\label{wem} Potential algebras find applications in complex geometry as well. Namely, for an algebraic quasi-projective complex $3$-fold $X$ and a birational flop contraction $f:X\to Y$ contracting a single rational curve $C\subset X$ to a point $p$, Donovan and Wemyss \cite{WEM} associated an invariant, they named a contraction algebra and denoted $A_{\rm con}$. It turns out that $A_{\rm con}$ is finite dimensional and is either the quotient of $\C[x]$ by the ideal generated by $x^n$ for some $n\in\N$ or is a potential algebra on $2$ generators given by a potential $F$ satisfying $F_0=F_1=F_2=0$. This draws attention to finite dimensional potential algebras. Furthermore, Toda \cite{toda} demonstrated that the dimensions of $A_{\rm con}$ and its abelianization $A_{\rm con}^{\rm ab}$ are $\sum\limits_{j=1}^l j^2n_j$ and $n_1$ respectively, where the natural numbers $n_1,\dots,n_l$ are the so-called Gopakumar--Vafa invariants. Admittedly, it is not known whether every finite dimensional potential algebra $A_F$ with $F\in\K^{\rm cyc}\langle x,y\rangle$ satisfying $F_0=F_1=F_2=0$ features as a contraction algebra. As suggested by Wemyss, the first natural step to figuring this out is to determine whether for $F\in\K^{\rm cyc}\langle x,y\rangle$ with $F_0=F_1=F_2=0$, the number $\dim A_F-\dim A_F^{\rm ab}$ has the form $\sum\limits_{j=2}^l j^2n_j$ with $l,n_j\in\N$ (which it must if $A_F$ is a contraction algebra). The complete list of positive integers which fail to have this form is $1$, $2$, $3$, $5$, $6$, $7$, $9$, $10$, $11$, $14$, $15$, $18$, $19$, $23$ and $27$.
\end{remark}

Smoktunowitz and the first named author \cite{AN} proved that for every $F\in\K^{\rm cyc}\langle x,y\rangle$ with\break $F_0=F_1=F_2=F_3=F_4=0$, $A_F$ is infinite dimensional. This results prompted them to raise the following question.

\begin{que1} Does there exist $F\in\K^{\rm cyc}\langle x,y\rangle$ with $F_0=F_1=F_2=F_3=0$ such that $A_F$ is finite dimensional?
\end{que1}

We answer this question negatively.

\begin{theorem}\label{main-growth} Let $n,k\in\N$ be such that $n\geq 2$, $k\geq 3$ and $(n,k)\neq(2,3)$ and let $F\in\K^{\rm cyc}\langle x_1,\dots,x_n\rangle$ be such that $F_0={\dots}=F_{k-1}=0$. Then $A_F$ is infinite dimensional. Furthermore, $A_F$ has at least cubic growth if $(n,k)=(2,4)$ or $(n,k)=(3,3)$ with cubic growth being possible in both cases and $A_F$ has exponential growth otherwise.
\end{theorem}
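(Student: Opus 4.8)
The plan is to reduce everything to a statement about Hilbert series and then use a comparison/growth argument. First I would reduce to the homogeneous case: if $F\in\K^{\rm cyc}\langle x_1,\dots,x_n\rangle$ has $F_0=\dots=F_{k-1}=0$, write $F=F_k+F_{k+1}+\dots$, and let $G=F_k$ be the leading homogeneous component. The relations $\delta_{x_j}F=\delta_{x_j}G+(\text{higher degree})$ form an ideal whose associated graded ideal (with respect to the degree filtration) contains the ideal generated by the $\delta_{x_j}G$. Hence $\dim (A_F)_m\geq \dim(A_G)_m$ for all $m$, so it suffices to prove the theorem for a \emph{homogeneous} potential $G\in{\cal P}_{n,k}$ of degree $k$. (Care is needed because $G$ might be degenerate or even zero; but if $G=0$ then $F$ starts in degree $\geq k+1$ and an obvious induction on the starting degree — together with the base case provided — handles it, except that the excerpt's cited result only covers $(n,k)=(2,4)\to(2,5)$, so really one should phrase the reduction as: the \emph{leading} form is a homogeneous cyclic potential of some degree $k'\geq k$, and it is enough to treat homogeneous potentials of every degree $\geq k$ with $(n,k')\ne(2,3)$.)

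Next, for a homogeneous cyclic potential $G$ of degree $k\geq 3$ on $n\geq 2$ generators with $(n,k)\neq(2,3)$, I would bound $H_{A_G}(t)$ coefficientwise from below. By Lemma~\ref{gen2}, $\dim(A_G)_{k}\geq n^k-2n^2+1$ and $\dim(A_G)_{k-1}\geq n^{k-1}-n$ (with equality when $G$ is non-degenerate, resp.\ proper); for degenerate $G$, Remark~\ref{rem0} lets us pass to fewer generators and a free product, which only increases growth, so we may assume $G$ non-degenerate. The complex (\ref{comq}) gives, whenever it is exact, the exact value $H_{A_G}(t)=\dfrac{1}{1-nt+nt^{k-1}-t^k}$; in general exactness at the three rightmost terms (guaranteed by Section~3) yields the \emph{coefficientwise inequality}
$$
H_{A_G}(t)\succeq \frac{1}{1-nt+nt^{k-1}}
$$
interpreted as a formal power series with the convention that once a coefficient would go non-positive we truncate. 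More robustly: from $d_1,d_2$ alone one gets $\dim(A_G)_m\geq n\dim(A_G)_{m-1}-n\dim(A_G)_{m-k+1}$ for all $m$ (the relations live in degree $k-1$, there are at most $n$ independent ones, hence $\dim I_m\le n\dim(A_G)_{m-k+1}$ cannot be the right bound — rather $\dim(A_G)_m\ge n\dim(A_G)_{m-1}-\dim(R_G A_{m-k+1})$ and $\dim R_G=n$). This linear recurrence inequality is the engine.

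Then I would analyze the recurrence $a_m\geq n a_{m-1}-n a_{m-k+1}$ with $a_0=1$, $a_i=n^i$ for $i<k-1$. The characteristic polynomial is $p(t)=t^{k-1}-n t^{k-2}+n$ (reciprocal to $1-nt+nt^{k-1}$). One shows that for $(n,k)\neq(2,3),(2,4),(3,3)$ this polynomial has a real root $>1$ strictly exceeding its other root moduli cannot be argued naively, so instead I would argue directly: for $n\geq 3$, $k\geq 3$ or $n=2$, $k\geq 5$ one checks $p(\lambda)<0$ for some $\lambda>1$ (e.g.\ evaluate at a convenient point), giving a dominant real eigenvalue $>1$ and hence exponential growth of $a_m$; this forces $A_G$ infinite-dimensional of exponential growth. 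For the two borderline cases $(n,k)=(2,4)$ and $(3,3)$ the polynomial is $t^3-2t^2+2$ and $t^2-3t+3$ respectively — both have no real root $>1$ (indeed $t^2-3t+3$ has complex roots of modulus $\sqrt3>1$, and $t^3-2t^2+2$ has one negative real root and a complex pair), so the crude recurrence only gives that $a_m$ does not decay; here one must use \emph{exactness} of (\ref{comq}) — which for these potentials is established in the classification Theorems~\ref{main33-p}--\ref{main24-t}, where the generic rows (P1, P19, etc.) have Hilbert series $(1-t)^{-3}$ and $(1+t)^{-1}(1-t)^{-3}$ respectively, i.e.\ polynomial growth of degree $3$ — to conclude cubic (not exponential) growth, while the non-exact rows are all infinite-dimensional with polynomial growth as well, and one reads off that $3$ is the minimal growth degree achieved. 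That ``cubic growth is possible'' is then witnessed explicitly by P1 and P19; that $A_F$ has \emph{at least} cubic growth in these two cases needs a lower bound $\dim(A_F)_m\gtrsim m^3$, which again follows by the associated-graded reduction from the generic homogeneous member whose Hilbert series is exactly $(1-t)^{-3}$ or $(1+t)^{-1}(1-t)^{-3}$.

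The main obstacle, I expect, is the two borderline cases $(n,k)=(2,4)$ and $(3,3)$: there the naive recurrence from the partial complex is too weak (it does not even prove infinite-dimensionality cleanly, since the characteristic roots are complex), so one genuinely needs the full exactness of (\ref{comq}) for cyclic potentials of these degrees — or at least a lower bound forcing $\dim(A_F)_m\geq c\,m^3$ — and this in turn seems to require the hands-on classification carried out elsewhere in the paper (the fact that \emph{every} such homogeneous potential, proper or not, non-degenerate or not, yields an algebra of polynomial growth with leading term $\asymp m^3$). A cleaner route, if available, is to prove directly that for $F\in\K^{\rm cyc}\langle x,y\rangle$, $F_0=\dots=F_3=0$, the algebra $A_F$ surjects onto (or is filtered-comparable to) the commutative potential algebra $\K[x,y]/(\partial L/\partial x,\partial L/\partial y)$ only in controlled ways — but this is false in general, so the bimodule-complex/exactness argument really is the substantive input, and pinning down its validity for $F$ merely cyclic (not homogeneous) by the associated-graded passage is the delicate point to get right.
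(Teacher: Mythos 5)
Your opening reduction is where the argument breaks, and unfortunately it breaks in the worst possible place: the inequality goes the wrong way. If $F=F_k+F_{k+1}+\cdots$ and $G=F_k$, then the ideal of initial (lowest-degree) forms of $I=(\delta_{x_1}F,\dots,\delta_{x_n}F)$ \emph{contains} the ideal generated by the $\delta_{x_j}G$, possibly strictly. Hence the associated graded algebra of $A_F$ is a \emph{quotient} of $A_G$, and what you actually get is $\dim A_F\leq \dim A_G$ (in the cumulative/filtered sense), an upper bound. A lower bound for the homogeneous algebra $A_G$ therefore transfers to $A_F$ only if the $\delta_{x_j}F$ form a standard basis, i.e.\ if no new low-degree elements appear in the initial ideal — and ruling that out is precisely the content of the theorem (it is exactly the mechanism by which higher-degree terms of a potential could in principle collapse $A_F$ to finite dimension, which is what the Iyudu--Smoktunowicz question asks about). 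So the homogeneous case, which your proposal then treats via the partial exactness of (\ref{comq}) and, in the borderline cases, via the classification theorems, does not by itself yield the statement.

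The paper's proof is organized to avoid this trap. It never reduces to the leading form; instead it works with the truncations $A_F^{(j)}$ and the $P$-series, minimized over the whole variety ${\cal P}_{n,k}^{(m)}$ of (non-homogeneous) potentials. The two substantive inputs are: (i) Lemma~\ref{abc}, which propagates injectivity of left multiplication by $x_1$ from the homogeneous algebra $A_{F_k}$ (available generically by Examples~\ref{potex}, \ref{potex1} and semicontinuity) to every truncation $A_F^{(j)}$ of the non-homogeneous algebra, by an explicit lowest-degree chipping argument; and (ii) the dimension count in Lemma~\ref{abc1}, where the syzygy $\sum[x_j,\delta_{x_j}F]=0$ of Lemma~\ref{gen1} is used to discard $r_1x_1$ from the generating set of $I_{j+1}$, producing the recurrence $a_{j+1}\geq na_j-na_{j+2-k}+a_{j+1-k}-1$ rather than your weaker $a_m\geq na_{m-1}-na_{m-k+1}$. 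That extra term $+a_{j+1-k}$ is decisive: it upgrades the denominator from $1-nt+nt^{k-1}$ to $1-nt+nt^{k-1}-t^k$ (which factors as $(1-t)^3$ for $(3,3)$ and $(1-t)^3(1+t)$ for $(2,4)$), so the borderline cases come out with cubic growth from the recurrence alone, with no appeal to exactness of (\ref{comq}) or to the classification theorems. You correctly diagnosed that the naive recurrence fails for $(3,3)$ and $(2,4)$ because the characteristic roots are complex of modulus $>1$; the fix is not full exactness but the one extra syzygy. If you want to salvage your outline, replace the associated-graded reduction by the truncation/$P$-series framework and incorporate the syzygy into the count.
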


Throughout the paper we perform linear substitutions. When describing a substitution, we keep the same letters for both old and new variables. We introduce a substitution by showing by which linear combination of (new) variables must the (old) variables be replaced. For example, if we write $x\to x+y+z$, $y\to z-y$ and $z\to 7z$, this means that all occurrences of $x$ (in the relations, potential etc.) are replaced by $x+y+z$, all occurrences of $y$ are replaced by $z-y$, while $z$ is swapped for $7z$. A {\bf scaling} is a linear substitution with a diagonal matrix. That is it swaps each variable with it own scalar multiple. For example, the substitution $x\to 2x$, $y\to -3y$ and $z\to iz$ is a scaling.

Section~2 is devoted to recalling relevant general information as well as to proving few auxiliary results of general nature. In Section~3 we prove a number of general results on potential and twisted potential algebras and provide examples. In particular, we prove Theorem~\ref{main-growth} in Section~3. In Sections~4--7 we prove Theorems~\ref{main24-p}, \ref{main33-p}, \ref{main24-t} and~\ref{main33-t} respectively. Section~8 is devoted to finite dimensional potential algebras. We make extra comments and discuss some open questions in the final Section~9.

\section{General background}

We shall always use the following partial order on power series with real coefficients. Namely, we write $\sum a_nt^n\geq \sum b_nt^n$ if $a_n\geq b_n$ for all $n\in\Z_+$.
If $V$ is an $n$-dimensional vector space over $\K$ and $R$ is a subspace of the $n^2$-dimensional space $V^2=V \otimes V$, then the quotient of the tensor algebra $T(V)$ by the ideal $I$ generated by $R$ is called a {\it quadratic algebra} and denoted $A(V,R)$. A quadratic algebra $A=A(V,R)$ is a {\it $PBW$-algebra} if there are bases $x_1,\dots,x_n$ and $g_1,\dots,g_m$ in $V$ and $R$ respectively such that with respect to some compatible with multiplication well-ordering on the monomials in $x_1,\dots,x_n$, $g_1,\dots,g_m$ is a Gr\"obner basis of the ideal $I$ of relations of $A$.

If we pick a basis $x_1,\dots,x_n$ in $V$, we get a bilinear form $b$ on the free algebra $\K\langle x_1,\dots,x_n\rangle$ (naturally identified with the tensor algebra $T(V)$) defined by $b(u,v)=\delta_{u,v}$ for every monomials $u$ and $v$ in the variables $x_1,\dots,x_n$. The algebra $A^!=A(V,R^\perp)$, where $R^\perp=\{u\in V^2:b(r,u)=0\ \text{for each}\ r\in R\}$, is known as the {\it dual algebra} of $A$. The algebra $A$ is called {\it Koszul} if $\K$ as a graded right $A$-module has a free resolution $\dots\to M_m\to\dots\to M_1\to A\to\K\to 0$, where the second last arrow is the augmentation map and the matrices of the maps $M_m\to M_{m-1}$ with respect to some free bases consist of homogeneous elements of degree $1$. Recall that there is a specific complex of free right $A$-modules, called the Koszul complex, whose exactness is equivalent to the Koszulity of $A$:
\begin{equation}\label{koco1}
\cdots\mathop{\longrightarrow}^{d_{k+1}} (A^!_k)^*\otimes A\mathop{\longrightarrow}^{d_k} (A^!_{k-1})^*\otimes A
 \mathop{\longrightarrow}^{d_{k-1}}\cdots \mathop{\longrightarrow}^{d_1} (A^!_{0})^*\otimes A=A\longrightarrow \K\to 0,
\end{equation}
where the tensor products are over $\K$, the second last arrow is the augmentation map and $d_k$ are given by $d_k(\phi \otimes u)=\sum\limits_{j=1}^n \phi_j\otimes x_ju$, where $\phi_j\in (A^!_{k-1})^*$, $\phi_j(v)=\phi(x_jv)$. Although $A^!$ and the Koszul complex seem to depend on the choice of a basis in $V$, it is not really the case up to the natural equivalence \cite{popo}. Recall that
$$
\begin{array}{l}
\text{every PBW-algebra is Koszul;}\\
\text{$A$ is Koszul $\iff$ $A^!$ is Koszul};\\
\text{if $A$ is Koszul, then $H_A(-t)H_{A^!}(t)=1$}.
\end{array}
$$

Note that if $F\in{\cal P}^*_{n,3}$, the corresponding twisted potential algebra $A_F$ is quadratic. One can easily verify that the complex (\ref{comq}) is always a subcomplex of the Koszul complex for $A_F$. Furthermore, the two complexes coincide precisely when $A_F$ is a proper twisted potential algebra. Thus we have the following curious fact:
\begin{equation}\label{koex}
\begin{array}{c}
\text{if $F\in{\cal P}^*_{n,3}$ is proper, then}\\ \text{$A_F$ is Koszul $\iff$ $A_F$ is exact.}\end{array}
\end{equation}

\subsection{Minimal series and maximal ranks}

Recall that a {\it finitely presented algebra} is an associative algebra $A$ given by  generators $x_1,\dots,x_n$ and relations $r_1,\dots,r_m\in \K\langle x_1,\dots,x_n\rangle$. That is, $A=\K\langle x_1,\dots,x_n\rangle/I$, where $I$, known as the ideal of relations for $A$, is the ideal in $\K\langle x_1,\dots,x_n\rangle$ generated by $r_1,\dots,r_m$. The {\it Poincar\'e series} of $A$ is $P^*_A(t)=\sum\limits_{n=0}^\infty a_kt^k$ with $a_k$ being the dimension of the subspace of $A$ spanned by non-commutative polynomials in $x_j$ of degree up to $k$. This series encodes the growth of $A$. $A$ is said to have {\it polynomial growth of degree} $m$ if $m=\lim\limits_{k\to \infty}\frac{\ln a_k}{\ln k}<\infty$ and $A$ is said to have {\it exponential growth} if $a_k\geq c^k$ for all $k$ for some $c>1$. However this is not the right series for our purposes. We shall define a different version of Poincar\'e series.

For each $k\in\Z_+$, let $J^{(k)}$ be the ideal in  $\K\langle x_1,\dots,x_n\rangle$ generated by all monomials of degree $k+1$. Clearly, $A^{(k)}=\K\langle x_1,\dots,x_n\rangle/(I+J^{(k)})$ is a finite dimensional algebra. Let
$$
d_k=d_k(A)=\dim A^{(k)}\ \ \text{for $k\in\Z_+$ with $d_{-1}=0$.}
$$
Obviously, $(d_k)$ is an increasing sequence of non-negative integers. We call $\smash{P_A=\sum\limits_{j=0}^\infty d_jt^j}$, the {\it P-series} of $A$. First, it is easy to see that if $r_j$ are homogeneous, the Hilbert series of $A$ is $H_A=\sum\limits_{j=0}^\infty (d_j-d_{j-1})t^j$. Note that $P_A^*\geq P_A$. As a result, $A$ is infinite dimensional if $(d_k)$ is unbounded and $A$ has exponential growth if the sequence $(d_k)$ grows exponentially. The reason for our choice is that unlike the classical Poincar\'e series $P_A^*$, the series $P_A$ enjoys certain stability under deformations. Note that the same series, although defined in a different manner, was introduced by Zelmanov \cite{Zelm}.

First, we describe what we mean by a variety of finitely presented algebras over a ground field $\K$. Let $m,n,d\in\N$ and  $P_d=\K[t_1,\dots,t_d]$. For $r\in P_d\langle x_1,\dots,x_n\rangle$ and $a\in\K^d$, $r(a)\in \K\langle x_1,\dots,x_n\rangle$ is obtained from $r$ by specifying $t_j=a_j$ for $1\leq j\leq d$.
\begin{equation}\label{VNG}
\begin{array}{l}
\text{Let $r_j\in P_d\langle x_1,\dots,x_n\rangle$ for $1\leq j\leq m$ and for $a\in\K^d$, let $A^a$ be the $\K$-algebra presented by}\\
\text{generators $x_1,\dots,x_n$ and  relations $r_1(a),\dots,r_m(a)$. We call $\{A^a\}_{a\in\K^d}$ a {\it variety of algebras}.}\\
\text{If, additionally, each $r_j$ is homogeneous, $\{A^a\}_{a\in\K^d}$ is called a {\it variety of graded algebras}}
\end{array}
\end{equation}

If $W=\{A^a\}_{a\in\K^d}$ is a variety of algebras, we denote
$$
\smash{d_k(W)=\min\{d_k(A):A\in W\}\ \ \text{for}\ \ k\in\Z_+\quad \text{and}\quad P_W=\sum_{k=0}^\infty d_k(W)t^k.}\hhhh
$$
If $W=\{A^a\}_{a\in\K^d}$ is a variety of graded algebras, we denote
$$
\smash{h_k(W)=\min\{\dim A_k:A\in W\}\ \ \text{for}\ \ k\in\Z_+\quad \text{and}\quad H_W=\sum_{k=0}^\infty h_k(W)t^k}.\hhhh
$$
If $\K$ is uncountable, we say that a property $P$ of points in $\K^d$ holds for {\it generic} $a\in\K^d$ if the set of points violating $P$ is contained in the union of countably many affine algebraic varieties in $\K^d$ (each different from the entire $\K^d$).

\begin{lemma}\label{miser1} Assume that $W=\{A^a\}_{a\in\K^d}$ is a variety of finitely presented algebras $($as in {\rm (\ref{VNG}))}. Then for every $k\in\Z_+$, the set $U_k=\{a\in\K^d:d_k(A^a)=d_k(W)\}$ is Zarisski open in $\K^d$. In particular, if $\K$ is uncountable, then $P_{A^a}=P_W$ for generic $a\in\K^d$.
\end{lemma}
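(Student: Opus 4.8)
The plan is to fix $k\in\Z_+$ and set up a uniform linear-algebra description of $d_k(A^a)$ as a function of $a$, then show the locus where it attains its minimum is Zariski open. First I would pass from the relations $r_j$ to the two-sided ideal they generate, truncated in degrees $\leq k$. Concretely, work inside the finite-dimensional vector space $T_{\leq k}(V)$ of noncommutative polynomials in $x_1,\dots,x_n$ of degree $\leq k$, whose dimension $N=\sum_{i=0}^k n^i$ is independent of $a$. Inside $T_{\leq k}(V)$, the image of $I+J^{(k)}$ is the span of all products $w_1\,r_j(a)\,w_2$ where $w_1,w_2$ range over monomials, $j$ ranges over $1,\dots,m$, and the total degree of $w_1 r_j(a) w_2$ is $\leq k$ (together with all monomials of degree exactly $k+1$, but those live in degree $>k$ and do not affect the truncation — more precisely one works in $T_{\leq k}(V)$ and the monomials of degree $k+1$ contribute nothing here, so the relevant subspace is exactly $\mathrm{span}\{w_1 r_j(a) w_2 : \deg \leq k\}$). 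Call this subspace $R_k(a)\subseteq T_{\leq k}(V)$. Then $d_k(A^a)=N-\dim R_k(a)$.

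Next I would observe that $R_k(a)$ is the column space (image) of an explicit matrix $\mathcal M(a)$ whose entries are polynomials in $a$: list the finitely many triples $(w_1,j,w_2)$ with $\deg(w_1 r_j(a) w_2)\leq k$ — this list is \emph{finite and independent of $a$} because $r_j$ has bounded degree in $x$ uniformly in $a$ (the $x$-degree of $r_j$ is determined by the polynomial $r_j\in P_d\langle x_1,\dots,x_n\rangle$, not by $a$) — and take $\mathcal M(a)$ to be the matrix whose columns are the coordinate vectors of $w_1\,r_j(a)\,w_2$ in the monomial basis of $T_{\leq k}(V)$. Since each $r_j$ depends polynomially on $a$, so does each entry of $\mathcal M(a)$. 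Hence $\dim R_k(a)=\operatorname{rank}\mathcal M(a)$, and $d_k(A^a)=N-\operatorname{rank}\mathcal M(a)$. The minimum $d_k(W)$ corresponds to the \emph{maximal} rank $\rho=\max_a \operatorname{rank}\mathcal M(a)$, and $U_k=\{a:\operatorname{rank}\mathcal M(a)=\rho\}$. But the maximal-rank locus of a matrix with polynomial entries is Zariski open: it is the nonvanishing locus of the (finitely many) $\rho\times\rho$ minors of $\mathcal M(a)$, each of which is a polynomial in $a$, so its complement (where all $\rho\times\rho$ minors vanish) is Zariski closed. This proves $U_k$ is Zariski open and nonempty.

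For the last sentence: if $\K$ is uncountable, then each $U_k$ is a nonempty Zariski-open subset of $\K^d$, so its complement $\K^d\setminus U_k$ is a proper affine algebraic variety. The set of $a$ with $P_{A^a}\neq P_W$ is exactly $\bigcup_{k\geq 0}(\K^d\setminus U_k)$, a countable union of proper affine varieties, hence $P_{A^a}=P_W$ for generic $a$ by definition. (One should note $P_{A^a}\geq P_W$ coefficientwise always, so agreement of the series is equivalent to $d_k(A^a)=d_k(W)$ for every $k$, i.e.\ to $a\in\bigcap_k U_k$.)

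The only genuinely delicate point is the bookkeeping in the first two paragraphs: making sure the indexing set of triples $(w_1,j,w_2)$ used to build $\mathcal M(a)$ is finite and $a$-independent, and that $R_k(a)$ as defined really equals the degree-$\leq k$ truncation of $I+J^{(k)}$ inside $T_{\leq k}(V)$. The finiteness is immediate once one fixes the (polynomial, hence $a$-independent) $x$-degree bound $D=\max_j\deg_x r_j$; then $\deg(w_1 r_j(a)w_2)\leq k$ forces $\deg w_1+\deg w_2\leq k$, leaving only finitely many choices. Everything after that — $d_k = N - \operatorname{rank}\mathcal M(a)$, openness of the maximal-rank locus via minors, and the countable-union argument over $k$ — is routine. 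No real obstacle; the content is organizational.
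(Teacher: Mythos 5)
Your overall strategy coincides with the paper's: express the codimension $\dim\Phi_k-d_k(A^a)$ (where $\Phi_k$ is the span of monomials of degree $\leq k$) as the rank of a matrix with entries polynomial in $a$, and use that the maximal-rank locus is Zariski open because it is the non-vanishing locus of the $\rho\times\rho$ minors; the countable-union argument for the generic statement is likewise the intended one. The only cosmetic difference is that the paper argues locally at a point $a_0\in U_k$, choosing a basis $g_1,\dots,g_N$ of $\Phi_k\cap(I^{a_0}+J^{(k)})$ and monomials $m_1,\dots,m_N$ so as to exhibit one non-degenerate square matrix $M(a)$, whereas you look at all maximal minors of a single rectangular matrix; this is the same argument.

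There is, however, one concrete error, located exactly at the step you flagged as delicate. You define $R_k(a)$ as the span of those products $w_1r_j(a)w_2$ whose \emph{total degree} is $\leq k$ and claim $d_k(A^a)=N-\dim R_k(a)$. This fails when the $r_j$ are not homogeneous, which is precisely the situation the $P$-series is designed to handle. For instance, with one generator and the single relation $r=x-x^3$, no product $w_1rw_2$ has degree $\leq 1$, so your $R_1(a)=0$ and your formula gives $d_1=2$; but $x=r+x^3\in I+J^{(1)}$, so in fact $d_1=1$. The correct identification is $\Phi_k\cap(I^a+J^{(k)})=\pi^{(k)}(I^a)$, where $\pi^{(k)}$ denotes the projection of $\K\langle x_1,\dots,x_n\rangle$ onto $\Phi_k$ along the span of monomials of degree $>k$; accordingly $R_k(a)$ must be the span of the truncations $\pi^{(k)}\bigl(w_1r_j(a)w_2\bigr)$, taken over the still finite and $a$-independent set of pairs with $\deg w_1+\deg w_2\leq k$. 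With that correction (truncate the products rather than discard the high-degree ones) the matrix $\mathcal M(a)$ still has entries polynomial in $a$, and the remainder of your argument, which is the paper's, goes through unchanged.
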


\begin{proof} Let $a_0\in U_k$ and $\Phi_k$ be the linear span of monomials of degree $\leq k$ in $\K\langle x_1,\dots,x_n\rangle$. Then the dimension of  $\Phi_k\cap(I^{a_0}+J^{(k)})$ is $N=\dim\Phi_k-d_k(W)$. Thus we can pick a basis $g_1,\dots,g_N$ in $\Phi_k\cap(I^{a_0}+J^{(k)})$. Pick monomials $m_1,\dots,m_N$ of degree $\leq k$ such that the matrix of $m_j$-coefficeints  of $g_s$ is non-degenerate. Then modulo $J^{(k)}$ each $g_s$ can be written as $\sum_t u_tr_{j_t}(a_0)v_t$ for some $u_t,v_t\in \K\langle x_1,\dots,x_n\rangle$. For $a\in \K^d$ consider the matrix $M(a)$ of  $m_j$-coefficeints  of $\sum_t u_tr_{j_t}(a)v_t$. Then $M(a_0)=M$ and the entries of $M(a)$ depend on $a$ polynomially. Since $M$ is non-degenerate, the set $U$ of $a$ for which $M(a)$ is non-degenerate is Zarisski open in $\K^d$ and contains $a_0$. By definition of $M(a)$, $\dim(\Phi_k\cap(I^{a}+J^{(k)}))\geq N$ for $a\in U$, which in turn means that $d_k(A^a)\leq d_k(W)$. By minimality of $d_k(W)$, we have $d_k(A^a)=d_k(W)$ for $a\in U$ and therefore $U\subseteq U_k$. Since $a_0\in U$ and $a_0$ was an arbitrary element of $U_k$ to begin with, $U_k$ is Zarisski open.
\end{proof}

The following statement, apart from being well-known, follows easily (and can be proven in exactly the same way) from Lemma~\ref{miser1}. It was probably Ufnarovskii, who first made this observation \cite{Uf}.

\begin{lemma}\label{miser} Assume that $W=\{A^a\}_{a\in\K^d}$ is a variety of graded algebras $($as in {\rm (\ref{VNG}))}. Then for every $k\in\Z_+$, the non-empty set  $U_k=\{a\in\K^d:\dim A^a_k=h_k(W)\}$ is Zarisski open in $\K^d$. As a consequence, $H_{A^a}=H_W$ for generic $a\in \K^d$ provided $\K$ is uncountable.
\end{lemma}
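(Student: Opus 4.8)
The plan is to imitate, almost verbatim, the proof of Lemma~\ref{miser1}, with the finite-dimensional truncations $A^{(k)}$ replaced by the homogeneous components $A^a_k$. First I would observe that, since each $r_j$ in (\ref{VNG}) is homogeneous, for every $a\in\K^d$ the ideal of relations $I^a$ is graded and its degree-$k$ part $(I^a)_k$ is the linear span of the finitely many elements $u\,r_j(a)\,v$ with $u,v$ monomials in $x_1,\dots,x_n$ and $\deg u+\deg r_j+\deg v=k$. Hence $\dim A^a_k=n^k-\dim(I^a)_k$, the number $h_k(W)$ is an honest minimum of a set of non-negative integers (so $U_k\neq\emptyset$), and minimizing $\dim A^a_k$ over $a$ is the same as maximizing $\dim(I^a)_k$.

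Next I would fix $k$, take an arbitrary $a_0\in U_k$, and set $N=n^k-h_k(W)=\dim(I^{a_0})_k$, which is then the maximal value of $\dim(I^a)_k$ over all $a$. Choose a basis $g_1,\dots,g_N$ of $(I^{a_0})_k$; writing each $g_s$ as a sum $\sum_t w_{s,t}\,r_{j_{s,t}}(a_0)\,w'_{s,t}$ of the spanning elements above and picking degree-$k$ monomials $m_1,\dots,m_N$ so that the $N\times N$ matrix $M$ of their coefficients in the $g_s$ is invertible, I would define, for general $a$, the elements $\tilde g_s(a)=\sum_t w_{s,t}\,r_{j_{s,t}}(a)\,w'_{s,t}\in(I^a)_k$ and the matrix $M(a)$ of $m_i$-coefficients of $\tilde g_1(a),\dots,\tilde g_N(a)$. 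Its entries are polynomials in $a$ and $M(a_0)=M$ is invertible, so $U=\{a:\det M(a)\neq0\}$ is Zarisski open and contains $a_0$; for $a\in U$ the $\tilde g_s(a)$ are linearly independent elements of $(I^a)_k$, so $\dim(I^a)_k\geq N$, i.e. $\dim A^a_k\leq h_k(W)$, and minimality forces equality. Thus $U\subseteq U_k$, which shows $U_k$ is a union of Zarisski-open sets, hence open.

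For the genericity consequence I would note that $H_{A^a}=H_W$ is equivalent to $\dim A^a_k=h_k(W)$ for all $k$, i.e. to $a\in\bigcap_{k\geq0}U_k$; since each $U_k$ is a non-empty Zarisski-open set, the set of $a$ violating $H_{A^a}=H_W$ lies in the countable union $\bigcup_k(\K^d\setminus U_k)$ of proper affine algebraic varieties, which is by definition a non-generic set, and is a proper subset of $\K^d$ when $\K$ is uncountable. I do not anticipate a genuine obstacle: the argument is structurally identical to that of Lemma~\ref{miser1}, and the one step deserving care is the passage from the semicontinuity bound $\dim(I^a)_k\geq N$ on $U$ to the equality $\dim A^a_k=h_k(W)$ — this is exactly where the minimality of $h_k(W)$ is used, and without it one would only get a one-sided, not a two-sided, control of $\dim A^a_k$ on $U$.
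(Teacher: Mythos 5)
Your proof is correct and follows exactly the route the paper intends: the paper does not write out a proof of this lemma at all, merely remarking that it "can be proven in exactly the same way" as Lemma~\ref{miser1}, and your argument is precisely that adaptation, with the truncations $A^{(k)}$ replaced by the graded components $(I^a)_k\subseteq\Phi_k$ and the same determinant/semicontinuity step combined with minimality of $h_k(W)$. The concluding genericity observation also matches the paper's definition of a generic point.
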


\begin{lemma} \label{maxra}
Assume that $W=\{A^a\}_{a\in\K^d}$ is a variety of graded algebras $($as in {\rm (\ref{VNG}))}. Let also $\Lambda$ be a $p\times q$ matrix, whose entries are degree $r$ $($the degree is with respect to $x_j)$ homogeneous elements of $P_d\langle x_1,\dots,x_n\rangle$, where $P_d=\K[t_1,\dots,t_d]$. For every fixed $a\in \K^d$, we can interpret $\Lambda$ as a map from $(A^a)^q$ to $(A^a)^p$ $($treated as free right $A^a$-modules$)$ acting by multiplication of the matrix $\Lambda$ by a column vector from $(A^a)^q$.

Fix a non-negative integer $k$ and let $U$ be a non-empty Zarisski open subset of $\K^d$ such that $\dim A^a_k$ and
$\dim A^a_{k+r}$ do not depend on $a$ provided $a\in U$. For $a\in\K^d$ let $\rho(k,a)$ be the rank of $\Lambda$ as a
linear map from $(A_k^a)^q$ to $(A^a_{k+r})^p$ and $\rho_{\max}(k)=\max\{\rho(k,a):a\in U\}$.
Then the set $W_k=\{a\in U:\rho(k,a)=\rho_{\max}(k)\}$ is Zarisski open in $\K^d$.
\end{lemma}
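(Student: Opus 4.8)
The plan is to reduce the statement to a standard fact about ranks of matrices whose entries depend polynomially on parameters, namely that the rank is a lower-semicontinuous function in the Zariski topology: the locus where the rank is at least a given value is always Zariski open, being defined by the non-vanishing of at least one minor of the appropriate size. The only real work is to realize the map $\Lambda\colon(A^a_k)^q\to(A^a_{k+r})^p$ as such a parametrized matrix after a suitable trivialization of the finite-dimensional spaces $A^a_k$ and $A^a_{k+r}$ over the open set $U$.

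First I would fix the integer $k$ and work over the given open set $U$ on which $\dim A^a_k$ and $\dim A^a_{k+r}$ are constant, say equal to $N$ and $N'$. Pick a point $a_0\in U$. As in the proof of Lemma~\ref{miser1}, choose monomials $u_1,\dots,u_N$ of degree $k$ whose images in $A^{a_0}_k$ form a basis; by the openness half of that argument, shrinking $U$ to a smaller non-empty Zariski open set $U'$ if necessary, the images of $u_1,\dots,u_N$ continue to form a basis of $A^a_k$ for all $a\in U'$. This is legitimate because the condition ``the $u_i$ span a space of dimension $N=\dim A^a_k$'' is exactly the non-degeneracy of a parameter-polynomial matrix, hence Zariski open, and it holds at $a_0$. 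Do the same on the target: choose monomials $w_1,\dots,w_{N'}$ of degree $k+r$ whose images form a basis of $A^a_{k+r}$ for all $a$ in a non-empty Zariski open $U''\subseteq U'$.

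Next, for each basis monomial $u_i$ of degree $k$, the column vector $(u_i,0,\dots,0)^{\!\top},\dots$ ranging over the $q$ slots gives $qN$ generators of $(A^a_k)^q$; applying $\Lambda$ to each yields a column vector in $(A^a_{k+r})^p$ whose components are specific elements of $P_d\langle x_1,\dots,x_n\rangle$ of degree $k+r$ evaluated at $a$. Reducing each such component modulo the ideal of relations and re-expressing it in the chosen basis $w_1,\dots,w_{N'}$ of $A^a_{k+r}$ is, by the same mechanism as before (writing a degree-$(k+r)$ element mod $J^{(k+r)}$ as $\sum_t u_t r_{j_t}(a) v_t$ and reading off $w_j$-coefficients via an invertible parameter-polynomial matrix whose invertibility is preserved on an open set), expressible by coefficients that are rational functions of $a$ with nowhere-vanishing denominator on $U''$, hence — after clearing those fixed denominators, which does not change rank — polynomial in $a$. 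Assembling these gives an $(pN')\times(qN)$ matrix $\widetilde\Lambda(a)$, polynomial in $a$ on $U''$, whose rank equals $\rho(k,a)$ for every $a\in U''$.

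Finally, apply the semicontinuity fact: the set $\{a\in U'':\operatorname{rank}\widetilde\Lambda(a)\geq\rho_{\max}(k)\}$ is the union over all $\rho_{\max}(k)\times\rho_{\max}(k)$ submatrices of the non-vanishing locus of the corresponding minor, hence Zariski open in $U''$, hence Zariski open in $\K^d$; and since $\rho_{\max}(k)$ is the maximal value of the rank on $U$ (equivalently on the dense open $U''$), this open set is exactly $W_k$. The one point requiring care — and the main obstacle — is the bookkeeping that shows the re-expression step produces genuinely polynomial (not merely regular-on-$U''$) entries, and that the shrinking of $U$ to $U''$ does not affect the conclusion; this is handled by observing that $W_k\cap U''$ open in $U''$ implies $W_k$ open in $\K^d$ because $U\setminus U''$ consists of points where the rank is not maximal, so they automatically lie outside $W_k$, and by noting that clearing a fixed nonzero-on-$U''$ denominator rescales columns and leaves the rank unchanged.
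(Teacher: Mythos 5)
Your overall strategy --- trivialize $A^a_k$ and $A^a_{k+r}$ by monomial bases on a Zariski open set, realize $\Lambda$ as a matrix whose entries depend algebraically on $a$, and invoke lower semicontinuity of the rank via non-vanishing of minors --- is exactly the strategy of the paper's proof, and your extra bookkeeping about clearing the denominators (which are nowhere vanishing on $U''$, so clearing them does not change ranks) is fine. The problem is the last step, where you pass from openness of $W_k\cap U''$ in $U''$ to openness of $W_k$ in $\K^d$. You justify this by asserting that $U\setminus U''$ consists of points where the rank is not maximal, so that such points automatically lie outside $W_k$. That assertion is unfounded: $U''$ was cut out by the condition that the particular monomials $u_1,\dots,u_N$ and $w_1,\dots,w_{N'}$ chosen at your fixed base point $a_0$ remain bases, and nothing prevents a point $b\in U\setminus U''$ (where some \emph{other} monomials form bases) from satisfying $\rho(k,b)=\rho_{\max}(k)$. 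Such a $b$ lies in $W_k$, and your argument establishes nothing about openness of $W_k$ near $b$.

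The repair is exactly the move the paper makes: instead of fixing one $a_0\in U$ once and for all, take an \emph{arbitrary} $a\in W_k$ and choose the monomial bases at that $a$; they remain bases on a Zariski open neighborhood of $a$ inside $U$, the $g\times g$ minor (with $g=\rho_{\max}(k)$) that is nonzero at $a$ stays nonzero on a smaller open neighborhood, on which the rank is therefore $\geq g$ and hence $=g$ by maximality of $g$ on $U$; so every point of $W_k$ has an open neighborhood contained in $W_k$. Equivalently, you could run your construction for every base point $a_0\in U$ and observe that $W_k=\bigcup_{a_0}\bigl(W_k\cap U''(a_0)\bigr)$ is a union of sets open in $\K^d$. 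With that adjustment your proof coincides with the paper's.
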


\begin{proof} Let $a\in W_k$. Then $\rho(k,a)=g$, where $g=\rho_{\max}(k)$.
Pick linear bases of monomials $e_1,\dots,e_u$ and $f_1,\dots,f_v$ is $A^a_k$ and $A^a_{k+r}$ respectively.
Obviously, the same sets of monomials serve as linear bases for $A^s_k$ and $A^s_{k+d}$ respectively
for $s$ from a Zarisski open set $V\subseteq U$. Then $\Lambda$ as a linear map from
$(A_k^s)^q$ to $(A^s_{k+r})^p$ for $s\in V$ has an $u^q\times v^p$ matrix $M_s$ with respect to the said
bases. The entries of this matrix depend on the parameters polynomially. Since the rank of this matrix
for $s=a$ equals $g$, there is a square $g\times g$ submatrix whose determinant is non-zero when
$s=a$. The same determinant is non-zero for a Zarisski open subset of $V$. Thus for $s$ from the
last set the rank of $M_s$ is at least $g$. By maximality of $g$, the said rank equals $g$.
Thus $a$ is contained in a Zarisski open set, for all $s$ from which $\rho(k,s)=g$. That is,
$W_k$ is Zarisski open.
\end{proof}

\begin{lemma}\label{complex} Let $W=\{A^a\}_{a\in\K^d}$ be a variety of graded algebras $($as in {\rm (\ref{VNG}))}. Assume that $\K$ is uncountable and that we have a complex
\begin{equation}\label{coco}
0\to (A^a)^{k_1}\mathop{\longrightarrow}^{d_{1}}(A^a)^{k_2}\mathop{\longrightarrow}^{d_{2}}{\dots}\mathop{\longrightarrow}^{d_{{m-1}}}(A^a)^{k_m}\to\K\to0,
\end{equation}
of right $A^a$-modules, where the second last arrow vanishes on all homogeneous elements of degree $\geq1$ and the maps $d_j$ are given by matrices $\Lambda_j$ satisfying conditions of Lemma~$\ref{maxra}$ composed of homogeneous elements of degree $r_j$. Assume also that $U$ is a non-empty Zarisski open subset of $\K^d$ such that $\dim A^a_j$ does not depend on $a\in U$ for $0\leq j\leq r=r_1+{\dots}+r_{m-1}$. Then the following dichotomy holds$:$ either $(\ref{coco})$ is non-exact for all $a\in U$ or $(\ref{coco})$ is exact for generic $a\in\K^d$.
\end{lemma}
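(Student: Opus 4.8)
The plan is to read off exactness of (\ref{coco}) degree by degree, to reduce each degree to a handful of Zariski-open rank conditions by means of Lemma~\ref{maxra}, and then to use that $\K$ is uncountable to pass to a generic point.

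First I would note that, since the differentials are homogeneous, (\ref{coco}) is exact if and only if for every $N\ge 0$ the induced complex $S_N^a$ of degree-$N$ components is exact; this $S_N^a$ is a finite complex $0\to C_1\to\cdots\to C_\ell\to 0$ of finite-dimensional $\K$-vector spaces. I would then invoke the elementary fact that, when the dimensions $\dim C_i$ are prescribed, $S_N^a$ is exact iff each $\mathrm{rank}\,\delta_i$ equals the integer $c_i$ determined by $c_0=0$, $c_i=\dim C_i-c_{i-1}$, and that — because in any complex $\mathrm{rank}\,\delta_{i-1}+\mathrm{rank}\,\delta_i\le\dim C_i$ — the inequalities $\mathrm{rank}\,\delta_i\ge c_i$ for all $i$ already force exactness. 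By Lemma~\ref{maxra}, over any Zariski-open set on which the relevant dimensions $\dim A^a_j$ are constant, the locus where all differentials of $S_N^a$ attain their maximal ranks is a Zariski-open set.

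Now, if (\ref{coco}) is exact for no $a\in U$ we are in the first alternative. Otherwise fix $a_0\in U$ with (\ref{coco}) exact for $A^{a_0}$. By Lemma~\ref{miser} each $V_j=\{a\in\K^d:\dim A^a_j=h_j(W)\}$ is a non-empty Zariski-open set, and since $\K^d$ is irreducible $U\cap V_j\ne\emptyset$; hence the constant value of $\dim A^a_j$ on $U$ equals $h_j(W)$ for $0\le j\le r$. Since (\ref{coco}) is now a graded free resolution of $\K$ over $A^{a_0}$, comparison of Euler characteristics in each degree produces a linear recursion with invertible leading coefficient expressing $\dim A^{a_0}_N$ through the lower $\dim A^{a_0}_j$; combining this, for the complexes actually in play, with the fact — checked in Section~3 for (\ref{comq}) — that (\ref{coco}) is exact at its rightmost terms for \emph{every} $a$, together with the semicontinuity of ranks, one shows by induction on $N$ that in fact $\dim A^{a_0}_N=h_N(W)$ for all $N$. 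Thus $a_0\in V_j$ for every $j$, i.e. $H_{A^{a_0}}=H_W$.

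Finally, for each $N$ put $\tilde U_N=U\cap\bigcap_{j\le N}V_j$, a non-empty Zariski-open subset of $\K^d$ containing $a_0$ on which all term-dimensions of $S_N^a$ are constant. Let $\Omega_N\subseteq\tilde U_N$ be the locus where all differentials of $S_N^a$ have maximal rank; by Lemma~\ref{maxra} this is Zariski-open, and it is non-empty since it contains $a_0$. On $\Omega_N$ these maximal ranks are, via the complex relations, bounded above by the exactness values $c_i$, which are already realised at $a_0$ because $S_N^{a_0}$ is exact; hence they equal $c_i$ and $S_N^a$ is exact for every $a\in\Omega_N$. Then $\Omega=\bigcap_{N\ge 0}\Omega_N$ is a countable intersection of non-empty Zariski-open subsets of $\K^d$; as $\K$ is uncountable, its complement lies in a countable union of proper affine subvarieties, so $\Omega$ is generic, and (\ref{coco}) is exact for every $a\in\Omega$. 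This is the second alternative. The step I expect to be the main obstacle is the one in the previous paragraph: that exactness of (\ref{coco}) at a single $a_0\in U$ already forces the minimal Hilbert series $H_W$. Without it, the high-degree strands $S_N^a$ with $N>r$ are not controlled on $U$ by the hypothesis alone and exactness cannot be propagated away from $a_0$; pushing it through requires the auxiliary ``exact at the rightmost terms'' input and a somewhat delicate induction intertwining the Euler-characteristic recursion for $A^{a_0}$ with the semicontinuity of ranks.
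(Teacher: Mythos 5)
Your overall architecture matches the paper's: pass to graded slices, turn exactness in each degree into rank conditions that are Zariski-open by Lemma~\ref{maxra}, show that exactness at a single $a_0\in U$ forces $H_{A^{a_0}}=H_W$, and finish with a countable intersection of non-empty open sets. The final paragraph (the sets $\Omega_N$ and the dichotomy) is essentially the paper's closing argument. The problem is that the step you yourself single out as ``the main obstacle'' --- that exactness at $a_0$ forces $H_{A^{a_0}}=H_W$ --- is not actually proved; it is only sketched, and the sketch leans on an input that is not available. You propose to use ``the fact, checked in Section~3 for (\ref{comq}), that (\ref{coco}) is exact at its rightmost terms for \emph{every} $a$.'' But Lemma~\ref{complex} is stated for an arbitrary complex of the given shape, not for the particular complex (\ref{comq}); exactness at the rightmost terms for all $a$ is a property of (\ref{comq}) established in Lemma~\ref{gen3} and is not among the hypotheses here, so you may not invoke it. Likewise, the ``Euler-characteristic recursion with invertible leading coefficient'' only determines $\dim A^{a_0}_N$ from lower degrees; to conclude $\dim A^{a_0}_N=h_N(W)$ you would still need to know that $h_N(W)$ satisfies the same recursion, which is essentially the statement being proved.

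The paper closes this step without any extra hypothesis, by a minimal-counterexample argument that you should adopt: assuming $H_B\neq H_W$ for $B=A^{a_0}$, take the smallest $s$ with $\dim B_j=h_j(W)$ for $j<s+r$ but $\dim B_{s+r}>h_{s+r}(W)$, and consider the graded slice of (\ref{coco}) in degrees $s,\dots,s+r$. Since all terms except the last live in degrees where $\dim B_j=h_j(W)$, the point $a_0$ lies in the open sets on which Lemma~\ref{maxra} is applied, so working from left to right the generic ranks of $d_1,\dots,d_{m-1}$ are pinned down to their exactness values at $a_0$; comparing the resulting value of $\dim d_{m-1}\bigl((A^a_{s+r-r_{m-1}})^{k_{m-1}}\bigr)$, which equals $k_m\dim B_{s+r}$ by surjectivity of $d_{m-1}$ at $a_0$, with the bound $k_m\dim A^a_{s+r}=k_m h_{s+r}(W)$ for generic $a$ yields $\dim B_{s+r}\leq h_{s+r}(W)$, contradicting the choice of $s$. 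Until this step is written out (or correctly reduced to the stated hypotheses), the proposal has a genuine gap at its central point, even though everything surrounding it is sound.
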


\begin{proof} Assume that there is $a_0\in U$ for which $(\ref{coco})$ is exact. The proof will be complete if we show that $(\ref{coco})$ is exact for generic $a\in\K^d$. By Lemma~\ref{miser}, $V=\{a\in\K^d:\dim A^a_j=h_j(W)\ \ \text{for}\ \ 0\leq j\leq r\}$ is non-empty and Zarisski open in $\K^d$. Obviously, $U\subseteq V$. Denote $B=A^{a_0}$. First, we shall verify that $H_B=H_W$. Assume the contrary. Since $H_W$ is the minimal Hilbert series for the variety $W$, there is $s\in\Z_+$ such that $\dim B_j=h_j(W)$ for $j<s+r$ and $\dim B_{s+r}>h_{s+r}(W)$. Consider the graded 'slice'
$$
0\to (A_s^a)^k_1\mathop{\longrightarrow}^{d_{1}}(A_{s+r_1}^a)^{k_2}\mathop{\longrightarrow}^{d_{2}}{\dots}\mathop{\longrightarrow}^{d_{{m-1}}}(A_{s+r}^a)^k_m\to 0
$$
of (\ref{coco}). For $a=a_0$ this complex of finite dimensional vector spaces is exact. By Lemma~\ref{miser}, $H_{A^a}=H_W$ for generic $a$. Applying Lemma~\ref{maxra} to each arrow in the above display (working from left to right), we then see that for generic $a\in U$, the dimension of $d_{m-1}((A_{s+r-r_{m-1}}^a)^k_{m-1})$ equals
$$
k_{m-1}h_{s+r-r_{m-1}}(W)-k_{m-2}h_{s+r-r_{m-1}-r_{m-2}}(W)+{\dots}+(-1)^mk_1h_{s}(W).
$$
The same is true for $a=a_0$ because of the exactness, which also implies that the same expression equals $k_m\dim B_{s+r}$. Thus $\dim B_{s+r}\leq \dim A^{a}_{s+r}$ for generic $a$. By minimality of $H_W$, $\dim B_{s+r}=h_{s+r}(W)$, which is a contradiction. Thus $H_B=H_W$.

Now we restrict ourselves to $a$ for which $H_{A^a}=H_W$ (happens for generic $a$), which we now know includes $a=a_0$ for which (\ref{coco}) is exact. We repeat the argument with applying Lemma~\ref{maxra} to the arrows in degree 'slices' of (\ref{coco}): for generic $a$ satisfying $H_{A^a}=H_W$ (this makes the dimensions of the spaces independent on $a$) the rank of each arrow is maximal. Comparing the ranks with that for $a=a_0$ and using the exactness for $a=a_0$, we see that this simultaneous maximality of the ranks is actually equivalent to the exactness of (\ref{coco}). Thus $(\ref{coco})$ is exact for generic $a$.
\end{proof}

\subsection{The module of syzigies}

Assume $A$ is a finitely presented algebra given by  generators $x_1,\dots,x_n$ and relations $r_1,\dots,r_m\in \K\langle x_1,\dots,x_n\rangle$ (not necessarily homogeneous) and $I$ is the ideal of relations of $A$.

The module of (two-sided) syzigies $S(A)$ for $A$ ($1$-syzigies to be precise) is defined in the following way. First, we consider the free $\K\langle x_1,\dots,x_n\rangle$-bimodule $M$ with generators $\widehat{r}_1,\dots,\widehat{r}_m$ being just $m$ symbols. The map $\widehat{r}_j\mapsto r_j$ naturally extends to a bimodule morphism from $M$ to $\K\langle x_1,\dots,x_n\rangle$. The module $S(A)$ is by definition the kernel of this morphism. That is, it consists of sums $\sum u_j\widehat{r}_{s_j}v_j$ with $u_j,v_j\in \K\langle x_1,\dots,x_n\rangle$, which vanish when symbols $\widehat{r}_s$ are replaces by $r_s$. The members of $S(A)$ are called syzigies.

Some syzigies are always there. For example, for $1\leq j,k\leq m$ and $u\in  \K\langle x_1,\dots,x_n\rangle$, $\widehat{r}_jur_k-r_ju\widehat{r}_k$ is a syzigy. We call a syzigie of this form, a {\it trivial syzigy}. Note that $S(A)$ depends not just on the algebra $A$ but on the choice of a presentation of $A$, so $S(A)$ constitutes an abuse of notation. If we consider a set $M$ of monomials in $x_1,\dots,x_n$, none of which contains another as a submonomial, an {\it overlap} of monomials in $M$ is a monomial $m$, which starts with $m_1\in M$, ends with $m_2\in M$ and has degree strictly less than ${\rm deg}\,m_1m_2$. Naturally, the degree of $m$ is called the degree of the overlap.

\begin{remark}\label{syzz} Note that syzigies are implicitly computed, when a reduced Gr\"obner basis of the ideal of relations is constructed. Namely, each time an ambiguity (=an overlap of leading monomials of members of the basis constructed so far) is resolved (=produces no new element of the basis) the computation leading to the resolution can be written as a syzigy. However, there is more. If we collect all syzigies obtained while constructing a reduced Gr\"obner basis (the process could be infinite resulting in infinitely many syzigies) and throw in the trivial syzigies, we obtain a generating set for the module $S(A)$. It is nothing new: this statement can be viewed as just another way of stating the diamond lemma.
\end{remark}

\subsection{A remark on PBW algebras}

Note that if $A=A(V,R)$ is a quadratic algebra, $x_1,\dots,x_n$ is a fixed basis in $V$ and the monomials in $x_j$ are equipped with an order compatible with multiplication, then we can choose a basis $g_1,\dots,g_m$ in $R$ such that the leading monomials $\overline{g}_j$ of $g_j$ are pairwise distinct. We then call $S=\{\overline{g}_1,\dots,\overline{g}_m\}$ the {\it set of leading monomials of $R$.} Note that although there are multiple bases in $R$ with pairwise distinct leading monomials of the members, the set $S$ is uniquely determined by $R$ (provided $x_j$ and the order are fixed). The following result is an improved version of a lemma from \cite{SKL1}.

\begin{lemma}\label{ome0} Let $A=A(V,R)$ be a quadratic algebra. Then the following statements are equivalent$:$
\begin{itemize}\itemsep=-2pt
\item[\rm (\ref{ome0}.1)]$A$ is PBW, $\dim A_1=3$, $\dim A_2=6$ and $\dim A_3=10;$
\item[\rm (\ref{ome0}.2)]$A$ is PBW and $H_A=(1-t)^{-3};$
\item[\rm (\ref{ome0}.3)]$\dim A_3=10$ and there is a basis $x,y,z$ in $V$ and a well-ordering on $x,y,z$ monomials compatible with multiplication, with respect to which the set of leading monomials of $R$ is $\{xy,xz,yz\}$.
\end{itemize}
\end{lemma}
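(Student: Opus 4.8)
The plan is to prove the cyclic chain of implications $(\ref{ome0}.3)\Rightarrow(\ref{ome0}.2)\Rightarrow(\ref{ome0}.1)\Rightarrow(\ref{ome0}.3)$, with the bulk of the work being in the first implication, where one must deduce from the shape of the leading-monomial set that the quadratic Gr\"obner basis is actually complete (no new elements appear). First I would observe that $(\ref{ome0}.2)\Rightarrow(\ref{ome0}.1)$ is immediate: expanding $(1-t)^{-3}=1+3t+6t^2+10t^3+\dots$ gives exactly the stated dimensions in degrees $1,2,3$.

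For $(\ref{ome0}.1)\Rightarrow(\ref{ome0}.3)$: assume $A$ is PBW with $\dim A_1=3$, $\dim A_2=6$, $\dim A_3=10$. Fix a basis $x_1,x_2,x_3$ of $V$ and a compatible well-order realizing the PBW property, and let $g_1,\dots,g_m$ be a reduced quadratic Gr\"obner basis of $I$ with pairwise distinct leading monomials, so $S=\{\overline g_1,\dots,\overline g_m\}$ is the leading-monomial set of $R$. Since $\dim R = 9-\dim A_2 = 3$, we have $m=3$, so $|S|=3$. Because the Gr\"obner basis is quadratic, the normal words in degree $2$ are exactly the $6$ quadratic monomials avoiding $S$, and the normal words in degree $3$ are the degree-$3$ monomials avoiding $S$ as a factor; these must number $6$ and $10$ respectively. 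The key combinatorial step is: a $3$-element set $S$ of quadratic monomials in three letters such that exactly $6$ quadratic monomials and exactly $10$ cubic monomials avoid it must, after renaming the variables $x_1,x_2,x_3\to x,y,z$, equal $\{xy,xz,yz\}$. I would prove this by a short case analysis: $|S|=3$ forces each of the $6$ avoiding quadratic monomials to be accounted for; counting cubics, each element $ab\in S$ kills the cubics $abc$ and $cab$ for $c\in\{x_1,x_2,x_3\}$ (with overlaps $aba$-type words counted once), and requiring the total killed to be exactly $9$ rules out, e.g., any square $x_i^2\in S$ or two monomials sharing a letter in a "bad" pattern; the only surviving configuration up to relabeling is $\{xy,xz,yz\}$ (leading monomials forming the "descent" pattern of the polynomial algebra). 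I expect this counting lemma to be the main obstacle — it is elementary but requires care to enumerate the configurations and eliminate all but one.

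For $(\ref{ome0}.3)\Rightarrow(\ref{ome0}.2)$: suppose there is a basis $x,y,z$ and a compatible well-order with leading-monomial set $\{xy,xz,yz\}$. Then the only normal monomials are products of the form $x^a y^b z^c$ (a monomial avoids $xy,xz,yz$ as factors iff all $x$'s precede all $y$'s precede all $z$'s), and the generating function of these is precisely $(1-t)^{-3}$. The reduced Gr\"obner basis then has three elements $g_1,g_2,g_3$ with $\overline g_1 = xy$, $\overline g_2 = xz$, $\overline g_3 = yz$. To conclude $A$ is PBW with $H_A=(1-t)^{-3}$ I must check that $\{g_1,g_2,g_3\}$ is already a Gr\"obner basis, i.e. all overlap ambiguities resolve. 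The overlaps among $\{xy,xz,yz\}$ are the degree-$3$ words $xyz$ (overlap of $xy$ and $yz$) and potentially others, but since no leading monomial is a factor of another and the only length-$3$ overlap pattern $m_1 m_2$ with a proper overlap is $xyz = (xy)z = x(yz)$, there is exactly one ambiguity. By Remark~\ref{syzz} / the diamond lemma it suffices to note that the number of normal words in each degree is forced: we already know $\dim A_3 \le 10$ from the three quadratic relations, and the avoiding-monomial count gives $\dim A_3 \ge 10$ only if the single ambiguity resolves; alternatively, since $\dim A(V,R) \ge $ (number of $S$-avoiding monomials) always fails in general but here one argues directly — the cleanest route is: the monomials $x^ay^bz^c$ span $A$ (by reducing any word using $g_1,g_2,g_3$), and they are linearly independent because the associated monomial algebra $\K\langle x,y,z\rangle/(xy,xz,yz)$ has exactly these as a basis and $\dim A_k \ge \dim (\text{monomial algebra})_k$ would need the reverse; in fact Hilbert-series semicontinuity (the quotient by leading terms has Hilbert series $\ge$ that of $A$, with equality iff PBW) combined with $\dim A_2 = 6 = $ the monomial-algebra value forces $\dim A_k$ to equal the monomial value in all degrees, hence the single ambiguity resolves automatically and $A$ is PBW with $H_A=(1-t)^{-3}$. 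This closes the cycle.
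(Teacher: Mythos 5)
Your proposal is correct and follows essentially the same route as the paper: the same cyclic chain of implications, the same key observation that $\dim A_3=10$ forces the three leading monomials to admit exactly one overlap (equivalently, exactly $27-10=17$ cubic monomials are killed, one fewer than the naive count $3\cdot 6=18$; your ``total killed to be exactly $9$'' is an arithmetic slip but does not affect the argument), the same case analysis eliminating squares, inverse pairs $\{uv,vu\}$ and cyclic triples to leave $\{xy,xz,yz\}$ up to relabelling, and the same resolution argument that a non-trivial degree-$3$ Gr\"obner element would drop $\dim A_3$ to $9$. No substantive difference from the paper's proof.
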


\begin{proof} The implication (\ref{ome0}.2)$\Longrightarrow$(\ref{ome0}.1) is obvious. Next, assume that (\ref{ome0}.1) is satisfied. Then $\dim V=\dim R=3$ and $\dim A_3=10$. Let $a,b,c$ be a PBW-basis for $A$, while $f,g,h$ be corresponding PBW-generators. Since $f$, $g$ and $h$ form a Gr\"obner basis of the ideal of relations of $A$, it is easy to see that $\dim A_3$ is $9$ plus the number of overlaps of the leading monomials $\overline{f}$, $\overline{g}$ and $\overline{h}$ of $f$, $g$ and $h$. Since $\dim A_3=10$, the monomials $\overline{f}$, $\overline{g}$ and $\overline{h}$ must produce exactly one overlap. Now it is a straightforward routine check that if at least one of three degree 2 monomials in 3 variables is a square, these monomials overlap at least twice. The same happens, if the three monomials contain $uv$ and $vu$ for some distinct $u,v\in\{a,b,c\}$. Finally, the triples $(ab,bc,ca)$ and $(ba,cb,ac)$ produce 3 overlaps apiece. The only option left, is for $(\overline{f},\overline{g},\overline{h})$ to be $\{xy,xz,yz\}$, where $(x,y,z)$ is a permutation of $(a,b,c)$. This completes the proof of implication (\ref{ome0}.1)$\Longrightarrow$(\ref{ome0}.3).

Finally, assume that (\ref{ome0}.3) is satisfied. Then the leading monomials of defining relations have exactly one overlap. If this overlap produces a non-trivial degree $3$ element of the Gr\"obner basis of the ideal of relations of $A$, then $\dim A_3=9$, which contradicts the assumptions. Hence, the overlap resolves. That is, a linear basis in $R$ is actually a Gr\"obner basis of the ideal of relations of $A$. Then $A$ is PBW. Furthermore, the leading monomials of the defining relations are the same as for $\K[x,y,z]$ with respect to the left-to-right lexicographical ordering with $x>y>z$. Hence $A$ and $\K[x,y,z]$ have the same Hilbert series: $H_A=(1-t)^{-3}$. This completes the proof of implication (\ref{ome0}.3)$\Longrightarrow$(\ref{ome0}.2).
\end{proof}

\subsection{Some canonical forms}

The following lemma is a well-known fact. We provide a proof for the sake of completeness.

\begin{lemma}\label{1-dim} Let $\K$ be an arbitrary algebraically closed field $($characteristics $2$ and $3$ are allowed here$)$, $M$ be a $2$-dimensional vector space over $\K$ and $S$ be a $1$-dimensional subspace of $M^2=M\otimes M$. Then $S$ satisfies exactly one of the following conditions$:$
\begin{itemize}\itemsep=-3pt
\item[\rm (I1)]there is a basis $x,y$ in $M$ such that $S=\spann\{yy\};$
\item[\rm (I2)]there is a basis $x,y$ in $M$ such that $S=\spann\{yx\};$
\item[\rm (I3)]there is a basis $x,y$ in $M$ such that $S=\spann\{xy-\alpha yx\}$ with $\alpha\in\K^*;$
\item[\rm (I4)]there is a basis $x,y$ in $M$ such that $S=\spann\{xy-yx-yy\}$.
\end{itemize}
Furthermore, if $S=\spann\{xy-\alpha yx\}=\spann\{x'y'-\beta y'x'\}$ with $\alpha\beta\neq 0$ for two different bases $x,y$ and $x',y'$ in $M$, then either $\alpha=\beta$ or $\alpha\beta=1$.
\end{lemma}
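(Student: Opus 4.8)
The plan is to classify one-dimensional subspaces $S=\spann\{r\}$ of $M^2$ under the action of $GL(M)$ by normalizing the single generator $r$. Write $r=ax^2+bxy+cyx+dy^2$ with respect to an arbitrary basis; a change of basis in $M$ acts on the coordinates $(a,b,c,d)$ through the induced action on the six-dimensional... no, four-dimensional space $M^2$, and we only care about the orbit of the line $\spann\{r\}$. The cleanest route is to organize the argument according to the symmetric/antisymmetric decomposition, or better, to treat $r$ as a (possibly non-symmetric) bilinear form and look at its ``symmetric part'' $s(r)=ax^2+\frac{b+c}{2}(xy+yx)+dy^2$ together with its ``antisymmetric part'' $\frac{b-c}{2}(xy-yx)$; since $\mathrm{char}\,\K\neq 2$ this decomposition makes sense and is $GL(M)$-equivariant.

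First I would dispose of the degenerate-symmetric-part cases. If $r$ is a scalar multiple of $xy-yx$, we are in case (I3) with $\alpha=1$. Otherwise the symmetric part $q=s(r)$ is a nonzero quadratic form in two variables, hence a product of two linear forms $q=\ell_1\ell_2$ (using $\K$ algebraically closed). If $\ell_1,\ell_2$ are proportional, $q$ is a square, so after a linear substitution $q=y^2$ up to scalar, and then $r=\lambda y^2+\mu(xy-yx)$ for suitable new $x$; rescaling $x$ we get either $r\sim y^2$ (case (I1), when $\mu=0$) or $r\sim y^2+xy-yx$, i.e.\ $\spann\{xy-yx-yy\}$ after negating, which is case (I4). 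If instead $\ell_1,\ell_2$ are independent, substitute so that $q=xy+yx$ up to scalar (using $\mathrm{char}\,\K\neq 2$ to split $xy+yx$ from a diagonal form; concretely $\ell_1\ell_2$ with independent factors is congruent to $xy+yx$), whence $r\sim xy+yx+\nu(xy-yx)=(1+\nu)xy+(1-\nu)yx$. If one of $1\pm\nu$ vanishes we land in (I2) (after possibly swapping $x,y$, which is allowed); otherwise, dividing out, $r\sim xy-\alpha yx$ with $\alpha=-\tfrac{1-\nu}{1+\nu}\in\K^*$, which is (I3). One must check these four types are genuinely distinct as orbits, which follows by looking at invariants: whether $r$ can be written with its symmetric part a square, a nonzero nondegenerate form, or zero, together with the rank of the symmetric part — these are $GL(M)$-invariant and separate (I1), (I2)+(I3), and distinguish (I4) from the rest by the rank-one-but-not-square condition.

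For the final ``furthermore'' claim, suppose $\spann\{xy-\alpha yx\}=\spann\{x'y'-\beta y'x'\}$ with $\alpha\beta\neq0$. The key observation is that for $r=xy-\alpha yx$ with $\alpha\neq0$, the symmetric part $\tfrac{1-\alpha}{2}(xy+yx)$ is nonzero precisely when $\alpha\neq1$, and when $\alpha\neq 1$ it is a nondegenerate rank-two form; so the invariant data already forces $\alpha=1\iff\beta=1$, handling that sub-case. When $\alpha,\beta\neq1$, I would extract a genuine scalar invariant: the pencil spanned by the symmetric part and the antisymmetric part, or equivalently the cross-ratio-type quantity attached to $r$. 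Concretely, think of $r\in M^*\otimes M^*$ as a linear map $M\to M^*$; composing with the fixed isomorphism given by the antisymmetric part $\omega=xy-yx$ (a symplectic form, canonical up to scalar on a $2$-space) gives an endomorphism $T_r\in\mathrm{End}(M)$ defined up to scalar, and the unordered pair of eigenvalue ratios of $T_r$ is a $GL(M)$-invariant of the line $\spann\{r\}$. Computing $T_r$ for $xy-\alpha yx$ gives eigenvalues $\{1,-\alpha\}$ (or $\{1,\alpha\}$ depending on sign conventions), so the invariant is the unordered pair $\{\alpha,\alpha^{-1}\}$ (or $\{-\alpha,-\alpha^{-1}\}$); equating with the corresponding data for $\beta$ yields $\alpha=\beta$ or $\alpha\beta=1$.

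The main obstacle I anticipate is purely bookkeeping: getting the congruence normal forms of the symmetric part exactly right in characteristic $\neq2$ (distinguishing ``square'', ``product of independent linear forms'', ``zero''), and then being careful that the residual freedom after fixing the symmetric part — scaling $x$, scaling $y$, swapping them — is exactly what is needed to reach the listed normal forms and no more, so that (I3) keeps precisely the $\alpha\leftrightarrow\alpha^{-1}$ ambiguity and nothing larger. The invariant-theoretic description of $T_r$ via the symplectic form is the conceptually clean way to pin down that residual ambiguity, and I would lean on it rather than on ad hoc substitution chasing; the rest is routine.
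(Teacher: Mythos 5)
Your proposal has a genuine gap: the lemma is stated for an \emph{arbitrary} algebraically closed field, with characteristics $2$ and $3$ explicitly allowed, yet your whole classification is built on the symmetric/antisymmetric decomposition of $r$, which you yourself note requires $\mathrm{char}\,\K\neq 2$. In characteristic $2$ there is no $GL(M)$-equivariant splitting of $M\otimes M$ into symmetric and antisymmetric parts, so the case division that drives your argument does not exist, and the proof fails precisely in a case the hypothesis obliges you to cover. The paper instead organizes the cases around the rank of $r$ viewed as an element of $M\otimes M$ (equivalently, of its $2\times 2$ coefficient matrix): rank $1$ means $r=u\otimes v$ and gives (I1) or (I2) according to whether $u$ and $v$ are proportional; in the rank $2$ case one kills the $uu$-coefficient by a substitution $v\to v+su$ (the required $s$ is a root of a quadratic, available over an algebraically closed field in any characteristic, with no division by $2$) and then normalizes the remaining coefficients directly, landing in (I3) or (I4). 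A second, smaller gap: the invariants you list (rank and type of the symmetric part, vanishing of the antisymmetric part) do not separate (I2) from (I3) --- you even group them as ``(I2)+(I3)'' --- and these must be shown to be distinct orbits since the lemma asserts exactly one condition holds. The fix is again tensor rank: $yx$ has rank $1$ while $xy-\alpha yx$ with $\alpha\in\K^*$ has rank $2$, and rank is a $GL(M)$-invariant.

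On the positive side, your treatment of the ``furthermore'' clause is correct and arguably cleaner than the paper's. Composing $r\colon M\to M^*$ with the inverse of the symplectic form $\omega=xy-yx$ (canonical up to scalar on a $2$-dimensional space) produces $T_r\in\mathrm{End}(M)$ well defined up to scalar and conjugation; for $r=xy-\alpha yx$ its eigenvalues are $\{1,\alpha\}$, so the unordered ratio $\{\alpha,\alpha^{-1}\}$ is an honest invariant of the line $S$, forcing $\alpha=\beta$ or $\alpha\beta=1$. This is characteristic-free and replaces the paper's ad hoc verification that the only substitutions preserving the shape $xy-\alpha yx$ up to scalar are scalings and scalings composed with the swap of $x$ and $y$.
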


\begin{proof} If $M$ is spanned by a rank one element, then $S=\spann\{uv\}$, where $u,v$ are non-zero elements of $M$ uniquely determined by $S$ up to non-zero scalar multiples. If $u$ and $v$ are linearly independent, we set $y=u$ and $x=v$ to see that (I2) is satisfied. If $u$ and $v$ are linearly dependent, we set $y=u$ and pick an arbitrary $x\in M$ such that $y$ and $x$ are linearly independent. In this case (I1) is satisfied. Obviously, (I1) and (I2) can not happen simultaneously. Since $S$ in (I3) and (I4) are spanned by rank 2 elements, neither of them can happen together with either (I1) or (I2).

Now let $u,v$ be an arbitrary basis in $M$ and $S$ be spanned by a rank $2$ element $f=auu+buv+cvu+dvv$ with $a,b,c,d\in\K$. A linear substitution $u\to u$, $v\to v+su$ with an appropriate $s\in\K$ turns $a$ into $0$ (one must use the fact that $f$ has rank $2$ and that $\K$ is algebraically closed: $s$ is a solution of a quadratic equation). Thus we can assume that $a=0$. Since $f$ has rank $2$, it follows that $bc\neq 0$. If $b+c\neq 0$, we set $x=u+\frac{dv}{b+c}$ and $y=bv$ to see that (I3) is satisfied with $\alpha=\frac{c}{b}\neq 1$. Note also that the only linear substitutions which send $xy-\alpha yx$  to $xy-\beta yx$ (up to a scalar multiple) with $\alpha\beta\in\K^*$, $\alpha\neq 1$ are scalings and scalings composed with swapping $x$ and $y$. In the first case $\alpha=\beta$. In the second case $\alpha\beta=1$. Finally, if $b+c=0$, then we have two options. If, additionally, $d=0$, $S$ is spanned by $xy-yx$ with $x=u$ and $y=v$, which falls into (I3) with $\alpha=1$.  Note that any linear substitution keeps the shape of $xy-yx$ up to a scalar multiple. If $d\neq 0$, we set $x=u$ and $y=\frac{dv}{b}$ to see that $S$ is spanned by $xy-yx-yy$ yielding (I4). The remarks on linear substitutions, we have thrown on the way complete the proof.
\end{proof}

One of the instruments we use is the following canonical form result on ternary cubics, which goes all the way back to Weierstrass. Note that if $\K$ is not algebraically closed or if the characteristic of $\K$ is $2$ or $3$, the result does not hold.

\begin{lemma}\label{co3-3} Every homogeneous degree $3$ polynomial $L\in \K[x,y,z]$ by means of a non-degenerate linear change of variables can be brought to one of the following forms$:$

\medskip
\noindent $\begin{array}{ll}\text{{\rm (Z1)}\ \ $L=L_{a,b}=a(x^3+y^3+z^3)+bxyz$ with $a,b\in\K;$\qquad}&\text{{\rm (Z5)}\ \ $L=xz^2+y^2z;$}\\
\text{{\rm (Z2)}\ \ $L=xyz+(y+z)^3;$}&\text{{\rm (Z6)}\ \ $L=y^3+z^3;$}\\
\text{{\rm (Z3)}\ \ $L=xyz+z^3;$}&\text{{\rm (Z7)}\ \ $L=yz^2;$}\\
\text{{\rm (Z4)}\ \ $L=xz^2+y^3;$}&\text{{\rm (Z8)}\ \ $L=z^3.$}
\end{array}$
\medskip

Furthermore, $L$ with different labels in the above list are non-equivalent $(=$can not be transformed into one another by a linear substitution$)$.
\end{lemma}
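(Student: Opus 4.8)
\section*{Proof proposal for Lemma~\ref{co3-3}}

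\textbf{Overall strategy.} The plan is to classify ternary cubics $L\in\K[x,y,z]$ by the geometry of the projective plane cubic curve $\{L=0\}\subset\mathbb P^2$, because a linear change of variables on $L$ is exactly a projective change of coordinates on the curve. The coarse invariants I would track, in order, are: (a) whether $L$ is irreducible; (b) if reducible, the partition type of its factorization into linear/quadratic pieces and their mutual position; (c) if irreducible, whether the cubic curve is smooth or singular, and the type of the singularity (node vs.\ cusp). Each of these is visibly a linear-substitution invariant, so once I show every cubic falls into one of these buckets and that each bucket contains exactly one of (Z1)--(Z8) (with (Z1) a one/two-parameter family for the smooth case), both the existence and the non-equivalence parts follow.

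\textbf{Key steps.} First I would dispose of the \emph{reducible} cases. If $L$ has a repeated linear factor, $L=\ell^2 m$; choosing coordinates so $\ell$ is a variable, we land on $yz^2$ (the two linear forms distinct, giving (Z7)) or on $z^3$ (all three equal, (Z8)). If $L$ is a product of three distinct linear forms, either they are in general position---three lines forming a triangle---which after normalization is $xyz$; but $xyz$ is \emph{not} on the list, so it must be absorbed into (Z1): indeed $xyz = L_{0,1}$. If the three lines are concurrent we can take them to be $x$, $y$, $x+y$ (say), again reducible and absorbable. If $L = \ell\cdot q$ with $q$ an irreducible conic and $\ell$ a line: the line is either tangent to the conic, secant (two intersection points), or --- over an algebraically closed field these are the only options up to projective equivalence of the pair. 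The secant case normalizes to $xyz + z^3$-type configurations and the tangent case to $xz^2+\cdots$; more carefully, $q$ can be put in a standard form (e.g.\ $y^2 - xz$ or $xz$ degenerate), and then the position of $\ell$ gives a short finite list which I expect to reproduce (Z3) (conic plus secant line, e.g.\ rewrite $xyz+z^3$), and possibly feed into (Z4)/(Z5) when the ``conic'' further degenerates. The cleanest bookkeeping is probably: reducible cubics with a singular point that is a \emph{triple} point, a point where two components meet, or the vertex of a double line, and match each to (Z7), (Z8), (Z4), (Z5), (Z6), or absorb into (Z1).

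\textbf{The irreducible case, which is the main obstacle.} For irreducible $L$ the curve $C=\{L=0\}$ is either smooth or has exactly one singular point, which (char $\neq 2,3$) is either a node or a cusp. The smooth case is the classical Weierstrass normal form: any smooth plane cubic, after a projective transformation, becomes $y^2z = x^3 + pxz^2 + qz^3$; symmetrizing this (this is where $\operatorname{char}\K\neq 2,3$ is essential --- one needs to invert $2$ and $3$ and to solve the relevant cubic/quadratic resolvents over the algebraically closed $\K$) yields the Hesse pencil form $x^3+y^3+z^3 + \lambda xyz$, i.e.\ $L_{a,b}$. One still has to check that the generic $L_{a,b}$ really is smooth and irreducible (it is, away from the finitely many $(a,b)$ where the Hessian degenerates, which matches the later ``Exceptions'' columns), and that $L_{a,b}$ with a repeated/special parameter value degenerates precisely into the reducible normal forms already listed, so there is no double-counting. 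The cuspidal irreducible cubic normalizes to $y^2z = x^3$, equivalently $xz^2 + y^3$ after renaming, giving (Z4); the nodal irreducible cubic normalizes to $y^2z = x^3 + x^2z$, equivalently $xz^2 + y^2z$ after a change of variables, giving (Z5). I expect the genuinely delicate part to be twofold: (i) the reduction to Weierstrass form itself, for which I would cite the classical theory (choose an inflection point as $[0:1:0]$, use the group-law/tangent geometry to kill coefficients), rather than re-derive it; and (ii) the careful separation of the boundary cases --- making sure that $xyz$, the three concurrent lines, the conic-plus-line, the double-line cases each land in a \emph{unique} bucket, and that the singular members of the Hesse family $L_{a,b}$ are exactly accounted for by (Z2) ($xyz+(y+z)^3$, a cuspidal or nodal degeneration presented in a shifted frame) and (Z3) ($xyz+z^3$).

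\textbf{Non-equivalence.} Finally, to see the eight labels are pairwise inequivalent I would use discrete invariants that are obviously preserved by linear substitutions: irreducibility of $L$; the number and type of irreducible components and their incidence; for the reducible ones the dimension of the singular locus of $\{L=0\}$ and the local type at each singular point; for the irreducible ones smooth vs.\ node vs.\ cusp; and within the smooth family (Z1), the $j$-invariant (a rational function of $(a,b)$) separates non-isomorphic elliptic curves, while the Hesse-pencil symmetry group---exactly the ``Isomorphisms'' action listed for (P1)---identifies those $(a,b)$ that do give projectively equivalent curves. Running down the list, each of (Z1)--(Z8) has a distinct profile of these invariants, which closes the argument.
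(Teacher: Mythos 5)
Your overall route is the same as the paper's: the paper also classifies $L$ via the projective geometry of the plane cubic $\{L=0\}$, sending a smooth curve to the Hesse form $L_{a,b}$ by an explicit substitution built from its inflection points, and then running through the types of singular and reducible cubics. So the strategy is sound and matches the source. However, your explicit matching of geometric types to the normal forms (Z1)--(Z8) contains a concrete error that would make the case analysis fail as written. You assign the irreducible nodal cubic to (Z5); but $xz^2+y^2z=z(xz+y^2)$ is visibly \emph{reducible} (a smooth conic together with its tangent line, meeting it at the single point $[1{:}0{:}0]$ where the local equation has rank-one quadratic part), so it cannot be linearly equivalent to an irreducible curve. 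The irreducible nodal cubic is in fact (Z2): for $L=xyz+(y+z)^3$ the only singular point is $[1{:}0{:}0]$, where the affine equation $yz+(y+z)^3$ has nondegenerate quadratic part $yz$, i.e.\ a node, and $L$ has no linear factor. Your description of (Z2) as ``a cuspidal or nodal degeneration presented in a shifted frame'' hedges exactly where precision is required.

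The correct dictionary, which your write-up leaves partly vague or contradictory, is: (Z1) covers $0$, the triangle(s) of lines, and all smooth cubics; (Z2) is the irreducible nodal cubic; (Z3) $=z(xy+z^2)$ is a conic plus a secant line; (Z4) is the irreducible cuspidal cubic (not, as one passage of yours suggests, a degenerate conic-plus-line configuration); (Z5) is a conic plus a tangent line; (Z6) $=y^3+z^3$ is three distinct concurrent lines (you never identify this case explicitly, saying only that concurrent lines are ``absorbable''); (Z7) is a double line plus a transversal line; (Z8) is a triple line. Since the entire content of the lemma is precisely this matching together with its injectivity, you need to redo the bookkeeping with these assignments; once that is fixed, your invariants for non-equivalence (reducibility, component types and incidence, node versus cusp versus tacnode versus triple point, and the $j$-invariant modulo the Hesse symmetry group within (Z1)) do separate the eight classes.
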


\begin{proof} It is a straightforward rephrasing of a well-known canonical form result for ternary cubics, see, for instance, \cite{cub1,cub2}. For the sake of completeness, we outline the idea of the proof. Consider the projective curve $C$ given by $L=0$. If $C$ is regular, $L$ can be transformed into $L_{a,b}$ with $27a^3+b^3\neq 0$ with the coefficients of the corresponding substitution written explicitly via coordinates of the inflection points of $C$. The rest is just going through various types of irregular $C$.
\end{proof}

This result is not entirely final. For instance, the $GL_3(\K)$-orbit of a generic $L\in \K[x,y,z]$ contains $L_{1,b}$ for more than one $b$ (12, actually). However, Lemma~\ref{co3-3} is sufficient for our purposes.

\begin{lemma}\label{co2-4} Let $G\in\K[x,y]$ be a homogeneous degree $4$ polynomial. Then by means of a linear substitution $(=$natural action of $GL_2(\K))$ $G$ can be turned into one of the following forms$:$

\medskip
\noindent $\begin{array}{ll}\text{{\rm (C1)}\ \ $G=0;$\ \ }&\text{{\rm (C4)}\ \ $G=x^2y^2;$}\\
\text{{\rm (C2)}\ \ $G=x^4;$}&\text{{\rm (C5)}\ \ $G=x^4+x^2y^2;$}\\
\text{{\rm (C3)}\ \ $G=x^3y;$\ \ }&\text{{\rm (C6)}\ \ $G=x^4+ax^2y^2+y^4$ with $a^2\neq 4.$}
\end{array}$
\medskip

Moreover, to which of the above six forms $G$ can be transformed is uniquely determined by $G$. As for the last option, the parameter $a$ is not determined by $G$ uniquely. However, the level of non-uniqueness is clear from the following fact.

For $a\in\K$, $a^2\neq 4$, the set of $S\in GL_2(\K)$, the substitution by which turns $x^4+ax^2y^2+y^4$ into $\lambda(x^4+bx^2y^2+y^4)$ for some $\lambda\in\K^*$ and $b\in\K$ with $b^2\neq 4$ does not depend on $a$, forms a subgroup $H$ of $GL_2(\K)$ and consists of non-zero scalar multiples of the matrices of the form
$$
\left(\begin{array}{cc}1&0\\ 0&p\end{array}\right),\ \left(\begin{array}{cc}0&1\\ p&0\end{array}\right),\
\left(\begin{array}{cc}1&p\\ -1&p\end{array}\right),\ \left(\begin{array}{cc}1&p\\ 1&-p\end{array}\right),\
\left(\begin{array}{cc}qp&1\\ p&q\end{array}\right),\ \ \text{where $p,q\in\K$, $p^4=1$ and $q^2=-1$.}
$$
\end{lemma}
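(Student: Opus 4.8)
The plan is to split the statement of Lemma~\ref{co2-4} into two independent parts: the classification of binary quartics under $GL_2(\K)$ up to a scalar (forms (C1)--(C6) with the claimed uniqueness), and the precise description of the stabiliser-type subgroup $H$ governing the non-uniqueness of the parameter $a$ in (C6). For the first part I would work projectively: a nonzero homogeneous degree $4$ polynomial $G\in\K[x,y]$ factors, since $\K$ is algebraically closed, into four linear forms, so it is determined up to scalar by a multiset of four points on $\mathbb{P}^1$, and $GL_2(\K)$ acts on $\mathbb{P}^1$ as $\mathrm{PGL}_2(\K)$, which is sharply $3$-transitive. The partition of the six cases is exactly the partition of size-$4$ multisets of $\mathbb{P}^1$-points by their coincidence pattern: all four equal ((C2)), three equal ((C3)), two pairs equal --- but this subdivides, since $\mathrm{PGL}_2$ can move any three points to $0,1,\infty$ but the fourth is then an invariant, giving the cross-ratio; the cases are two equal plus two equal ((C4) after scaling; here two double points, projectively unique), one double point and two simple points ((C5)), and four distinct points ((C6), the cross-ratio giving the parameter $a$, with $a^2\neq 4$ being precisely the condition that the four roots are distinct). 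Keeping $(C1)=0$ separate, one checks the normal forms match by an explicit substitution in each case, and non-equivalence across labels follows from the invariance of the root-coincidence pattern.

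For the parameter count in (C6), the cleanest route is to identify, for a four-distinct-point configuration normalised so that the roots of $x^4+ax^2y^2+y^4$ are $\pm r,\pm r^{-1}$ for a suitable $r$ (this is why only even powers appear, and $a=-(r^2+r^{-2})$, $a^2\neq4$ meaning $r^2\neq\pm1$, i.e.\ the four roots are genuinely distinct), which elements of $GL_2(\K)$ send this root set to another root set of the same palindromic shape $\{\pm s,\pm s^{-1}\}$. Since the action is through $\mathrm{PGL}_2$ and a projective transformation is determined by where it sends three points, the condition is a combinatorial one: the induced permutation of $\{\pm r,\pm r^{-1}\}$ must be realised by a M\"obius map, and additionally the image must again be a set of the form $\{\pm s,\pm s^{-1}\}$ for some $s$. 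The $r$-independence claim is the key subtlety: one must show that the subset of $\mathrm{PGL}_2$ realising \emph{some} admissible permutation is the same for every $r$ with $r^2\neq\pm1$; this works because the admissible permutations are exactly those preserving the two "pairing structures" $x\mapsto -x$ and $x\mapsto x^{-1}$ up to composition (the Klein four-group of sign/inversion symmetries together with swaps), and the M\"obius maps realising each such permutation are given by formulas with coefficients not involving $r$. Concretely one writes down the $24$ permutations of the four roots, determines which extend to M\"obius maps (there are $8$: the four sign-and-inversion symmetries, already in $\mathrm{PGL}_2$ as $x\mapsto\pm x^{\pm1}$, composed with the two "diagonal swaps" $r\leftrightarrow -r$, $r^{-1}\leftrightarrow -r^{-1}$ etc.), lifts each to $GL_2(\K)$, and reads off that the resulting $8$ projective classes, once one allows the scaling ambiguity in $\lambda$, are represented by the listed matrices with $p^4=1$ (from the fourth-root scaling freedom) and $q^2=-1$ (from the order-$4$ rotation $x\mapsto ix$-type symmetry). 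One then verifies $H$ is closed under multiplication and inversion directly from the list, establishing it is a subgroup.

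The main obstacle I expect is the second part: correctly enumerating which permutations of the four roots are induced by $GL_2(\K)$ and then lifting them to explicit matrices with the stated normalisation, including the scaling factors $p$ and $q$ and checking that the resulting set is genuinely $a$-independent rather than merely finite. This is a finite but fiddly computation --- it is easy to miss a coset or to get a form that secretly depends on $a$ through the normalising constant --- and the bookkeeping of when two of the listed matrices coincide up to scalar must be done carefully to match the count implicit in the statement. The first part, by contrast, is routine once one adopts the projective root-configuration viewpoint; the only care needed there is in the (C4)/(C5) split (two double roots versus one double and two simple) and in confirming that $a^2\neq4$ is exactly the discriminant-type condition separating (C6) from (C4)--(C5).
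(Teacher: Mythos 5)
Your treatment of the classification (C1)--(C6) is sound and is essentially the paper's own argument in projective dress: the paper likewise factors $G$ into four linear forms, sorts the cases by the coincidence pattern of the factors, and reduces the four-distinct-roots case to (C6) by an explicit substitution (first making $G$ divisible by $xy$, then applying $x\to x+y$, $y\to x-y$). The only point to watch is that $a$ is not literally the cross-ratio of the four roots, but that does not affect existence or uniqueness of the normal form, and your identification of $a^2\neq 4$ with distinctness of the roots is correct.

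The second half has a concrete gap. Your enumeration of ``admissible permutations'' yields $8$ projective transformations, but $H/\K^*I$ has $24$ elements (it is isomorphic to $S_4$; see Remark~\ref{groupH}, or simply count the listed matrices up to scalar: $4+4+4+4+8=24$). The undercount comes from framing the condition as ``the induced permutation of $\{\pm r,\pm r^{-1}\}$ must be realised by a M\"obius map'': that only captures maps carrying the root set of $x^4+ax^2y^2+y^4$ to \emph{itself}, whereas $H$ must also contain the substitutions carrying it to the root set of $x^4+bx^2y^2+y^4$ with $b\neq a$ --- for instance the matrices $\bigl(\begin{smallmatrix}1&p\\-1&p\end{smallmatrix}\bigr)$, which induce $t\mapsto p(t-1)/(t+1)$ and genuinely move the configuration; these are precisely the elements responsible for the $S_3$-action on the parameter that Lemma~\ref{S3} later exploits. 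A repaired version of your route would characterize a set of the form $\{\pm s,\pm s^{-1}\}$ as one invariant under the Klein four-group $V=\{t,\,-t,\,t^{-1},\,-t^{-1}\}$, note that for $r^2\neq\pm1$ the stabiliser of $\{\pm r,\pm r^{-1}\}$ in $\mathrm{PGL}_2(\K)$ is exactly $V$, and conclude that $H/\K^*I$ is the normalizer of $V$ in $\mathrm{PGL}_2(\K)$, namely the octahedral group $S_4$ --- a description visibly independent of $r$, which settles the $a$-independence. The paper instead avoids all of this by writing down the three polynomial equations (\ref{SSTT}) on the entries of $S$ forced by the condition $x^4+ax^2y^2+y^4\mapsto\lambda(x^4+bx^2y^2+y^4)$ and solving them by elementary algebra, which produces the explicit list of matrices and the $a$-independence in a single computation.
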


\begin{proof} If $G=0$ to begin with, it stays this way after any linear sub. Thus we can assume that $G\neq 0$. Since $G$ is homogeneous of degree $4$ and $\K$ is algebraically closed, $G$ is the product of 4 non-zero homogeneous degree $1$ polynomials $G=u_1u_2u_3u_4$. Analyzing possible linear dependencies of $u_j$, we see that unless $u_j$ are pairwise linearly independent, a linear substitution turns $G$ into a unique form from (C2--C5). Indeed, all $u_j$ being proportional leads to (C2), three being proportional with one outside their one-dimensional  linear span gives (C3), two pairs of proportional $u_j$ generating distinct one-dimensional spaces corresponds to (C4), while only one pair of proportional $u_j$ leads to (C5).

This leaves the case of  $u_j$ being pairwise linearly independent. Note that for $a\in\K$ satisfying $a^2\neq 4$, this is the case with  $G=x^4+ax^2y^2+y^4$. Our next step is to see that an arbitrary $G$ with this property can be turned into a $G$ from (C6) by means of a linear substitution. We achieve this in three steps. First, making a substitution which turns $u_1$ into $x$ and $u_2$ into $y$, we make $G$ divisible by $xy$: $G=xy(px^2+qxy+ry^2)$ with $p,q,r\in\K$. Note that $pr\neq 0$ (otherwise there is a linear dependent pair of degree $1$ divisors of $G$). Thus (recall that $\K$ is algebraically independent) a scaling turns $G$ into $G=xy(x^2+qxy+y^2)$ with $q\in\K$. The substitution $x\to x+y$, $y\to x-y$ together with a scaling transforms $G$ into $x^4+ax^2y^2+y^4$. Finally, pairwise linear independence of degree $1$ factors translates into $a^2\neq 4$ and we are done with the first part of the lemma.

It is easy to see that $H$ consisting of non-zero scalar multiples of the matrices in the above display is a subgroup of $GL_2(\K)$ and that substitutions provided by matrices from $H$ preserve the class (C6) up to scalar multiples. Assume now that $a\in\K$, $a^2\neq 4$ and the linear substitution provided by
$$
S=\left(\begin{array}{cc}\alpha&\beta\\ p&q\end{array}\right)\in GL_2(\K)
$$
turns $x^4+ax^2y^2+y^4$ into $\lambda(x^4+bx^2y^2+y^4)$ for some $\lambda\in\K^*$ and $b\in\K$ with $b^2\neq 4$.
The proof will be complete if we show that $S\in H$. The condition that $x^4+ax^2y^2+y^4$ is mapped to $\lambda(x^4+bx^2y^2+y^4)$ yields the following system:
\begin{equation}\label{SSTT}
\begin{array}{r}
\alpha^4+p^4+a\alpha^2p^2=\beta^4+q^4+a\beta^2q^2\neq 0;
\\
2\alpha^3\beta+a\alpha^2pq+a\alpha\beta p^2+2p^3q=0;
\\
2\alpha\beta^3+a\alpha\beta q^2+a\beta^2pq+2pq^3=0.
\end{array}
\end{equation}
Indeed, the first equation ensures that after the sub the $x^4$ and $y^4$ coefficients of $G$ are equal and non-zero, while the remaining two equations are responsible for the absence of $x^3y$ and $xy^3$ in $G$.

If $q\alpha=0$, the above system immediately gives $\alpha=q=0$ and $\beta^4=p^4\neq 0$ and therefore $S\in H$. If $p\beta=0$, we similarly have $\beta=p=0$ and $\alpha^4=q^4\neq 0$ ensuring the membership of $S$ in $H$. Thus it remains to consider the case $pq\alpha\beta\neq 0$. Set $s=\alpha/\beta$ and $t=q/p$. The last two equations in the above display now read
$$
2s^3+as^2t+as+2t=0,\qquad 2s+ast^2+at+2t^3=0.
$$
Multiplying the first equation by $t$, the second by $s$ and subtracing (after an obvious rearrangement) yields $(s^2-t^2)(st-1)=0$.
$S$ being non-degenerate implies $st\neq 1$. Hence $s^2=t^2$. Thus $t=s$ or $t=-s$.

If $t=s$, we plug the definitions of $s$ and $t$ back into the first equation of (\ref{SSTT}). This gives $s^2=-1$. If $t=-s$, the same procedure yields $s^2=1$. Thus we have the following options for $(s,t)$: $(i,i)$, $(-i,-i)$, $(1,-1)$ and $(-1,1)$. The inclusion $S\in H$ becomes straightforward.
\end{proof}

\begin{remark} \label{groupH} For the group $H$ of Lemma~\ref{co2-4}, the group $H_0=H/\K^*I$ is finite. Moreover, it is isomorphic to $S_4$. Indeed, it is easy to check that $H_0$ has 24 elements and trivial centre. Since there is only one such group up to an isomorphism, namely $S_4$, $H_0\simeq S_4$.
\end{remark}

\begin{lemma}\label{S3} For $a,b\in\K$ satisfying $4(a+b)^2\neq1$, let
$$
F_{a,b}=x^4+a{x^2y^2}^\rcirclearrowleft+bxyxy^\rcirclearrowleft+y^4\in\K^{\rm cyc}\langle x,y\rangle.
$$
Then $F_{a,b}$ and $F_{a',b'}$ are equivalent $($=can be obtained from one another by a linear substitution$)$ if and only if $(a,b)$ and $(a',b')$ belong to the same orbit of the group action generated by two involutions $(a,b)\mapsto (-a,-b)$ and $(a,b)\mapsto\left(\frac{1-2b}{1+2a+2b},\frac{1-2a+2b}{2(1+2a+2b)}\right)$. This group has $6$ elements and is isomorphic to $S_3$.
\end{lemma}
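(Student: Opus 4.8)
The plan is to pass to abelianizations and combine Lemma~\ref{co2-4} with Remark~\ref{groupH}. Note first that the abelianization of $F_{a,b}$ is $x^4+4(a+b)x^2y^2+y^4$, since both ${x^2y^2}^\rcirclearrowleft$ and $xyxy^\rcirclearrowleft$ abelianize to $4x^2y^2$; the hypothesis $4(a+b)^2\neq1$ says exactly that this binary quartic is of type (C6) in Lemma~\ref{co2-4} with a genuine parameter $4(a+b)$ (square $\neq4$). A linear substitution $S$ commutes with abelianization and with the cyclic symmetrization $G\mapsto G^\rcirclearrowleft$ (in degree $4$ the latter is just $S^{\otimes4}$, which commutes with cyclic permutation of the tensor factors). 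Hence if $S\cdot F_{a,b}=F_{a',b'}$, then $S$ carries $x^4+4(a+b)x^2y^2+y^4$ onto $x^4+4(a'+b')x^2y^2+y^4$, both of which are genuine (C6) forms, so by Lemma~\ref{co2-4} we get $S\in H$.

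Conversely, if $S\in H$ then $S\cdot F_{a,b}$ is a cyclically invariant degree $4$ element whose abelianization is $\mu(x^4+Bx^2y^2+y^4)$ with $\mu\neq0$ and $B^2\neq4$ (again by Lemma~\ref{co2-4}). Since the abelianization map $\K^{\rm cyc}\langle x,y\rangle_4\to\K[x,y]_4$ is onto with kernel $\spann\{{x^2y^2}^\rcirclearrowleft-xyxy^\rcirclearrowleft\}$, it follows that $S\cdot F_{a,b}=\mu\,x^4+\mu\,y^4+\alpha\,{x^2y^2}^\rcirclearrowleft+\beta\,xyxy^\rcirclearrowleft$ with $4(\alpha+\beta)=\mu B$, so $S\cdot F_{a,b}=\mu F_{a'',b''}$ with $(a'',b'')=(\alpha/\mu,\beta/\mu)$ and $4(a''+b'')^2=B^2/4\neq1$. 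Therefore $H$ acts on $\Omega=\{(a,b):4(a+b)^2\neq1\}$ via a group homomorphism $\phi\colon H\to\mathrm{Sym}(\Omega)$, and $F_{a,b}$ is equivalent to $F_{a',b'}$ precisely when $(a',b')$ lies in the $\phi(H)$-orbit of $(a,b)$ (the scalar $\mu$ is harmless: replacing $S$ by $cS$ multiplies $S\cdot F_{a,b}$ by $c^4$ and $\K$ is algebraically closed). Scalar matrices act trivially, so $\phi$ descends to $H_0=H/\K^*{\rm Id}\cong S_4$ (Remark~\ref{groupH}).

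It remains to identify $\phi(H)$. The scaling $y\mapsto iy$ fixes $x^4$ and $y^4$ and multiplies both ${x^2y^2}^\rcirclearrowleft$ and $xyxy^\rcirclearrowleft$ by $i^2=-1$, so it realizes $\sigma_1\colon(a,b)\mapsto(-a,-b)$. For the substitution $x\mapsto x+y$, $y\mapsto x-y$, expanding $F_{a,b}(x+y,x-y)$ and reading off the coefficients of the monomials $xxxx$ and $xxyy$ — which in $\mu F_{a'',b''}$ are $\mu$ and $\mu a''$ — gives $\mu=2(1+2a+2b)$ and $\mu a''=2-4b$, hence $a''=\frac{1-2b}{1+2a+2b}$; together with $a''+b''=\frac{3-2a-2b}{2(1+2a+2b)}$ (read off from the abelianization) this yields $b''=\frac{1-2a+2b}{2(1+2a+2b)}$, i.e. this substitution realizes $\sigma_2$. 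On the other hand the substitutions $y\mapsto-y$ and $x\leftrightarrow y$ both fix $F_{a,b}$ (using ${y^2x^2}^\rcirclearrowleft={x^2y^2}^\rcirclearrowleft$ and $(yxyx)^\rcirclearrowleft=(xyxy)^\rcirclearrowleft$), so they lie in $\ker\phi$ and, viewed in $H_0$, generate a Klein four-subgroup. Thus $\ker\phi$ is a normal subgroup of $S_4$ of order $\geq4$, so $\ker\phi\in\{V,A_4,S_4\}$ and $|\phi(H)|\in\{6,2,1\}$; but $\phi(H)$ contains the pairwise distinct elements ${\rm id},\sigma_1,\sigma_2$ (e.g. $\sigma_1(0,0)=(0,0)\neq(1,\tfrac12)=\sigma_2(0,0)$), so $|\phi(H)|\geq3$. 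Hence $|\phi(H)|=6$ and $\phi(H)\cong S_4/V\cong S_3$. Finally $\langle\sigma_1,\sigma_2\rangle\leq\phi(H)$ has even order (two distinct involutions) and at least $3$ elements, so its order is $6$; therefore $\langle\sigma_1,\sigma_2\rangle=\phi(H)\cong S_3$ and the lemma follows.

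Everything here is bookkeeping with Lemma~\ref{co2-4}, Remark~\ref{groupH} and finite group theory, with one exception: the explicit computation of the $xxxx$- and $xxyy$-coefficients of the noncommutative polynomial $F_{a,b}(x+y,x-y)$. This is the main (and only genuinely computational) obstacle, and it is essential — it is precisely what pins down $\sigma_2$ rather than one of the other involutions of $\phi(H)$ having the same action on $a+b$; the commutative picture alone cannot see the difference between ${x^2y^2}^\rcirclearrowleft$ and $xyxy^\rcirclearrowleft$.
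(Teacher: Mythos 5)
Your proof is correct and follows essentially the same route as the paper: reduce via abelianization and Lemma~\ref{co2-4} to the group $H$, use Remark~\ref{groupH} to identify $H/\K^*I$ with $S_4$, factor out the Klein four-group coming from the substitutions $y\to -y$ and $x\leftrightarrow y$ that fix every $F_{a,b}$, and identify the resulting order-$6$ quotient with $S_3$ generated by the two stated involutions. You carry out the group-theoretic bookkeeping and the computation pinning down $\sigma_2$ in more detail than the paper does (incidentally, your substitution $x\to x+y$, $y\to x-y$ realizes $\sigma_2$ exactly as stated, whereas the paper's claimed $x\to x+iy$, $y\to x-iy$ in fact realizes $\sigma_1\sigma_2$ --- an immaterial discrepancy, since the generated group is the same).
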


\begin{proof} Note that the abelianization $F^{\rm ab}_{a,b}\in\K[x,y]$ of $F_{a,b}$ is given by $F^{\rm ab}_{a,b}=x^4+4(a+b)x^2y^2+y^4$. According to the assumption $4(a+b)^2\neq1$, each $F^{\rm ab}_{a,b}$ is of the form (C6) of Lemma~\ref{co2-4}. Since every linear substitution transforming $F_{a,b}$ into $F_{a',b'}$ must also transform $F^{\rm ab}_{a,b}$ into $F^{\rm ab}_{a',b'}$, the relevant substitutions can only be provided by matrices from the group $H$ of  Lemma~\ref{co2-4}. Factoring out the scalar multiples, we are left with the group $H/\K^*I$, which happens to be finite (of order 24) and whose elements are listed in  Lemma~\ref{co2-4}. Note also that the substitutions $x\to -x$, $y\to y$ and $x\to y$, $y\to x$ transform each $F_{a,b}$ to itself. After factoring these out from $H/\K^*I$, we are left with a group of order $6$, which is easily seen to be isomorphic to $S_3$ and to act essentially freely on $F_{a,b}$. Two involutions generating $S_3$ correspond to substitutions $x\to x$, $y\to iy$ and $x\to x+iy$, $y\to x-iy$, which act by $(a,b)\mapsto (-a,-b)$ and $(a,b)\mapsto\left(\frac{1-2b}{1+2a+2b},\frac{1-2a+2b}{2(1+2a+2b)}\right)$ on the parameters $(a,b)$.
\end{proof}

\section{General results on twisted potential algebras}

\begin{lemma}\label{gen3} Let $F\in \K\langle x_1,\dots,x_n\rangle$ be a twisted potential with $F_0=F_1=0$ and let $A=A_F$. Then the sequence $(\ref{comq})$ is a complex, which is exact at its three rightmost terms.
\end{lemma}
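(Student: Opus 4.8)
The plan is to treat separately the assertion that $(\ref{comq})$ is a complex and the assertion that it is exact at its three rightmost terms $\K$, $A$ (the domain of $d_0$) and $A^n$ (the domain of $d_1$). For the complex part only Lemma~\ref{gen1} is needed. The identity $d_0d_1=0$ is clear since $d_1$ takes values in the augmentation ideal. For $d_2d_3=0$, note that $(d_2d_3u)_j=\bigl(\sum_k(\delta_{x_j}\delta^R_{x_k}F)x_k\bigr)u$; applying the left derivative $\delta_{x_j}$ to the identity $F=\sum_k(\delta^R_{x_k}F)x_k$ of Lemma~\ref{gen1} — term by term, which is legitimate because $F_0=F_1=0$ forces each $\delta^R_{x_k}F$ to have zero constant term — gives $\delta_{x_j}F=\sum_k(\delta_{x_j}\delta^R_{x_k}F)x_k$, so $(d_2d_3u)_j=(\delta_{x_j}F)u=0$ in $A$. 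For $d_1d_2=0$, the identity $\sum_j x_j(\delta_{x_j}G)=G$ of Lemma~\ref{gen1}, applied to $G=\delta^R_{x_k}F$, yields $d_1d_2(u)=\sum_k(\delta^R_{x_k}F)u_k$, which vanishes in $A$ precisely because $F$ is a twisted potential, so that the $\delta^R_{x_k}F$ lie in the ideal of relations of $A$.

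Exactness at $\K$ is just surjectivity of the augmentation map $d_0$, and exactness at the rightmost $A$ is the equality $\ker d_0=\sum_j x_jA=\operatorname{im}d_1$, valid because $A$ is generated by $x_1,\dots,x_n$ and so its augmentation ideal equals $\sum_j x_jA$. The one substantial point is exactness at the domain of $d_1$. For this I would isolate a presentation-independent lemma: for any algebra $B=\K\langle x_1,\dots,x_n\rangle/J$ with $J$ generated by elements $g_1,\dots,g_m$ of zero constant term, the sequence $B^m\xrightarrow{\ \phi\ }B^n\xrightarrow{\ d_1\ }B\to\K\to0$ is exact, where $d_1$ acts on right $B$-modules by $(a_j)_j\mapsto\sum_j x_ja_j$ and $\phi(e_s)=(\delta_{x_1}g_s,\dots,\delta_{x_n}g_s)$. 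The inclusion $\operatorname{im}\phi\subseteq\ker d_1$ follows from $d_1\phi(e_s)=\sum_j x_j\delta_{x_j}g_s=g_s=0$ in $B$. For the reverse inclusion, given $(u_1,\dots,u_n)\in\ker d_1$, lift the $u_j$ to elements $\tilde u_j$ of the free algebra so that $\sum_j x_j\tilde u_j=\sum_t a_tg_{s_t}b_t\in J$, split each $a_t$ into its constant term $\lambda_t\in\K$ and $\sum_j x_j\delta_{x_j}a_t$, substitute $g_{s_t}=\sum_j x_j\delta_{x_j}g_{s_t}$, and regroup the resulting expression as $\sum_j x_jw_j$; since $x_1,\dots,x_n$ freely generate the free algebra (every monomial of positive degree has a unique leftmost letter), $\tilde u_j=w_j$ for each $j$, and reducing modulo $J$ annihilates every term still containing a factor $g_{s_t}$, leaving $u_j=\sum_t\lambda_t(\delta_{x_j}g_{s_t})\overline{b_t}$ in $B$, i.e. $(u_1,\dots,u_n)\in\operatorname{im}\phi$.

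Finally I would apply this lemma with $g_s=\delta^R_{x_s}F$: since $F$ is a twisted potential, $\delta^R_{x_1}F,\dots,\delta^R_{x_n}F$ span the same subspace as the defining relations $\delta_{x_1}F,\dots,\delta_{x_n}F$ and hence generate the same ideal, so this is a legitimate presentation of $A=A_F$, and by inspection of $(\ref{comq})$ the map $\phi$ for this presentation is exactly $d_2$; the exactness at the domain of $d_1$ follows. The main obstacle, such as it is, is merely getting the general syzygy lemma cleanly stated and proved; conceptually it is nothing more than the beginning of the standard free resolution of $\K$ over a finitely presented algebra (cf.\ Remark~\ref{syzz}), and beyond that only Lemma~\ref{gen1} and the freeness of the tensor algebra enter. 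Note that injectivity of $d_3$ and exactness at the remaining $A^n$ are deliberately not claimed here: they amount to exactness of $(\ref{comq})$, which holds only for special $F$ and is handled later.
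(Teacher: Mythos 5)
Your proposal is correct and follows essentially the same route as the paper: the complex identities are verified via the identities of Lemma~\ref{gen1} exactly as in the paper's proof, and your ``general syzygy lemma'' for exactness at the domain of $d_1$ is just a repackaging of the paper's argument (lift the $u_j$, decompose the ideal element by splitting off constant left coefficients so that $I=\sum_k(\delta^R_{x_k}F)\Phi+\sum_jx_jI$, substitute $\delta^R_{x_k}F=\sum_jx_j\delta_{x_j}\delta^R_{x_k}F$, and compare left factors using freeness of the tensor algebra). The only difference is presentational — you state the syzygy step for an arbitrary finitely presented algebra before specializing to $g_s=\delta^R_{x_s}F$ — and all the hypotheses you need ($F_1=0$ so the relations have no constant term, and the left and right derivatives generating the same ideal) are correctly tracked.
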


\begin{proof}Since no constants feature in the defining relations of $A$, the augmentation map $d_0:A\to \K$ is well-defined. By definition of $d_1$, $d_0\circ d_1=0$. Using the definition of $d_2$, we see that
$$
d_1\circ d_2(u_1,\dots,u_n)=\sum_{k,j=1}^n x_j(\delta_{x_j}\delta^R_{x_k} F)u_k=\sum_{k=1}^n (\delta^R_{x_k} F)u_k=0\ \ \text{in $A$,}
$$
where the second equality is due to Lemma~\ref{gen1}. Thus $d_1\circ d_2=0$. Finally, by definition of $d_3$,
$$
d_2\circ d_3(u)_j=\sum_{k=1}^n  (\delta_{x_j}\delta^R_{x_k} F)x_ku=\delta_{x_j} Fu=0\ \ \text{in $A$,}
$$
where the second equality follows from Lemma~\ref{gen1}. Thus (\ref{comq}) is indeed a complex. Its exactness at $\K$ and at the rightmost $A$ is trivial. It remains to check that (\ref{comq}) is exact at the third term from the right. In order to do this, we have to show that if $u=(u_1,\dots,u_n)\in A^n$ and $x_1u_1+{\dots}+x_nu_n=0$, then $u=d_2(v)$ for some $v\in A^n$. Let $I$ be the ideal of relations of $A$. Pick $a_1,\dots,a_n\in\K\langle x_1,\dots,x_n\rangle$ such that $u_j=a_j+I$ for all $j$. Since $x_1u_1+{\dots}+x_nu_n=0$ in $A$, we have $x_1a_1+{\dots}+x_na_n\in I$. Hence,
$$
x_1a_1+{\dots}+x_na_n=\delta^R_{x_1}Fs_1+{\dots}+\delta^R_{x_n}Fs_n+x_1p_1+{\dots}+x_np_n,\ \ \text{in $\K\langle x_1,\dots,x_n\rangle$, where $p_j\in I$}
$$
and $s_j\in\K\langle x_1,\dots,x_n\rangle$. By Lemma~\ref{gen1}, $\delta^R_{x_j}F=\sum\limits_{k=1}^n x_k\delta_{x_k}\delta^R_{x_j}F$. Plugging this into the above display, we get
$$
\sum_{k=1}^n x_k\Bigl(a_k-p_k-\sum_{j=1}^n \delta_{x_k}\delta^R_{x_j}Fs_j \Bigr)=0\ \ \text{in $\K\langle x_1,\dots,x_n\rangle$}.
$$
Hence
$$
a_k-p_k-\sum_{j=1}^n \delta_{x_k}\delta^R_{x_j}Fs_j =0\ \ \text{in $\K\langle x_1,\dots,x_n\rangle$ for $1\leq k\leq n$}.
$$
Factoring out $I$, we obtain $u=d_2(v)$ with $v_j=s_j+I\in A$.
\end{proof}

\subsection{Minimal series of graded twisted potential algebras}

Recall that a twisted potential algebra $A_F$ is called {\it exact} if the corresponding sequence (\ref{comq}) is an exact complex. For an algebra $A$ generated by $x_1,\dots,x_n$, we say that $u\in A$ is a {\it right annihilator} if $x_ju=0$ for $1\leq j\leq n$. A right annihilator $u$ is {\it non-trivial} if $u\neq 0$.

\begin{lemma}\label{commin} Let $F\in{\cal P}^*_{n,k}$ with $n\geq 2$, $k\geq 3$ and let $A=A_F$. Then the following statements are equivalent$:$
\begin{itemize}\itemsep=-2pt
\item[\rm (1)] $A$ is an exact twisted potential algebra$;$
\item[\rm (2)] $A$ has no non-trivial right annihilators and $H_A=(1-nt+nt^{k-1}-t^k)^{-1}$.
\end{itemize}
Moreover, if $A$ is exact, then $A$ is proper. Finally,
\begin{equation}\label{exko}
\text{if $k=3$, then}\qquad
\left\{\begin{array}{l}
\text{$A$ is exact $\Longrightarrow$ $A$ is Koszul,}\\
\text{$A$ proper and Koszul $\Longrightarrow$ $A$ is exact.}
\end{array}\right.
\end{equation}
\end{lemma}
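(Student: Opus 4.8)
The plan is to reduce the entire statement to a single Euler-characteristic computation on the complex (\ref{comq}), using Lemma~\ref{gen3} to dispose of the three rightmost terms for free. First I would fix the internal grading on (\ref{comq}) that makes every $d_i$ a degree-preserving morphism of graded right $A$-modules, namely $M_0=\K$, $M_1=A$, $M_2=A^n(-1)$, $M_3=A^n(-(k-1))$, $M_4=A(-k)$; this choice is forced by the facts that $\delta_{x_j}\delta^R_{x_k}F$ is homogeneous of degree $k-2$ and that $d_1,d_3$ raise degree by one, and checking it is a one-line verification. Since each internal degree of the complex is a finite complex of finite-dimensional spaces, the Euler-characteristic identity
$$
1-H_A+ntH_A-nt^{k-1}H_A+t^kH_A=\sum_{i=0}^{4}(-1)^i H_{H_i}(t)
$$
holds coefficientwise in $\Q[[t]]$, where $H_i$ denotes the $i$-th homology of (\ref{comq}) counted from the right. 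By Lemma~\ref{gen3} we have $H_0=H_1=H_2=0$, so the right-hand side equals $-H_{H_3}+H_{H_4}$ with $H_4=\ker d_3$ and $H_3=\ker d_2/\mathrm{im}\,d_3$. The key remark to record here is that $H_4=\{u\in A:x_ju=0\ \text{for all }j\}$ is exactly the space of right annihilators, so ``$A$ has no non-trivial right annihilator'' is literally the vanishing $H_4=0$.

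With this in place the equivalence should fall out formally. For (1)$\Rightarrow$(2): exactness gives $H_{H_3}=H_{H_4}=0$, so in particular $H_4=0$, and the displayed identity collapses to $H_A\,(1-nt+nt^{k-1}-t^k)=1$. For (2)$\Rightarrow$(1): if $H_4=0$ and $H_A=(1-nt+nt^{k-1}-t^k)^{-1}$, the left-hand side of the identity is $0$, forcing $H_{H_3}=0$ as well; hence all homology of (\ref{comq}) vanishes and $A$ is exact. For the ``moreover'' clause I would push the Hilbert series through the recursion $c_m=nc_{m-1}-nc_{m-k+1}+c_{m-k}$ ($c_m=0$ for $m<0$, $c_0=1$), read off $c_{k-1}=n^{k-1}-n$ and $c_k=n^k-2n^2+1$, and invoke Lemma~\ref{gen2}, according to which $\dim A_k=n^k-2n^2+1$ already forces $F$ to be proper. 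Finally, for (\ref{exko}) with $k=3$: if $A$ is exact then it is proper by the clause just proved, so the fact (\ref{koex}) (``for proper $F\in{\cal P}^*_{n,3}$, Koszul $\Leftrightarrow$ exact'') gives that $A$ is Koszul; and if $A$ is proper and Koszul, the same fact (\ref{koex}) gives that $A$ is exact.

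I do not expect a genuinely hard step: the substance is the Euler-characteristic identity together with Lemma~\ref{gen3}, Lemma~\ref{gen2} and (\ref{koex}). The two points that need care are (a) pinning down the module shifts so that the generating-function identity is legitimate, and being explicit that it holds coefficientwise because each internal degree is a finite complex of finite-dimensional spaces; and (b) the clean identification of $H_4$ with the space of right annihilators, which is precisely what lets the hypothesis ``no right annihilator'' enter as a homological vanishing. Everything else is bookkeeping.
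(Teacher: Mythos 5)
Your proof is correct and follows essentially the same route as the paper's: both arguments rest on Lemma~\ref{gen3} for the three rightmost terms, the identification of $\ker d_3$ with the space of right annihilators, and a degree-by-degree dimension count forcing exactness at the remaining term, with the ``moreover'' and (\ref{exko}) clauses handled identically via Lemma~\ref{gen2} and (\ref{koex}). Your Euler-characteristic identity is just a cleaner packaging of the recursion $a_{m+k}-na_{m+k-1}+na_{m+1}-a_m=0$ that the paper extracts from the graded slices by hand.
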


\begin{proof}Assume that $A$ is exact. Denote $a_j=\dim A_j$ and set $a_j=0$ for $j=-1$. Since both defining relations of $A$ are of degree $k-1$, $a_j=n^j$ for $0\leq j<k-1$, exactness of (\ref{comq}) yields the recurrent equality $a_{m+k}-na_{m+k-1}+na_{m+1}-a_m=0$ for $m\geq-1$. Together with the initial data  $a_j=n^j$ for $0\leq j<k-1$ and $a_{-1}=0$, this determines $a_n$ for $n\geq 0$ uniquely, yielding $H_A=(1-nt+nt^{k-1}-t^k)^{-1}$. Since (\ref{comq}) is exact, $d_3$ is injective and therefore $A$ has no non-trivial right annihilators.

Now assume that  $H_A=(1-nt+nt^{k-1}-t^k)^{-1}$ and that $A$ has no non-trivial right annihilators. Then the map $d_3$ in (\ref{comq}) is injective and therefore the complex is exact at the leftmost $A$. By Lemma~\ref{gen3}, the only place, where the exactness may fail is the leftmost $A^n$. Considering the graded slices of the complex and using the exactness of the complex everywhere after the left $A^n$, we can compute the dimension of the intersection of the kernel of $d_2$ with $A_{m+1}^n$, which is $a_{m+k}-na_{m+k-1}+na_{m+1}$. On the other hand, $\dim d_3(A_m)=a_m$. Since $a_m$ are the Taylor coefficients of $(1-nt+nt^{k-1}-t^k)^{-1}$, they satisfy $a_{m+k}-na_{m+k-1}+na_{m+1}-a_m=0$, which proves that the above image and kernel have the same dimension and therefore coincide. Thus the exactness extends to the missing term. The fact that $A$ is proper when exact now follows from Lemma~\ref{gen2} (just look at $\dim A_k$). The  Koszulity statement is a consequence of (\ref{koex}).
\end{proof}

\begin{lemma}\label{commin01} Let $n\geq 2$, $k\geq 3$, $d\in\N$, $P_d=\K[t_1,\dots,t_d]$ and $F\in P_d\langle x_1,\dots,x_n\rangle$ be homogeneous of degree $k$ and such that for every $a\in\K^d$, specification $t_j=a_j$ for $1\leq j\leq d$ makes $F$ into a twisted potential. We denote by $A^a$ the corresponding twisted potential algebra. Assume also that $\K$ is uncountable and that there is $a_0\in\K^d$ such that $A^{a_0}$ is an exact twisted potential algebra. Then $A^a$ is exact for generic $a\in\K^d$ and the variety $W=\{A^a:a\in\K^d\}$ of graded algebras satisfies $H_W=(1-nt+nt^{k-1}-t^k)^{-1}$.
\end{lemma}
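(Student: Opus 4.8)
The plan is to read Lemma~\ref{commin01} off the deformation machinery of Lemma~\ref{complex}, applied to the variety $W=\{A^a\}_{a\in\K^d}$ together with the complex (\ref{comq}), using Lemma~\ref{commin} to convert ``exact'' into ``no non-trivial right annihilator, plus Hilbert series $(1-nt+nt^{k-1}-t^k)^{-1}$''. First I would observe that, because $F$ is homogeneous of $x$-degree $k\geq3$ (so that every specialization satisfies $F_0=F_1=0$), Lemma~\ref{gen3} applies and (\ref{comq}) is a complex of right $A^a$-modules for every $a\in\K^d$. Its differentials $d_3,d_2,d_1$ are given by matrices with entries the $x_j$, the $\delta_{x_j}\delta^R_{x_k}F$, and the $x_j$ again; these are homogeneous in the $x_j$ of degrees $1$, $k-2$, $1$ respectively and depend polynomially on the parameters, so they satisfy the hypotheses of Lemma~\ref{maxra}, hence of Lemma~\ref{complex}, with total degree $r=1+(k-2)+1=k$.

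The next step is to produce the Zariski-open set $U$ on which $\dim A^a_j$ is independent of $a$ for $0\leq j\leq k$. Since the defining relations of each $A^a$ sit in degree $k-1$, we have $\dim A^a_j=n^j$ for all $a$ whenever $0\leq j\leq k-2$. For $j=k-1$ and $j=k$ I would invoke Lemma~\ref{gen2}: every degree-$k$ twisted potential algebra satisfies $\dim A^a_{k-1}\geq n^{k-1}-n$ and $\dim A^a_k\geq n^k-2n^2+1$. Since $A^{a_0}$ is exact, Lemma~\ref{commin} makes it proper, whence Lemma~\ref{gen2} forces both bounds to be attained at $a_0$; therefore $h_{k-1}(W)=n^{k-1}-n$ and $h_k(W)=n^k-2n^2+1$, both realised at $a_0$, and by Lemma~\ref{miser} the two loci on which these minima are attained are non-empty and Zariski open. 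Taking $U$ to be their intersection gives a non-empty Zariski-open set containing $a_0$ on which $\dim A^a_0,\dots,\dim A^a_k$ are all constant.

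Now Lemma~\ref{complex} applies to (\ref{comq}) with this $U$: the complex is exact at $a_0\in U$ (which is the hypothesis ``$A^{a_0}$ exact''), so the dichotomy excludes the first alternative and (\ref{comq}) is exact for generic $a\in\K^d$; equivalently, $A^a$ is exact for generic $a$, which is the first assertion. For the Hilbert series, generic exactness together with Lemma~\ref{commin} gives $H_{A^a}=(1-nt+nt^{k-1}-t^k)^{-1}$ for generic $a$, while Lemma~\ref{miser} gives $H_{A^a}=H_W$ for generic $a$; intersecting these two generic conditions (legitimate because $\K$ is uncountable, so a countable union of proper subvarieties of $\K^d$ cannot exhaust $\K^d$) yields $H_W=(1-nt+nt^{k-1}-t^k)^{-1}$.

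The one step that is not purely mechanical is the verification that $a_0\in U$, i.e.\ that an exact member of the family simultaneously minimises $\dim A^a_{k-1}$ and $\dim A^a_k$. This is exactly what properness of $A^{a_0}$ buys us via Lemmas~\ref{gen2} and~\ref{commin}, and it is what makes the hypotheses of Lemma~\ref{complex} available in the first place. The remaining points --- matching the indexing and degree conventions of (\ref{comq}) to the statement of Lemma~\ref{complex}, and checking homogeneity and polynomial dependence of the differentials --- are routine.
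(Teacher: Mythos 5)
Your proposal is correct and follows essentially the same route as the paper: apply Lemma~\ref{complex} to the complex (\ref{comq}) to get generic exactness, then Lemma~\ref{commin} to convert exactness into the Hilbert series, and Lemma~\ref{miser} to identify that series with $H_W$. The paper states this in three sentences and leaves the verification of the hypotheses of Lemma~\ref{complex} implicit; your explicit check that $a_0$ lies in a suitable open set $U$ (via properness from Lemmas~\ref{commin} and~\ref{gen2}) correctly supplies the detail the paper omits.
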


\begin{proof} Applying Lemma~\ref{complex} to the sequence (\ref{comq}), we see that $A^a$ is exact for generic $a$. By Lemma~\ref{commin}, $H_{A^a}=(1-nt+nt^{k-1}-t^k)^{-1}$ for generic $a$. By Lemma~\ref{miser}, $H_W=(1-nt+nt^{k-1}-t^k)^{-1}$.
\end{proof}

\subsection{Examples of exact potential algebras}

We start with an observation, which saves us the trouble of doing some computations.

\begin{lemma}\label{sy}Let $F\in{\cal P}^*_{n,k}$ with $n\geq 2$ and $k\geq 3$ and assume that monomials in $x_j$ are equipped with a  well-ordering compatible with multiplication. Then the leading monomials of the defining relations $r_j=\delta_{x_j}F$ of $A=A_F$ exhibit at least one overlap of degree $k$. Furthermore, if they have exactly one overlap, then this overlap has degree $k$ and it resolves. The latter means that the defining relations themselves form a reduced Gr\"obner basis in the ideal of relations of $A$.
\end{lemma}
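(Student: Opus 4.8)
The plan is to exploit the syzygy (\ref{syz1}) of Lemma~\ref{gen1} together with the diamond lemma bookkeeping recalled in Remark~\ref{syzz}. First I would record the structural fact that the leading monomials of the relations $r_j=\delta_{x_j}F$ are pairwise distinct: if two of them coincided, a suitable linear combination of the $r_j$ would have strictly smaller leading degree-$(k-1)$ part, but all $r_j$ are homogeneous of degree $k-1$, so such a combination would be a relation of degree $k-1$ supported on fewer than $n$ leading monomials — this does not immediately contradict non-degeneracy, so instead I would simply replace the basis $r_1,\dots,r_n$ of $R_F$ by a basis with pairwise distinct leading monomials (possible as in the discussion preceding Lemma~\ref{ome0}), observing that this does not change $A_F$ and only relabels the potential's derivatives up to an invertible linear change, which by Remark~\ref{rer} keeps $F$ a (proper/non-degenerate as the case may be) twisted potential. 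Call the leading monomials $w_1,\dots,w_n$, each of degree $k-1$.

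Next comes the heart of the argument: the relation (\ref{syz1}), namely $\sum_j x_j(\delta_{x_j}F)=\sum_j(\delta^R_{x_j}F)x_j$, is a nontrivial linear dependence among the degree-$k$ elements $x_jr_j$ and $r'_jx_j$ (where $r'_j=\delta^R_{x_j}F$, which by non-degeneracy span the same space $R_F$). Passing to leading monomials, the cancellation in this identity must occur, and since the monomials on the left are of the form $x_j\,(\text{monomial of degree }k-1)$ and on the right $(\text{monomial of degree }k-1)\,x_j$, at least one leading term $x_j w_j$ must coincide with some $w_i' x_{?}$; unwinding, this forces one of the monomials $w_j$ to begin with some $x_i$ in such a way that $x_i w_j$ is simultaneously a left multiple of $x_i$-times-a-relation-leading-monomial and a right-multiple-of-a-relation-leading-monomial-times-$x_j$. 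Concretely, $x_i w_j$ is an overlap of $\{w_1,\dots,w_n\}$ of degree $k$: it begins with $w_i$ and ends with $w_j$ (here one uses that leading monomials of the right relations $r'_j$ are, up to the invertible twist, again among the $w$'s, using (\ref{qua})). So the syzygy (\ref{syz1}) certifies at least one degree-$k$ overlap, giving the first assertion. I would be a little careful here about whether the leading monomials of $\delta_{x_j}F$ and of $\delta^R_{x_j}F$ are literally the same set; they need not be, but their spans agree, and the leading-monomial analysis of the fixed identity (\ref{syz1}) only needs the monomials actually appearing in it, which belong to $VR_F\cap R_FV$ — this is the step I expect to require the most care.

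For the second assertion, suppose the $w_j$ exhibit exactly one overlap. Any overlap has degree between $k$ (minimum possible, since two degree-$(k-1)$ monomials overlap in $\geq k-2$ positions, i.e.\ the overlap monomial has degree $\leq 2(k-1)-(k-2)=k$... I would instead just note each overlap of two degree-$(k-1)$ monomials has degree at most $2(k-1)-1=2k-3$ and at least $k$) — in any case the overlap produced above has degree exactly $k$, so by uniqueness every overlap has degree $k$. Now I invoke Lemma~\ref{gen2}: $\dim A_k\ge n^k-2n^2+1$ always, and more to the point, the span $I_k$ of the $2n^2$ products $x_jr_m,\ r_mx_j$ has dimension $\le 2n^2-1$ because of (\ref{syz1}); hence $\dim A_k\ge n^k-2n^2+1$. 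On the other hand, the number of degree-$k$ monomials reducible modulo the length-one-step rewriting given by $w_1,\dots,w_n$ is exactly (number of degree-$k$ monomials containing some $w_j$), and the diamond-lemma count gives $\dim A^{(k)}_k = n^k - (\#\text{such monomials})$, while $\#\text{such monomials} = n\cdot n^{?}$ minus inclusion–exclusion over overlaps; with a single degree-$k$ overlap this count is exactly $2n^2-1$ (each $w_j$ sits inside $2n$ degree-$k$ monomials, giving $2n^2$, and we overcount precisely the one overlap monomial once). Therefore the ``obvious'' upper bound for $\dim A_k$ from rewriting equals the lower bound $n^k-2n^2+1$, which is only possible if the single overlap resolves (an unresolved overlap would contribute a new degree-$k$ element of the Gröbner basis and drop $\dim A_k$ by one, violating the lower bound of Lemma~\ref{gen2}). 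Since the one overlap resolves and there are no others, $r_1,\dots,r_n$ is its own reduced Gröbner basis, as claimed. The main obstacle, as flagged, is the leading-monomial matching between the left-derivative and right-derivative relation sets; I would handle it by first conjugating the twist into a convenient form via Remark~\ref{rer} (e.g.\ making $M$ upper triangular) so that the leading monomials of $\delta^R_{x_j}F$ can be read off from those of $\delta_{x_j}F$ up to the triangular twist, which suffices to locate the overlap inside $VR_F\cap R_FV$.
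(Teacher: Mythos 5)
Your argument is correct, but it reaches the conclusion by a different route than the paper. The paper's proof is a two-line appeal to Remark~\ref{syzz}: the identity (\ref{syz1}) is a non-trivial syzygy of degree $k$, trivial syzygies live in degree $\geq 2k-2>k$, so the syzygy module in degree $k$ must be generated by syzygies coming from resolved overlaps of the defining relations, which simultaneously yields the existence of a degree-$k$ overlap \emph{and} the fact that at least one such overlap resolves; the second assertion of the lemma is then immediate. You instead (a) extract the overlap by comparing leading monomials of the two sides of $\sum_j x_j(\delta_{x_j}F)=\sum_j(\delta^R_{x_j}F)x_j$, and (b) obtain resolution from the dimension count $\dim A_k\geq n^k-2n^2+1$ of Lemma~\ref{gen2} against the upper bound $n^k-(2n^2-1)$ forced by a single overlap. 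Both steps are sound, and (a) is arguably more self-contained than invoking the diamond-lemma folklore of Remark~\ref{syzz}. The point you flag as delicate has a one-line resolution that you half-state but then overcomplicate: once $r_1,\dots,r_n$ is replaced by a basis of $R_F$ with pairwise distinct leading monomials $w_1,\dots,w_n$, the leading monomial of \emph{any} nonzero element of $R_F$ is one of the $w_m$; since each $\delta^R_{x_j}F$ lies in $R_F$ by the definition of a twisted potential, the matching $x_iw_i=w_mx_j$ of the two leading monomials of (\ref{syz1}) immediately produces a degree-$k$ overlap. Your fallback of conjugating the twist into triangular form via Remark~\ref{rer} should be dropped: that is a change of variables, which changes the monomials and hence the given ordering, whereas the lemma is asserted for an arbitrary fixed compatible ordering on the original variables. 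Two further small points: replacing $\delta_{x_1}F,\dots,\delta_{x_n}F$ by another basis of $R_F$ has no effect on $F$ or on $A_F$, so no appeal to Remark~\ref{rer} is needed there either; and your counting step tacitly uses that there are $n$ linearly independent relations with $n$ distinct leading monomials (so that exactly $2n^2-1$ distinct leading monomials occur among the $x_iw_j$ and $w_jx_i$) --- this is where your route is slightly less robust than the paper's syzygy argument, which is insensitive to such degeneracies, but it is harmless in every situation where the lemma is applied.
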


\begin{proof} The equation (\ref{syz1}) provides a non-trivial linear dependence of $x_jr_m$ and $r_mx_j$ for $1\leq j,m\leq n$. Since the degree of each $r_m$ is $k-1>1$, this dependence produces a non-trivial syzigy (a syzigy outside the submodule generated by trivial ones). By Remark~\ref{syzz}, there must be at least one overlap of degree $k$, which resolves. Since the defining relations have degree $k-1$ and all other members of the Gr\"obner basis must have higher degrees, there should be at least one degree $k$ overlap of the leading monomials of the defining relations, which resolves. The result follows.
\end{proof}

\begin{example}\label{potex} Let $n$ and $k$ be integers such that $k\geq n\geq 2$, $k\geq 3$ and $(n,k)\neq (2,3)$. Consider the potential $F\in{\cal P}_{n,k}$ given by
$$
F=\sum_{\sigma\in S_{n-1}} x_n^{k-n+1}x_{\sigma(1)}\dots x_{\sigma(n-1)}^\rcirclearrowleft,
$$
where the sum is taken over all bijections from the set $\{1,\dots,n-1\}$ to itself.
Then the potential algebra $A=A_F$ is exact. Furthermore, $x_1u\neq 0$ for every non-zero $u\in A$.
\end{example}

\begin{proof} We order the generators by $x_n>x_{n-1}>{\dots}>x_1$ and equip monomials with the left-to-right degree-lexicographical ordering. The leading monomials of the defining relations of $A$ are easily seen to be $m_n=x_n^{k-n}x_{n-1}\dots x_1$, and $m_j=x_n^{k-n+1}x_{n-1}\dots x_{j+1}x_{j-1}\dots x_1$ with $1\leq j\leq n-1$ (after $x_n^{k-n+1}$  we have all other $x_k$ in descending order with $x_j$ missing). There is just one overlap of these monomials $x_n^{k-n+1}x_{n-1}\dots x_1=x_nm_n=m_1x_1$. By Lemma~\ref{sy}, it happily resolves and the defining relations themselves form a (finite) reduced Gr\"obner basis in the ideal of relations of $A$. This allows to confirm that $H_A=(1-nt+nt^{k-1}-t^k)^{-1}$. Since no leading monomial of the elements of the Gr\"obner basis starts with $x_1$, the map $u\mapsto x_1u$ from $A$ to $A$ is injective. Hence $A$ has no non-trivial right annihilators and therefore $A$ is exact according to Lemma~\ref{commin}.
\end{proof}

\begin{example}\label{potex1} Let $n$ and $k$ be integers such that $n>k\geq 3$. Order the generators by $x_n>x_{n-1}>{\dots}>x_1$ and consider the left-to-right degree-lexicographical ordering on the monomials. Consider the set $M$ of degree $k-2$ monomials in $x_1,\dots,x_{n-1}$ in which each letter $x_j$ features at most once. Let $m_1,\dots,m_{n-1}$ be the top $n-1$ monomials in $M$ enumerated in such a way that $m_{n-k+1}=x_{n-1}\dots x_{n-k+2}$ $($the biggest one$)$. Now define the potential  $F\in{\cal P}_{n,k}$  by
$$
F=x_nx_{n-1}\dots x_{n-k+1}^\rcirclearrowleft+\sum_{1\leq j\leq n-1\atop j\neq n-k+1} x_jx_nm_j^\rcirclearrowleft.
$$
Then the potential algebra $A=A_F$ is exact.  Furthermore, $x_1u\neq 0$ for every non-zero $u\in A$.
\end{example}

\begin{proof} We use the same order as above. The leading monomials of the defining relations of $A$ are easily seen to be $x_{n-1}\dots x_{n-k+1}$ and $x_nm_j$ for $1\leq j\leq n-1$. Again, there is just one overlap of these monomials
$x_nx_{n-1}\dots x_{n-k+1}=x_n(x_{n-1}\dots x_{n-k+1})=(x_nx_{n-1}\dots x_{n-k+2})x_{n-k+1}$, which happens to resolve according to Lemma~\ref{sy}. The rest of the proof is the same as for the previous example.
\end{proof}

Combining Examples~\ref{potex} and~\ref{potex1} with  Lemma~\ref{commin01}, we get  the following result.

\begin{lemma}\label{commin1} Let $n\geq 2$, $k\geq 3$ and $(n,k)\neq (2,3)$. Then $H_W=(1-nt+nt^{k-1}-t^k)^{-1}$ for the variety
$W=\{A_F:F\in{\cal P}_{n,k}\}$. Furthermore, if $\K$ is uncountable, then for a generic $F\in{\cal P}_{n,k}$, $A=A_F$ is an exact potential algebra and satisfies $H_A=(1-nt+nt^{k-1}-t^k)^{-1}$.
\end{lemma}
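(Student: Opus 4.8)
The plan is to realise $W$ as a variety of graded algebras in the sense of {\rm (\ref{VNG})}, to produce a single exact member of it using the explicit families of Examples~\ref{potex} and~\ref{potex1}, and then to let Lemma~\ref{commin01} (together with the semicontinuity package of Section~2) do the rest. First I would fix a $\K$-linear basis $F_1,\dots,F_d$ of the finite-dimensional space ${\cal P}_{n,k}$ (here $d=\dimk {\cal P}_{n,k}\geq 1$, since the Examples below already exhibit non-zero members), set $P_d=\K[t_1,\dots,t_d]$ and $F=t_1F_1+{\dots}+t_dF_d\in P_d\langle x_1,\dots,x_n\rangle$. Then $F$ is homogeneous of degree $k$; each specialisation $t_j=a_j$ produces an element of ${\cal P}_{n,k}\subseteq \K^{\rm cyc}\langle x_1,\dots,x_n\rangle$, hence a potential — in particular a twisted potential, with trivial twist by Lemma~\ref{gen1} — and the defining relations $\delta_{x_j}F$ of $A^a:=A_{F(a)}$ depend linearly, so polynomially, on $a$. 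Thus $W=\{A_F:F\in{\cal P}_{n,k}\}$ is precisely the variety $\{A^a\}_{a\in\K^d}$, and all the hypotheses of Lemma~\ref{commin01} are met save, possibly, for the existence of one exact member.

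To supply that member I would split into two cases covering all admissible $(n,k)$. If $k\geq n$, then $k\geq n\geq 2$, $k\geq 3$ and $(n,k)\neq(2,3)$, so Example~\ref{potex} applies and yields a potential in ${\cal P}_{n,k}$ whose potential algebra is exact. If instead $n>k$, then $n>k\geq 3$, so Example~\ref{potex1} applies and again yields an exact potential algebra $A_F$ with $F\in{\cal P}_{n,k}$. In either case there is $a_0\in\K^d$ with $A^{a_0}$ exact. Now Lemma~\ref{commin01} immediately gives both that $A^a$ is exact for generic $a\in\K^d$ — equivalently, that $A_F$ is exact for generic $F\in{\cal P}_{n,k}$ — and that $H_W=(1-nt+nt^{k-1}-t^k)^{-1}$; and by Lemma~\ref{commin}, exactness of the generic $A_F$ forces $H_{A_F}=(1-nt+nt^{k-1}-t^k)^{-1}$, which is the ``furthermore'' clause.

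One bookkeeping point remains: Lemma~\ref{commin01} assumes $\K$ uncountable, while the equality $H_W=(1-nt+nt^{k-1}-t^k)^{-1}$ is asserted for arbitrary $\K$. I would deal with this by noting that $h_m(W)=\min\{\dim A_m:A\in W\}$ is nothing but a maximal rank of matrices whose entries are polynomials in $t_1,\dots,t_d$ (this is exactly what Lemmas~\ref{miser} and~\ref{maxra} extract), hence equals the rank over the rational function field $\K(t_1,\dots,t_d)$ and is unaffected by enlarging the (algebraically closed, characteristic $\neq 2,3$) ground field; so for the $H_W$ statement one may assume $\K$ uncountable without loss. Alternatively, the explicit potential found above already witnesses $h_m(W)\leq [t^m](1-nt+nt^{k-1}-t^k)^{-1}$ over any such $\K$, and the matching lower bound is the generic statement pulled back. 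I do not expect a genuine obstacle inside this lemma: the real work sits in the two Examples (the diamond-lemma check, via Lemma~\ref{sy}, that the unique overlap resolves, giving $H_{A_F}=(1-nt+nt^{k-1}-t^k)^{-1}$ and no non-trivial right annihilators) and in the semicontinuity machinery already behind Lemma~\ref{commin01}; the only things to be careful about here are the exhaustiveness of the case split $k\geq n$ versus $n>k$ and the countability caveat just discussed.
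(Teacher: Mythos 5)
Your proposal is correct and follows essentially the same route as the paper, whose entire proof is the one-line combination of Examples~\ref{potex} and~\ref{potex1} (covering $k\geq n$ and $n>k$ respectively) with Lemma~\ref{commin01}. Your extra care about the uncountability hypothesis in Lemma~\ref{commin01} versus the unconditional $H_W$ claim is a reasonable point the paper leaves implicit, and your field-extension argument resolves it correctly.
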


\subsection{Algebras $A_F$ with $F\in{\cal P}^*_{2,3}$}

Note that the case $(n,k)=(2,3)$ is indeed an odd one out. It turns out that in this case there are no exact twisted potential algebras at all and the formula for the minimal series fails to follow the pattern as well. For aesthetic reasons, we use $x$ and $y$ instead of $x_1$ and $x_2$.

\begin{proposition}\label{commin2} There are just four pairwise non-isomorphic algebras in the variety $W=\{A_F:F\in {\cal P}_{2,3}\}$. These are the algebras corresponding to the potentials $F=0$, $F=x^3$, $F={xy^2}^\rcirclearrowleft$ and $F=x^3+y^3$. Their Hilbert series are $(1-2t)^{-1}$, $\frac{1+t}{1-t-t^2}$ and $\frac{1+t}{1-t}$ for the last two algebras. All algebras in $W$ are PBW, Koszul, infinite dimensional and non-exact.
\end{proposition}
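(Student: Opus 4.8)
The plan is to parametrise ${\cal P}_{2,3}$ explicitly and reduce the whole statement to a finite $GL_2(\K)$-orbit count. First I would record that the four cyclic orbit sums $x^3$, $y^3$, ${x^2y}^\rcirclearrowleft$, ${xy^2}^\rcirclearrowleft$ of the degree-$3$ monomials in $x,y$ form a basis of ${\cal P}_{2,3}$, so a general element is $F=\alpha x^3+\beta y^3+\gamma{x^2y}^\rcirclearrowleft+\delta{xy^2}^\rcirclearrowleft$. A direct computation gives $\delta_xF=\alpha x^2+\gamma(xy+yx)+\delta y^2$ and $\delta_yF=\gamma x^2+\delta(xy+yx)+\beta y^2$, so \emph{both} defining relations of $A_F$ always lie in the $3$-dimensional subspace $Q=\spann\{x^2,\,xy+yx,\,y^2\}$ of $V^{\otimes2}$, where $V=\spann\{x,y\}$; note $Q=\mathrm{Sym}^2V$. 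Thus $A_F=A(V,R)$ is a quadratic algebra whose relation space $R=\spann\{\delta_xF,\delta_yF\}$ satisfies $\dim R\leq2$ and $R\subseteq Q$. Since a graded isomorphism between quadratic algebras is precisely a linear substitution carrying one relation space onto the other, the graded isomorphism type of $A_F$ is determined by the $GL(V)$-orbit of $R\subseteq Q$, and everything reduces to classifying these orbits.

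The orbit count is the core of the argument. If $\dim R=0$ then $F=0$ and $A_F=\K\langle x,y\rangle$. If $\dim R=1$ then $F$ is degenerate; by Remark~\ref{rem0} (or, directly, because $\spann\{\delta_xF,\delta_yF\}$ is then spanned by a perfect square $(x+\lambda y)^2$ or by $y^2$) we get $R\sim\spann\{x^2\}$, so $A_F\cong\K\langle x,y\rangle/(x^2)$, and Remark~\ref{rem0} also yields $H_{A_F}=\bigl((1+t)^{-1}-t\bigr)^{-1}=\frac{1+t}{1-t-t^2}$. If $\dim R=2$ then $R=\ker\phi$ for a nonzero $\phi\in Q^*=\mathrm{Sym}^2V^*$, unique up to a scalar; as a binary quadratic form over the algebraically closed field $\K$, $\phi$ has rank $1$ or $2$, giving exactly the two orbits $R\sim\spann\{xy+yx,\,y^2\}$ (rank $1$) and $R\sim\spann\{x^2,y^2\}$ (rank $2$). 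These four orbits are realised by $F=0$, $x^3$, ${xy^2}^\rcirclearrowleft$ and $x^3+y^3$ respectively (the last two checked by substituting into the formulas above), so the four corresponding algebras are the complete list.

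For the Hilbert series and the PBW/Koszul claims I would exhibit quadratic Gröbner bases: $\varnothing$ for $\K\langle x,y\rangle$; $\{x^2\}$; $\{x^2,y^2\}$; and, ordering monomials by $x>y$ degree-lexicographically, $\{xy+yx,\,y^2\}$ — in each case the (only) overlap of leading monomials reduces to $0$ on both sides, so the relations already form a reduced Gröbner basis and the algebra is PBW, hence Koszul. Counting normal monomials gives the series $(1-2t)^{-1}$, $\frac{1+t}{1-t-t^2}$, and $\frac{1+t}{1-t}$ for the last two algebras (normal words: alternating words in $x,y$; respectively $y^{\varepsilon}x^{n}$ with $\varepsilon\in\{0,1\}$ — two of each positive degree). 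All four series have all coefficients positive, so all four algebras are infinite-dimensional. For non-exactness, Lemma~\ref{commin} forces an exact $A_F$ with $F\in{\cal P}^*_{2,3}$ to have Hilbert series $(1-2t+2t^2-t^3)^{-1}=1+2t+2t^2+t^3+0\cdot t^4+\cdots$, whose $t^4$-coefficient vanishes; since $\dim(A_F)_4\geq1$ for all four algebras, none of them is exact.

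It remains to check pairwise non-isomorphism. The Hilbert series $(1-2t)^{-1}$, $\frac{1+t}{1-t-t^2}$ and $\frac{1+t}{1-t}$ separate $\K\langle x,y\rangle$, $\K\langle x,y\rangle/(x^2)$ and the remaining pair, so only $A=\K\langle x,y\rangle/(x^2,y^2)$ and $B=\K\langle x,y\rangle/(xy+yx,\,y^2)$ have to be distinguished; for this I would use the isomorphism invariant ``number of lines $\K\ell\subseteq A_1$ with $\ell^2=0$ in $A_2$''. Writing $\ell=ax+by$ one has $\ell^2=a^2x^2+ab(xy+yx)+b^2y^2$, which in $A$ equals $ab(xy+yx)$ (nonzero unless $ab=0$: two such lines) and in $B$ equals $a^2x^2$ with $x^2\neq0$ in $B$ (nonzero unless $a=0$: one such line), so $A\not\cong B$. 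The one genuinely delicate point in the whole argument is the $GL(V)$-orbit classification of $R\subseteq Q$, together with verifying that $R$ always lies in $Q$ and that the degenerate case is forced to be a rank-one form; the rest is routine bookkeeping with $2$-generator Gröbner bases.
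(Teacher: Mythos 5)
Your proof is correct, but the classification step takes a genuinely different route from the paper's. The paper classifies the potentials themselves: it notes that $F$ is recovered from its abelianization $F^{\rm ab}$ (characteristic $\neq 3$), factors the binary cubic $F^{\rm ab}$ into three linear forms over the algebraically closed field, and reads off the four canonical forms $0$, $x^3$, $xy^2$, $x^3+y^3$ from the possible linear dependencies among the factors. You instead classify the algebras directly via the $GL(V)$-orbit of the relation space $R=\spann\{\delta_xF,\delta_yF\}\subseteq{\rm Sym}^2V$, identifying a $2$-dimensional $R$ by the rank ($1$ or $2$) of the annihilating binary quadratic form and checking separately that a $1$-dimensional $R$ is forced to be spanned by a perfect square. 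Both arguments are elementary; the paper's yields the finer statement that every potential is $GL_2$-equivalent to one of the four listed ones (not merely that every algebra in $W$ is isomorphic to one of the four, which is all the proposition asserts and all your argument gives), while yours is self-contained at the level of relation spaces and, usefully, supplies an explicit invariant — the number of lines $\K\ell\subseteq A_1$ with $\ell^2=0$ — separating the two algebras with Hilbert series $\frac{1+t}{1-t}$, a point the paper dismisses as ``easily seen''. The Gr\"obner-basis computations, the Hilbert series, the PBW/Koszulity conclusions and the non-exactness argument via Lemma~\ref{commin} coincide with the paper's.
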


\begin{proof} An $F\in {\cal P}_{2,3}$ has the form $F=ax^3+b{xy^2}^\rcirclearrowleft+cx^2y^\rcirclearrowleft+dy^3$ with $a,b,c,d\in\K$. Then the abelianization of $F$ is $F^{\rm ab}=ax^3+3bxy^2+3cx^2y+dy^3\in\K[x,y]$. Since $\K$ is not of characteristic $3$, $F$ recovers uniquely from its abelianization. Now since a degree $3$ cubic form on two variables is a product of three linear forms (provided $\K$ is algebraically closed), we see that by a linear substitution $F^{\rm ab}$ can be turned into one of the following forms $x^3$, $xy^2$ or $x^3+y^3$ unless it was zero to begin with. This corresponds to $F$ turning into one of the four potentials listed in the statement of the lemma by means of a linear substitution.  If $F=0$, $A_F$ is the free algebra on two generators and $H_A=(1-2t)^{-1}$. If $F=x^3$, $A_F$ is defined by one relation $x^2$, which forms a one-element Gr\"obner basis. In this case $H_{A_F}=\frac{1+t}{1-t-t^2}$. If $F={xy^2}^\rcirclearrowleft$ or $F=x^3+y^3$, the defining relations of $A_F$ are $xy+yx$ and $y^2$ or $x^2$ and $y^2$ respectively. Again, they form a Gr\"obner basis, yielding  $H_{A_F}=\frac{1+t}{1-t}$. Since the last two algebras are easily seen to be non-isomorphic and the Hilbert series of the first three are pairwise distinct, the four algebras are pairwise non-isomorphic. As all four algebras have quadratic Gr\"obner basis, they are PBW and therefore Koszul. Obviously, they are infinite dimensional. If any of these algebras were exact, Lemma~\ref{commin} would imply that its Hilbert series is $(1-2t+2t^{2}-t^3)^{-1}=(1-t)^{-1}(1-t+t^2)^{-1}$, which does not match any of the above series. Thus they are all non-exact.
\end{proof}

We extend the above proposition to include twisted potential algebras.

\begin{proposition}\label{commin22} Any non-potential twisted potential algebra $A$ on two generators given by a homogeneous degree $3$ twisted potential is isomorphic to either $A_G$ or $A_{G_\alpha}$ with $\alpha\in\K\setminus\{0,1\}$, where $G=x^2y-xyx+yx^2+y^3$ and $G_\alpha=x^2y+\alpha xyx+\alpha^2yx^2$. $A_G$ is non-isomorphic to any of $A_{G_\alpha}$ and $A_{G_\alpha}$ is non-isomorphic to $A_{G_\beta}$ if $\alpha\neq \beta$. Furthermore all these algebras are non-degenerate, infinite dimensional, non-exact, PBW, Koszul and have the Hilbert series $\frac{1+t}{1-t}$.
\end{proposition}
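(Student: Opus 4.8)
The plan is a case analysis organised by the twist of $F$, in the spirit of Remark~\ref{rer}. The first step is to reduce to the non-degenerate case: if $F$ is a degenerate twisted potential on two generators, then by Remark~\ref{rem0} it lies in the tensor algebra of a one-dimensional subspace of $\spann\{x,y\}$, so $F=\lambda z^3$ for a linear form $z$; but $z^3$ is cyclically invariant, hence $F$ is a potential, contrary to hypothesis. So every non-potential twisted potential in sight is non-degenerate, and so is $A_F$ in the sense of Remark~\ref{proper}. (A one-line inspection of the relations shows that $G$ and each $G_\alpha$, $\alpha\ne0$, are themselves non-degenerate, non-potential twisted potentials, with twists ${\rm diag}(-1,1)$ and ${\rm diag}(\alpha,\alpha^{-2})$ respectively, so both sides of the asserted classification make sense.)

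Since $F$ is non-degenerate it has a unique twist $M\in GL_2(\K)$ as in (\ref{qua}), and $M\ne{\rm Id}$ because $F$ is not cyclically invariant (Lemma~\ref{gen1}). By Remark~\ref{rer} a linear substitution conjugates $M$ arbitrarily, so I would put $M$ in Jordan form: $M={\rm diag}(\lambda,\mu)$ with $\lambda\mu\ne0$, or $M=\left(\begin{smallmatrix}\lambda&1\\ 0&\lambda\end{smallmatrix}\right)$ with $\lambda\ne0$. In each case, writing $F=\sum_w c_w w$ over the eight degree-$3$ words $w$, I would compare coefficients in the identity $F=x\,\delta_xF+y\,\delta_yF=(\delta^R_xF)x+(\delta^R_yF)y$ of Lemma~\ref{gen1}, together with $(\delta^R_xF,\delta^R_yF)^{\rm t}=M(\delta_xF,\delta_yF)^{\rm t}$. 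This is a small homogeneous linear system in the $c_w$; solving it shows that if $F\ne0$ then $M$ is forced to be one of ${\rm diag}(1,-1)$, ${\rm diag}(-1,1)$, ${\rm diag}(\lambda,\lambda^{-2})$ with $\lambda\notin\{0,1\}$, $\theta\,{\rm Id}$, $\theta^2\,{\rm Id}$, or a Jordan block with $\lambda\in\{\theta,\theta^2\}$, and that for each such $M$ the space of admissible $F$ is one- or two-dimensional.

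Next I would normalise $F$ within each family, using the residual substitutions (those whose transpose commutes with $M$: scalings and the swap $x\leftrightarrow y$ in the diagonal case, all of $GL_2(\K)$ in the scalar case) together with an overall rescaling, and recognise the outcome. For $M={\rm diag}(\pm1,\mp1)$ a short computation identifies the family with $\{A_G,A_{G_{-1}}\}$, using $G=G_{-1}+y^3$ and a rescaling of $y$. For $M={\rm diag}(\lambda,\lambda^{-2})$ with $\lambda^3\ne1$ the potential turns out to be a scalar multiple of $G_\lambda$, so $A_F=A_{G_\lambda}$. For the scalar and Jordan cases with $\lambda=\theta$ (and symmetrically $\theta^2$) the key observation is that the degree-$2$ relation space always contains the rank-one tensor $(px-qy)^2$ for a suitable linear form $px-qy$; changing basis so that this tensor becomes $x^2$ reduces matters to the case where $x^2$ is a relation and collapses the two-parameter family to $A_{G_\theta}$. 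After this step every $A_F$ is isomorphic to $A_G$ or to $A_{G_\alpha}$ for some $\alpha\in\K\setminus\{0,1\}$. For the structural properties I would observe that for $A_G=\K\langle x,y\rangle/(xy-yx,\,x^2+y^2)$ and each $A_{G_\alpha}=\K\langle x,y\rangle/(xy+\alpha yx,\,x^2)$ the two defining relations already form a Gr\"obner basis (with respect to a degree-lexicographic order, a short check of the two ambiguities), so $A_F$ is PBW, hence Koszul, with two normal words in each positive degree; this gives $H_{A_F}=\frac{1+t}{1-t}$, so $A_F$ is infinite-dimensional, and $A_F$ cannot be exact, since by Lemma~\ref{commin} an exact twisted potential algebra with $(n,k)=(2,3)$ would have Hilbert series $(1-2t+2t^2-t^3)^{-1}\ne\frac{1+t}{1-t}$.

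Finally, the non-isomorphism assertions would follow from simple invariants: $A_G\cong\K[u,v]/(uv)$ (via $u=x+iy$, $v=x-iy$) has no nonzero degree-$1$ nilpotent, whereas in $A_{G_\alpha}$ the degree-$1$ nilpotents are exactly the line $\K x$, so $A_G\not\cong A_{G_\alpha}$; and in $A_{G_\alpha}$, taking $x$ to span the nilpotent line and $y'$ any lift of a generator of $A_{G_\alpha}/(x)\cong\K[\overline{y'}]$, one gets $xy'=-\alpha\,y'x$ with $y'x\ne0$, so $\alpha$ is a graded-isomorphism invariant and $A_{G_\alpha}\cong A_{G_\beta}$ forces $\alpha=\beta$. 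The main obstacle is the bookkeeping in the scalar-twist cases $M=\theta\,{\rm Id}$ and $\theta^2\,{\rm Id}$: there the residual symmetry is the whole of $GL_2(\K)$ and $F$ varies in a two-dimensional space, so one cannot just classify $F$ up to a scalar — and, these algebras being non-proper, Lemma~\ref{gen2} supplies no rigidity, so genuinely different $F$'s do yield isomorphic algebras. The workable device there is precisely the $GL_2$-invariant rank-one tensor $(px-qy)^2$ in the relation space, which lets one reduce to the normalised form.
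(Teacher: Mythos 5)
Your proposal is correct and follows essentially the same route as the paper: reduce to the non-degenerate case, put the twist in Jordan normal form, solve the resulting linear system on the coefficients of $F$ to enumerate the admissible twists, normalise within each family, and read off the structural properties from a quadratic Gr\"obner basis together with Lemma~\ref{commin}. Your two local variations — using the (unique up to scalar) square of a linear form in the relation space to normalise the scalar-twist and Jordan-block families instead of the paper's explicit substitution $x\to x+\alpha y$, and distinguishing $A_G$ from the $A_{G_\alpha}$ by degree-one nilpotents rather than by squares in the space of quadratic relations — are sound but cosmetic.
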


\begin{proof} We know that $A=A_F$, where $F$ is the corresponding twisted potential. If $\delta_x F$ and $\delta_y F$ are linearly dependent, one easily sees that either $F=0$ or $F$ is the cube of a degree one homogeneous element. In either case $A$ is potential, which contradicts the assumptions. Thus $\delta_x F$ and $\delta_y F$ are linearly independent and therefore $F$ is non-degenerate. Let $M\in GL_2(\K)$ be the matrix providing the twist. By Remark~\ref{rer}, we can assume that $M$ is in Jordan normal form. Note that $M$ is not the identity matrix (otherwise $A$ is potential). Let $\alpha$ and $\beta$ be the eigenvalues of $M$ and $T(M)$ be the set of all homogeneous degree $3$ non-degenerate twisted potentials on two generators, whose twist is provided by $M$. One easily sees that this set is empty unless $1\in\{\alpha^2\beta,\alpha\beta^2\}$. Without loss of generality, we can assume that $\alpha^2\beta=1$. If $M$ is one Jordan block, we must have $\alpha=\beta$ and $\alpha^3=1$. If in this case $\alpha=1$, we again easily see that $T(M)$ is empty. Thus it remains to consider the following options for $M$:
$$
M=N_\alpha=\left(\begin{array}{ll} \alpha&1\\0&\alpha\end{array}\right)\ \text{with $\alpha^3=1\neq\alpha$\ \ and}\ \
M=M_\alpha=\left(\begin{array}{ll} \alpha&0\\0&\alpha^{-2}\end{array}\right)\ \text{with $\alpha\neq 1$.}
$$
In the case $M=M_\alpha$, $T(M)$ contains $x^2y+\alpha xyx+\alpha^2yx^2$ and consists only of its scalar multiples unless $\alpha^3=1$ or $\alpha=-1$. In the case $\alpha^3=1\neq\alpha$, $T(M)$ sits in the two-dimensional space spanned by $x^2y+\alpha xyx+\alpha^2yx^2$ and $xy^2+\alpha^2yxy+\alpha y^2x$. If $\alpha=-1$, $T(M)$ is contained in the two-dimensional space spanned by $x^2y-xyx+yx^2$ and $y^3$. In the case $M=N_\alpha$ with $\alpha^3= 1\neq\alpha$, $T(M)$ consists of scalar multiples of $\frac{x^3}{1-\alpha}+yx^2+\alpha x^2y+\alpha^2 xyx$. All this is obtained by translating (\ref{qua}) into a system of linear equations on coefficients of $F$ and solving it.

When $F=x^2y+\alpha xyx+\alpha^2yx^2=G_\alpha$, the defining relations of $A=A_F$ are $xx$ and $xy+\alpha yx$ with
$\alpha\in\K^*$, $\alpha\neq 1$. It is easy to see that different $\alpha$ correspond to non-isomorphic algebras. Indeed, a linear substitution providing an isomorphism must map $x$ to its scalar multiple ($xx$ is the only square among quadratic relations) and taking this into account, it is easy to see that $y$ also must be mapped to its own scalar multiple. Such a substitution preserves the space of defining relations and therefore $\alpha$ is an isomorphism invariant. It is easy to see that the defining relations form a Gr\"obner basis in the ideal of relations. Hence $A$ is PBW and Koszul. It is easy to see that $H_A=\frac{1+t}{1-t}$.

If $F=s(x^2y+\alpha xyx+\alpha^2yx^2)+t(xy^2+\alpha^2yxy+\alpha y^2x)$ with $s,t\in\K$, $(s,t)\neq(0,0)$ and $\alpha^3=1\neq\alpha$, we have options. If $st=0$, then by means of a scaling combined with the swap of $x$ and $y$ in the case $s=0$, we can transform $F$ into $G_\alpha$. If $st\neq 0$, a scaling reduces considerations to the case $t=s=1$. Then $F=x^2y+\alpha xyx+\alpha^2yx^2+xy^2+\alpha^2yxy+\alpha y^2x$. The substitution $x\to x+\alpha y$, $y\to y$, provides an isomorphism of $A=A_F$ and $A_{G_\alpha}$. It remains to consider $F=s(x^2y-xyx+yx^2)+ty^3$ with $s,t\in\K$. If $s=0$, $A$ is potential. Thus $s\neq 0$. If $t=0$ and $s\neq 0$, then up to a scalar multiple, $F=G_{-1}$. Thus we can assume that $st\neq0$. By a scaling, we can turn both $s$ and $t$ into $1$, which transforms $F$ into $G$. The defining relations of $A=A_F$ are now $xy-yx$ and $x^2+y^2$. This time the space of quadratic relations fails to contain a square of a degree one element and therefore the corresponding algebra is not isomorphic to any of $A_{G_\alpha}$. Again, the defining relations form a Gr\"obner basis in the ideal of relations. Hence $A$ is PBW and Koszul and $H_A=\frac{1+t}{1-t}$. This series fails to coincide with $(1-2t+2t^2-t^3)^{-1}$ and therefore none of these algebras is exact according to Lemma~\ref{commin}.
\end{proof}

Proposition~\ref{commin22} and Remark~\ref{rem0} provide a complete description of degenerate twisted potential non-potential algebras on three generators with homogeneous degree $3$ twisted potentials. Indeed, the latter are free products of algebras from Proposition~\ref{commin22} with the algebra of polynomials on one variable. This observation is recorded as follows.

\begin{lemma}\label{23} $A$ is a non-potential twisted potential algebra on three generators given by a homogeneous degree $3$ degenerate twisted potential if and only if $A$ is isomorphic to an algebra from {\rm (T22)} or {\rm (T23)} of Theorem~$\ref{main33-t}$. The algebras with different labels are non-isomorphic and the information in the table from Theorem~$\ref{main33-t}$ concerning {\rm (T22)} and {\rm (T23)} holds true.
\end{lemma}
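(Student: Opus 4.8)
The plan is to reduce everything to the already‑settled two‑generated case, using the free‑product description of degenerate twisted potential algebras in Remark~\ref{rem0}.

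\emph{Forward direction.} Let $A=A_F$ with $F$ a homogeneous degree $3$ degenerate twisted potential on $x_1,x_2,x_3$ and $m=\dim\spann\{\delta_{x_1}F,\delta_{x_2}F,\delta_{x_3}F\}<3$. By Remark~\ref{rem0}, after a linear substitution $A\cong B*\K\langle z_1,\dots,z_{3-m}\rangle$, where $B$ is the twisted potential algebra on $m$ generators with twisted potential $F$. If $m=0$ then $F=0$ by Lemma~\ref{gen1} and $A$ is the free algebra, which is a potential algebra (with zero potential); if $m=1$ then $F$ is a nonzero multiple of the cube of a degree one element, which is cyclicly invariant, so again $A$ is potential. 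Both cases are excluded, so $m=2$. If the two‑generated algebra $B$ were isomorphic to a potential algebra $A_{F'}$, then, regarding $F'$ in $\K\langle x_1,x_2,x_3\rangle$ and applying Remark~\ref{rem0} again, $A\cong A_{F'}*\K[z]$ would be potential — a contradiction. Hence $B$ is a non‑potential two‑generated twisted potential algebra with homogeneous degree $3$ twisted potential, and Proposition~\ref{commin22} gives $B\cong A_{G}$ or $B\cong A_{G_\alpha}$ with $\alpha\in\K\setminus\{0,1\}$, where $G_\alpha=x^2y+\alpha xyx+\alpha^2 yx^2$ and $G=x^2y-xyx+yx^2+y^3$. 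Putting back the free generator $z$, $A$ is isomorphic to $\K\langle x,y,z\rangle/(xy+\alpha yx,\,x^2)$ or $\K\langle x,y,z\rangle/(xy-yx,\,x^2+y^2)$, i.e.\ to an algebra from {\rm (T22)} (with $a=\alpha$) or {\rm (T23)}.

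\emph{Converse and table data.} Conversely, a direct computation of $\delta_{x_j}F$ and $\delta^R_{x_j}F$ for $F=G_a$ and $F=G$, viewed in $\K\langle x,y,z\rangle$, shows that these cubics are twisted potentials with $\spann\{\delta_{x_j}F\}=\spann\{\delta^R_{x_j}F\}$ two‑dimensional, that they are degenerate ($m=2$), and that they produce the relations listed in rows {\rm (T22)}, {\rm (T23)}. Their $B$‑parts are $A_{G_a}$ and $A_G$, which are PBW, Koszul and infinite dimensional with $H_B=\frac{1+t}{1-t}$ by Proposition~\ref{commin22}; by Remark~\ref{rem0}, $A=B*\K[z]$ is then PBW, Koszul and not exact, with Hilbert series $\bigl(H_B(t)^{-1}-t\bigr)^{-1}=\bigl(\frac{1-t}{1+t}-t\bigr)^{-1}=\frac{1+t}{1-2t-t^2}$, which is exactly the {\rm Y/Y/N} and Hilbert‑series data recorded in the table.

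\emph{Non‑isomorphism, including non‑potentiality.} The free‑product decomposition is an isomorphism invariant: for each of these algebras the two‑dimensional subspace $U\subseteq A_1$ spanned by the active generators is recovered as the image of $R=\ker(A_1\otimes A_1\to A_2)$ under either tensor‑factor projection, and $B\cong A(U,R)$. Hence any graded isomorphism between algebras from {\rm (T22)}/{\rm (T23)} restricts to one between the corresponding $B$'s; since $A_{G_a}$ are pairwise non‑isomorphic for distinct $a$ and $A_G\not\cong A_{G_\alpha}$ for all $\alpha$ (Proposition~\ref{commin22}), the rows {\rm (T22)}, {\rm (T23)} with distinct parameters are pairwise non‑isomorphic, which is the ``trivial'' isomorphism entry. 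Finally, if some algebra from {\rm (T22)}, {\rm (T23)} were isomorphic to a potential algebra $A_{F'}$ then, $A_{F'}$ being quadratic, $F'\in{\cal P}_{3,3}$; $F'$ cannot be non‑degenerate by Remark~\ref{proper}, so it is degenerate, its restriction to the relevant subspace is still cyclicly invariant, and comparing Hilbert series its $B$‑part $B'$ is a two‑generated potential algebra with $H_{B'}=\frac{1+t}{1-t}$; but $B'\cong B$ is then $A_{G_a}$ or $A_G$, which is not among the two‑generated potential algebras with this Hilbert series listed in Proposition~\ref{commin2} (one way to see this: the isomorphism invariant $\dim(R\cap S)$, with $S$ the subspace of symmetric $2$‑tensors, equals $1$ for $A_{G_a}$ and $A_G$ and $2$ for those potential algebras), a contradiction. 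The only genuinely delicate point is the isomorphism‑faithfulness of the free‑product decomposition, i.e.\ the canonicity of $U$; everything else is bookkeeping over Propositions~\ref{commin2}, \ref{commin22} and Remark~\ref{rem0}.
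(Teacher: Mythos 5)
Your proposal is correct and follows essentially the same route as the paper: the paper's entire justification for this lemma is the observation that Remark~\ref{rem0} reduces the degenerate three-generator case to the two-generator classification of Proposition~\ref{commin22}, with the algebras in (T22)--(T23) being the free products of the algebras $A_{G_a}$, $A_G$ with $\K[z]$. Your write-up in fact supplies more detail than the paper does (the recovery of the two-dimensional subspace $U$ from the relation space as an isomorphism invariant, the Hilbert-series computation $\bigl(\tfrac{1-t}{1+t}-t\bigr)^{-1}=\tfrac{1+t}{1-2t-t^2}$, and the $\dim(R\cap S)$ invariant ruling out potentiality), all of which checks out.
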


\subsection{Lower estimate for $P_A$, $A$ being a potential algebra}

The methods we develop and apply in this section work for many varieties of twisted potential algebras. We restrict ourselves to potential algebras for the sake of clarity. The main objective of this section is to prove Theorem~\ref{main-growth}. For $n,k,m\in\N$ such that $n\geq 2$ and $m\geq k\geq 3$, denote
$$
{\cal P}_{n,k}^{(m)}=\{F\in \K^{\rm cyc}\langle x_1,\dots,x_n\rangle:F_j=0\ \text{for $j<k$ and for $j>m$}\}.
$$
Clearly, ${\cal P}_{n,k}^{(m)}$ is a vector space and ${\cal P}_{n,k}^{(k)}={\cal P}_{n,k}$. Recall that for $j\in\Z_+$ and $F\in {\cal P}_{n,k}^{(m)}$, $A_F^{(j)}$ is the quotient of $A_F$ by the ideal generated by the monomials of degree $j+1$.

\begin{lemma}\label{abc} Let $n,k\in\N$, $n\geq 2$, $m\geq k\geq 3$ and $(n,k)\neq (2,3)$ and $F\in {\cal P}_{n,k}^{(m)}$. Assume also that $x_1a\neq 0$ in $A_{F_k}$ for every non-zero $a\in A_{F_k}$. Then for each $j\in\Z_+$, $x_1b\neq 0$ in $A_F^{(j+1)}$ for every $b\in \K\langle x_1,\dots,x_n\rangle$ such that $b\neq 0$ in $A_F^{(j)}$.
\end{lemma}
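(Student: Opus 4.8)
The plan is to recast the statement as the injectivity, for every $j\ge0$, of the (well-defined) linear map $\mu_j\colon A_F^{(j)}\to A_F^{(j+1)}$, $b\mapsto x_1b$, and to prove this by induction on $j$. Throughout, let $R=\K\langle x_1,\dots,x_n\rangle$, let $I$ be the ideal of relations of $A=A_F$, generated by $\rho_i=\delta_{x_i}F$ ($1\le i\le n$), write $r_i=\delta_{x_i}F_k$ (homogeneous of degree $k-1$) and $\rho_i'=\delta_{x_i}(F-F_k)$, so $\rho_i=r_i+\rho_i'$, and let $I_0=(r_1,\dots,r_n)$ be the ideal of relations of $A_{F_k}$. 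Since $I$ is generated by elements of order $\ge k-1\ge2$, one checks that $A_F^{(j)}\cong R_{\le j}/I^{\le j}$, where $R_{\le j}$ is the space of polynomials of degree $\le j$ and $I^{\le j}=\{g_{\le j}:g\in I\}$ is the space of degree-$\le j$ truncations of elements of $I$. The base case $j=0$ is immediate: $I^{\le1}=0$ (relations have order $\ge2$), so $A_F^{(1)}=\K\oplus\spann\{x_1,\dots,x_n\}$ and for $b$ with nonzero constant term $b_0$ one gets $x_1b=b_0x_1\ne0$ there.

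For the inductive step I would fix $j\ge1$, assume $\mu_{j-1}$ injective, and take $b$ with $x_1b=0$ in $A_F^{(j+1)}$; truncating, $\deg b\le j$, and $x_1b=p_{\le j+1}$ for some $p\in I$. Then $x_1(b_{\le j-1})=(x_1b)_{\le j}=p_{\le j}\in I^{\le j}$, so by the inductive hypothesis $b_{\le j-1}=q_{\le j-1}$ for some $q\in I$; since $q_{\le j}\in I^{\le j}$, one computes $[b]=[b_j-q_j]$ in $A_F^{(j)}$, hence $[x_1b]=[x_1(b_j-q_j)]$ in $A_F^{(j+1)}$. Replacing $b$ by $b_j-q_j$, we may therefore assume $b$ is homogeneous of degree $j$; now $x_1b=p_{\le j+1}$ with $p\in I$ forces $\mathrm{ord}\,p=j+1$ and $\mathrm{low}\,p=x_1b$, where $\mathrm{low}\,g$ denotes the lowest-degree homogeneous component of a nonzero $g$ and $\mathrm{ord}\,g$ its degree.

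The key reduction is then to show $x_1b\in I_0$. Granting this, the hypothesis that $x_1$ acts injectively on $A_{F_k}$ gives $b\in I_0$; writing $b=\sum u_\alpha r_{i_\alpha}v_\alpha$ with $u_\alpha,v_\alpha$ homogeneous of total degree $j-(k-1)$, the element $\sum u_\alpha\rho_{i_\alpha}v_\alpha\in I$ has degree-$\le j$ truncation equal to $b$, so $b\in I^{\le j}$, i.e. $b=0$ in $A_F^{(j)}$, as needed. To prove $x_1b\in I_0$ I would fix a representation $p=\sum_\alpha u_\alpha\rho_{i_\alpha}v_\alpha$ with homogeneous $u_\alpha,v_\alpha$, set $e_\alpha=\deg u_\alpha+(k-1)+\deg v_\alpha$ and $e=\min_\alpha e_\alpha$. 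Since each $\rho_i$ has order exactly $k-1$ with lowest component $r_i$, only terms with $e_\alpha=e$ contribute to $p_e$, and each contributes $u_\alpha r_{i_\alpha}v_\alpha$, so $p_e=\sum_{e_\alpha=e}u_\alpha r_{i_\alpha}v_\alpha\in I_0$. If $e=j+1$ then $p_e=\mathrm{low}\,p=x_1b$ and we are done. If $e<j+1=\mathrm{ord}\,p$, then $p_e=0$, i.e. $\sum_{e_\alpha=e}u_\alpha r_{i_\alpha}v_\alpha=0$ is a homogeneous syzygy of the defining relations of $A_{F_k}$; using $\rho_{i_\alpha}=r_{i_\alpha}+\rho_{i_\alpha}'$ one rewrites $\sum_{e_\alpha=e}u_\alpha\rho_{i_\alpha}v_\alpha$ as an element of $I$ of order $\ge e+1$, and the aim is to turn this into a representation of $p$ (mapping onto the same $x_1b$) with strictly larger $e$; iterating, $e$ reaches $\mathrm{ord}\,p=j+1$ and the previous case applies.

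The hard part is precisely this descent: it requires that every homogeneous syzygy $\sum u_\alpha r_{i_\alpha}v_\alpha=0$ of the relations of $A_{F_k}$ lift to the relations of $A_F$, i.e. that $\sum u_\alpha\rho_{i_\alpha}v_\alpha$ admit a representation by the $\rho_i$ in which $\min e_\alpha$ exceeds $e$; equivalently, that the associated graded of $A_F$ for the degree filtration coincide with $A_{F_k}$. This is where I expect to use both that $F$ is a potential — so the canonical syzygy $\sum_i x_ir_i=\sum_i r_ix_i$ of Lemma~\ref{gen1} lifts to $\sum_i x_i\rho_i=\sum_i\rho_ix_i$ — and the hypothesis on $A_{F_k}$, which rules out nontrivial right annihilators and thereby forces exactness of the relevant portion of the complex (\ref{comq}) for $F_k$. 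A parallel route I would keep in reserve is to deform: set $F^{(t)}=\sum_{l=k}^{m}t^{l-k}F_l$, so that $\{A_{F^{(t)}}\}_{t\in\K}$ is a variety of finitely presented algebras with $A_{F^{(0)}}=A_{F_k}$ and $A_{F^{(t)}}\cong A_F$ for $t\ne0$ (via $x_i\mapsto t^{-1}x_i$); Lemma~\ref{miser1} makes $d_j$ generic, hence attained for some $t\ne0$, and a semicontinuity argument for the ranks of the maps $A_{F^{(t)}}^{(j)}\to A_{F^{(t)}}^{(j+1)}$ in the spirit of Lemma~\ref{maxra} would again reduce the claim to $t=0$, where $\mu_j$ is injective simply because $A_{F_k}$ is graded and $x_1$ acts injectively on it. In either approach the genuine content is the flatness-type control over how the relations of $A_{F_k}$ deform.
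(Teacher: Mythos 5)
Your overall strategy is the paper's: write $x_1b$ modulo $J^{(j+1)}$ as an element of the relation ideal $I$ of $A_F$, compare lowest-degree homogeneous parts, use the hypothesis (in the form $x_1c\in I_0\Rightarrow c\in I_0$) to pull the relevant component of $b$ into $I_0$, subtract a lift, and iterate. The induction on $j$ and the reduction to a homogeneous $b$ of top degree are correct. But the proposal is not a proof, because the step you yourself label ``the hard part'' --- that the lowest-degree component of every element of $I$ lies in $I_0$, equivalently that every homogeneous syzygy $\sum u_\alpha r_{i_\alpha}v_\alpha=0$ of the relations of $A_{F_k}$ can be absorbed so as to strictly raise the minimal degree $e$ of the representation --- is left entirely open, and neither of your two suggested completions is carried out. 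Moreover, the first suggestion rests on an implication that does not hold: the absence of non-trivial right annihilators in $A_{F_k}$ only gives injectivity of $d_3$ in (\ref{comq}), i.e.\ exactness at the leftmost $A$; the syzygies you need to control live at the adjacent $A^n$ term, and by Lemma~\ref{commin} exactness there requires in addition that $H_{A_{F_k}}$ be the minimal series $(1-nt+nt^{k-1}-t^k)^{-1}$, which is not among the hypotheses of the lemma. So ``the hypothesis forces exactness of the relevant portion of (\ref{comq})'' is precisely what you may not assume. The deformation route via $F^{(t)}$ is a reasonable alternative plan (in the spirit of Lemmas~\ref{miser1}, \ref{maxra} and~\ref{geni}), but it needs a rank-semicontinuity statement for the truncations $A^{(j)}$ of \emph{filtered} rather than graded algebras, which you would have to formulate and prove; as written it is a sketch, not an argument.

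For comparison: the paper's proof does not reduce to homogeneous $b$. It keeps the offending element $a$ inhomogeneous, writes $x_1a\equiv\sum u_\alpha\,(\delta_{x_{s(\alpha)}}F)\,v_\alpha$ modulo $J^{(j+1)}$ with $m=\min\deg(u_\alpha v_\alpha)$, and extracts the degree-$(m+k-1)$ component of this congruence, which reads $x_1a_{m+k-2}=\sum_{\deg u_\alpha v_\alpha=m}u_\alpha\,(\delta_{x_{s(\alpha)}}F_k)\,v_\alpha\in I_0$ \emph{whether or not this component is the lowest non-zero one}; the hypothesis then gives $a_{m+k-2}\in I_0$, one subtracts a lift $\sum f_p\,(\delta_{x_{t(p)}}F)\,g_p$ to kill $a_{m+k-2}$ without changing the class of $a$ in $A_F^{(j)}$, and repeats, removing the components of $a$ from the bottom up until $a\in I+J^{(j)}$, a contradiction. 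So the paper applies the hypothesis degree by degree to the components of $a$ rather than trying to establish the single statement ``the initial ideal of $I$ is contained in $I_0$'' for a homogeneous $b$; you should adopt that formulation, and then examine carefully why each pass removes a genuinely new component (this is where the question you flagged about representations whose lowest part is a syzygy reappears, and where your write-up currently stops).
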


\begin{proof} Assume the contrary. Then there exist $j\in\Z_+$ and $a\in \K\langle x_1,\dots,x_n\rangle$ such that $a\neq 0$ in $A_F^{(j)}$ and $x_1a=0$ in $A_F^{(j+1)}$. The latter means that
\begin{equation*}
x_1a=\sum_{j\in N} u_jr_{s(j)}v_j\,({\rm mod}\,\,J^{(j+1)}),
\end{equation*}
where $r_j=\delta_{x_j}F$, $N$ is a finite set, $s$ is a map from $N$ to $\{1,\dots n\}$, $u_j,v_j$ are non-zero homogeneous elements of $\K\langle x_1,\dots,x_n\rangle$ such that the degree of each $u_jv_j$ does not exceed $j-k+2$ and the equality $f=g\,({\rm mod}\,\,J)$ means $f-g\in J$. Let $m$ be the smallest degree of $u_jv_j$ and $N'=\{j\in N:\deg u_jv_j=m\}$. Then the smallest degree part of the above display reads
$$
x_1a_{m+k-2}=\sum_{j\in N'} u_j\rho_{s(j)}v_j\ \ \text{in $\K\langle x_1,\dots,x_n\rangle$,}
$$
where $\rho_j=\delta_{x_j}F_k$. Note that automatically $a_q=0$ for $q<m+k-2$. The condition imposed upon $A_{F_k}$ means that the ideal $K$ generated by $\rho_1,\dots,\rho_n$ satisfies $x_1b\in K\,\Longrightarrow b\in K$. Hence, by the above display,
$$
a_{m+k-2}=\sum_{p\in M} f_p\rho_{t(p)}g_p,
$$
where $M$ is a finite set $t$ is a map from $M$ to $\{1,\dots n\}$, $f_p,g_p$ are non-zero homogeneous elements of $\K\langle x_1,\dots,x_n\rangle$ such that the degree of each $f_pg_p$ is $m-1$. Now we replace $a$ by
$$
a'=a-\sum_{p\in M} f_pr_{t(p)}g_p.
$$
Note that $a=a'$ in $A_F$ and therefore $a=a'$ in $A_F^{(j)}$ and $x_1a=x_1a'$ in $A_F^{(j+1)}$. So $a'$ satisfies the same properties as $a$ with the only essential difference being that $a'_{m+k-2}=0$. Now we can repeat the process chipping off the homogeneous degree-components of $a$ from bottom up one by one until at the final step we arrive to a contradiction with $a\neq 0$ in $A_F^{(j)}$.
\end{proof}

\begin{lemma}\label{geni} Let $\K$ be uncountable, $n,k\in\N$, $n\geq 2$, $k\geq 3$ and $(n,k)\neq (2,3)$. Then for a generic $F\in{\cal P}_{n,k}$, $x_1a\neq 0$ in $A_F$ for every non-zero $a\in A_F$.
\end{lemma}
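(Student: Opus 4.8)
The plan is to exhibit a single potential in ${\cal P}_{n,k}$ for which left multiplication by $x_1$ is injective, and then to propagate this property to a generic member of the variety by a rank-maximality argument. Fix $d=\dimk{\cal P}_{n,k}$ and identify ${\cal P}_{n,k}$ with $\K^d$ via a choice of basis $F^{(1)},\dots,F^{(d)}$. Since the defining relations $\delta_{x_j}F$ depend linearly on the coordinates of $F$, the family $\{A_F:F\in\K^d\}$ is a variety of graded algebras in the sense of (\ref{VNG}). By Lemma~\ref{commin1}, $H_W=(1-nt+nt^{k-1}-t^k)^{-1}$ for $W=\{A_F:F\in{\cal P}_{n,k}\}$.

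First I would invoke the explicit potentials constructed in Examples~\ref{potex} and~\ref{potex1}. Since $(n,k)\neq(2,3)$, exactly one of $k\geq n$ or $n>k$ holds, so one of these two examples applies and produces an $F_0\in{\cal P}_{n,k}$ such that $A_{F_0}$ is exact and $x_1u\neq0$ in $A_{F_0}$ for every non-zero $u\in A_{F_0}$. By Lemma~\ref{commin}, exactness of $A_{F_0}$ forces $H_{A_{F_0}}=(1-nt+nt^{k-1}-t^k)^{-1}=H_W$; in particular $\dim(A_{F_0})_m=h_m(W)$ for every $m\in\Z_+$.

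Now fix $m\in\Z_+$. By Lemma~\ref{miser}, the set $U^{(m)}=\{a\in\K^d:\dim A^a_m=h_m(W)\text{ and }\dim A^a_{m+1}=h_{m+1}(W)\}$ is a non-empty Zariski-open subset of $\K^d$, and $F_0\in U^{(m)}$. I would then apply Lemma~\ref{maxra} with $p=q=1$, $r=1$, the $1\times1$ matrix $\Lambda=(x_1)$, the integer $m$ in the role of $k$, and the open set $U=U^{(m)}$. This yields that the set $W_m=\{a\in U^{(m)}:\rho(m,a)=\rho_{\max}(m)\}$ is Zariski-open in $\K^d$, where $\rho(m,a)$ is the rank of the left-multiplication map $A^a_m\to A^a_{m+1}$, $u\mapsto x_1u$. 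Since on $U^{(m)}$ the source has dimension $h_m(W)$, one has $\rho(m,a)\leq h_m(W)$ always, whereas for $a=F_0$ the map is injective, so $\rho(m,F_0)=h_m(W)$. Hence $\rho_{\max}(m)=h_m(W)$ and $F_0\in W_m$, so $W_m$ is non-empty, and for every $a\in W_m$ the map $u\mapsto x_1u$ is injective on $A^a_m$.

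Finally, since $\K$ is uncountable, the intersection $\bigcap_{m\geq0}W_m$ is generic in $\K^d$: each $W_m$ is a non-empty Zariski-open subset of $\K^d$, its complement is a proper algebraic subvariety, and the complement of the intersection is the union of these countably many proper subvarieties. For any $F$ in $\bigcap_{m\geq0}W_m$, left multiplication by $x_1$ is injective on every graded component $A_{F,m}$; since $A_F$ is graded and $x_1$ is homogeneous of degree $1$, it follows that $x_1a\neq0$ in $A_F$ for every non-zero $a\in A_F$. The only point requiring care is that the rank comparison must be carried out inside the locus where the Hilbert series is minimal, which is precisely why the exactness of $A_{F_0}$ (giving $H_{A_{F_0}}=H_W$ through Lemma~\ref{commin}) is used; with that in place the remainder is a routine assembly of Lemmas~\ref{miser} and~\ref{maxra} together with the uncountability of $\K$.
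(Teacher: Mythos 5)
Your proposal is correct and follows essentially the same route as the paper: produce $F_0$ from Examples~\ref{potex}/\ref{potex1}, use Lemma~\ref{commin1} (equivalently Lemma~\ref{miser}) to restrict to the locus of minimal Hilbert series, and apply Lemma~\ref{maxra} to the left-multiplication map $u\mapsto x_1u$ to propagate injectivity to a generic $F$. Your write-up merely makes explicit the degree-by-degree intersection of the Zariski-open sets $W_m$, which the paper leaves implicit.
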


\begin{proof} Let $F_0$ be the potential provided by the appropriate (depending on whether $k\geq n$ or $k<n$) Example~\ref{potex} or Example~\ref{potex1}. Then $x_1a\neq 0$ in $A_{F_0}$ for every non-zero $a\in A_{F_0}$ and $H_{A_{F_0}}=(1-nt+nt^{k-1}-t^k)^{-1}$. Lemma~\ref{commin1} guarantees that $H_{A_F}=(1-nt+nt^{k-1}-t^k)^{-1}$ for generic $F\in{\cal P}_{n,k}$. Applying Lemma~\ref{maxra} to the map $a\mapsto x_1a$ from $A_F$ to $A_F$, we now see that $\dim x_1(A_F)_j\geq \dim x_1(A_{F_0})_j$ for all $j$ for generic $F\in{\cal P}_{n,k}$. Since $\dim x_1(A_{F_0})_j=\dim (A_{F_0})_j=\dim (A_{F})_j$ for generic $F$, the map $a\mapsto x_1a$ from $A_F$ to itself is injective for generic $F$.
\end{proof}

\begin{lemma}\label{abc1} Let $n,k\in\N$, $n\geq 2$, $m\geq k\geq 3$ and $(n,k)\neq (2,3)$, $F\in {\cal P}_{n,k}^{(m)}$ and $A=A_F$. Then $P_{A}\geq (1-t)^{-1}(1-nt+nt^{k-1}-t^k)^{-1}$. \end{lemma}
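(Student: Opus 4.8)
The plan is to reduce, by a genericity argument, to a deformation-flatness statement that is controlled by Lemma~\ref{abc}. Since $d_j(A_F)=\dim A_F^{(j)}$ is computed by linear algebra over $\K$ and is unchanged under extension of scalars, and $F$ stays in ${\cal P}^{(m)}_{n,k}$ over any extension, I may assume $\K$ uncountable. View $\mathcal W=\{A_G:G\in {\cal P}^{(m)}_{n,k}\}$ as a variety of finitely presented algebras with parameter space ${\cal P}^{(m)}_{n,k}$. By Lemma~\ref{miser1}, $P_{A_F}\geq P_{\mathcal W}$ coefficientwise, and $P_{A_G}=P_{\mathcal W}$ for generic $G$; moreover $P_{A_G}=(1-t)^{-1}H_{\mathrm{gr}\,A_G}$, where $\mathrm{gr}\,A_G$ is the associated graded algebra of $A_G$ for the degree filtration (via the identity $H_{\mathrm{gr}\,A_G}=\sum_j(d_j-d_{j-1})t^j$). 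So it suffices to find a generic $G\in{\cal P}^{(m)}_{n,k}$ with $H_{\mathrm{gr}\,A_G}\geq(1-nt+nt^{k-1}-t^k)^{-1}$: for such a $G$ we get $P_{\mathcal W}=P_{A_G}\geq(1-t)^{-1}(1-nt+nt^{k-1}-t^k)^{-1}$, hence $P_{A_F}\geq(1-t)^{-1}(1-nt+nt^{k-1}-t^k)^{-1}$.

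For generic $G$ the degree-$k$ part $G_k$ is generic in ${\cal P}_{n,k}$, so by Lemma~\ref{commin1} $A_{G_k}$ is exact with Hilbert series $(1-nt+nt^{k-1}-t^k)^{-1}$, and by Lemma~\ref{geni} multiplication by $x_1$ is injective on $A_{G_k}$ (alternatively $G_k$ could be taken to be the explicit potential of Example~\ref{potex} or~\ref{potex1}, according as $k\geq n$ or $k<n$). By Lemma~\ref{abc} the map $A_G^{(j)}\to A_G^{(j+1)}$ given by multiplication by $x_1$ is then injective for every $j$. Since $\mathrm{gr}\,A_G$ is a quotient of $A_{G_k}$ (its defining ideal contains the lowest-degree parts $\delta_{x_i}G_k$ of $\delta_{x_i}G$), one has $\dim(\mathrm{gr}\,A_G)_j\leq\dim(A_{G_k})_j$ automatically; the task is the reverse inequality, i.e. that $\mathrm{gr}\,A_G\cong A_{G_k}$ (flatness of the deformation), which would give $H_{\mathrm{gr}\,A_G}=(1-nt+nt^{k-1}-t^k)^{-1}$ exactly.

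Flatness I would establish by a truncated dimension count. Level-by-level injectivity of $x_1$ makes $d_3$ injective on the relevant truncations of the complex~(\ref{comq}); by Lemma~\ref{gen3} this complex is exact at its three rightmost terms; and the defining relations of $A_G$ have order $k-1$ while~(\ref{syz1}) supplies a syzygy of order $k$. Feeding these into the (truncated) Euler characteristic of~(\ref{comq}) should yield $d_j(A_G)\geq\sum_{i\le j}h_i$, where $\sum_i h_i t^i=(1-nt+nt^{k-1}-t^k)^{-1}$, i.e. $\dim(\mathrm{gr}\,A_G)_j\geq h_j$, which with the easy reverse bound is equality. Equivalently, one can argue that $\delta_{x_1}G,\dots,\delta_{x_n}G$ form a Gröbner (standard) basis of their ideal for the degree filtration: because $A_{G_k}$ is exact, (\ref{comq}) is its minimal free resolution of $\K$, so the only essential syzygy of $\delta_{x_i}G_k$ beyond the trivial ones is the order-$k$ one coming from~(\ref{syz1}) (cf. Lemma~\ref{sy} and Remark~\ref{syzz}); this syzygy lifts to $A_G$ because $\sum_j x_j(\delta_{x_j}G)=\sum_j(\delta_{x_j}G)x_j$ holds in $\K\langle x_1,\dots,x_n\rangle$ by Lemma~\ref{gen1} ($G$ being a potential) and its lowest-degree part is precisely~(\ref{syz1}) for $G_k$, while the trivial syzygies lift trivially; hence $\mathrm{gr}\,I_G=(\delta_{x_i}G_k)$.

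I expect the flatness step — the truncated Euler-characteristic count, or equivalently the lifting-of-syzygies argument — to be the main obstacle: the complex~(\ref{comq}) is a complex of filtered (not graded) modules whose differentials are not strict for the filtration, so turning ``exact except at one term'' into a clean recursion for $\dim A_G^{(j)}$, dually turning ``all syzygies lift'' into $\mathrm{gr}\,A_G\cong A_{G_k}$, requires careful tracking of lowest-degree parts. This is exactly where the exactness of $A_{G_k}$ and the sharp level-by-level statement of Lemma~\ref{abc} do the work, and the genericity reduction of the first paragraph is what confines this delicate analysis to the case in which $G_k$ is known to be exact and $x_1$-injective.
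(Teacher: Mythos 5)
Your setup coincides with the paper's: pass to an uncountable ground field, use Lemma~\ref{miser1} to replace $F$ by a generic $G$ realizing the minimal series $P_W$, use Lemma~\ref{geni} to arrange that $x_1$ acts injectively on $A_{G_k}$, and invoke Lemma~\ref{abc} to get injectivity of $u\mapsto x_1u$ from $A_G^{(j)}$ to $A_G^{(j+1)}$ for every $j$. Your identification of $\sum_j(d_j-d_{j-1})t^j$ with the Hilbert series of the algebra defined by the lowest-degree forms of the ideal of relations is also correct, and you have correctly located the crux: since that ideal of initial forms contains the ideal generated by $\delta_{x_i}G_k$, the inclusion only yields $d_j-d_{j-1}\leq\dim(A_{G_k})_j$, i.e.\ the wrong direction, so the whole lemma rides on the ``flatness'' claim that the initial ideal is exactly the one generated by the $\delta_{x_i}G_k$ (or at least on the corresponding dimension inequality).

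That claim is precisely where the proof has to happen, and you do not prove it; both of your candidate routes are left as expectations. Route (a) founders on the difficulty you yourself name: for inhomogeneous $G$ the sequence (\ref{comq}) is a complex of filtered, not graded, modules, and its ``truncated Euler characteristic'' is undefined until the bookkeeping with the truncations $A_G^{(j)}$ has actually been carried out. Route (b) rests on the assertion that exactness of (\ref{comq}) for $A_{G_k}$ forces the \emph{bimodule} of syzygies of $\delta_{x_1}G_k,\dots,\delta_{x_n}G_k$ to be generated by the trivial syzygies together with the one coming from (\ref{syz1}); this does not follow formally from exactness of a one-sided resolution of $\K$ and requires a separate argument (it is the ``S-triviality'' which the paper derives from minimality of the Hilbert series in the proof of Lemma~\ref{genii}). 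The paper closes the gap by a direct count instead: using the level-by-level $x_1$-injectivity of Lemma~\ref{abc} it builds monomial bases $N_j$ of the truncations with $N_{j+1}\supseteq x_1N_j$, writes the degree-$\leq j{+}1$ truncation of the ideal as $V\cdot(\text{previous truncation})$ plus the relations multiplied on the right by normal words of degree $\leq j{+}2{-}k$, uses the identity $\sum_j[x_j,\delta_{x_j}G]=0$ of Lemma~\ref{gen1} to discard the products of $r_1$ with normal words beginning with $x_1$, and so obtains a recursive lower bound for $d_{j+1}$ in terms of $d_j$, $d_{j+2-k}$, $d_{j+1-k}$ which matches the recursion satisfied by the Taylor coefficients of $(1-t)^{-1}(1-nt+nt^{k-1}-t^k)^{-1}$ and forces equality. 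That count is the quantitative substitute for your flatness statement; without it, or a completed version of your route (b), the argument is not a proof.
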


\begin{proof} First, observe that exchanging the ground field $\K$ for a field extension does not affect the series $P_A$. Thus we can without loss of generality assume that $\K$ is uncountable. For $j\in\Z_+$, let $b_j$ be Taylor coefficients of the rational function $Q(t)=(1-t)^{-1}(1-nt+nt^{k-1}-t^k)^{-1}$ (that is, $Q(t)=\sum b_jt^j$) and $a_j=\min\{\dim A_G^{(j)}:G\in {\cal P}_{n,k}^{(m)}\}$. The proof will be complete if we show that $a_j=b_j$ for all $j\in\Z_+$. Denote $P=\sum a_jt^j$. First, note that Examples~\ref{potex} and~\ref{potex1}, provide $G\in {\cal P}_{n,k}\subseteq {\cal P}_{n,k}^{(m)}$ for which $H_G=(1-nt+nt^{k-1}-t^k)^{-1}$. It immediately follows that $P_G=Q$. By definition of $P$ (minimality of $a_j$), we then have $P\leq Q$, that is, $a_j\leq b_j$ for all $j\in\Z_+$.

By Lemmas~\ref{miser1} and~\ref{geni}, for a generic $G\in {\cal P}_{n,k}^{(m)}$, we have that $P_{A_G}=P$ and $x_1a\neq 0$ in $A_{G_k}$ for every non-zero $a\in A_{G_k}$. In particular, we can pick a single $G\in {\cal P}_{n,k}^{(m)}$ such that for $B=A_G$, $P_B=P$ and $x_1a\neq 0$ in $A_{G_k}$ for every non-zero $a\in A_{G_k}$. According to Lemma~\ref{abc}, we then have that for each $j\in\Z_+$, $x_1b\neq 0$ in $B^{(j+1)}$ for every $b\in \K\langle x_1,\dots,x_n\rangle$ such that $b\neq 0$ in $B^{(j)}$.
This property allows us to pick inductively (starting with $M_0=\{1\}$) sets $M_j$ of monomials of degree $j$ such that $M_{j+1}\supseteq x_1M_j$ and $N_j=M_0\cup{\dots}\cup M_j$ is a linear basis in $B^{(j)}$ for each $j\in\Z_+$. For every $j$, let $B^+_j$ be the linear span of $N_j$ and $B^{++}_j$ be the linear span of $N_j\setminus x_1N_{j-1}$ in $\K\langle x_1,\dots,x_n\rangle$. Clearly $P_B=\sum (\dim B^+_j)t^j$ and therefore $a_j=\dim B^+_j$ for all $j\in\Z_+$. Let also $\pi^{(j)}$ be the natural projection of $\K\langle x_1,\dots,x_n\rangle$ onto the linear span of monomials of length $\leq j$ along $J^{(j)}$. As usual, let $V$ be the linear span of $x_1,\dots,x_n$, $r_j=\delta_{x_j}G$, $R$ be the linear span of $r_1,\dots,r_n$ and $I$ be the ideal generated by $r_1,\dots,r_n$ (=the ideal of relations of $B$). For the sake of brevity denote $\Phi=\K\langle x_1,\dots,x_n\rangle$. Obviously,
$I=VI+R\Phi$. Then $\pi^{(j+1)}(I)=V\pi^{(j)}(I)+\pi^{(j+1)}(R\Phi)$ for every $j\in\Z_+$. Using the definition of $B_j^+$ and the fact that each $r_j$ starts at degree $\geq k-1$, we obtain
$$
\pi^{(j+1)}(I)=V\pi^{(j)}(I)+\pi^{(j+1)}(RB^+_{j+2-k}).
$$
Since by Lemma~\ref{gen1}, $\sum[x_j,r_j]=0$ in $\Phi$, we can get rid of $r_1x_1$:
$$
V\pi^{(j)}(I)+\pi^{(j+1)}(RB^+_{j+2-k})=V\pi^{(j)}(I)+\pi^{(j+1)}(R'B^+_{j+2-k}+r_1B^{++}_{j+2-k})
$$
where $R'$ is the linear span of $r_2,\dots,r_n$. Thus
$$
\pi^{(j+1)}(I)=V\pi^{(j)}(I)+\pi^{(j+1)}(R'B^+_{j+2-k}+r_1B^{++}_{j+2-k}).
$$
Hence
$$
\begin{array}{l}
\dim \pi^{(j+1)}(I)\leq \dim V\pi^{(j)}(I)+\dim R'B^+_{j+2-k}+\dim r_1B^{++}_{j+2-k}
\\ \qquad\qquad =n\dim \pi^{(j)}(I)+(n-1)\dim B^+_{j+2-k}+\dim B^{++}_{j+2-k}.
\end{array}
$$
Plugging the equalities $\dim B^+_j=a_j$, $\dim B^{++}_j=a_j-a_{j-1}$ (assume $a_{s}=0$ for $s<0$) and $\dim \pi^{(j)}(I)=1+n+{\dots}+n^j-a_j$ into the inequality in the above display, we get
$$
1+{\dots}+n^{j+1}-a_{j+1}\leq n+{\dots}+n^{j+1}-na_j+na_{j+2-k}-a_{j+1-k}.
$$
Hence $a_{j+1}\geq na_j-na_{j+2-k}+a_{j+1-k}-1$ for $j\in\Z_+$. On the other hand, it is easy to see that the Taylor coefficients $b_j$ of $Q$ satisfy $b_{j+1}=nb_j-nb_{j+2-k}+b_{j+1-k}-1$ for $j\geq k-1$. It is also elementary to verify that $a_j=b_j$ for $0\leq j\leq k-1$. Now for $c_j=b_j-a_j$, we have $c_j=0$ for $0\leq j\leq k-1$, $c_j\geq 0$ for $j\geq k$ and $c_{j+1}\leq nc_j-nc_{j+2-k}+c_{j+1-k}$ for $j\geq k-1$. The only sequence satisfying these conditions is easily seen to be the zero sequence. Hence $a_j=b_j$ for all $j\in\Z_+$, which completes the proof.
\end{proof}

Now Theorem~\ref{main-growth} is a direct consequence of Lemma~\ref{abc1}. Indeed, every potential $F$ on $n$ variables starting in degree $\geq k$ belongs to ${\cal P}_{n,k}^{(m)}$ for $m$ large enough and Lemma~\ref{abc} kicks in providing at least cubic growth of $A_F$ in the case $(n,k)=(3,3)$ or $(n,k)=(2,4)$ and exponential growth otherwise.
We end this section with another observation concerning the growth of potential algebras. We say that $F\in \K^{\rm cyc}\langle x_1,\dots,x_n\rangle$ is {\it S-trivial} if the module of syzigies of $A_F$ presented by generators $x_1,\dots,x_n$ and relations $r_1,\dots,r_n$ with $r_j=\delta_{x_j}F$ is generated by trivial syzigies and the syzigy $\sum [x_j,\widehat{r}_j]$ provided by Lemma~\ref{gen1}.

\begin{lemma}\label{genii} Let $\K$, $n,k,m\in\N$ be such that $n\geq 2$, $m\geq k\geq 3$, $(n,m)\neq (2,3)$, $F\in {\cal P}_{n,k}^{(m)}$ and $A=A_F$. Assume also that $H_{A_{F_m}}=(1-nt+nt^{m-1}-t^m)^{-1}$. Then $P^*_{A}=(1-t)^{-1}(1-nt+nt^{m-1}-t^m)^{-1}$.
\end{lemma}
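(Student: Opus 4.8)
The plan is to pass from $A=A_F$ to its associated graded algebra for the filtration by total degree and to identify that graded algebra with $A_{F_m}$. Write $\Phi=\K\langle x_1,\dots,x_n\rangle$, filtered by degree, and let $B=\mathrm{gr}\,A$. For any filtered algebra the image in $A$ of the span of monomials of degree $\leq d$ has dimension $\sum_{j\leq d}\dim B_j$, so $P^*_A(t)=(1-t)^{-1}H_B(t)$; hence the lemma is equivalent to the equality $H_B=(1-nt+nt^{m-1}-t^m)^{-1}$, which by hypothesis is the same as $H_B=H_{A_{F_m}}$.

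One inequality is cheap. Since $F\in{\cal P}^{(m)}_{n,k}$, each relation $r_j=\delta_{x_j}F$ of $A$ is supported in degrees $k-1,\dots,m-1$ with top-degree component $\delta_{x_j}F_m$, a defining relation of $A_{F_m}$; here $F_m\neq0$, and indeed $F_m$ is non-degenerate and proper, because by Lemma~\ref{gen2} the hypothesis on $H_{A_{F_m}}$ forces $\dim(A_{F_m})_{m-1}=n^{m-1}-n$ and $\dim(A_{F_m})_m=n^m-2n^2+1$; so the $\delta_{x_j}F_m$ really are the leading forms of the $r_j$. Consequently the ideal $\mathrm{gr}\,I$ of leading forms of the ideal of relations of $A$ contains the ideal generated by $\delta_{x_1}F_m,\dots,\delta_{x_n}F_m$, which gives a surjection of graded algebras $A_{F_m}\twoheadrightarrow B$ and hence $H_B\leq H_{A_{F_m}}$, that is, $P^*_A\leq(1-t)^{-1}(1-nt+nt^{m-1}-t^m)^{-1}$.

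The real content is that this surjection is an isomorphism, i.e. that $\mathrm{gr}\,I$ acquires no leading forms beyond the ideal $(\delta_{x_j}F_m)$. A new generator of $\mathrm{gr}\,I$ can only come from a cancellation of the top-degree parts of a $\K$-combination $\sum_\alpha c_\alpha u_\alpha r_{i_\alpha}v_\alpha\in I$, and such a cancellation is precisely a syzygy of $A_{F_m}$ (Remark~\ref{syzz}). The trivial syzygies of $A_{F_m}$ manifestly contribute nothing, and the distinguished syzygy $\sum_j[x_j,\delta_{x_j}F_m]=0$, valid because $F_m$ is a potential (Lemma~\ref{gen1}), lifts: since $F$ itself is a potential, $\sum_j[x_j,\delta_{x_j}F]=0$ holds in $\Phi$ and its leading form is exactly $\sum_j[x_j,\delta_{x_j}F_m]$, so this relation among the $\delta_{x_j}F_m$ is realized by an honest identity among the $r_j$ and again produces no new generator of $\mathrm{gr}\,I$. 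To turn this into a proof one needs that, modulo the trivial syzygies, $\sum_j[x_j,\widehat r_j]$ \emph{generates} the module of syzygies of $A_{F_m}$ --- equivalently, in the terminology introduced just before the lemma, that $F_m$ is $S$-trivial. This is exactly where the assumption on $H_{A_{F_m}}$ is used: by Lemma~\ref{commin1} (applicable since $(n,m)\neq(2,3)$) the series $(1-nt+nt^{m-1}-t^m)^{-1}$ is the minimal Hilbert series in the family ${\cal P}_{n,m}$, so $A_{F_m}$ realizes it; feeding this, together with the properness of $F_m$ already noted, into the dimension bookkeeping of the complex (\ref{comq}) for $A_{F_m}$ exactly as in the proof of Lemma~\ref{commin} forces $A_{F_m}$ to be exact, and exactness (via Lemma~\ref{sy} and Remark~\ref{syzz}) yields $S$-triviality. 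Feeding this back, $\mathrm{gr}\,I=(\delta_{x_j}F_m)$, so $B=A_{F_m}$ and $P^*_A=(1-t)^{-1}H_{A_{F_m}}=(1-t)^{-1}(1-nt+nt^{m-1}-t^m)^{-1}$.

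I expect the main obstacle to be precisely the chain ``extremal Hilbert series $\Longrightarrow$ exact $\Longrightarrow$ $S$-trivial'' for $A_{F_m}$, together with the bookkeeping that upgrades ``the distinguished syzygy lifts'' to ``no new leading form appears in any degree'': degrees $m$ and $m+1$ are controlled directly by the properness of $F_m$ and the rank computations for $d_2$ (as in Lemma~\ref{commin}), but cancellations coming from arbitrarily high-degree terms of $I$ require the full syzygy statement. A possible cleaner repackaging of the same content is to filter the complex (\ref{comq}) for $A$ by assigning its free generators the degrees $0,1,m-1,m$, to observe that its associated graded is the complex (\ref{comq}) for $A_{F_m}=B$ (which is exact by the above), to conclude that (\ref{comq}) for $A$ is a strict filtered free resolution of $\K$, and to read off $(1-t)^{-1}=P^*_A(t)\bigl(1-nt+nt^{m-1}-t^m\bigr)$ by comparing the filtered Euler characteristics of the degree $\leq d$ pieces.
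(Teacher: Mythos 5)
Your overall strategy---identify ${\rm gr}\,A$ with $A_{F_m}$ by showing that every syzygy of the leading forms $\delta_{x_j}F_m$ lifts to a syzygy of the actual relations $r_j=\delta_{x_j}F$, thereby reducing everything to $S$-triviality of $F_m$---is essentially the paper's own proof in different clothing: the paper phrases the same conclusion as ``the reduced Gr\"obner basis of the ideal of relations of $A_{F_m}$ consists exactly of the top-degree components of the reduced Gr\"obner basis for $A_F$, so the normal words coincide.'' Your reduction $P^*_A=(1-t)^{-1}H_{{\rm gr}\,A}$, the cheap inequality via the surjection $A_{F_m}\twoheadrightarrow {\rm gr}\,A$, the lifting of the trivial syzygies, and the lifting of the distinguished syzygy $\sum_j[x_j,\widehat r_j]$ via Lemma~\ref{gen1} are all fine.

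The gap is in how you obtain $S$-triviality of $F_m$. You argue ``extremal Hilbert series (plus properness) $\Longrightarrow$ $A_{F_m}$ exact $\Longrightarrow$ $S$-trivial,'' justifying the first implication by saying the bookkeeping is ``exactly as in the proof of Lemma~\ref{commin}.'' But Lemma~\ref{commin} does not assert that the minimal Hilbert series forces exactness: both its statement and its proof require, in addition, that the algebra have no non-trivial right annihilators, i.e.\ that $d_3$ be injective; only then does the Euler-characteristic count close up the one remaining spot. The Hilbert series hypothesis alone only fixes the Euler characteristics of the graded slices of (\ref{comq}), and since there are two positions to the left of where exactness is automatic (Lemma~\ref{gen3}), a failure of injectivity of $d_3$ in some degree can be exactly compensated by homology at the left $A^n$ without disturbing any Euler characteristic. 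You never establish the absence of right annihilators for $A_{F_m}$ (it is not among the hypotheses of the lemma), so the chain breaks at its first link; the same objection applies to the ``cleaner repackaging'' at the end, which again presupposes exactness of (\ref{comq}) for $A_{F_m}$. The paper bypasses exactness entirely: it deduces $S$-triviality directly from the hypothesis $H_{A_{F_m}}=(1-nt+nt^{m-1}-t^m)^{-1}$ by noting that this is the \emph{minimal} Hilbert series over ${\cal P}_{n,m}$ (Lemma~\ref{commin1}), and any syzygy outside the submodule generated by the trivial ones and the distinguished one would lower the dimension of some component of the ideal of relations and hence push the corresponding component of $A_{F_m}$ strictly above the minimum. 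Substituting that direct dimension count for your exactness detour repairs the argument; as written, the key step is unproved.
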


\begin{proof} First, we observe that if $G\in {\cal P}_{n,m}$ and $H_{A_G}=(1-nt+nt^{m-1}-t^m)^{-1}$, then $G$ is $S$-trivial. Indeed, otherwise an 'extra' syzigy will 'drop' the dimension of the corresponding component of the ideal of relations thus increasing the dimension of the component of the algebra compared to the minimal Hilbert series $(1-nt+nt^{m-1}-t^m)^{-1}$.
Now we equip monomials in $x_1,\dots,x_n$ with a well-ordering compatible with multiplication such that monomials of greater degree are always greater (for instance, we can use a degree-lexicographical ordering). We proceed to compare the corresponding reduced Gr\"obner basis in the ideals of relations for $A_F$ and $A_{F_m}$. Since $A_{F_m}$ is $S$-trivial (by the above observation) and $\sum [x_j,\widehat{r}_j]$ is a syzigy for $A_F$ anyway, it is easy to see that the elements of the reduced Gr\"obner basis in the ideal of relations for $A_{F_m}$ are exactly the highest degree components of the elements of the reduced Gr\"obner basis in the ideal of relations for $A_{F}$. In particular, the leading monomials are the same and the exact same overlaps resolve. Hence normal words for $A_F$ and $A_{F_m}$ are the same and therefore $P^*_{A}=P^*_{A_{F_m}}$. Since $H_{A_{F_m}}=(1-nt+nt^{m-1}-t^m)^{-1}$, we have $P^*_{A_{F_m}}=(1-t)^{-1}(1-nt+nt^{m-1}-t^m)^{-1}$ and the result follows.
\end{proof}

\section{Potential algebras $A_F$ for $F\in{\cal P}_{2,4}$}

Throughout this section we equip the monomials in $x,y$ with the left-to-right degree-lexicographical ordering assuming $x>y$. The following statement is elementary.

\begin{lemma}\label{cyab24}
The kernel of the canonical homomorphism from $\K\langle x,y\rangle$ onto $\K[x,y]$ $(=$abelianization$)$ intersects ${\cal P}_{2,4}$ by the one-dimensional space spanned by ${x^2y^2}^\rcirclearrowleft-xyxy^\rcirclearrowleft$.
\end{lemma}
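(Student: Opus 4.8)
The plan is a straightforward dimension count. First I would rewrite ${\cal P}_{2,4}$ as the space $\K^{\rm cyc}\langle x,y\rangle_4$ of degree~$4$ elements of $\K\langle x,y\rangle$ fixed by the cyclic shift $C$. Since $C$ permutes the $16$ degree~$4$ monomials in $x,y$, this fixed space has as a basis the orbit sums, and there are exactly six $C$-orbits of degree~$4$ monomials: $\{x^4\}$, $\{y^4\}$, the orbit of $x^3y$ (of size $4$), the orbit of $xy^3$ (of size $4$), the orbit of $x^2y^2$ (of size $4$), and the orbit of $xyxy$ (of size $2$, consisting of $xyxy$ and $yxyx$). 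Thus $\dim{\cal P}_{2,4}=6$, with basis $x^4$, $y^4$, ${x^3y}^\rcirclearrowleft$, ${xy^3}^\rcirclearrowleft$, ${x^2y^2}^\rcirclearrowleft$, $xyxy^\rcirclearrowleft$; note that $xyxy^\rcirclearrowleft=2(xyxy+yxyx)$ precisely because the orbit of $xyxy$ has size $2$ rather than $4$.

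Next I would compute the images of these six basis vectors under the abelianization map $\K\langle x,y\rangle_4\to\K[x,y]_4$: one gets $x^4\mapsto x^4$, $y^4\mapsto y^4$, ${x^3y}^\rcirclearrowleft\mapsto 4x^3y$, ${xy^3}^\rcirclearrowleft\mapsto 4xy^3$, ${x^2y^2}^\rcirclearrowleft\mapsto 4x^2y^2$ and $xyxy^\rcirclearrowleft\mapsto 4x^2y^2$. Since the characteristic of $\K$ is different from $2$, each of $x^4$, $x^3y$, $x^2y^2$, $xy^3$, $y^4$ lies in the image, so the restriction of the abelianization map to ${\cal P}_{2,4}$ is onto the five-dimensional space $\K[x,y]_4$. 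Hence the intersection of ${\cal P}_{2,4}$ with the kernel of the abelianization, being the kernel of this restricted map, has dimension $6-5=1$.

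Finally I would point to an explicit nonzero kernel element: from the list of images, ${x^2y^2}^\rcirclearrowleft$ and $xyxy^\rcirclearrowleft$ both abelianize to $4x^2y^2$, so ${x^2y^2}^\rcirclearrowleft-xyxy^\rcirclearrowleft$ lies in the kernel; expanding it gives $x^2y^2+xy^2x+y^2x^2+yx^2y-2xyxy-2yxyx$, which is visibly nonzero in $\K\langle x,y\rangle$ (for example $xyxy$ appears with coefficient $-2$). Since the kernel is one-dimensional, it is spanned by ${x^2y^2}^\rcirclearrowleft-xyxy^\rcirclearrowleft$, which is the assertion. I do not expect a genuine obstacle here: the argument is elementary linear algebra, and the only point needing care is the scalar bookkeeping in the abelianizations — in particular the fact that the short orbit of $xyxy$ forces the coefficient $4$ (and not $2$) in the abelianization of $xyxy^\rcirclearrowleft$, which is precisely what makes it coincide with the abelianization of ${x^2y^2}^\rcirclearrowleft$ and thereby produces the kernel vector.
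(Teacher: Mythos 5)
Your argument is correct: the orbit count gives $\dim{\cal P}_{2,4}=6$, the abelianization images you list do span the $5$-dimensional space $\K[x,y]_4$ (using ${\rm char}\,\K\neq 2$ so that $4\neq 0$), and the explicit element ${x^2y^2}^\rcirclearrowleft-xyxy^\rcirclearrowleft$ is nonzero, so the kernel is exactly its span. The paper states this lemma without proof as "elementary," and your dimension count is precisely the intended routine verification, including the correct bookkeeping for the short orbit of $xyxy$.
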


We shall use the following observation on a number of occasions.

\begin{lemma}\label{smth}Let $F\in{\cal P}^*_{2,4}$ be such that $x^2y^2$ and $yx^2y$ are in $F$ with non-zero coefficients, while the monomials $x^4$, $x^3y$, $x^2yx$ and $yx^3$ do not occur in $F$. Then $A=A_F$ is exact and $H_A=(1+t)^{-1}(1-t)^{-3}$. Similarly, if $F\in{\cal P}^*_{2,4}$ contains $y^2x^2$ and $xy^2x$ with non-zero coefficients, while $y^4$, $y^3x$, $y^2xy$ and $xy^3$ do not feature in $F$, then $A=A_F$ is exact and $H_A=(1+t)^{-1}(1-t)^{-3}$.
\end{lemma}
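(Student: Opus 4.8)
The plan is to identify the leading monomials of the two defining relations $r_1=\delta_xF$ and $r_2=\delta_yF$ with respect to the left-to-right degree-lexicographical order ($x>y$) in force in this section, and then to appeal to Lemmas~\ref{sy} and~\ref{commin}. The coefficient of a monomial $v$ in $\delta_xF$ equals the coefficient of $xv$ in $F$, so the coefficient of $xy^2$ in $r_1$ is the coefficient of $x^2y^2$ in $F$, which is non-zero by hypothesis; the only degree $3$ monomials exceeding $xy^2$ are $x^3$, $x^2y$ and $xyx$, whose coefficients in $r_1$ are the coefficients of $x^4$, $x^3y$ and $x^2yx$ in $F$, all of which vanish by hypothesis. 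Hence the leading monomial of $r_1$ is $xy^2$. Symmetrically, the coefficient of $x^2y$ in $r_2=\delta_yF$ equals the coefficient of $yx^2y$ in $F$ and is non-zero, whereas the coefficient of the only larger degree $3$ monomial $x^3$ in $r_2$ equals the coefficient of $yx^3$ in $F$ and vanishes; hence the leading monomial of $r_2$ is $x^2y$.

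Next I would observe that $\{x^2y,\,xy^2\}$ admits exactly one overlap, namely $x^2y^2$, of degree $4$, and that neither monomial divides the other. By Lemma~\ref{sy} this overlap resolves, so $\{r_1,r_2\}$ is a reduced Gr\"obner basis of the ideal of relations of $A$, and the normal words of $A$ are exactly the monomials in $x,y$ containing no occurrence of $x^2y$ or $xy^2$. A direct count of such words (equivalently: the Anick chains are the two degree $1$ generators, the two degree $3$ obstructions and the single degree $4$ overlap $x^2y^2$, and there are no chains of greater length since no obstruction begins with $y$ and so cannot be glued beyond the tail $xy^2$ of the $2$-chain) yields $H_A(t)^{-1}=1-2t+2t^3-t^4=(1+t)(1-t)^3$, i.e. $H_A=(1+t)^{-1}(1-t)^{-3}$, which is precisely the series $(1-nt+nt^{k-1}-t^k)^{-1}$ of Lemma~\ref{commin} for $n=2$, $k=4$.

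To finish it remains to show that $A$ has no non-trivial right annihilator. The crucial point is that both leading monomials $x^2y$ and $xy^2$ begin with $x$, so no element of the reduced Gr\"obner basis has a leading monomial beginning with $y$; consequently, for every normal word $w$ the word $yw$ is again in normal form, and therefore left multiplication by $y$ sends the basis of normal words of $A$ injectively into itself. In particular $u\mapsto yu$ is injective on $A$, so any right annihilator, being in particular killed by $y$, is zero. Lemma~\ref{commin} then gives that $A$ is exact; alternatively one can bypass Lemma~\ref{commin} by noting that the Anick resolution, which by the chain count above has the shape $0\to A\to A^2\to A^2\to A\to\K\to0$, coincides with the complex $(\ref{comq})$. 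The second assertion follows from the first by applying the substitution interchanging $x$ and $y$: it swaps $\delta_x$ with $\delta_y$, carries $x^2y^2$ to $y^2x^2$ and $yx^2y$ to $xy^2x$, and sends the excluded set $\{x^4,x^3y,x^2yx,yx^3\}$ to $\{y^4,y^3x,y^2xy,xy^3\}$, so its hypotheses are exactly the image of the hypotheses of the first statement.

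I expect the only genuinely delicate part to be the bookkeeping in the first paragraph — checking that the four monomials excluded from $F$ are exactly those whose removal is needed to make $xy^2$ (resp. $x^2y$) the leading term of $r_1$ (resp. $r_2$) — together with the short verification that the Anick chains terminate, so that the Hilbert series computation is complete; everything after that is routine once the leading monomials and the single overlap are in hand.
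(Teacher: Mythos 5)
Your proof is correct and follows essentially the same route as the paper's: identify the leading monomials $xy^2$ and $x^2y$ of $\delta_xF$ and $\delta_yF$, invoke Lemma~\ref{sy} on the unique overlap $x^2y^2$ to get a quadratic-free (degree $3$) Gr\"obner basis, read off $H_A=(1+t)^{-1}(1-t)^{-3}$, observe that no leading monomial begins with $y$ so left multiplication by $y$ is injective and there are no non-trivial right annihilators, and conclude exactness from Lemma~\ref{commin}. The only cosmetic difference is that you count via Anick chains where the paper simply lists the normal words $y^n(xy)^mx^k$; both give the same series.
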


\begin{proof} The two statements are clearly equivalent (just swap $x$ and $y$). Thus we may assume that $x^2y^2$ and $yx^2y$ are in $F$ with non-zero coefficients and $F$, while the monomials $x^4$, $x^3y$, $x^2yx$ and $yx^3$ do not occur in $F$. Then $xy^2$ is the leading monomial of $\delta_xF$, while $x^2y$ is the leading monomial of $\delta_yF$. Since these monomials exhibit just one overlap $x^2y^2=(x^2y)y=x(xy^2)$, Lemma~\ref{sy} implies that $\delta_xF$ and $\delta_yF$ form a Gr\"obner basis in the ideal of relations of $A$. It immediately follows that $H_A=(1+t)^{-1}(1-t)^{-3}$ (the corresponding normal words are $y^n(xy)^mx^k$ with $n,m,k\in\Z_+$) and that $A$ has no non-trivial right annihilators (there is no leading monomials of a Gr\"obner basis starting with $y$ and therefore the map $u\mapsto yu$ from $A$ to $A$ is injective). By Lemma~\ref{commin}, $A$ is exact.
\end{proof}

\begin{lemma}\label{nco2-4} Let $F\in{\cal P}_{2,4}$. Then by means of a linear substitution $F$ can be turned into one of the following forms$:$

\medskip
\noindent $\begin{array}{ll}\text{{\rm (H1)}\ \ $F=0;$\qquad{\rm (H2)}\ \ $F=x^4;$\qquad}&\text{{\rm (H6)}\ \ $F=x^4+y^4;$}\\
\text{{\rm (H3)}\ \ $F=x^4+\frac12xyxy^\rcirclearrowleft;\qquad$}&\text{{\rm (H7)}\ \ $F=x^3y^\rcirclearrowleft+{x^2y^2}^\rcirclearrowleft-xyxy^\rcirclearrowleft;$}\\
\text{{\rm (H4)}\ \ $F=\frac12xyxy^\rcirclearrowleft;$}&\text{{\rm (H8)}\ \ $F_a=x^4+{x^2y^2}^\rcirclearrowleft+\frac{a}{2}xyxy^\rcirclearrowleft$ with $a\in\K;$}\\
\text{{\rm (H5)}\ \ $F=x^3y^\rcirclearrowleft;$}&\text{{\rm (H9)}\ \ $F_a={x^2y^2}^\rcirclearrowleft+\frac{a}{2}xyxy^\rcirclearrowleft$ with $a\in\K;$}\\
\text{\rlap{{\rm \!\!\!(H10)}\ \ $F_{a,b}=x^4+a{x^2y^2}^\rcirclearrowleft\!\!+bxyxy^\rcirclearrowleft\!\!+y^4$ with $a,b\in\K$, $4(a+b)^2\neq1$, $(a,b)\notin\bigl\{(0,0),\bigl(1,\frac12\bigr),\bigl(-1,-\frac12\bigr)\bigr\}$.}}&
\end{array}$
\medskip

Moreover, to which of the above $10$ forms $F$ can be turned into is uniquely determined by $F$. Similarly, the parameter $a$ in {\rm (H8)} and {\rm (H9)} is uniquely determined by $F$. As for the last option, $F_{a,b}$ and $F_{a',b'}$ can be obtained from one another by a linear substitution if and only if they belong to the same orbit of the group action generated by two involutions $(a,b)\mapsto (-a,-b)$ and $(a,b)\mapsto\left(\frac{1-2b}{1+2a+2b},\frac{1-2a+2b}{2(1+2a+2b)}\right)$. This group has $6$ elements and is isomorphic to $S_3$.
\end{lemma}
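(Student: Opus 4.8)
The plan is to run everything through the abelianization map $\pi\colon\K\langle x,y\rangle\to\K[x,y]$, which is equivariant under linear substitutions, and then to feed the output into the three preparatory Lemmas~\ref{co2-4}, \ref{cyab24} and~\ref{S3}. First I would apply Lemma~\ref{co2-4} to the binary quartic $\pi(F)=F^{\rm ab}$, bringing it by a linear substitution to one of the forms (C1)--(C6), and carry that substitution over to $F$ itself. By Lemma~\ref{cyab24} the kernel of $\pi$ on ${\cal P}_{2,4}$ is the one-dimensional space $\K w$ with $w={x^2y^2}^\rcirclearrowleft-xyxy^\rcirclearrowleft$; hence, with $F^{\rm ab}$ now fixed, $F=F_0+\lambda w$ for a single scalar $\lambda\in\K$, where $F_0$ is a fixed cyclic lift of the chosen representative of $F^{\rm ab}$ (for instance $F_0=x^4$ in (C2), $F_0=\frac14{x^3y}^\rcirclearrowleft$ in (C3), $F_0=x^4+\frac14{x^2y^2}^\rcirclearrowleft$ in (C5), and $F_0=x^4+\frac c4{x^2y^2}^\rcirclearrowleft+y^4$ for the representative $x^4+cx^2y^2+y^4$ in (C6)). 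The whole problem is thereby reduced to normalising $\lambda$.

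For the normalisation I would exploit, case by case, two sources of freedom: diagonal substitutions $x\mapsto\alpha x$, $y\mapsto\delta y$ --- which act on the four relevant cyclic monomials by explicit scalars, in particular on $\K w$ by $\alpha^2\delta^2$, and which move between different representatives of a given type --- and multiplication of $F$ by an arbitrary nonzero constant, available since $\K$ is algebraically closed (every constant is a fourth power). In each case a direct computation shows that the subgroup fixing the representative of $F^{\rm ab}$ up to a scalar sends $F_0$ to a scalar multiple of itself with no $w$-component, so that the only coarse invariant of the fibre over $F^{\rm ab}$ is the vanishing of $\lambda$, while a diagonal substitution together with a rescaling lets one send a chosen nonzero coefficient to a prescribed value. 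Reading this off yields exactly the listed forms: (C1) gives $F=0$, i.e.\ (H1), or $F=w$, i.e.\ the $a=-2$ member of the family (H9); (C2) gives $x^4$, i.e.\ (H2), or $x^4+w$, i.e.\ the $a=-2$ member of (H8); (C3) gives $x^3y^\rcirclearrowleft$, i.e.\ (H5), or $x^3y^\rcirclearrowleft+w$, i.e.\ (H7); (C4) gives, according to whether the coefficient of ${x^2y^2}^\rcirclearrowleft$ vanishes, either $\frac12xyxy^\rcirclearrowleft$, i.e.\ (H4), or --- after rescaling that coefficient to $1$ --- the family (H9) with $a\neq-2$; (C5) gives, according to the same alternative, either $x^4+\frac12xyxy^\rcirclearrowleft$, i.e.\ (H3), or --- after a diagonal substitution $y\mapsto\delta y$ --- the family (H8) with $a\neq-2$; and (C6) gives $F_{a,b}=x^4+a{x^2y^2}^\rcirclearrowleft+bxyxy^\rcirclearrowleft+y^4$ with $4(a+b)^2\neq1$.

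For uniqueness I would argue as follows. The type of $F^{\rm ab}$ among (C1)--(C6), together with the condition $F=0$, is preserved by linear substitutions (last clause of Lemma~\ref{co2-4}); since the ten listed forms fall into groups indexed by this data, two forms from different groups are inequivalent. Within a group: in (C1)--(C3) the two possibilities are separated by whether $\lambda=0$; in (C4) and (C5) one checks that the relevant stabiliser (all diagonal and anti-diagonal substitutions in (C4), and $x\mapsto\alpha x$, $y\mapsto\pm\alpha y$ in (C5)) acts \emph{trivially} on $\lambda$, so that $\lambda$ --- and hence the parameter $a$ of (H8), (H9), which is a bijective function of $\lambda$ --- is an honest isomorphism invariant, the two members with $a=-2$ being separated off already because they lie in types (C2), resp.\ (C1). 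Case (C6) is literally Lemma~\ref{S3}: $F_{a,b}\sim F_{a',b'}$ if and only if $(a,b)$ and $(a',b')$ lie in one orbit of the stated $S_3$-action; computing the orbit of $(0,0)$ gives $\{(0,0),(1,\frac12),(-1,-\frac12)\}$, and $F_{0,0}=x^4+y^4$ is (H6), so deleting that orbit leaves precisely the family (H10) with the stated exclusions and equivalence.

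The main obstacle I anticipate is organisational rather than conceptual: the operator ${}^\rcirclearrowleft$ introduces factors of $2$ and $4$ which must be tracked with care when matching normalised expressions to the precise coefficients in (H1)--(H10), and for each of the six types one must verify that the relevant stabiliser fixes the chosen lift $F_0$ with no $w$-term and record the exact relation between $\lambda$ and $a$ (and, in (C6), the orbit structure of the $S_3$-action). To keep this under control I would maintain for each type a short table recording the representative of $F^{\rm ab}$, the lift $F_0$, the stabiliser together with its character on $\K w$ and its scalar on $F_0$, and then carry out all normalisations mechanically from that data.
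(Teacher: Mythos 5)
Your proposal is correct and follows essentially the same route as the paper: abelianize, apply Lemma~\ref{co2-4} to reach (C1)--(C6), use Lemma~\ref{cyab24} to write $F=F_0+\lambda w$ with $w={x^2y^2}^\rcirclearrowleft-xyxy^\rcirclearrowleft$, normalise $\lambda$ by scalings case by case, and settle uniqueness via stabilisers of the abelianized form together with Lemma~\ref{S3} for (C6). The details you flag (the factors of $2$ and $4$ from ${}^\rcirclearrowleft$, the stabiliser computations in (C4)--(C5)) are exactly the bookkeeping the paper carries out.
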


\begin{proof} Let $F\in{\cal P}_{2,4}$. First, we show that $F$ can be turned into exactly one of (H1--H10) by a linear sub. Let $G\in\K[x,y]$ be the abelianization of $F$. By Lemma~\ref{co2-4} by means of a linear substitution $G$ can be turned into exactly one of the forms (C1--C6). Thus we can assume from the start that $G$ is in one of the forms (C1--C6).

Case 1: $G=0$. By Lemma~\ref{cyab24}, $F=s({x^2y^2}^\rcirclearrowleft-xyxy^\rcirclearrowleft)$ with $s\in\K$. If $s=0$, $F$ is given by (H1). If $s\neq 0$, a scaling brings $F$ to the form (H9) with $a=-2$.

Case 2: $G=x^4$. By Lemma~\ref{cyab24},  $F=x^4+s({x^2y^2}^\rcirclearrowleft-xyxy^\rcirclearrowleft)$ with $s\in\K$. If $s=0$, $F$ is given by (H2). If $s\neq 0$, a scaling brings $F$ to the form (H8) with $a=-2$.

Case 3: $G=x^3y$. By Lemma~\ref{cyab24},  $F=\frac14x^3y^\rcirclearrowleft+s({x^2y^2}^\rcirclearrowleft-xyxy^\rcirclearrowleft)$ with $s\in\K$. If $s=0$, $F$ acquires the form (H5) after scaling. If $s\neq 0$, a scaling  brings $F$ to the form (H7).

Case 4: $G=x^2y^2$. By Lemma~\ref{cyab24},  $F=\frac14{x^2y^2}^\rcirclearrowleft+s({x^2y^2}^\rcirclearrowleft-xyxy^\rcirclearrowleft)$ with $s\in\K$. If $1+4s=0$, $F$ acquires the form (H4) after scaling. Otherwise, a scaling brings $F$ to the form (H9) with $a\neq-2$.

Case 5: $G=x^4+x^2y^2$. By Lemma~\ref{cyab24}, $F=x^4+\frac14{x^2y^2}^\rcirclearrowleft+s({x^2y^2}^\rcirclearrowleft-xyxy^\rcirclearrowleft)$ with $s\in\K$. If $4s+1=0$, $F$ acquires the form (H3) after scaling. Otherwise, a scaling brings $F$ to the form (H8) with $a\neq-2$.

Case 6: $G=x^4+cx^2y^2+y^4$ with $c^2\neq 4$. By Lemma~\ref{cyab24}, $F=x^4+\frac{c}{4}{x^2y^2}^\rcirclearrowleft+y^4+s({x^2y^2}^\rcirclearrowleft-xyxy^\rcirclearrowleft)$ with $s\in\K$. That is, $F=F_{a,b}=x^4+a{x^2y^2}^\rcirclearrowleft\!\!+bxyxy^\rcirclearrowleft\!\!+y^4$ with $a=s+\frac c4$ and $b=-s$. The condition $c^2\neq 4$ translates into $4(a+b)^2\neq1$. By Lemma~\ref{S3} $F_{0,0}$, $F_{1,1/2}$ and $F_{-1,-1/2}$ are all equivalent and are non-equivalent to any other $F_{a,b}$. Thus if $(a,b)\in\bigl\{(0,0),\bigl(1,\frac12\bigr),\bigl(-1,-\frac12\bigr)\bigr\}$, $F$ is equivalent to the potential from (H5). Otherwise, $F$ is in (H10).

The fact that $F$ with different labels from the list (H1--H10) are non-equivalent (can not be obtained from one another by a linear sub) follows from the above observations, the non-equivalence of polynomials with the different labels from the list (C1--C6) as well as the trivial observation that a symmetric (not just cyclicly) element of ${\cal P}_{2,4}$ can not be equivalent to a non-symmetric one. It remains to prove the statements about equivalence within each of (H8), (H9) and (H10). The latter follows directly from Lemma~\ref{S3}. It remains to deal with (H7) and (H8). We may remove the exceptional cases $a=-2$ from consideration. One easily sees that the only subs that transform an $F_a$ with $a\neq -2$ from (H7) (respectively, (H8)) to another $F_{a'}$ from (H7) (respectively, (H8)) up to scalar multiples are scalings combined with a possible swap of $x$ and $y$ in the (H8) case. Since none of the latter has any effect on the parameter, we have $a=a'$. It follows that distinct $F_a$ from (H7) or (H8) are non-equivalent.
\end{proof}

\begin{lemma}\label{easy1}
Let $F\in{\cal P}_{2,4}$ be a potential from {\rm (Hj)} of Lemma~$\ref{nco2-4}$ for $1\leq j\leq 6$. Then the potential algebra $A=A_F$ is non-exact. Its Hilbert series is $H_A=\frac{1+t+t^2}{1-t-t^2}$ for $j\in\{4,5,6\}$, $H_A=\frac{(1+t^2)(1-t^5)}{(1-t)(1-t-t^4-t^5)}$ for $j=3$, $H_A=\frac{1+t+t^2}{1-t-t^2-t^3}$ for $j=2$ and $H_A=\frac1{1-2t}$ for $j=1$. If $j=3$, $A$ is proper, while $A$ is non-proper in all other cases.
\end{lemma}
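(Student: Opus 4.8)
The plan is to treat the six algebras family by family, always using the degree-lexicographical order on $x,y$-monomials with $x>y$ that is fixed at the start of this section. First I would record the two defining relations $\delta_x F,\delta_y F$ for each of the potentials (H1)--(H6) of Lemma~\ref{nco2-4}: for (H1) there are none, so $A$ is the free algebra and $H_A=(1-2t)^{-1}$; for (H2) the only relation is $x^3$; for (H4) the relations are $xyx$ and $yxy$; for (H5) they are $x^3$ and $x^2y+xyx+yx^2$; and for (H6) they are $x^3$ and $y^3$.

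For (H2), (H4), (H5) and (H6) the point is that these defining relations already form a reduced Gr\"obner basis of the ideal of relations. Their leading monomials are $\{x^3\}$, $\{xyx,yxy\}$, $\{x^3,x^2y\}$ and $\{x^3,y^3\}$ respectively, and in each case there are only finitely many overlaps: for (H2), (H4) and (H6) every overlap resolves trivially, and for (H5) one checks by hand that the two nontrivial overlaps $x^3y$ and $x^4y$ reduce to $0$ modulo the relations. Once the Gr\"obner basis is in place, $H_A$ follows from the usual count of normal words (words over $x,y$ avoiding the leading monomials); a short block decomposition of the normal words of each type gives $H_A=\frac{1+t+t^2}{1-t-t^2-t^3}$ for (H2) and $H_A=\frac{1+t+t^2}{1-t-t^2}$ for each of (H4), (H5), (H6).

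The one nontrivial case --- and the main obstacle --- is (H3), whose relations are $x^3+yxy$ and $xyx$, with leading monomials $x^3$ and $xyx$. These do \emph{not} form a Gr\"obner basis: the overlap $x^3yx$ produces the new element $yxy^2x$ and the overlap $xyxxx$ produces $xy^2xy$, both already reduced. I would run the (noncommutative) Gr\"obner/diamond-lemma procedure to completion; the assertion to verify carefully is that it terminates, with reduced Gr\"obner basis $\{x^3+yxy,\ xyx,\ yxy^2x,\ xy^2xy,\ yxy^3xy\}$, i.e.\ that all the remaining overlaps (all of degree $\leq 8$) resolve. A routine count of the normal words --- those avoiding the five leading monomials $x^3,xyx,yxy^2x,xy^2xy,yxy^3xy$ --- then gives $H_A=\frac{(1+t^2)(1-t^5)}{(1-t)(1-t-t^4-t^5)}$; in particular $\dim A_2=4$, $\dim A_3=6$, $\dim A_4=9$ and $\dim A_5=13$.

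Finally, non-exactness and properness. By Lemma~\ref{commin} applied with $n=2$, $k=4$, an exact twisted potential algebra on two generators with cubic defining relations must have Hilbert series $(1-2t+2t^3-t^4)^{-1}=(1+t)^{-1}(1-t)^{-3}$, whose coefficients begin $1,2,4,6,9,12,16,\dots$; comparing low-degree coefficients rules this out for all six algebras (their values of $\dim A_4$ are $16,13,9,10,10,10$, while an exact algebra has $\dim A_4=9$ and $\dim A_5=12$, whereas (H3) has $\dim A_5=13$), so each of the six is non-exact --- for (H1) and (H2) this is also immediate from Remark~\ref{rem0}, as those potentials are degenerate. As for properness, by Lemma~\ref{gen2} $A_F$ is proper precisely when $\dim A_4=2^4-2\cdot 2^2+1=9$; from the Hilbert series this holds only for (H3), so (H3) is proper and (H1), (H2), (H4), (H5), (H6) are non-proper (for (H1) and (H2) non-properness also follows from degeneracy via Remark~\ref{proper}).
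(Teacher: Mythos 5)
Your proposal is correct and follows essentially the same route as the paper: exhibit a finite Gr\"obner basis for each of (H1)--(H6), read off the Hilbert series from the normal words, and then invoke Lemma~\ref{ms1} (comparison with $(1+t)^{-1}(1-t)^{-3}$) for non-exactness and Lemma~\ref{gen2} (the criterion $\dim A_4=9$) for properness. The only difference is in case (H3), where the paper first swaps $x$ and $y$ so that the reduced Gr\"obner basis collapses to the three elements $yxy$, $xyx+y^3$, $y^4$, whereas your direct computation yields the five-element basis $\{x^3+yxy,\,xyx,\,yxy^2x,\,xy^2xy,\,yxy^3xy\}$ --- which is indeed complete and gives the same series, just with more overlaps to resolve.
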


\begin{proof} It is straightforward to verify that the defining relations themselves form a Gr\"obner basis in the ideal of relations for all $F$ under consideration except for $F$ from (H3). In the case $j=3$, we swap $x$ and $y$ to begin with. After this the reduced Gr\"obner basis in the ideal of relations comprises $yxy$, $xyx+y^3$ and $y^4$. In each case, knowing a finite Gr\"obner basis (more specifically, knowing the leading monomials of its members), it is a routine calculation to find $H_A$ in the form of a rational function to confirm the required formulae. Since none of the resulting series coincides with $(1+t)^{-1}(1-t)^{-3}$, Lemma~\ref{ms1} implies that $A$ is non-exact. By Lemma~\ref{gen2}, $A$ is proper if and only if $\dim A_4=9$. Knowing the Hilbert series, we see that this happens precisely when $A=A_F$ with $F$ given by (H3).
\end{proof}

The following lemma is a special case of Lemmas~\ref{commin} and~\ref{commin1}.

\begin{lemma}\label{ms1}
Let $F\in{\cal P}_{2,4}$ and $A=A_F$. Then $H_A\geq (1+t)^{-1}(1-t)^{-3}$. Furthermore, $A$ is exact if and only if $H_A=(1+t)^{-1}(1-t)^{-3}$ and $A$ has no non-trivial right annihilators.
\end{lemma}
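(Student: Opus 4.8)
The plan is to obtain both assertions as immediate specializations of the general results of Section~3 to the case $(n,k)=(2,4)$. The only computation involved is the power series identity
$$
(1-nt+nt^{k-1}-t^k)^{-1}\Big|_{n=2,\,k=4}=(1-2t+2t^3-t^4)^{-1}=(1+t)^{-1}(1-t)^{-3},
$$
which follows from $(1+t)(1-t)^3=1-2t+2t^3-t^4$. So the series that appears in Lemmas~\ref{commin} and~\ref{commin1} is exactly $(1+t)^{-1}(1-t)^{-3}$ in our situation.

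First I would record the routine bookkeeping needed to apply those lemmas. Since a cyclicly invariant $F$ satisfies $\delta_{x_j}F=\delta^R_{x_j}F$ for every $j$, the span of the left derivatives coincides (literally) with the span of the right derivatives, so $F$ is a twisted potential; hence ${\cal P}_{2,4}\subseteq{\cal P}^*_{2,4}$. The numerical hypotheses $n\geq2$, $k\geq3$, $(n,k)\neq(2,3)$ all hold for $(2,4)$. Finally, ${\cal P}_{2,4}$ is a finite-dimensional $\K$-vector space and the defining relations $\delta_xF$, $\delta_yF$ depend linearly (hence polynomially and homogeneously) on its coordinates, so $W=\{A_F:F\in{\cal P}_{2,4}\}$ is a variety of graded algebras in the sense of~(\ref{VNG}).

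For the lower bound I would invoke Lemma~\ref{commin1}, which gives $H_W=(1-2t+2t^3-t^4)^{-1}=(1+t)^{-1}(1-t)^{-3}$; since $H_W$ is by definition the coefficientwise minimum of the Hilbert series over $W$, it follows that $H_A=H_{A_F}\geq H_W=(1+t)^{-1}(1-t)^{-3}$ for every $F\in{\cal P}_{2,4}$. For the equivalence I would apply Lemma~\ref{commin} with $(n,k)=(2,4)$ to $F\in{\cal P}^*_{2,4}$: its conditions (1) and (2) read precisely ``$A$ is exact'' and ``$A$ has no non-trivial right annihilators and $H_A=(1+t)^{-1}(1-t)^{-3}$'' once the series identity above is substituted, which is exactly the claim. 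There is no real obstacle here — the whole substance is contained in Lemmas~\ref{commin} and~\ref{commin1}, and what remains is only the elementary verification that a potential is a twisted potential, that the hypotheses on $(n,k)$ are satisfied, and the power series identity.
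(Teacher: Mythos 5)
Your proof is correct and is exactly the paper's argument: the paper dispatches this lemma in one line as "a special case of Lemmas~\ref{commin} and~\ref{commin1}," which is precisely the specialization to $(n,k)=(2,4)$ that you carry out, together with the identity $(1-2t+2t^3-t^4)^{-1}=(1+t)^{-1}(1-t)^{-3}$. Your additional bookkeeping (potentials are twisted potentials, the hypotheses on $(n,k)$ hold, $H_W$ is a coefficientwise minimum) is all accurate and just makes explicit what the paper leaves implicit.
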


\begin{lemma}\label{wei} Let $F\in{\cal P}_{2,4}$ be either from {\rm (H7--H9)} or from {\rm (H10)} of Lemma~$\ref{nco2-4}$ with $(a,b)$ such that
$ab(a^2-1)(4b^2-a^2)(4b^2-1)(4b^2-a^4)(4b^2-2a^2+1)=0$. Then $A=A_F$ is exact and satisfies $H_A=(1+t)^{-1}(1-t)^{-3}$.
\end{lemma}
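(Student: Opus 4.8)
The plan is to split the statement into two parts: the families (H7), (H8), (H9), which I would settle directly from Lemma~\ref{smth}, and the family (H10) restricted to the locus where the displayed product vanishes, which I would handle case by case according to which factor is zero.

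For $F$ in (H7), (H8) or (H9), a direct inspection of the monomials occurring in $F$ (equivalently, in $\delta_xF$ and $\delta_yF$) shows that the coefficient of $y^4$ and of every monomial in the cyclic orbit of $xy^3$ vanishes, while the coefficient of the cyclic orbit of $x^2y^2$ (which contains $y^2x^2$ and $xy^2x$) is non-zero; none of this depends on the parameter $a$ in (H8) or (H9). Hence the $x\leftrightarrow y$ version of Lemma~\ref{smth} applies verbatim and yields both exactness and $H_{A_F}=(1+t)^{-1}(1-t)^{-3}$. (Concretely: in the degree-lexicographical order with $y>x$ the leading monomials of $\delta_xF$ and $\delta_yF$ are $y^2x$ and $yx^2$; they have the single overlap $y^2x^2$, which resolves by Lemma~\ref{sy}, so $\{\delta_xF,\delta_yF\}$ is already a reduced Gr\"obner basis; the normal words are $x^a(yx)^by^c$, which gives the Hilbert series, and since no leading monomial begins with $y$ the map $u\mapsto yu$ is injective, so there are no non-trivial right annihilators and Lemma~\ref{commin} gives exactness.)

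The substantive part is (H10). Here the abelianization $x^4+4(a+b)x^2y^2+y^4$ is a regular binary quartic, so it has four distinct roots in $\mathbb{P}^1$; since a linear substitution preserves root multiplicities and our $F$ stays a potential, no substitution can bring $F$ into the shape required by Lemma~\ref{smth} (that would force the abelianization to be divisible by $y^2$). Instead, for each factor $a$, $b$, $a^2-1$, $4b^2-a^2$, $4b^2-1$, $4b^2-a^4$, $4b^2-2a^2+1$ set equal to zero, I would exhibit an explicit linear substitution of $x,y$ (depending on the remaining parameter) after which the two cubic relations $\delta_xF$, $\delta_yF$ acquire, in a suitable degree-lexicographical order, leading monomials whose only non-trivial overlap is of degree $4$. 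By Lemma~\ref{sy} the relations then form the reduced Gr\"obner basis of the ideal of relations; counting normal words gives $H_{A_F}=(1+t)^{-1}(1-t)^{-3}$, so $A_F$ is proper, the shape of the leading monomials makes multiplication by one of the variables injective, hence there are no non-trivial right annihilators, and Lemma~\ref{commin} delivers exactness. The vanishing of the chosen factor is exactly the numerical condition making that substitution non-singular and collapsing the two would-be cross-overlaps of $\delta_xF$, $\delta_yF$ into one; the number of genuinely distinct cases can be cut down using the $S_3$-action of Lemma~\ref{S3}, under which several of the seven loci lie in one orbit, so a single substitution serves several factors after pre-composing with the corresponding isomorphism.

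I expect the main obstacle to be this second part: writing down the seven substitutions and verifying, for each, that the transformed pair of relations has a single non-trivial degree-$4$ overlap and no others, so that no further element enters the Gr\"obner basis. Tracking degree-lexicographic leading terms through a substitution is routine but error-prone, and a few parameter values on each locus — those already excluded from (H10) as degenerate, and those lying on a second factor's zero set — are likely to need a different order or a different substitution. A tempting shortcut for some of the loci is to exhibit one exact member by the Gr\"obner computation and then invoke the deformation dichotomy (Lemma~\ref{complex}, through Lemma~\ref{commin01}) along the one-parameter locus; but because exactness along a curve is only \emph{generic or never}, this still leaves finitely many points to be checked by hand, so it does not really shorten the argument.
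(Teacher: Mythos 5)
Your handling of (H7--H9) is correct and coincides with the paper's (a direct application of the mirror version of Lemma~\ref{smth}); one small slip: the leading monomials $y^2x$ and $yx^2$ both \emph{begin} with $y$, so it is left multiplication by $x$, not by $y$, that is injective on normal words, but the conclusion about right annihilators is unaffected.

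The plan for (H10), which is the substance of the lemma, cannot be carried out, and the obstruction is precisely the one you already identified when explaining why Lemma~\ref{smth} does not apply. Among the eight degree-$3$ monomials in $x,y$, the only two-element sets with exactly one overlap are $\{x^2y,xy^2\}$ and $\{yx^2,y^2x\}$: any set containing $x^3$ or $y^3$ already has two self-overlaps, $xyx$ and $yxy$ each self-overlap once and produce further cross-overlaps with every other candidate, and each of the remaining pairs $\{x^2y,yx^2\}$, $\{x^2y,y^2x\}$, $\{xy^2,yx^2\}$, $\{xy^2,y^2x\}$ has at least two cross-overlaps. So the single-overlap configuration required by Lemma~\ref{sy} forces the leading monomials of the space of relations to be exactly those of Lemma~\ref{smth}, for \emph{any} compatible well-ordering (since $x^3$ majorizes both $x^2y$ and $xy^2$ whenever $x>y$, no element of the relation space may contain $x^3$, i.e.\ $F$ may contain neither $x^4$ nor the cyclic orbit of $x^3y$); this in turn forces $F^{\rm ab}$ to be divisible by $y^2$ (or by $x^2$ in the mirror case). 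For (H10) the abelianization $x^4+4(a+b)x^2y^2+y^4$ has four distinct roots, so no linear substitution achieves this on any of the seven loci. Concretely, the defining relations of these algebras are never a Gr\"obner basis by themselves: already for $a=0$ the reduced basis has five elements, three of them in degree~$5$.

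The paper's actual argument for (H10) is structurally different. For $a=0$ one computes a finite five-element Gr\"obner basis of the ideal of relations of $A$ directly. For the loci $b=0$, $a=1$, $2b=a$ and $2b=a^2$ one exhibits a degree-$4$ \emph{central} element $g$ (for instance $g=xy^2x+(1-q)y^2x^2-qy^4$ when $b=0$), computes a finite Gr\"obner basis for $B=A/(g)$, obtains $H_B=\frac{1+t^2}{(1-t)^2}$ together with injectivity of $u\mapsto yu$ on $B$, and then lifts to $A$ via $\dim A_n=\dim B_n+\dim gA_{n-4}$ combined with the lower bound of Lemma~\ref{ms1}: this forces $H_A=(1+t)^{-1}(1-t)^{-3}$ and simultaneously shows that $g$ is not a zero divisor, after which a short descent argument excludes right annihilators and Lemma~\ref{commin} gives exactness. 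The remaining factors ($a=-1$, $2b=-a$, $4b^2=1$, $2b=-a^2$, $4b^2=2a^2-1$) are reduced to these by the $S_3$-action of Lemma~\ref{S3}, as you anticipated. Your closing remark is correct: the dichotomy of Lemma~\ref{complex} only yields generic exactness along each locus and cannot certify the individual algebras this lemma is about.
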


\begin{proof} If $F$ is from {\rm (H7--H9)}, the result follows directly from Lemma~\ref{smth}. It remains to deal with the case of $F$ given by (H10).

{\bf Case~1:} \ $a=0$. Since $(a,b)\neq (0,0)$, we have $b\neq 0$. Then $F=x^4+y^4+bxyxy^\rcirclearrowleft$. Scaling $x$ and $y$, we can turn $F$ into $F=x^4-\frac12xyxy^\rcirclearrowleft+qy^4$ with $q\in\K^*$. The defining relations of $A$ now are $x^3=yxy$ and $xyx=qy^3$. It is now straightforward to compute the reduced Gr\"obner basis of the ideal of relations of $A$, which comprises $x^3-yxy$, $xyx-qy^3$, $xy^4-y^4x$, $x^2y^3-\frac1q yxy^2x$ and $xy^2xy-qy^3x^2$. Knowing the leading monomials of this basis it is routine to verify that $H_A=(1+t)^{-1}(1-t)^{-3}$. Since none of the leading monomials of the members of the Gr\"obner basis starts with $y$, there are no non-trivial right annihilators in $A$. By Lemma~\ref{ms1}, $A$ is exact.

\medskip

{\bf Case~2:} \ $b=0$. Since $(a,b)\neq (0,0)$, we have $a\neq 0$. Then $F=x^4+y^4+a{x^2y^2}^\rcirclearrowleft$. Scaling $x$ and $y$, we can turn $F$ into $F=x^4+{x^2y^2}^\rcirclearrowleft+qy^4$ with $q\in\K^*$. The defining relations of $A$ now are $x^3+xy^2+y^2x$ and $x^2y+yx^2+qy^3$. First, computing the Gr\"obner basis up to degree $5$, it is easy to verify that
$$
g=xy^2x+(1-q)y^2x^2-qy^4
$$
commutes with both $x$ and $y$ and therefore is central in $A$. Consider the algebra $B=A/I$, where $I$ is the ideal generated by $g$. The algebra $B$ can be presented by the generators $x,y$ and the relations $x^3+xy^2+y^2x=0$, $x^2y+yx^2+qy^3$ and $xy^2x+(1-q)y^2x^2-qy^4$. It is now straightforward to compute the reduced Gr\"obner basis of the ideal of relations of $B$, which comprises the defining relations together with $xyx^2-(1-q)xy^3-y^2xy$, $xy^4+y^4x+(2-q)y^2xy^2$, $xyxy^2+(2-q)xy^3x+y^2xyx$, $xy^3x^2+(q^2-3q+1)y^2xy^3+(1-q)y^4xy$ and $xy^3xy^2-(q^2-4q+3)y^2xy^3x-(2-q)y^4xyx$. Knowing the leading monomials of this basis it is routine to verify that $H_B=1+\sum\limits_{j=1}^\infty 2jt^j=\frac{1+t^2}{(1-t)^2}$. Since none of the leading monomials of the members of the Gr\"obner basis starts with $y$, we have $yu\neq 0$ for every non-zero $u\in B$. Since $g$ is central, we have $\dim A_n=\dim B_n+\dim gA_{n-4}$ for every $n\geq 4$. In particular, $\dim A_n\leq \dim A_{n-4}+2n$ for $n\geq 5$ and all these inequalities turn into equalities precisely when $g$ is not a zero divisor. The inequalities $\dim A_n\leq \dim A_{n-4}+2n$ together with easily verifiable $\dim A_4=12$ imply that $H_A\leq (1+t)^{-1}(1-t)^{-3}$ and the equality is only possible if $\dim A_n\leq \dim A_{n-4}+2n$ for all $n\geq 5$. By Lemma~\ref{ms1}, $H_A\geq (1+t)^{-1}(1-t)^{-3}$. Hence $H_A=(1+t)^{-1}(1-t)^{-3}$ and $g$ is not a zero divisor. Now we check that $yu\neq 0$ for every non-zero $u\in A$. Assume the contrary. Then pick a non-zero homogeneous $u\in A$ of smallest possible degree such that $yu=0$ in $A$. Since $B$ is a quotient of $A$, $yu=0$ in $B$. Hence $u=0$ in $B$. Then $u=gv$ in $A$ for some $v\in A$. The equality $yu=0$ yields $gyv=0$ and therefore $yv=0$ in $A$. Since the degree of $v$ is smaller (by $4$) than the degree of $u$, we have arrived to a contradiction. Thus $yu\neq 0$ for every non-zero $u\in A$ and therefore $A$ has no non-trivial right annihilators. By Lemma~\ref{ms1}, $A$ is exact.

\medskip

{\bf Case~3:} \ $a=1$. Since $(a,b)\neq (1,1/2)$, we have $b\neq 1/2$. Since $4(a+b)^2\neq 1$, we have $b\neq -1/2$. Denoting $b=\frac{q}2$, we have $F=x^4+{x^2y^2}^\rcirclearrowleft+\frac{q}2xyxy^\rcirclearrowleft+y^4$ with $q^2\neq 1$. The defining relations of $A$ now are $x^3+xy^2+qyxy+y^2x$ and $x^2y+qxyx+yx^2+y^3$. First, computing the Gr\"obner basis up to degree $5$, it is easy to verify that $g=xy^2x-y^4$ commutes with both $x$ and $y$ and therefore is central in $A$. Consider the algebra $B=A/I$, where $I$ is the ideal generated by $g$. The algebra $B$ can be presented by the generators $x,y$ and the relations $x^3+xy^2+qyxy+y^2x$, $x^2y+qxyx+yx^2+y^3$ and $xy^2x-y^4$. It is now straightforward to compute the reduced Gr\"obner basis of the ideal of relations of $B$, which comprises the defining relations together with $xyx^2-y^2xy$, $xyxyx+\frac1q xy^4+\frac1qy^2xy^2+\frac1qy^4x$, $xyxy^2+xy^3x+y^2xyx+qy^5$, $xy^3x^2+xy^5+y^4xy+qy^5x$, $xy^3xy+\frac1q xy^4x+\frac1qy^4x^2+\frac1qy^6$ and $xy^6-y^6x$. Knowing the leading monomials of this basis it is routine to verify that $H_B=\frac{1+t^2}{(1-t)^2}$. The rest of the proof is the same as in Case~2.

\medskip

{\bf Case~4:} \ $a=-1$ or $a=-2b$ or $b=\pm \frac12$. These cases follow from the already considered ones due to the isomorphism conditions in (H10). Indeed, one easily sees that our algebras in the case $b=\pm \frac12$ are isomorphic to those with $a=0$. The cases $a=1$, $a=-1$ and $a=-2b$ are linked in a similar way.

\medskip

{\bf Case~5:} \ $2b=a$. Since $(a,b)\neq (0,0)$, we have $a\neq 0$ and $b\neq 0$. Scaling $x$ and $y$, we can turn $F$ into $F=x^4+{x^2y^2}^\rcirclearrowleft+\frac12xyxy^\rcirclearrowleft+qy^4$ with $q=a^{-2}\in\K^*$. Since $(a,b)\neq \pm(1,1/2)$, we have $q\neq 1$. The defining relations of $A$ now are $x^3+xy^2-yxy+y^2x$ and $x^2y-xyx+yx^2+qy^3$. First, computing the Gr\"obner basis up to degree $5$, it is easy to verify that $g=xyxy-y^2x^2$ commutes with both $x$ and $y$ and therefore is central in $A$. Consider the algebra $B=A/I$, where $I$ is the ideal generated by $g$. The algebra $B$ can be presented by the generators $x,y$ and the relations $x^3+xy^2-yxy+y^2x$, $x^2y-xyx+yx^2+qy^3$ and $xyxy-y^2x^2$. It is now straightforward to compute the reduced Gr\"obner basis of the ideal of relations of $B$, which comprises the defining relations together with $xy^3-y^3x$. Knowing the leading monomials of this basis it is routine to verify that $H_B=\frac{1+t^2}{(1-t)^2}$. The rest of the proof is the same as in Case~2.

\medskip

{\bf Case~6:} \ $2b=a^2$. Since $(a,b)\neq (0,0)$, we have $a\neq 0$ and $b\neq 0$. Scaling $x$ and $y$, we can turn $F$ into $F=x^4+{x^2y^2}^\rcirclearrowleft+\frac{a}2xyxy^\rcirclearrowleft+\frac1{a^2}y^4$. Since the case $b=\pm\frac12$ is already considered, we can assume that $a^2\neq 1$. The defining relations of $A$ now are $x^3+xy^2+ayxy+y^2x$ and $x^2y+axyx+yx^2+\frac1{a^2}y^3$. Computing the Gr\"obner basis up to degree $5$, it is easy to verify that $\textstyle g=xyxy+axy^2x-\frac1a y^2x^2-\frac1a y^4$ commutes with both $x$ and $y$ and therefore is central in $A$. Consider the algebra $B=A/I$, where $I$ is the ideal generated by $g$. The algebra $B$ can be presented by the generators $x,y$ and the relations $x^3+xy^2+ayxy+y^2x$, $x^2y+axyx+yx^2+\frac1{a^2}y^3$ and $xyxy+axy^2x-\frac1a y^2x^2-\frac1a y^4$. It is now straightforward to compute the reduced Gr\"obner basis of the ideal of relations of $B$, which comprises the defining relations together with $xyx^2+\frac1{a^2}xy^3-y^2xy-\frac1a y^3x$, $xy^2x^2+\frac1a y^3xy$, $xy^3x+\frac1a y^5$, $xy^2xyx+\frac1{a^3} xy^5-\frac1{a^2}y^3xy^2-\frac1{a^2}y^5x$, $xy^2xy^2+xy^4x-\frac1{a}y^3xyx-y^6$, $xy^4xy+\frac1{a}xy^5x-\frac1{a^2}y^5x^2-\frac1{a^2}y^7$, $xy^4x^2+\frac1{a^2}xy^6-\frac1{a}y^5xy-y^6x$, $xy^5xy+\frac1{a}xy^6x+\frac1{a}y^6x^2+\frac1{a^3}y^8$, $xy^5x^2+xy^7+y^6xy+ay^7x$, $xy^6xy-y^7x$ and $xy^8-y^8x$. Knowing the leading monomials of this basis it is routine to verify that $H_B=\frac{1+t^2}{(1-t)^2}$. The rest of the proof is the same as in Case~2.

\medskip

{\bf Case~7:} \ $4b^2-2a^2+1=0$ or $2b=-a^2$. As in Case~4, these cases follow from the already considered ones due to the isomorphism conditions in (H10). Indeed, one easily sees that our algebras in the case $2b=-a^2$ are isomorphic to those with $2b=a^2$ as well as to those with $4b^2-2a^2+1=0$. It remains to notice that Cases~1--7 exhaust all possibilities.
\end{proof}

\begin{lemma}\label{nwei} Let $F\in{\cal P}_{2,4}$ be given by {\rm (H10)} of Lemma~$\ref{nco2-4}$ with parameters $\alpha,\beta$ $($we want to reserve letters $a$ and $b)$ such that
$$
\alpha\beta(\alpha^2-1)(4\beta^2-\alpha^2)(4\beta^2-1)(4\beta^2-\alpha^4)(4\beta^2-2\alpha^2+1)\neq 0.
$$
Then $A=A_F$ is exact and therefore $H_A=(1+t)^{-1}(1-t)^{-3}$.
\end{lemma}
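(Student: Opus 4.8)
The plan is to follow the same strategy as in Lemma~\ref{wei}: locate a central element of degree~$4$ in $A=A_F$, kill it, control the Hilbert series of the quotient by a Gröbner basis computation, and then read off exactness from Lemma~\ref{ms1}. The reason this is a separate lemma is precisely that on the zero locus of $\Pi=\alpha\beta(\alpha^2-1)(4\beta^2-\alpha^2)(4\beta^2-1)(4\beta^2-\alpha^4)(4\beta^2-2\alpha^2+1)$ the central element and the ensuing Gröbner basis degenerate (division by a vanishing factor of $\Pi$, or an extra basis element appearing), which forced the ad~hoc treatment of Lemma~\ref{wei}; off this locus one can work with one, if cumbersome, family of formulas in $\alpha,\beta$. (One could instead try to apply Lemma~\ref{complex} to the complex $(\ref{comq})$ over the variety $\{A_{F_{\alpha,\beta}}\}$, using an exact member from Lemma~\ref{wei} to land in the ``exact generically'' branch; but that only yields exactness off countably many proper subvarieties, not off the single hypersurface $\Pi=0$, so a direct argument seems unavoidable.)

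Concretely, write the two cubic relations as $r_1=\delta_xF=x^3+\alpha(xy^2+y^2x)+2\beta\,yxy$ and $r_2=\delta_yF=\alpha(x^2y+yx^2)+2\beta\,xyx+y^3$, and fix the degree-lex order with $x>y$ (so the leading monomials are $x^3$ and $x^2y$, the latter because $\alpha\neq0$). First I would look for a central element $g=\sum$(degree-$4$ monomials) with undetermined coefficients: imposing $[x,g]=[y,g]=0$ in $A_5$ reduces, after normal-form reduction against $r_1,r_2$, to a linear system over $\K(\alpha,\beta)$ whose solution space is one-dimensional for $\Pi\neq0$; take $g$ a generator, its coefficients being rational functions of $\alpha,\beta$ with denominators dividing $\Pi$. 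One can shorten the bookkeeping by first normalizing $(\alpha,\beta)$ via a scaling and/or one of the $S_3$-substitutions of Lemma~\ref{S3}. Set $B=A/(g)$, presented by $x,y$ and the three relations $r_1,r_2,g$.

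Next I would compute the reduced Gröbner basis of the ideal of relations of $B$ with respect to the same order. As in the cases handled in Lemma~\ref{wei} I expect it to be finite, with leading monomials whose complement has generating function $\frac{1+t^2}{(1-t)^2}$, so that $H_B=(1-t)^{-2}(1+t^2)$. Since $g$ is central, the exact sequence $0\to gA\to A\to B\to0$ of graded spaces gives $\dim A_n\le\dim A_{n-4}+\dim B_n$, i.e.\ $H_A\le t^4H_A+H_B$, whence $H_A\le(1-t^4)^{-1}H_B=(1+t)^{-1}(1-t)^{-3}$; combined with $H_A\ge(1+t)^{-1}(1-t)^{-3}$ from Lemma~\ref{ms1} this forces $H_A=(1+t)^{-1}(1-t)^{-3}$ and, simultaneously, that $g$ is not a zero divisor. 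Finally, no leading monomial of the Gröbner basis of $B$ begins with $y$, so multiplication by $y$ is injective on $B$; the smallest-degree argument of Lemma~\ref{wei} (Case~2), using that $g$ is an NZD, lifts this to injectivity of multiplication by $y$ on $A$, so $A$ has no non-trivial right annihilator. By Lemma~\ref{ms1}, $A$ is exact, and the Hilbert series follows.

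The main obstacle is the middle step: exhibiting $g$ and the Gröbner basis of $B$ with $\alpha,\beta$ kept symbolic. Unlike the special values in Lemma~\ref{wei}, the coefficients are genuine rational functions of $(\alpha,\beta)$, and one must verify both that every overlap ambiguity reduces (each factor of $\Pi$ earns its place here: a vanishing factor either obstructs the one-dimensionality of the space of central $g$ or produces an extra Gröbner element, bumping up some $\dim A_n$) and that the basis terminates, so that $H_B$ is \emph{exactly} $(1-t)^{-2}(1+t^2)$. A reassuring cross-check is that, up to isomorphism, the proper algebras $A_{F_{\alpha,\beta}}$ are the cubic Artin–Schelter regular algebras of global dimension~$3$, for which $(1+t)^{-1}(1-t)^{-3}$ is the known Hilbert series and the degree-$4$ central element is the standard normalizing one; the computation above simply makes this self-contained.
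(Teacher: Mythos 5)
Your outer framework is exactly the paper's: normalize by a scaling to $F=x^4+{x^2y^2}^\rcirclearrowleft+\frac{a}{2}xyxy^\rcirclearrowleft+by^4$, exhibit the degree-$4$ central element $g$ (the paper's is $g=-a(1-b)xyxy+(1-a^2)xy^2x+(1-b)y^2x^2-b(1-a^2)y^4$), pass to $B=A/(g)$, squeeze $H_A$ between $t^4H_A+H_B$ and the lower bound of Lemma~\ref{ms1} to get $H_A=(1+t)^{-1}(1-t)^{-3}$ together with $g$ being a non-zero-divisor, and lift injectivity of left multiplication by $y$ from $B$ to $A$ to kill right annihilators. Your remark that Lemma~\ref{complex} only gives exactness off countably many subvarieties, not off the single hypersurface $\Pi=0$, is also correct and is why a direct argument is needed.

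The gap is the step you yourself flag as ``the main obstacle'': establishing $H_B=(1+t^2)(1-t)^{-2}$ by a finite reduced Gr\"obner basis for $B$ with coefficients rational in $(\alpha,\beta)$. The paper does not do this, and its actual argument shows why your plan cannot be carried out uniformly. Instead of a Gr\"obner basis for $B$, the paper studies the cyclic right module $M=B/yB$ and proves $\dim M_j=2$ for all $j$ by an induction on properties $(\Omega_k)$, tracking scalars $a_k,b_k$ defined by $xy^kx^2=a_kxy^{k+2}$ and $xy^kxy=b_kxy^{k+1}x$ in $M$. The induction branches into a trichotomy (O1)/(O2)/(O3), and in cases (O1) and (O2) the \emph{set of monomials spanning} $M_{k+3},M_{k+4},\dots$ changes (e.g.\ in (O1) the monomials $xy^{k+1}xy$ and $xy^{k+3}$ vanish and $xy^{k+1}x^2$, $xy^{k+2}x$ survive, the opposite of the generic pattern). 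Whether and at which degree $k$ these cases fire depends on polynomial conditions in $(a,b)$ that cut out countably many proper subvarieties inside the complement of $\Pi=0$. Consequently the set of leading monomials of the reduced Gr\"obner basis of $B$ is not constant off $\Pi=0$: a computation over the function field $\K(\alpha,\beta)$ would only cover a generic open set, and covering all admissible parameters your way would require infinitely many separate Gr\"obner computations, one for each degree at which (O1) or (O2) can occur. So the middle step is not merely cumbersome; as formulated it fails, and replacing it by the module-theoretic induction on $M$ (or an equivalent device) is the actual content of the paper's proof. The cross-check via Artin--Schelter regularity does not repair this, since the paper's point is precisely to make that classification self-contained and up to isomorphism.
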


\begin{proof} A scaling turns $F$ into
$$
\textstyle F=x^4+{x^2y^2}^\rcirclearrowleft+\frac{a}{2}xyxy^\rcirclearrowleft+by^4
$$
with $a,b\in\K$ given by $a=\frac{2\beta}{\alpha}$ and $b=\frac1{\alpha^2}$. In terms of $a$ and $b$ the assumption about $\alpha$ and $\beta$ reads as follows: $a\neq 0$, $b\neq 1$, $a^2\neq 1$, $b+a^2\neq 2$, $a^2b\neq 1$ and $b\neq a^2$.

Computing $\delta_xF$ and $\delta_yF$, we see that $A$ is given by generators $x$ and $y$ and relations
\begin{equation}\label{dr1}
x^3=-xy^2-ayxy-y^2x,\qquad x^2y=-axyx-yx^2-by^3.
\end{equation}
A direct computation allows to find all elements of the reduced Gr\"obner basis of the ideal of relations up to degree $5$. They correspond to the relations
\begin{align*}
\textstyle xyx^2&\textstyle=\frac{1-b}{1-a^2}xy^3+y^2xy-\frac{a(1-b)}{1-a^2}y^3x;
\\
\textstyle xyxyx&\textstyle=-\frac1a xy^2x^2-\frac{1-a^2b}{a(1-a^2)}xy^4-\frac1ay^2xy^2+\frac{1-b}{1-a^2}y^3xy;
\\
\textstyle xy^2xy&\textstyle=-\frac{a(1-b)(2-b-a^2)}{(1-a^2)(1-a^2b)}xy^3x-\frac{a(1-b)}{1-a^2b}yxyxy+\frac{1-a^2}{1-a^2b}yxy^2x+\frac{(1-b)(2-a^2b-a^2)}{(1-a^2)(1-a^2b)}y^3x^2+\frac{b(1-b)}{1-a^2b}y^5;
\\
\textstyle xyxy^2&\textstyle=-\frac{2-b-a^2}{1-a^2b}xy^3x+\frac{a^2(1-b)}{1-a^2b}yxyxy-\frac{a(1-a^2)}{1-a^2b}yxy^2x-y^2xyx-\frac{a(1-b)}{1-a^2b}y^3x^2-\frac{ab(1-b)}{1-a^2b}y^5.
\end{align*}

This provides us with a multiplication table in $A$ for degrees up to $5$. Given this, it is routine to verify that
$$
g=-a(1-b)xyxy+(1-a^2)xy^2x+(1-b)y^2x^2-b(1-a^2)y^4
$$
commutes with both $x$ and $y$ and therefore is central in $A$. Now we consider the algebra
$$
\text{$B=A/I$, where $I$ is the ideal in $A$, generated by $g$}
$$
as well as the degree-graded right $B$-module
$$
M=B/yB.
$$
Note that using the above Gr\"obner basis elements for $A$, one easily sees that Hilbert series of $M$ starts as $H_M=1+2t+2t^2+2t^3+2t^4+2t^5+{\dots}$ By the same token,
\begin{equation}\label{f5}
H_A=1+2t+4t^2+6t^3+9t^4+12t^5+{\dots}
\end{equation}
According to Lemma~\ref{ms1},
\begin{equation}\label{fff}
H_A\geq (1+t)^{-1}(1-t)^{-3}.
\end{equation}
By the same lemma, the proof will be complete if we show that
\begin{equation}\label{aa1}
\text{$H_A=(1+t)^{-1}(1-t)^{-3}$ and  $A$ has no non-trivial right annihilators}.
\end{equation}

We start by proving the following two statements:
\begin{align}
&H_M(t)=1+\sum\limits_{n=1}^\infty 2t^n\,\,\Longrightarrow\,\,\text{(\ref{aa1}) is satisfied},\label{mm1}
\\
&\text{if $k\in\N$ and $\dim M_j\leq 2$ for $1\leq j\leq k$, then $\dim M_j=2$ for $1\leq j\leq k$.}\label{mm2}
\end{align}

Assume that $k\in\N$ and $\dim M_j\leq 2$ for $1\leq j\leq k$. Clearly,
$\dim B_j=\dim yB_{j-1}+\dim M_j$ for $j\in\N$. It follows that $\dim B_j\leq 2j$ for $1\leq j\leq k$ and the inequalities turn into equalities if and only if $\dim M_j=2$ for $1\leq j\leq k$ and $yu\neq 0$ for every degree $<k$ homogeneous $u\in B$. Next, $\dim A_{j}=\dim gA_{j-4}+\dim B_j$ for all $j\geq 4$. Using this recurrent inequality and the initial data (\ref{f5}), we see that for $j\leq k$, $\dim A_j$ does not exceed the $j^{\rm th}$ coefficient of $(1+t)^{-1}(1-t)^{-3}$ and the inequalities turn into equalities if and only if $\dim B_j=2j$ for $1\leq j\leq k$ and $gu\neq 0$ for every degree $\leq k-4$ homogeneous $u\in A$. However, by (\ref{fff}), turn into equalities they must. In particular, we must have $\dim M_j=2$ for $1\leq j\leq k$, which proves (\ref{mm2}). In order to prove (\ref{mm1}), we apply the above argument with arbitrarily large $k$. It follows that the equality $H_M(t)=1+\sum\limits_{n=1}^\infty 2t^n$ not only yields $H_A=(1+t)^{-1}(1-t)^{-3}$, but also ensures that $yu\neq 0$ for every non-zero $u\in B$ and $gu\neq 0$ for every non-zero $u\in A$. In order to complete the proof, it suffices to show that $yu\neq 0$ for every non-zero $u\in A$. Assume the contrary. Then pick a non-zero homogeneous $u\in A$ of smallest possible degree such that $yu=0$ in $A$. Since $B$ is a quotient of $A$, $yu=0$ in $B$. Hence $u=0$ in $B$. Then $u=gv$ in $A$ for some $v\in A$. The equality $yu=0$ yields $gyv=0$ and therefore $yv=0$ in $A$. Since the degree of $v$ is smaller (by $4$) than the degree of $u$, we have arrived to a contradiction. This concludes the proof of (\ref{mm1}).

According to (\ref{mm1}), the proof will be complete if we verify that $H_M(t)=1+\sum\limits_{n=1}^\infty 2t^n$. By definition of $B$ and the above formulas for the low degree elements of the Gr\"obner basis for $A$, we see that the following relations are satisfied in $B$:
\begin{align}
x^3&=-xy^2-ayxy-y^2x,\label{rb1}
\\
x^2y&=-axyx-yx^2-by^3,\label{rb2}
\\
xyx^2&\textstyle=\frac{1-b}{1-a^2}xy^3+y^2xy-\frac{a(1-b)}{1-a^2}y^3x;\label{rb3}
\\
xyxy&\textstyle=\frac{1-a^2}{a(1-b)}xy^2x+\frac1a y^2x^2-\frac{b(1-a^2)}{a(1-b)}y^4.\label{rb4}
\end{align}
Actually, the first two and the last of these are the defining relations, while (\ref{rb3}) is the only other member of the degree $\leq 4$ of the Gr\"obner basis.

For each $k\in\Z_+$, consider the following property:
\begin{itemize}
\item[$(\Omega_k)$]$\dim M_j=2$ for $1\leq j\leq k+3$, $M_{k+3}$ is spanned by $xy^{k+2}$ and $xy^{k+1}x$ and there exist $a_k,b_k\in\K$ such that the equalities $xy^kx^2=a_kxy^{k+2}$ and $xy^kxy=b_kxy^{k+1}x$ hold in $M$.
\end{itemize}

Note that if $(\Omega_k)$ is satisfied, then $a_k$ and $b_k$ are uniquely determined. Indeed, otherwise $xy^{k+2}$ and $xy^{k+1}x$ would be linearly dependent in $M$. Note also that according to (\ref{rb1}--\ref{rb4}),
\begin{equation}\label{start}
\text{$\Omega_0$ and $\Omega_1$ are satisfied with $a_0=-1$, $b_0=-a$, $\textstyle a_1=\frac{1-b}{1-a^2}$ and $\textstyle b_1=\frac{1-a^2}{a(1-b)}$.}
\end{equation}
Note also that
\begin{equation}\label{oomm}
\begin{array}{l}
\text{if $k\in\Z_+$, $\dim M_j=2$ for $1\leq j\leq k+1$, $M_{k+2}$ is spanned by $xy^{k+1}$ and $xy^kx$ and}\\
\text{$xy^kx^2=a_kxy^{k+2}$, $xy^kxy=b_kxy^{k+1}x$ in $M$ for some $a_k,b_k\in\K$, then $(\Omega_k)$ holds.}
\end{array}
\end{equation}
Indeed, by (\ref{mm2}), $\dim M_{k+2}=2$. Since $M_{k+2}$ is spanned by $xy^{k+1}$ and $xy^kx$, $M_{k+3}$ is spanned by  $xy^{k+2}$, $xy^kxy$, $xy^{k+1}x$ and $xy^kx^2$. By the equations in (\ref{oomm}), $M_{k+3}$ is spanned by  $xy^{k+2}$, and $xy^{k+1}x$ and $\dim M_{k+3}=2$ by (\ref{mm2}). Thus $(\Omega_k)$ holds.

Reducing the overlaps $xy^kx^2y=(xy^kx^2)y=xy^k(x^2y)$ and $xy^kx^3=(xy^kx^2)x=xy^k(x^3)$ by means of (\ref{rb2}), (\ref{rb1}) and the equations from ($\Omega_k$), we obtain
\begin{equation}\label{omm1}
\begin{array}{l}
\text{if $k\in\Z_+$ and $\Omega_k$ is satisfied, then}\\ (ab_k+1)xy^{k+1}x^2+(a_k+b)xy^{k+3}=
(b_k+a)xy^{k+1}xy+(a_k+1)xy^{k+2}x=0\ \ \text{in $M.$}
\end{array}
\end{equation}
Reducing $xy^{k-1}xyxy=(xy^{k-1}xy)xy=xy^{k-1}(xyxy)$ and $xy^{k-1}xyx^2=(xy^{k-1}xy)x^2=xy^{k-1}(xyx^2)$ by means of (\ref{rb1}--\ref{rb4}) and the equations from ($\Omega_k$) and ($\Omega_{k-1}$), we get
\begin{equation}\label{omm2}
\begin{array}{l}
\text{if $k\in\N$ and both $\Omega_{k-1}$ and $\Omega_k$ are satisfied, then}
\\
\left(\frac1a+b_{k-1}+\frac{1-a^2b}{a(1-b)}b_{k-1}b_k\right)xy^{k+1}x^2+\left(bb_{k-1}-\frac{b(1-a^2)}{a(1-b)}\right)xy^{k+3}=0,
\\
\left(1+ab_{k-1}+\frac{2-b-a^2}{1-a^2}b_{k-1}b_k\right)xy^{k+1}xy+\left(b_{k-1}-\frac{a(1-b)}{1-a^2}\right)xy^{k+2}x=0\ \ \text{in $M.$}
\end{array}
\end{equation}

Assume now that $k\in\N$ and both $\Omega_{k-1}$ and $\Omega_k$ are satisfied. We consider the following three options:
\begin{itemize}\itemsep=-2pt
\item[(O1)] $\bigl(ab_k+1,\frac1a+b_{k-1}+\frac{1-a^2b}{a(1-b)}b_{k-1}b_k\bigr)=(0,0);$
\item[(O2)] $\bigl(b_k+a,1+ab_{k-1}+\frac{2-b-a^2}{1-a^2}b_{k-1}b_k\bigr)=(0,0);$
\item[(O3)] $\bigl(ab_k+1,\frac1a+b_{k-1}+\frac{1-a^2b}{a(1-b)}b_{k-1}b_k\bigr)\neq(0,0)$ and
$\bigl(b_k+a,1+ab_{k-1}+\frac{2-b-a^2}{1-a^2}b_{k-1}b_k\bigr)\neq (0,0)$,
\end{itemize}
which cover all possibilities.

First observe that according to (\ref{omm1}), (\ref{omm2}) and (\ref{oomm}) ,
\begin{equation}\label{oo3}
\text{if (O3) holds, then $(\Omega_{k+1})$ is satisfied.}
\end{equation}

Assume now that (O2) holds. By the equalities in (O2), $b_k=-a$ and $b_{k-1}=\frac{1-a^2}{a(1-b)}$. The conditions $a^2b\neq 1$ and $a^2+b\neq 2$ allow to check that $b_{k-1}\neq -a$ and $b_{k-1}\neq -\frac1a$. Then (\ref{omm1}) applied with $k-1$ instead of $k$ yields
\begin{equation}\label{kk1}
\textstyle b_k=-\frac{a_{k-1}+1}{b_{k-1}+a}\ \ \text{and}\ \ a_k=-\frac{a_{k-1}+b}{ab_{k-1}-1}.
\end{equation}
Plugging the above expressions for $b_{k-1}$ and $b_k$ into the first equation in (\ref{kk1}), we get $a_{k-1}=\frac{b(1-a^2)}{1-b}$. Plugging this together with  $b_{k-1}=\frac{1-a^2}{a(1-b)}$ into the second equation in (\ref{kk1}), we get (after cancellations to perform which we need the assumptions about $a$ and $b$) $a_k=-b$. Now plugging $b_k=-a$, $a_k=-b$, $a_{k-1}=\frac{b(1-a^2)}{1-b}$ and  $b_{k-1}=\frac{1-a^2}{a(1-b)}$ into the equalities in (\ref{omm1}) and (\ref{omm2}), we see that
$$
xy^{k+1}x^2=xy^{k+2}x=0\ \ \text{in $M$}.
$$
Then $M_{k+4}$ is spanned by $xy^{k+1}xy$ and $xy^{k+3}$. Using the above display, (\ref{rb2}) and (\ref{rb1}) we reduce the overlaps $xy^{k+1}x^3=(xy^{k+1}x^2)x=xy^{k+1}(x^3)$ and $xy^{k+1}x^2y=(xy^{k+1}x^2)y=xy^{k+1}(x^2y)$ to get
$$
\textstyle xy^{k+1}xy^2+xy^{k+3}x=xy^{k+1}xyx+\frac baxy^{k+4}=0\ \ \text{in $M$}.
$$
Then $M_{k+5}$ is spanned by $xy^{k+3}x$ and $xy^{k+4}$. Using the equalities in the above display together with (\ref{rb3}) and (\ref{rb4}), we reduce the overlaps $xy^{k+1}xyx^2=(xy^{k+1}xyx)x=xy^{k+1}(xyx^2)$ and $xy^{k+1}xyxy=(xy^{k+1}xyx)y=xy^{k+1}(xyxy)$ to get
$$
\textstyle xy^{k+3}x^2+bxy^{k+5}=xy^{k+3}xy+\frac 1axy^{k+4}x=0\ \ \text{in $M$}.
$$
By (\ref{oomm}), we see that $\Omega_{k+3}$ is satisfied  with $a_{k+3}=-b$ and $b_{k+3}=-\frac1a$. Dealing in a similar way with the overlaps $xy^{k+3}x^3=(xy^{k+3}x^2)x=xy^{k+3}(x^3)$ and $xy^{k+2}xyxy=(xy^{k+2}xyx)y=xy^{k+2}(xyxy)$,
we get
$$
\textstyle  xy^{k+4}xy=\frac{a(1-b)}{1-a^2}xy^{k+5}x\ \ \text{and}\ \ xy^{k+4}x^2=\frac{b(1-a^2)}{1-b}xy^{k+6}\ \ \text{in $M$},
$$
The last display together with (\ref{oomm}) shows that $\Omega_{k+4}$ is satisfied. Hence
\begin{equation}\label{oo2}
\text{if (O2) holds, then $(\Omega_{k+3})$ and $(\Omega_{k+4})$ are satisfied.}
\end{equation}

Finally, assume that (O1) holds. By the equalities in (O1), $b_k=-\frac1a$ and $b_{k-1}=\frac{a(1-b)}{1-a^2}$. The conditions $a^2b\neq 1$ and $a^2+b\neq 2$ yield $b_{k-1}\neq -a$ and $b_{k-1}\neq -\frac1a$. As above, this means that (\ref{kk1}) holds. Plugging the expressions for $b_{k-1}$ and $b_k$ into the first equation in (\ref{kk1}), we get $a_{k-1}=\frac{1-b}{1-a^2}$. Plugging this together with  $b_{k-1}=\frac{a(1-b)}{1-a^2}$ into the second equation in (\ref{kk1}), we obtain $a_k=-1$.
Plugging $a_{k-1}=\frac{1-b}{1-a^2}$, $b_{k-1}=\frac{a(1-b)}{1-a^2}$, $b_k=-\frac1a$ and $a_k=-1$ into the equalities from (\ref{omm1}) and (\ref{omm2}), we get
$$
xy^{k+1}xy=xy^{k+3}=0\ \ \text{in $M$}
$$
 Now $M_{k+4}$ is spanned by $xy^{k+1}x^2$ and $xy^{k+2}x$. From this and (\ref{rb1}) and (\ref{rb2}) it follows that $M_{k+5}$ is spanned by $xy^{k+2}x^2$ and $xy^{k+2}xy$. Now an elementary inductive procedure (use (\ref{mm2})) shows that $M_j$ is 2-dimensional for every $j$. That is,
\begin{equation}\label{oo1}
\text{if (O1) holds, then $M_j$ is 2-dimensional for every $j\in\N$.}
\end{equation}

Note that if $(\Omega_k)$ holds for infinitely many $k$, then $M_j$ is 2-dimensional for every $j\in\N$ as well. Applying (\ref{oo2}) and (\ref{oo3}) inductively ((\ref{start}) serves as the basis of induction) and using (\ref{oo1}), we see that no matter the case, $M_j$ is 2-dimensional for every $j\in\N$. This completes the proof.
\end{proof}

\subsection{Proof of Theorem~\ref{main24-p}}

Combining Lemmas~\ref{nco2-4}, \ref{easy1}, \ref{wei} and \ref{nwei}, we see that all statements of Theorem~\ref{main24-p} hold with isomorphism of $A_F$ and $A_G$ condition replaced by equivalence of $F$ and $G$ (with respect to the $GL_2(\K)$ action by linear substitutions).

By Lemma~\ref{gen2}, these two equivalences are the same for proper potentials. Thus all that remains is to show that algebras from (P24--P28) are pairwise non-isomorphic. Since isomorphic graded algebras have the same Hilbert series, it remains to verify that three algebras from (P24--P26) are pairwise non-isomorphic. Now (P25) is singled out by being non-monomial (it is easy to see that it is not isomorphic as a graded algebra to a monomial one), while algebras from (P24) and (P26) are monomial. Algebras in (P24) and (P26) are non-isomorphic since the first one has cubes in the space of degree $3$ relations, while the second one has no such thing.

\section{Potential algebras $A_F$ for $F\in{\cal P}_{3,3}$}

Throughout this section we equip the monomials in $x,y,z$ with the left-to-right degree-lexicographical ordering assuming $x>y>z$. The following statement is elementary.

\begin{lemma}\label{cyab33}
The kernel of the canonical homomorphism from $\K\langle x,y,z\rangle$ onto $\K[x,y,z]$ $(=$abelianization$)$ intersects ${\cal P}_{3,3}$ by the one-dimensional space spanned by $xyz^\rcirclearrowleft-xzy^\rcirclearrowleft$.
\end{lemma}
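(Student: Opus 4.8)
The plan is a straightforward dimension count together with an explicit witness. First I would observe that ${\cal P}_{3,3}$, being by definition the degree $3$ component of $\K^{\rm cyc}\langle x,y,z\rangle$, is exactly the fixed subspace of the cyclic shift operator $C$ acting on the $27$-dimensional space of degree $3$ monomials in $x,y,z$. The fixed subspace of a permutation of a basis is spanned by the sums over the orbits, so ${\cal P}_{3,3}$ has the basis $\{u^\rcirclearrowleft\}$ indexed by cyclic orbits of monomials. Enumerating these orbits — the three cubes $x^3$, $y^3$, $z^3$; six orbits of the shape $\{u^2v,uvu,vu^2\}$ (two for each of the three unordered pairs of distinct letters); and the two orbits $\{xyz,yzx,zxy\}$ and $\{xzy,zyx,yxz\}$ — gives $3+6+2=11$, so $\dimk{\cal P}_{3,3}=11$. (This count is valid in any characteristic.)

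Next I would compute the image of ${\cal P}_{3,3}$ under the abelianization $\pi\colon\K\langle x,y,z\rangle\to\K[x,y,z]$; its degree $3$ target $\K[x,y,z]_3$ is $10$-dimensional. On the chosen basis, each cube maps to itself, each orbit sum of shape $u^2v^\rcirclearrowleft$ maps to $3u^2v$, and both $xyz^\rcirclearrowleft$ and $xzy^\rcirclearrowleft$ map to $3xyz$. Since the characteristic is not $3$, the scalar $3$ is invertible, so $\pi$ restricted to ${\cal P}_{3,3}$ hits every monomial basis vector of $\K[x,y,z]_3$; hence $\pi|_{{\cal P}_{3,3}}$ is surjective and $\dimk\bigl(\ker\pi\cap{\cal P}_{3,3}\bigr)=11-10=1$.

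Finally, I would exhibit the generator: $xyz^\rcirclearrowleft-xzy^\rcirclearrowleft$ lies in $\ker\pi$ because both summands abelianize to $3xyz$, and it is nonzero because the six monomials $xyz,yzx,zxy,xzy,zyx,yxz$ are pairwise distinct. Therefore $\ker\pi\cap{\cal P}_{3,3}=\spann\{xyz^\rcirclearrowleft-xzy^\rcirclearrowleft\}$. I do not anticipate a genuine obstacle; the only subtle point is that the hypothesis $\mathrm{char}\,\K\neq3$ is exactly what is needed to guarantee that each $u^\rcirclearrowleft$ abelianizes to a \emph{nonzero} multiple of its commutative image, without which $\pi|_{{\cal P}_{3,3}}$ would fail to be surjective and the kernel would be larger.
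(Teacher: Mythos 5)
Your proof is correct and complete: the orbit count giving $\dim {\cal P}_{3,3}=11$, the surjectivity of the abelianization onto the $10$-dimensional space $\K[x,y,z]_3$ (using only ${\rm char}\,\K\neq 3$), and the explicit nonzero kernel element together pin down the one-dimensional intersection exactly as claimed. The paper offers no proof — it simply labels the statement elementary — and your dimension-count argument is precisely the kind of routine verification being left to the reader, so there is nothing to reconcile.
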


\begin{lemma}\label{nco3-3} Let $F\in{\cal P}_{3,3}$. Then by means of a linear substitution $F$ can be turned into one of the following forms$:$

\medskip
\noindent $\begin{array}{ll}\text{{\rm \!(G1)}\ \ $F=0;$\ \ }&\text{{\rm \phantom0(G9)}\ \ $F=z^3+xyz^\rcirclearrowleft;$}\\
\text{{\rm \!(G2)}\ \ $F=z^3;$}&\text{{\rm (G10)}\ \ $F=(y+z)^3+xyz^\rcirclearrowleft;$}\\
\text{{\rm \!(G3)}\ \ $F={yz^2}^\rcirclearrowleft;$}&\text{{\rm (G11)}\ \ $F={yz^2}^\rcirclearrowleft+xyz^\rcirclearrowleft-xzy^\rcirclearrowleft;$}\\
\text{{\rm \!(G4)}\ \ $F=y^3+z^3;$}&\text{{\rm (G12)}\ \ $F=y^3+z^3+xyz^\rcirclearrowleft-xzy^\rcirclearrowleft;$}\\
\text{{\rm \!(G5)}\ \ $F=xyz^\rcirclearrowleft;$}&\text{{\rm (G13)}\ \ $F=y^3+{xz^2}^\rcirclearrowleft+xyz^\rcirclearrowleft-xzy^\rcirclearrowleft;$}\\
\text{{\rm \!(G6)}\ \ $F=x^3+y^3+z^3;$}&\text{{\rm (G14)}\ \ $F={xz^2}^\rcirclearrowleft+{y^2z}^\rcirclearrowleft+xyz^\rcirclearrowleft-xzy^\rcirclearrowleft;$}\\
\text{{\rm \!(G7)}\ \ $F={xz^2}^\rcirclearrowleft+y^3;$}&\text{{\rm (G15)}\ \ $F_a=xyz^\rcirclearrowleft-axzy^\rcirclearrowleft$ with $a\in\K^*;$}\\
\text{{\rm \!(G8)}\ \ $F={xz^2}^\rcirclearrowleft+{y^2z}^\rcirclearrowleft;$\quad}&\text{{\rm (G16)}\ \ $F_a=z^3+xyz^\rcirclearrowleft+axzy^\rcirclearrowleft$ with $a\in\K^*;$}\\
\text{\rlap{{\rm \!\!\!\!(G17)}\ \ $F_a=(y+z)^3+xyz^\rcirclearrowleft+axzy^\rcirclearrowleft$ with $a\notin\{0,-1\};$}}&\\
\text{\rlap{{\rm \!\!\!\!(G18)}\ \ $F_{a,b}{=}x^3{+}y^3{+}z^3{+}axyz^\rcirclearrowleft\!{+}bxzy^\rcirclearrowleft\!$ with $a,b\in\K$, $(a{+}b)^3{+}1{\neq}0$, $(a,b){\neq}(0,0)$ and $(a^3{-}1,b^3{-}1){\neq} (0,0).$}}&
\end{array}$
\medskip

Moreover, to which of the above $18$ forms $F$ can be turned into is uniquely determined by $F$. For $F=F_a$ from {\rm (G15--G17)} $F_a$ can be obtained from $F_b$ by a linear substitution if and only if $a=b$ or $ab=1$. Finally, for $F=F_{a,b}$ from {\rm (G17)}, $F_{a,b}$ and $F_{a',b'}$ can be obtained from one another by a linear substitution if and only if they belong to the same orbit of the group action generated by two maps $(a,b)\mapsto (\theta a,\theta b)$ and $(a,b)\mapsto \bigl(\frac{1+\theta a+\theta^2b}{1+a+b},\frac{1+\theta^2a+\theta b}{1+a+b}\bigr)$. This group has $24$ elements and is isomorphic to $SL_2(\Z_3)$.
\end{lemma}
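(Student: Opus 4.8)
The plan is to reduce the classification of cyclically invariant cubics in three variables to the Weierstrass-type classification of ternary cubics already recorded in Lemma~\ref{co3-3}, using Lemma~\ref{cyab33} as the bridge. By Lemma~\ref{cyab33}, the abelianization map from $\K^{\rm cyc}\langle x,y,z\rangle$ to $\K[x,y,z]$, restricted to ${\cal P}_{3,3}$, has kernel spanned by the single element $xyz^\rcirclearrowleft-xzy^\rcirclearrowleft$. Since the characteristic is not $2$ or $3$, one checks that this restricted map is surjective: any monomial in three commuting variables of degree $3$ is hit by $\tfrac1{c}$ times the cyclic symmetrization of a corresponding noncommutative monomial. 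Hence for any $F\in{\cal P}_{3,3}$ the abelianization $F^{\rm ab}\in\K[x,y,z]$ determines $F$ up to adding a scalar multiple of $xyz^\rcirclearrowleft-xzy^\rcirclearrowleft$, and conversely a linear substitution acts on ${\cal P}_{3,3}$ compatibly with its action on $\K[x,y,z]$ (the kernel element is itself $GL_3(\K)$-semi-invariant, being the ``commutator cyclic'' element whose abelianization is $0$; more precisely a linear substitution sends $xyz^\rcirclearrowleft-xzy^\rcirclearrowleft$ to $\det(C)$ times itself, which should be verified by a short computation).

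Given this, I would proceed case by case through the eight normal forms (Z1)--(Z8) of Lemma~\ref{co3-3} for $F^{\rm ab}$. For each normal form $L$, the preimage in ${\cal P}_{3,3}$ is a line (or a plane in the (Z1) case, where $L=L_{a,b}$ already depends on parameters): $F=F_0+s(xyz^\rcirclearrowleft-xzy^\rcirclearrowleft)$ for the fixed lift $F_0$ of $L$ and $s\in\K$. I would then stratify this line according to degeneracies of the coefficient $s$ relative to the other coefficients, exactly as was done in Lemma~\ref{nco2-4} (Cases 1--6) for the degree-$4$ two-variable situation. Scalings and the residual linear substitutions fixing $L$ (the stabilizer of each normal form in $GL_3(\K)$, acting on the parameter $s$) are used to bring $F$ into one of the listed forms (G1)--(G18). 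For the regular-cubic case (Z1), $L=L_{a,b}=a(x^3+y^3+z^3)+bxyz$, and the lift is $F=a(x^3+y^3+z^3)+b'xyz^\rcirclearrowleft+s(xyz^\rcirclearrowleft-xzy^\rcirclearrowleft)$ for appropriate $b'$; rewriting, $F$ is a combination $x^3+y^3+z^3+\alpha\,xyz^\rcirclearrowleft+\beta\,xzy^\rcirclearrowleft$ after scaling, which is (G18) in the generic subcase, degenerating to (G15), (G16), (G17) or to (G2)--(G14) entries as the parameters hit the excluded loci. The excluded loci $(a+b)^3+1\neq0$, $(a,b)\neq(0,0)$, $(a^3-1,b^3-1)\neq(0,0)$ correspond precisely to the condition $27a^3+b^3\neq0$ of regularity for the abelianized curve together with genuine ternary-ness.

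Next comes the uniqueness and the group-action statements. Uniqueness of the form (i.e.\ that $F$ with different labels are inequivalent) follows from the inequivalence of the corresponding abelianizations in Lemma~\ref{co3-3} together with the remark that within one abelianization class the parameter stratification is stable under linear substitution (scalings act transitively on the relevant nonzero scalars, and substitutions changing the abelianization change the label). For the (G15)--(G17) families, the statement ``$F_a\sim F_b$ iff $a=b$ or $ab=1$'' mirrors the last assertion of Lemma~\ref{1-dim}: the only substitutions preserving the shape up to scalar are scalings and the swap of the two relevant variables, the former acting trivially on the ratio defining $a$ and the latter inverting it; one must check no further substitution helps, using that $F^{\rm ab}$ pins down the substitution up to the stabilizer of $L_{a,b}$ or of $xyz+z^3$ etc. For (G18), I would compute the stabilizer of $L_{a,b}$ in $PGL_3(\K)$ — for a smooth plane cubic in Hesse form this is the well-known Hessian group structure; modding out by the scalings and the substitutions fixing every $F_{a,b}$ leaves a group of order $24$ acting on $(a,b)$, and one identifies the two explicit generators $(a,b)\mapsto(\theta a,\theta b)$ and $(a,b)\mapsto\bigl(\tfrac{1+\theta a+\theta^2 b}{1+a+b},\tfrac{1+\theta^2 a+\theta b}{1+a+b}\bigr)$ by direct substitution (the first comes from scaling two variables by $\theta$, the second from a Fourier-type substitution on the Hesse pencil). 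Recognizing this order-$24$ group as $SL_2(\Z_3)$ is then a finite-group fact: it is generated by two elements of the given orders with the right relations, and $SL_2(\Z_3)$ is the unique group of order $24$ with a unique element of order $2$ (equivalently, the binary tetrahedral group), which matches because the central scaling by $\theta^?$ — more precisely the order-$2$ element — is unique here.

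The main obstacle I anticipate is the (G18)/Hesse-pencil bookkeeping: correctly computing the stabilizer of the generic Hesse cubic, verifying that after quotienting by the substitutions that fix $F_{a,b}$ pointwise one is left with exactly $24$ classes, checking the two displayed generators land in it and generate, and pinning down the isomorphism type as $SL_2(\Z_3)$ rather than, say, $S_4$ (both have order $24$; the distinction is precisely whether there is a unique involution). A secondary but more laborious obstacle is making sure the stratification of each of the remaining seven abelianization classes into the listed (G$\cdot$) forms is exhaustive and non-overlapping, and that in the degenerate subcases the residual scalings genuinely absorb all free scalars (e.g.\ the appearance of $\theta$ and fixed roots of unity in some entries forces care about which cube roots or other roots can be normalized away). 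Both obstacles are of the same flavour as, but somewhat heavier than, the two-variable degree-$4$ analysis in Lemma~\ref{nco2-4} and the ternary-cubic stabilizer bookkeeping implicit in Lemma~\ref{co3-3}.
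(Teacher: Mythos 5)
Your strategy is the same as the paper's: abelianize, invoke Lemma~\ref{co3-3} to put $F^{\rm ab}$ into one of the forms (Z1)--(Z8), use Lemma~\ref{cyab33} to see that the fibre of the abelianization over each normal form is the line $F_0+s(xyz^\rcirclearrowleft-xzy^\rcirclearrowleft)$, and then stratify each fibre by scalings and the residual stabilizer, exactly in the spirit of Lemma~\ref{nco2-4}. Your observation that $xyz^\rcirclearrowleft-xzy^\rcirclearrowleft$ transforms by $\det(C)$ is correct and is implicitly what makes the whole reduction work. The one genuine divergence is in how the equivalence statements for (G15) and (G18) are settled: the paper does not compute the Hessian-group action at all but simply cites \cite{SKL2} both for the criterion deciding when a Case-1 potential lands in (G15) versus (G18) and for the $24$-element group action on the parameters of (G18); you propose to compute the stabilizer of the generic Hesse cubic directly. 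Your route is self-contained where the paper's is not, at the cost of the bookkeeping you correctly identify as the main obstacle; the paper's route is shorter but leans on an external result. For (G16) and (G17) your analysis (only scalings and the swap of the two distinguished variables survive, acting by $a\mapsto a$ or $a\mapsto a^{-1}$) matches the paper's.

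One concrete slip: you assert that $SL_2(\Z_3)$ is the unique group of order $24$ with a unique element of order $2$. That is false --- $C_{24}$, $C_3\times Q_8$ and the dicyclic group of order $24$ also have a unique involution. Having a unique involution does distinguish your group from $S_4$, but to pin down $SL_2(\Z_3)$ you need more (for instance, exhibit it as a nonsplit extension of $A_4$ by its centre, or check it is generated by the two displayed elements subject to the binary tetrahedral presentation, or note it is perfect-free but nonabelian with Sylow-$2$ subgroup $Q_8$ acted on nontrivially by an element of order $3$). This does not affect the classification itself, only the final identification of the abstract isomorphism type.
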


\begin{proof} Let $F\in{\cal P}_{3,3}$. First, we show that $F$ can be turned into exactly one of (G1--G18) by a linear sub. Let $G\in\K[x,y,z]$ be the abelianization of $F$. By Lemma~\ref{co3-3} by means of a linear substitution $G$ can be turned into exactly one of the forms (Z1--Z8). Thus we can assume from the start that $G$ is in one of the forms (Z1--Z8).

Case 1: $G=a(x^3+y^3+z^3)+bxyz$ with $a,b\in\K$. By Lemma~\ref{cyab33}, $F=r(x^3+y^3+z^3)+qxyz^\rcirclearrowleft+qxzy^\rcirclearrowleft$ with $p,q,r\in\K$. Note that the substitution $x\to x$, $y\to y$, $z\to\theta z$ preserves this shape of $F$ and transforms the parameters according to the rule $(p,q,r)\mapsto (\theta p,\theta q,r)$, while the sub $x\to x+y+z$, $y\to x+\theta^2y+\theta z$, $z\to x+\theta y+\theta^2 z$ also preserves the shape of $F$ and transforms the parameters according to the rule $(p,q,r)\mapsto (\theta p+\theta^2q+r,\theta^2p+\theta q+r,p+q+r)$. If $p=q=r=0$, we fall into (G1). Using the above subs and a scaling, we see that $F$ can be turned into the form (G5) if either $q=r=0$, $p\neq 0$ or $p=r=0$, $q\neq 0$ or $p^3=q^3=r^3\neq 0$, $p\neq q$ and $F$ can be turned into the form (G6) if either $p=q=0$, $r\neq 0$ or $p=q\neq 0$,  $p^3=r^3$. As shown in \cite{SKL2}, $F$ can be turned into the form (G15) precisely when either $r=0$ and $pq\neq 0$ or $(p+q)^3=-r^3\neq 0$. Now assume that none of the above assumptions on $p,q,r$ holds. Then $r\neq 0$. By a scaling, we can turn $r$ into $1$. The rest of the assumptions now read $(p+q)^3+1\neq 0$, $(p,q)\neq (0,0)$ and $(p^3-1,q^3-1)\neq (0,0)$. That is, we end up in (G18). The fact that $F$ from (G1), (G5), (G6), (G14) and (G18) with different labels are non-equivalent follows from the fact \cite{SKL2} that even the corresponding potential algebras are non-isomorphic.

Case 2: $G=z^3$. By Lemma~\ref{cyab33},  $F=z^3+s(xyz^\rcirclearrowleft-xzy^\rcirclearrowleft)$ with $s\in\K$. If $s=0$, $F$ is given by (G2). If $s\neq 0$, a scaling brings $F$ to the form (G16) with $a=-1$.

Case 3: $G=yz^2$. By Lemma~\ref{cyab33},  $F=\frac13{yz^2}^\rcirclearrowleft+s(xyz^\rcirclearrowleft-xzy^\rcirclearrowleft)$ with $s\in\K$. If $s=0$, $F$ acquires form (G3) after scaling. If $s\neq 0$, a scaling brings $F$ to the form (G11).

Case 4: $G=y^3+z^3$. By Lemma~\ref{cyab33},  $F=y^3+z^3+s(xyz^\rcirclearrowleft-xzy^\rcirclearrowleft)$ with $s\in\K$. If $s=0$, $F$ is given by  (G4). If $s\neq 0$, a scaling  brings $F$ to the form (G12).

Case 5: $G=xz^2+y^3$. By Lemma~\ref{cyab33},  $F=y^3+\frac13{xz^2}^\rcirclearrowleft+s(xyz^\rcirclearrowleft-xzy^\rcirclearrowleft)$ with $s\in\K$. If $s=0$, $F$ acquires form (G7) after scaling. If $s\neq 0$, a scaling brings $F$ to the form (G13).

Case 6: $G=xz^2+y^2z$. By Lemma~\ref{cyab33},  $F=\frac13{xz^2}^\rcirclearrowleft+\frac13{y^2z}^\rcirclearrowleft+s(xyz^\rcirclearrowleft-xzy^\rcirclearrowleft)$ with $s\in\K$. If $s=0$, $F$ acquires form (G8) after scaling. If $s\neq 0$, a scaling brings $F$ to the form (G14).

Case 7:  $G=xyz+z^3$. By Lemma~\ref{cyab33},  $F=z^3+\bigl(s+\frac{1}{3}\bigr)xyz^\rcirclearrowleft-sxzy^\rcirclearrowleft$ with $s\in\K$. If $s=0$, a scaling brings $F$ to the form (G9). If $s=-\frac13$, swapping $x$ and $y$ and a scaling brings $F$ to the form (G9) again. If $s\neq 0$ and $s\neq -\frac13$, a scaling turns $F$ into the form (G16) with $a\neq -1$.

Case 8: $G=xyz+(y+z)^3$. By Lemma~\ref{cyab33},  $F=(y+z)^3+\bigl(s+\frac{1}{3}\bigr)xyz^\rcirclearrowleft-sxzy^\rcirclearrowleft$ with $s\in\K$. Same as in the previous case, if $s=0$ or $s=-\frac13$ a scaling or the same together with swapping of $y$ and $z$ turns $F$ into the form (G10). If $s\neq 0$ and $s\neq -\frac13$, a scaling turns $F$ into the form (G17) (automatically, $a\neq -1$).

The fact that $F$ from the list (G1--G18) with different labels are non-equivalent (can not be obtained from one another by a linear sub) follows from the non-equivalence of polynomials with different labels from the list (Z1--Z8), the equivalence statements in Case~1  as well as the trivial observation that a symmetric (not just cyclicly) element of ${\cal P}_{3,3}$ can not be equivalent to a non-symmetric one.

It remains to prove the statements about equivalence within each of (G15--G18). The (G15) and (G18) cases are done in \cite{SKL2}. It remains to deal with (G16) and (G17). Let $F_a$, $F_b$ be two potentials both from (G16). Their abelianizations are $G_a=z^3+(1+a)xyz$ and $G_b=z^3+(1+b)xyz$. A linear sub turning $F_a$ to $F_b$ must transform $G_a$ into $G_b$. If $a=-1$, such a thing can obviously exist only if $b=-1$. Thus we can assume that $a\neq -1$ and $b\neq -1$. Now it is straightforward to check that that such subs are among the scalings $x\to px$, $y\to qy$ and $z\to rz$ or scalings composed with the swap of $x$ and $y$: $x\to py$, $y\to qx$ and $z\to rz$ with $p,q,r\in\K^*$, $r^3=1$. In order for an $F_a$ to be transformed to any $F_{a'}$, we need additionally $pqr=1$ in the first case and $pqra=1$ in the second. Analyzing the way how these subs act on $F_a$, we see that $F_a$ is transformed to itself if no swap is involved and to $F_{a^{-1}}$ otherwise. The situation with $F_a$ from (G17) can be analyzed in a similar way.
\end{proof}

\begin{lemma}\label{ms2}
Let $F\in{\cal P}_{3,3}$ and $A=A_F$. Then $H_A\geq (1-t)^{-3}$. Furthermore, the following statements are equivalent$:$
\begin{itemize}\itemsep=-2pt
\item[\rm (K1)]$A$ is exact$;$
\item[\rm (K2)]$H_A=(1-t)^{-3}$ and $A$ has no non-trivial right annihilators$;$
\item[\rm (K3)]$H_A=(1-t)^{-3}$ and $A$ is Koszul.
\end{itemize}
\end{lemma}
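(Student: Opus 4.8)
\textbf{Proof plan for Lemma~\ref{ms2}.}

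The lower bound $H_A\geq(1-t)^{-3}$ and the equivalence (K1)$\iff$(K2) are exactly the special case $n=3$, $k=3$ of Lemma~\ref{commin} (recall $(1-3t+3t^2-t^3)^{-1}=(1-t)^{-3}$), so the only real content here is the equivalence of (K3) with the other two. The plan is to derive (K3)$\iff$(K1) from the Koszulity machinery already recorded in the excerpt, namely (\ref{koex}) and (\ref{exko}). First I would treat the direction (K1)$\Longrightarrow$(K3): if $A$ is exact, then by Lemma~\ref{commin} $A$ is proper and $H_A=(1-t)^{-3}$, and by the first implication of (\ref{exko}) exactness forces Koszulity, giving (K3).

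For the converse (K3)$\Longrightarrow$(K1), suppose $H_A=(1-t)^{-3}$ and $A$ is Koszul. The key point is that this forces $A$ to be \emph{proper}: since $F\in{\cal P}_{3,3}$ is a cubic potential, $A=A_F$ is quadratic with $n=3$ generators and $\dim R_F\leq 3$, and by Lemma~\ref{gen2} properness is equivalent to $\dim A_3=3^3-2\cdot 3^2+1=10$. Reading off the coefficient of $t^3$ in $H_A=(1-t)^{-3}=\sum\binom{m+2}{2}t^m$ gives $\dim A_3=\binom52=10$, so $A$ is proper. Now the second implication of (\ref{exko}) (``$A$ proper and Koszul $\Longrightarrow$ $A$ is exact'') applies directly and yields (K1). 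This closes the cycle (K1)$\Rightarrow$(K2)$\Rightarrow$(K1) from Lemma~\ref{commin} together with (K1)$\Leftrightarrow$(K3) just established.

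There is essentially no hard step here: the statement is a bookkeeping corollary assembling Lemma~\ref{commin}, the identity (\ref{koex}) relating exactness and Koszulity for proper cubic twisted potentials, and the observation that $H_A=(1-t)^{-3}$ pins down $\dim A_3=10$ and hence properness via Lemma~\ref{gen2}. The only thing to be careful about is making sure the hypotheses of (\ref{exko}) and (\ref{koex}) are met --- both require $F\in{\cal P}^*_{n,3}$ (here $n=3$), which holds since ${\cal P}_{3,3}\subseteq{\cal P}^*_{3,3}$, and (\ref{exko}) in its second implication additionally needs properness, which is exactly what the Hilbert series hypothesis buys us. If one prefers to avoid invoking (\ref{exko}) as a black box, the same conclusion follows by noting that for a proper cubic twisted potential algebra the complex (\ref{comq}) literally coincides with the Koszul complex (\ref{koco1}) (as remarked just before (\ref{koex})), so exactness of one is exactness of the other.
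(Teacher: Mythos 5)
Your argument is correct and follows the paper's proof essentially verbatim: the lower bound and (K1)$\iff$(K2) come from the general results on exact twisted potential algebras, and (K1)$\iff$(K3) comes from the coincidence of (\ref{comq}) with the Koszul complex for proper $F$, with properness pinned down by $\dim A_3=10$ via Lemma~\ref{gen2}. The only slip is attributing the inequality $H_A\geq(1-t)^{-3}$ to Lemma~\ref{commin} alone --- that lemma only characterizes exactness, and the lower bound (minimality of the series over the variety ${\cal P}_{3,3}$) actually requires Lemma~\ref{commin1}, which the paper cites alongside it.
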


\begin{proof}The inequality  $H_A\geq (1-t)^{-3}$ and the equivalence of (K1) and (K2) follow from  Lemmas~\ref{commin} and~\ref{commin1} with $n=k=3$.  The equivalence of (K1) and (K3) follows from the already mentioned fact that the complex (\ref{comq}) coincides with the Koszul complex if the potential $F$ is proper, while the latter happens if and only if $\dim A_3=10$ (see Lemma~\ref{gen2}).
\end{proof}

\begin{lemma}\label{g18} Let $F\in{\cal P}_{3,3}$ be given by {\rm (G18)} of Lemma~$\ref{nco3-3}$. Then the potential algebra $A=A_F$ is Koszul, exact, non-PBW and satisfies $H_A=(1-t)^{-3}$.
\end{lemma}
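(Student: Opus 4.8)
The plan is to establish, in this order: (i) $H_A=(1-t)^{-3}$; (ii) exactness; (iii) Koszulity; (iv) non-PBW. Parts (ii)--(iii) come for free once (i) is in hand: Lemma~\ref{ms2} already gives $H_A\geq(1-t)^{-3}$, and once $H_A=(1-t)^{-3}$ it is enough to exhibit a variable (after a linear change, say $z$) for which $u\mapsto zu$ is injective on $A$ in order to rule out non-trivial right annihilators, whence exactness and then Koszulity both follow from Lemma~\ref{ms2}. So the real content is (i) together with that injectivity, and (iv).

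For (i) I would mimic the central-element argument in the proof of Lemma~\ref{wei}. The algebras in (G18) are the $3$-dimensional Sklyanin algebras; as recorded in \cite{SKL2} (or by a direct if tedious computation, imposing $[x,g]=[y,g]=[z,g]=0$ on a general cyclically symmetric degree-$3$ element and using the three defining relations to reduce), $A$ carries a nonzero central element $g$ of degree $3$. Put $B=A/gA$, presented by the three quadratic relations of $A$ together with $g$; a finite Gröbner basis computation then gives $\dim B_n=3n$ for $n\geq1$, i.e.\ $H_B=\frac{1+t+t^2}{(1-t)^2}$ (this is where a short case analysis in $(a,b)$, in the spirit of the cases in the proof of Lemma~\ref{wei}, may be required). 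From $\dim A_n\leq\dim B_n+\dim A_{n-3}$ one gets $H_A(1-t^3)\leq H_B$, hence $H_A\leq(1-t)^{-3}$; combining with Lemma~\ref{ms2} forces equality, and since all the inequalities are now equalities $g$ must be a non-zero-divisor. Since no leading monomial of the Gröbner basis of $B$ starts with (say) $z$, left multiplication by $z$ is injective on $B$, and lifting through the regular element $g$ exactly as in Lemma~\ref{wei} shows it is injective on $A$; so $A$ has no non-trivial right annihilators, Lemma~\ref{ms2} gives exactness and Koszulity, and in particular $\dim A_3=10$, so $A$ is proper (Lemma~\ref{gen2}).

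For (iv), suppose $A$ were PBW. As $\dim A_3=10$ and $H_A=(1-t)^{-3}$, Lemma~\ref{ome0} yields a basis $u>v>w$ of $V=\spann\{x,y,z\}$ and a compatible well-ordering whose set of leading monomials of the relation space is $\{uv,uw,vw\}$; hence the associated graded algebra of $A$ for this monomial-order filtration is the quadratic monomial algebra $C=\K\langle u,v,w\rangle/(uv,uw,vw)$. The principal symbol of the central element $g$ is then a nonzero central element of $C$ sitting in degree $3$: this is the standard fact that symbols of central elements are central in the associated graded (if $[\sigma(a),\sigma(g)]\neq0$ in $\mathrm{gr}(A)$ then $[a,g]\neq0$ in $A$), together with $\sigma(g)\neq0$. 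But $Z(C)_3=0$, by a one-line computation: the degree-$3$ normal monomials of $C$ are the $w^iv^ju^k$ with $i+j+k=3$, imposing $u\eta=\eta u$ on such an $\eta$ annihilates every coefficient except that of $u^3$, and then $u^3w=0\neq wu^3$ in $C$ kills that coefficient too. This contradiction shows $A$ is not PBW.

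The main obstacle is the input $\dim B_n=3n$ — equivalently, that $A$ has the Sklyanin Hilbert series $(1-t)^{-3}$ — since this is where the elliptic-curve geometry behind these algebras enters; carried out by hand it is the Gröbner-basis bookkeeping (with parameter subcases) that constitutes the core of the Sklyanin analysis of \cite{SKL2}. Everything else — the symbol argument, the computation $Z(C)_3=0$, and the passage from $H_A=(1-t)^{-3}$ and no right annihilators to exactness and Koszulity via Lemma~\ref{ms2} — is short and stable.
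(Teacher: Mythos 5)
Your architecture for (i)--(iii) is sound and in fact matches the route of the sources the paper actually relies on: the paper's own proof of Lemma~\ref{g18} is essentially a citation (Koszulity and $H_A=(1-t)^{-3}$ to \cite{TVB}, with alternative Gr\"obner-basis proofs in \cite{SKL1,SKL2}, and non-PBW to \cite{SKL1}), followed by the one-line deduction of exactness from Koszulity via Lemma~\ref{ms2}. Your central-element scheme is precisely the \cite{SKL1,SKL2} scheme and the scheme of Lemma~\ref{wei}, and the surrounding bookkeeping ($H_A(1-t^3)\leq H_B$, injectivity of $u\mapsto zu$ on $B$ from the shape of the leading monomials, lifting through the regular element $g$, then Lemma~\ref{ms2}, then properness via Lemma~\ref{gen2}) is all correct. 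But you have not exhibited the central element, the finite Gr\"obner basis of $B$, or the verification that $\dim B_n=3n$ uniformly over the $(a,b)$ allowed in (G18); that computation \emph{is} the theorem (it is where the content of the Artin--Tate--Van den Bergh result lives), so as written (i)--(iii) is an accurate plan rather than a proof --- which, to your credit, you acknowledge.

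The non-PBW argument, however, contains a genuine error. The ``standard fact'' that the principal symbol of a central element of $A$ is central in the associated monomial algebra is false for monomial-order filtrations. Counterexample: $A=\K[x,y]=\K\langle x,y\rangle/(xy-yx)$ with $x>y$ and deglex; the leading monomial of the relation is $xy$, the associated monomial algebra is $C=\K\langle x,y\rangle/(xy)$, and the central element $x$ of $A$ has symbol $x$, which is not central in $C$ because $x\cdot y=0\neq y\cdot x$ there. Your parenthetical implication fails here with $a=y$, $g=x$: $[\sigma(y),\sigma(x)]=yx\neq0$ in $C$ while $[y,x]=0$ in $A$. The failure mode is exactly the one relevant to your setting: centrality of $g$ only gives $\mathrm{lm}(xg)=\mathrm{lm}(gx)$ after reduction to normal form, and when, say, $m_gx$ is not normal this common leading word can perfectly well be $xm_g$, so nothing forces $xm_g$ to vanish in $C$; the two sides of $[\sigma(x),\sigma(g)]$ live in different monomial-graded components and need not cancel even though $[x,g]=0$. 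Hence your (correct) computation $Z(C)_3=0$ yields no contradiction. To prove non-PBW within this paper's toolkit you would need to argue as in Lemma~\ref{non-pbwb-all}: use Lemma~\ref{ome0} to pin down the possible sets of leading monomials after an arbitrary linear substitution and eliminate each by direct inspection of coefficients, which is also how \cite{SKL1} proceeds.
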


\begin{proof} The fact that $A$, known also as a Sklyanin algebra, is Koszul and satisfies $H_A=(1-t)^{-3}$ is proved in \cite{TVB}. Different proofs are presented in \cite{SKL1} and \cite{SKL2}. In \cite{SKL1} it is shown that these algebras are non-PBW. Now, by Lemma~\ref{ms2},  Koszulity of $A$ yields its exactness.
\end{proof}

\begin{lemma}\label{smth1} Let $F\in{\cal P}^*_{3,3}$ be such that $xyz$, $yxz$ and $zxy$ are present in $F$ with non-zero coefficients, while $xxx$, $xxy$, $xxz$, $xyx$, $xyy$, $yxx$, $yxy$ and $zxx$ do not feature in $F$. Then $A=A_F$ is PBW, Koszul, exact and satisfies $H_A=(1-t)^{-3}$.
\end{lemma}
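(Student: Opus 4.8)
The plan is to pin down the leading monomials of the three defining relations $r_x=\delta_xF$, $r_y=\delta_yF$, $r_z=\delta_zF$ with respect to the fixed left-to-right degree-lexicographical order with $x>y>z$, and then let the general machinery of Sections~2 and~3 finish the job. First I would list the cubic monomials of $\K\langle x,y,z\rangle$ by first letter. Among the nine cubics beginning with $x$, the five monomials $xxx,xxy,xxz,xyx,xyy$ are excluded by hypothesis, so $r_x$ is a linear combination of $yz,zx,zy,zz$ whose $yz$-coefficient is the (nonzero) coefficient of $xyz$ in $F$; hence $\overline{r_x}=yz$. Of the cubics beginning with $y$ only $yxx,yxy$ are excluded, and these are precisely the length-three words lying above $yxz$ in the order (the only length-two words exceeding $xz$ are $xx$ and $xy$), so $r_y$ is supported on $xz,yx,yy,yz,zx,zy,zz$ with nonzero $xz$-coefficient (the coefficient of $yxz$ in $F$), whence $\overline{r_y}=xz$. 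Finally, the only excluded cubic beginning with $z$ is $zxx$, which is the unique one lying above $zxy$, so $\overline{r_z}=xy$, its $xy$-coefficient being the coefficient of $zxy$ in $F$.

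Since $\overline{r_x}=yz$, $\overline{r_y}=xz$, $\overline{r_z}=xy$ are pairwise distinct, the relations are nonzero with distinct leading terms, hence linearly independent; thus $F$ is non-degenerate and $\{xy,xz,yz\}$ is the set of leading monomials of the relation space $R$. These three monomials produce exactly one overlap, namely $xyz=xy\cdot z=x\cdot yz$, so by Lemma~\ref{sy} the overlap resolves and $r_x,r_y,r_z$ themselves form the reduced Gr\"obner basis of the ideal of relations; in particular $A$ is PBW, and therefore Koszul. Counting normal words, i.e.\ the monomials avoiding all of $xy,xz,yz$ as factors — exactly the words $z^iy^jx^k$ — yields $H_A=(1-t)^{-3}$. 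In particular $\dim A_3=10=3^3-2\cdot3^2+1$, so $F$ is proper by Lemma~\ref{gen2}, and, being proper and Koszul, $A$ is exact by (\ref{koex}). (Alternatively one may read off PBW and $H_A=(1-t)^{-3}$ from Lemma~\ref{ome0} applied to the leading-monomial set $\{xy,xz,yz\}$ together with $\dim A_3=10$, and then use the implication $\text{(K3)}\Rightarrow\text{(K1)}$ of Lemma~\ref{ms2} after noting $A$ is a proper twisted potential algebra.)

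The only genuine work is the monomial bookkeeping of the first paragraph, and it is entirely routine once the cubics are sorted by first letter; there is no real obstacle, as Lemma~\ref{sy}, Lemma~\ref{gen2} and (\ref{koex}) do all the heavy lifting. The single point needing a moment's care is to confirm that the three excluded families genuinely force $\overline{r_x}=yz$, $\overline{r_y}=xz$, $\overline{r_z}=xy$ rather than some larger monomial surviving: for $r_x$ the competing cubics $xxx,xxy,xxz,xyx,xyy$ are all excluded; for $r_y$ the only cubics starting with $y$ that could feed a term above $xz$ are $yxx,yxy$, both excluded; and for $r_z$ the only competitor $zxx$ is excluded. Everything else is an immediate application of results already established.
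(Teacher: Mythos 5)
Your proposal is correct and follows essentially the same route as the paper: it identifies the leading monomials of $\delta_xF$, $\delta_yF$, $\delta_zF$ as $yz$, $xz$, $xy$, invokes Lemma~\ref{sy} to resolve the unique overlap $xyz$ and obtain a quadratic Gr\"obner basis, reads off PBW, Koszulity and $H_A=(1-t)^{-3}$, and deduces exactness from Koszulity plus properness. The only cosmetic difference is that you verify properness via $\dim A_3=10$ and cite (\ref{koex}) explicitly, whereas the paper routes the last step through Lemma~\ref{ms2}, whose proof rests on the same fact.
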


\begin{proof} By assumptions, the leading monomials of $\delta_z F$, $\delta_y F$ and $\delta_x F$  are $xy$, $xz$ and $yz$ respectively. Since the said monomials exhibit just one overlap, it must resolve by Lemma~\ref{sy}, turning the defining relations into a quadratic Gr\"obner basis and $\{xy,xz,yz\}$ into the set of leading monomials of members of the said basis. The equality $H_A=(1-t)^{-3}$ immediately follows. Since $A$ admits a quadratic Gr\"obner basis in the ideal of relations, $A$ is PBW and therefore Koszul. By Lemma~\ref{ms2}, $A$ is exact.
\end{proof}

\begin{lemma}\label{g11-17} Let $F\in{\cal P}_{3,3}$ be given by one of {\rm (G11--G17)} of Lemma~$\ref{nco3-3}$. Then the potential algebra $A=A_F$ is PBW, Koszul, exact and satisfies $H_A=(1-t)^{-3}$.
\end{lemma}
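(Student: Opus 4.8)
The plan is to handle each of the seven potentials from (G11)–(G17) by the same mechanism: produce the defining relations explicitly by computing $\delta_x F,\delta_y F,\delta_z F$, then check that their leading monomials (under the fixed left-to-right degree-lexicographic order with $x>y>z$) are exactly $\{xy,xz,yz\}$, possibly after a harmless linear substitution that keeps $F$ inside ${\cal P}_{3,3}$. Once that is established, Lemma~\ref{ome0} together with Lemma~\ref{sy} does all the remaining work: by Lemma~\ref{sy} the single overlap of $\{xy,xz,yz\}$ must resolve (the relations already form a reduced Gr\"obner basis), hence $A$ is PBW, hence Koszul; PBW with leading terms $\{xy,xz,yz\}$ forces $H_A=(1-t)^{-3}$ (as in Lemma~\ref{ome0}); and finally Lemma~\ref{ms2} converts $H_A=(1-t)^{-3}$ plus Koszulity (equivalently, absence of nontrivial right annihilators, which is immediate since no leading monomial of the Gr\"obner basis begins with $z$) into exactness. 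In several of these cases this is literally an instance of Lemma~\ref{smth1}: one only needs to verify that $xyz$, $yxz$, $zxy$ occur in $F$ with nonzero coefficients while $xxx,xxy,xxz,xyx,xyy,yxx,yxy,zxx$ do not, possibly after relabelling variables.

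Concretely, I would go case by case. For (G15), $F_a=xyz^\rcirclearrowleft-axzy^\rcirclearrowleft$, the relations are $yz-azy$, $zx-axz$, $xy-ayx$ (up to scalars), with leading monomials $yz,zx,xy$ — after reordering the variables appropriately this is exactly the hypothesis of Lemma~\ref{smth1}, and $a\neq0$ guarantees the relations are genuine. For (G16), $F_a=z^3+xyz^\rcirclearrowleft+axzy^\rcirclearrowleft$, the cubic term $z^3$ only adds a $zz$ summand to the relation $\delta_z F$, which does not affect the leading monomials $xy,xz,yz$; similarly (G17) $F_a=(y+z)^3+xyz^\rcirclearrowleft+axzy^\rcirclearrowleft$ adds only terms in $y,z$ to two of the relations, again leaving the leading monomials untouched, so Lemma~\ref{smth1} applies after one checks the relevant coefficients are nonzero (this is where the exclusions $a\neq0$, $a\neq-1$ enter). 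For (G11) $={yz^2}^\rcirclearrowleft+xyz^\rcirclearrowleft-xzy^\rcirclearrowleft$, (G12) $=y^3+z^3+xyz^\rcirclearrowleft-xzy^\rcirclearrowleft$, (G13) $=y^3+{xz^2}^\rcirclearrowleft+xyz^\rcirclearrowleft-xzy^\rcirclearrowleft$, (G14) $={xz^2}^\rcirclearrowleft+{y^2z}^\rcirclearrowleft+xyz^\rcirclearrowleft-xzy^\rcirclearrowleft$ the relations are exactly those displayed in the tables of Theorems~\ref{main33-p} (rows P8, P7, P5, P6 respectively), and one reads off directly that the leading monomials are $\{xy,xz,yz\}$ — indeed these are precisely the relations appearing in the "Defining Relations of $A_F$" column, and the $xyz^\rcirclearrowleft-xzy^\rcirclearrowleft$ part always contributes the "generic" leading monomials while the commutative cubic contributes only lower-order (in the lex sense) quadratic corrections.

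The only genuine obstacle is bookkeeping: for (G13), (G14) and (G17) the extra cubic summand contributes a square or a mixed quadratic term to one of the relations, and one must confirm that this contribution is lexicographically smaller than the "main" leading monomial $xy$, $xz$ or $yz$ — e.g. in P5 the relation $xy-yx+xz+zx$ has leading monomial $xy$ and the correction $xz+zx$ involves only $xz,zx,yy,zy,\dots$ which are all $<xy$ in our order, so nothing changes. I expect no case to fail this check, precisely because in Lemma~\ref{nco3-3} the variables were chosen so that in the abelianization the distinguished "$xyz$-type" monomial dominates; but writing out the three derivatives and confirming the leading-monomial set is $\{xy,xz,yz\}$ in each of the seven cases is the substance of the proof. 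After that, the chain \emph{Lemma~\ref{sy} $\Rightarrow$ PBW $\Rightarrow$ Koszul, $H_A=(1-t)^{-3}$ (Lemma~\ref{ome0}) $\Rightarrow$ exact (Lemma~\ref{ms2})} closes the argument uniformly.
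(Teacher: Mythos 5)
Your proposal is correct and matches the paper's proof, which simply observes that every potential in (G11--G17) satisfies the hypotheses of Lemma~\ref{smth1} (the monomials $xyz$, $yxz$, $zxy$ occur with nonzero coefficients and none of the forbidden monomials appear), after which PBW, Koszulity, $H_A=(1-t)^{-3}$ and exactness follow exactly along the chain you describe. Your extra case-by-case bookkeeping just re-derives the content of Lemma~\ref{smth1} inline; the only tiny inaccuracy is that the exclusion $a\neq-1$ in (G17) plays no role in this verification (only $a\neq0$ is needed for the leading-monomial check).
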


\begin{proof}
Just apply Lemma~\ref{smth1}: the potentials $F$ from each of {\rm (G11--G17)} satisfy the assumptions.
\end{proof}

\begin{lemma}\label{g1-8} Let $F\in{\cal P}_{3,3}$ be given by one of {\rm (Gj)} of Lemma~$\ref{nco3-3}$ with $1\leq j\leq 8$. Then the potential algebra $A=A_F$ is PBW, Koszul and non-exact. The Hilbert series of $A$ is given by  $H_A=(1-3t)^{-1}$ if $j=1$, $H_A=\frac{1+t}{1-2t-2t^2}$ if $j=2$, $H_A=\frac{1+t}{1-2t-t^2}$ if $j\in\{3,4\}$ and $H_A=\frac{1+t}{1-2t}$ if $5\leq j\leq 8$.
\end{lemma}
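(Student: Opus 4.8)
The plan is to handle the eight cases in two batches, exactly as the classification of abelianizations in Lemma~\ref{nco3-3} suggests: first the "monomial-relation" cases and then the cases where the relations are not all monomials. In each case I would write down the three defining relations $\delta_x F$, $\delta_y F$, $\delta_z F$ explicitly from the given potential and then exhibit a finite quadratic Gröbner basis with respect to the left-to-right degree-lexicographical order with $x>y>z$ fixed at the start of the section. For $j=1$ ($F=0$) the algebra is free on three generators, so $H_A=(1-3t)^{-1}$ and there is nothing to compute. For $j=2$ ($F=z^3$) the single relation is $z^2$; for $j=3$ ($F={yz^2}^\rcirclearrowleft$) the relations are $z^2$ and $yz+zy$; for $j=4$ ($F=y^3+z^3$) they are $y^2$ and $z^2$; for $j=5$ ($F=xyz^\rcirclearrowleft$) they are $yz$, $zx$, $xy$; for $j=6$ ($F=x^3+y^3+z^3$) they are $x^2$, $y^2$, $z^2$; for $j=7$ ($F={xz^2}^\rcirclearrowleft+y^3$) they are $z^2$, $y^2$, $xz+zx$; for $j=8$ ($F={xz^2}^\rcirclearrowleft+{y^2z}^\rcirclearrowleft$) they are $z^2$, $yz+zy$, $xz+zx+y^2$. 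In each of these the leading monomials of the listed relations have no overlaps at all (cases $j=2,3,4,6$), or exactly one overlap (cases $j=5,7,8$, e.g.\ $xyz$ for $j=5$, $xz^2$ for $j=7$, $xz^2$ and $yz^2$ reducing compatibly for $j=8$), which by the diamond lemma resolves to zero — for $j=5,7,8$ one can simply check the single $S$-polynomial reduces, or invoke Lemma~\ref{sy}, which guarantees that if there is exactly one overlap of degree $3$ it must resolve. Hence in every case the given relations form a (quadratic) reduced Gröbner basis, so $A$ is PBW and therefore Koszul.

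Once the Gröbner basis is in hand, the Hilbert series is a routine count of normal words. For $j=2$ the forbidden subword is $z^2$, giving the Fibonacci-type series $\frac{1+t}{1-2t-2t^2}$; for $j=3,4$ two of the nine degree-$2$ monomials are forbidden in a way that leaves $\frac{1+t}{1-2t-t^2}$; for $j=5,6,7,8$ exactly three of the nine degree-$2$ monomials are reducible and the normal words are $z$-then-$y$-then-$x$ type strings (or the analogous pattern), producing $\dim A_m = 3\cdot 2^{m-1}$ for $m\ge 1$, i.e.\ $H_A=\frac{1+t}{1-2t}$; and for $j=1$, $H_A=(1-3t)^{-1}$. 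I would just record the leading monomials of the Gröbner basis and read off the rational function; this is the part the paper would call "a routine calculation."

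Finally, non-exactness is immediate from Lemma~\ref{ms2}: that lemma states $A$ is exact if and only if $H_A=(1-t)^{-3}$ (together with the absence of non-trivial right annihilators, but the Hilbert series alone already fails). None of the series $(1-3t)^{-1}$, $\frac{1+t}{1-2t-2t^2}$, $\frac{1+t}{1-2t-t^2}$, $\frac{1+t}{1-2t}$ equals $(1-t)^{-3}=1+3t+6t^2+10t^3+\cdots$ (for instance each of ours has $\dim A_2\le 6$ but with strictly smaller degree-$3$ coefficient, or already $\dim A_2<6$), so $A$ is non-exact in every case.

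I do not expect a genuine obstacle here: the only thing to be careful about is bookkeeping — getting the three partial derivatives right for each potential and checking the one potential overlap in the three cases where it exists. The structural input (PBW $\Rightarrow$ Koszul, Lemma~\ref{sy} for overlap resolution, Lemma~\ref{ms2} for the exactness criterion) is all already available, so the proof is essentially a finite verification once the cases are organized by the abelianization types (Z8), (Z7)+symmetrization, (Z6), (Z5)-degenerations, etc., coming from Lemma~\ref{nco3-3}.
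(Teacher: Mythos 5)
Your proposal follows the paper's proof exactly: exhibit the defining relations as a quadratic Gr\"obner basis (hence PBW, hence Koszul), read off the Hilbert series by counting normal words, and deduce non-exactness from Lemma~\ref{ms2} since none of the resulting series equals $(1-t)^{-3}$. One bookkeeping correction: your overlap counts are off in several cases --- for $j=3$ the leading monomials $zz$ and $yz$ do overlap (in $yzz$, and $zz$ self-overlaps in $zzz$), for $j=5$ the monomials $xy$, $yz$, $zx$ produce three overlaps rather than one, and for $j=2,4,6,7,8$ the self-overlaps of the squares must be counted --- so Lemma~\ref{sy} (which requires exactly one overlap) is not applicable there and you must fall back on the direct $S$-polynomial reductions you mention; these all resolve, so the argument and conclusions stand.
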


\begin{proof} An easy computation shows that the defining relations $\delta_x F$, $\delta_y F$ and $\delta_z F$ form a Gr\"obner basis in the ideal of relations of $A$. Hence $A$ is PBW and therefore Koszul. The computation of the Hilbert series is now easy and routine.
\end{proof}

\begin{lemma}\label{g9} Let $F\in{\cal P}_{3,3}$ be given by {\rm (G9)} and $A=A_F$. Then $A$ is
non-Koszul, non-PBW, non-exact, non-proper and satisfies $H_A=\frac{1+t+t^2+t^3+t^4}{1-2t+t^2-t^3-t^4}$.
\end{lemma}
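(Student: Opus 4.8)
The algebra is $A=A_F$ with $F=z^3+xyz^\rcirclearrowleft=z^3+xyz+yzx+zxy$; computing the left partial derivatives presents $A$ by generators $x,y,z$ and relations $yz$, $zx$, $xy+z^2$. In the fixed left-to-right degree-lexicographical order ($x>y>z$) the leading monomials of these relations are $xy,yz,zx$, which by Lemma~\ref{ome0} can never be the set of leading monomials of a quadratic Gr\"obner basis. The plan is first to compute the reduced Gr\"obner basis of the ideal of relations by the diamond lemma. The three leading monomials have exactly three overlaps, $xyz$, $yzx$, $zxy$; the overlap $yzx$ resolves trivially, while $xyz$ reduces to $-z^3$ on one side and to $0$ on the other, so $z^3$ joins the basis. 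After adjoining $z^3$ all remaining overlaps --- $zxy$, $yzzz$, $zzzx$, $zzzz$, $zzzzz$ --- resolve, so the reduced Gr\"obner basis is $\{xy+z^2,\ yz,\ zx,\ z^3\}$, with leading monomial set $\{xy,yz,zx,z^3\}$. In particular $\dim A_3=11$.

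Next I would read off the Hilbert series from the normal words: these are the monomials in $x,y,z$ avoiding $xy$, $yz$, $zx$ and $z^3$ as factors. Counting them with a transfer matrix whose states record the last letter (with $z$ split according to whether the word terminates in exactly one $z$ or in $zz$, to encode the $z^3$ prohibition), the admissible one-step extensions being $x\mapsto\{x,z\}$, $y\mapsto\{x,y\}$, $z\mapsto\{y,z\}$, gives a $4\times 4$ matrix with characteristic polynomial $t^4-2t^3+t^2-t-1$. Hence $\dim A_n=2\dim A_{n-1}-\dim A_{n-2}+\dim A_{n-3}+\dim A_{n-4}$ for $n\ge 5$, with initial values $1,3,6,11,21$ obtained by directly listing normal words, and summing the series yields $H_A=\dfrac{1+t+t^2+t^3+t^4}{1-2t+t^2-t^3-t^4}$.

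The four negative assertions now follow from this data. Since $\dim A_3=11\neq 10$, Lemma~\ref{gen2} gives that $A$ is non-proper; since $H_A\neq(1-t)^{-3}$, the equivalence of (K1) and (K2) in Lemma~\ref{ms2} gives that $A$ is non-exact. For Koszulity I would use the identity $H_A(-t)H_{A^!}(t)=1$ valid for Koszul algebras: if $A$ were Koszul then $H_{A^!}(t)=1/H_A(-t)=\dfrac{1+2t+t^2+t^3-t^4}{1-t+t^2-t^3+t^4}=\dfrac{1+3t+3t^2+2t^3-t^5}{1+t^5}$ would be the Hilbert series of the algebra $A^!$, hence would have non-negative integer coefficients; but its coefficient at $t^5$ equals $-2$, a contradiction, so $A$ is non-Koszul. (Alternatively, one computes the reduced Gr\"obner basis of $A^!$, finds $A^!$ finite-dimensional with $H_{A^!}=1+3t+3t^2+2t^3$, and checks that $H_A(-t)H_{A^!}(t)\neq 1$ already in degree $5$.) Finally, every PBW algebra is Koszul, so $A$ is non-PBW as well.

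The only genuinely delicate point is the bookkeeping: one must verify that adjoining $z^3$ really does close up the Gr\"obner basis --- all seven overlaps, not merely the three visible quadratic ones, must be checked to resolve --- and then carry the transfer-matrix computation through without arithmetic slips, so that the precise rational form of $H_A$, and with it the sign of the degree-$5$ coefficient of $1/H_A(-t)$, comes out correctly.
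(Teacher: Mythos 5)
Your proof is correct and follows essentially the same route as the paper: the same Gr\"obner basis $\{yz,\ zx,\ xy+z^2,\ z^3\}$, the same Hilbert series, non-Koszulity via the duality formula (the paper computes $H_{A^!}=1+3t+3t^2+2t^3$ explicitly, while you extract the negative degree-$5$ coefficient of $1/H_A(-t)$ --- an equivalent and slightly slicker check, which you also note as an alternative), and then Lemma~\ref{ms2} and Lemma~\ref{gen2} for non-exactness and non-properness exactly as in the paper. One throwaway remark is inaccurate --- Lemma~\ref{ome0} does not say that $\{xy,yz,zx\}$ can never be the leading-monomial set of a quadratic Gr\"obner basis (the monomial algebra (P12) is a counterexample), only that it cannot occur when $\dim A_3=10$ --- but nothing in your argument depends on it.
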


\begin{proof} Since $F=z^3+xyz^\rcirclearrowleft$, the defining relations of $A$ are $yz$, $zx$ and $xy+zz$. The ideal of relations of $A$ turns out to have a finite Gr\"obner basis comprising $yz$, $zx$, $xy+zz$ and $zzz$. This allows us to find an explicit expression for the Hilbert series of $A$: $H_A=\frac{1+t+t^2+t^3+t^4}{1-2t+t^2-t^3-t^4}$. Next, one easily checks that the Koszul dual $A^!$ has the Hilbert series $H_{A^!}=1+3t+3t^2+2t^3$. Then the duality formula $H_A(t)H_{A^!}(-t)=1$ fails. Hence $A$ is non-Koszul  and therefore non-PBW. By Lemma~\ref{ms2}, $A$ is non-exact. The above formula for $H_A$ yields $\dim A_3=11$ and therefore $A$ is non-proper by Lemma~\ref{gen2}.
\end{proof}

\begin{lemma}\label{g10} Let $F\in{\cal P}_{3,3}$ be given by {\rm (G10)} and $A=A_F$. Then $A$ is proper, non-Koszul, non-PBW, non-exact and satisfies $H_A=\frac{1+2t+3t^2+3t^3+2t^4+t^5}{1-t-t^3-2t^4}$.
\end{lemma}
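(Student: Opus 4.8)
The plan is to make the Gröbner basis of $A$ explicit, read off the Hilbert series, and feed the numerical data into Lemmas~\ref{gen2} and~\ref{ms2} together with the implication~{\rm(\ref{koex})}. First I would compute the partials of $F=(y+z)^3+xyz^\rcirclearrowleft$, obtaining the defining relations $\delta_xF=yz$, $\delta_yF=zx+(y+z)^2$ and $\delta_zF=xy+(y+z)^2$ of $A$. With the degree-lexicographical order ($x>y>z$) these three relations already have pairwise distinct leading monomials $yz$, $yy$, $xy$; reducing the tails modulo the first two gives the cleaner equivalent basis $\{yz,\ yy+zx+zy+zz,\ xy-zx\}$ of the space of relations, still with leading monomials $\{yz,yy,xy\}$.

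Next I would run a Gröbner basis computation. The leading monomials $\{yz,yy,xy\}$ exhibit exactly the four degree-$3$ overlaps $yyz$, $yyy$, $xyz$, $xyy$; resolving them shows $xyz$ resolves, while $yyz$, $yyy$, $xyy$ force the three new Gröbner basis elements $zxz$, $zzz$ and $xzx+xzy+xzz+zzx$, with leading monomials $zxz$, $zzz$, $xzx$. I would then check that every overlap of the six leading monomials $\{yz,yy,xy,zxz,zzz,xzx\}$ in degrees $4$ and $5$ resolves, so that
$$
\{\,yz,\ \ yy+zx+zy+zz,\ \ xy-zx,\ \ zxz,\ \ zzz,\ \ xzx+xzy+xzz+zzx\,\}
$$
is a finite reduced Gröbner basis of the ideal of relations of $A$. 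Counting the normal words (those containing none of these six monomials as a submonomial) and summing their generating function gives $H_A=\dfrac{1+2t+3t^2+3t^3+2t^4+t^5}{1-t-t^3-2t^4}$; in particular $\dim A_1=3$, $\dim A_2=6$, $\dim A_3=10$ and $\dim A_4=17$.

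The rest is formal. Since $\dim A_3=10=3^3-2\cdot 3^2+1$, Lemma~\ref{gen2} gives that $F$ is proper, so $A$ is a proper potential algebra. Since $H_A\ne(1-t)^{-3}$ (for instance $\dim A_4=17>15$), Lemma~\ref{ms2} shows $A$ is not exact; as $A$ is proper, {\rm(\ref{koex})} then forces $A$ to be non-Koszul, and since every PBW algebra is Koszul, $A$ is non-PBW. The main obstacle is the Gröbner basis computation itself — confirming that the basis is finite (i.e. that every degree $4$ and $5$ overlap resolves) and carrying out the normal-word count carefully enough to obtain the \emph{exact} rational function $H_A$ rather than just its first few coefficients; everything downstream of that is immediate from the quoted results.
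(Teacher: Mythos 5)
Your proposal is correct and follows essentially the same route as the paper: compute the same finite reduced Gr\"obner basis $\{yz,\ yy+zx+zy+zz,\ xy-zx,\ zxz,\ zzz,\ xzx+xzy+xzz+zzx\}$, read off $H_A$, and then get properness from $\dim A_3=10$ via Lemma~\ref{gen2} and non-exactness from $H_A\neq(1-t)^{-3}$ via Lemma~\ref{ms2}. The only (harmless) divergence is at the Koszulity step: you deduce non-Koszulity from properness plus non-exactness via~(\ref{koex}), whereas the paper instead computes $H_{A^!}=1+3t+3t^2+t^3$ and observes that the duality formula $H_A(t)H_{A^!}(-t)=1$ fails; both arguments are valid and equally short.
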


\begin{proof} Since $F=(y+z)^3+xyz^\rcirclearrowleft$, the defining relations of $A$ are $xy+(y+z)^2$, $zx+(y+z)^2$ and $yz$. The ideal of relations of $B$ turns out to have a finite Gr\"obner basis comprising $xy-zx$, $yy+zx+zy+zz$, $yz$, $xzx+xzy+xzz+zzx$, $zxz$ and $zzz$. This allows us to find an explicit expression for the Hilbert series of $A$: $H_A=\frac{1+2t+3t^2+3t^3+2t^4+t^5}{1-t-t^3-2t^4}$. Next, the dual algebra $A^!$ is easily seen to have the Hilbert series $H_{A^!}=1+3t+3t^2+t^3$. Clearly, the duality formula $H_A(t)H_{A^!}(-t)=1$ fails and therefore $A$ is non-Koszul. Hence $A$ is non-PBW. By Lemma~\ref{ms2}, $A$ is non-exact. The above formula for $H_A$ yields $\dim A_3=10$ and therefore $A$ is proper by Lemma~\ref{gen2}.
\end{proof}

\subsection{Proof of Theorem~\ref{main33-p}}

Combining Lemmas~\ref{nco3-3}, \ref{g18}, \ref{g11-17}, \ref{g1-8}, \ref{g9} and \ref{g10}, we see that all statements of Theorem~\ref{main33-p} hold with isomorphism of $A_F$ and $A_G$ condition replaced by equivalence of $F$ and $G$ (with respect to the $GL_3(\K)$ action by linear substitutions). By Remark~\ref{proper}, it remains to show that algebras (P10--P14) are pairwise non-isomorphic and algebras (P15--P18) are pairwise non-isomorphic. Since isomorphic graded algebras have the same Hilbert series, it remains to verify that four algebras from (P10--P13) are pairwise non-isomorphic and that the algebras from (P15) and (P16) are non-isomorphic. The latter holds because the algebra in (P15) is monomial, while the algebra in (P16) is not isomorphic to a monomial one. It remains to show that four algebras from (P10--P13) are pairwise non-isomorphic. The same argument on monomial algebras reduces the task to showing that the algebras in (P11) and (P12) are non-isomorphic and the algebras in (P10) and (P13) are non-isomorphic. The algebras in (P11) and (P12) are non-isomorphic since the (3-dimensional) space of quadratic relations for the first one is spanned by squares (of degree 1 elements) while the same space for the second algebra contains no squares at all. As for the algebras in (P10) and (P13), the second one sports just one (up to a scalar multiple) square in the space of quadratic relations, while the first one obviously has two linearly independent ones: $x^2$ and $z^2$.

\section{Twisted potential algebras $A_F$ with $F\in{\cal P}^*_{2,4}$}

We shall occasionally switch back and forth between denoting the generators $x,y$ or $x_1,x_2$ meaning $x=x_1$ and $y=x_2$. The reasons are aesthetic.

\begin{lemma}\label{d126} For $a\in\K^*$, $a\neq 1$, let $F_a=x^3y+ax^2yx+a^2xyx^2+a^3yx^3$ be the twisted potential of {\rm (T34)} of Theorem~$\ref{main24-t}$ and $A^a=A_{F_a}$. Then the twisted potential algebras $A^a$ are pairwise non-isomorphic, non-potential, non-proper and satisfy $H_{A^a}=\frac{1+t+t^2}{1-t-t^2}$.
\end{lemma}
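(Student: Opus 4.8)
The plan is to read the presentation off the twisted potential, get the Hilbert series from a Gr\"obner basis, use $\dim A^a_4$ for non-properness, and then combine the already-established Theorem~\ref{main24-p} with one cheap isomorphism invariant to obtain simultaneously non-potentiality and pairwise non-isomorphism.

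First I would compute $\delta_x F_a=x^2y+axyx+a^2yx^2$ and $\delta_y F_a=a^3x^3$, so that $A^a$ is presented by $r_1=x^2y+axyx+a^2yx^2$ and $r_2=x^3$ (one also checks $\delta^R_xF_a=a\,\delta_xF_a$ and $\delta^R_yF_a=a^{-3}\delta_yF_a$, which confirms $F_a$ is a twisted potential with twist $\mathrm{diag}(a,a^{-3})$). With the fixed left-to-right degree-lexicographical order ($x>y$) the leading monomials are $x^2y$ and $x^3$, and their overlaps are $x^4$, $x^5$, $x^3y$, $x^4y$; the first two resolve trivially ($x^3\mapsto0$ on both sides) and the remaining two resolve by a short direct reduction, so $\{r_1,r_2\}$ is a Gr\"obner basis. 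The normal words are then exactly the words in $x,y$ with no subword $x^3$ and no subword $x^2y$, i.e.\ words with no two consecutive $x$'s, optionally followed by a single terminal $xx$; a Fibonacci-type count gives $H_{A^a}=\frac{1+t+t^2}{1-t-t^2}$, and in particular $\dim A^a_3=6$ and $\dim A^a_4=10$. Since $6=2^3-2$ and $10\neq 2^4-2\cdot2^2+1=9$, Lemma~\ref{gen2} yields that $F_a$ is non-degenerate and non-proper, as claimed.

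Next, for non-potentiality: $A^a$ is connected graded, generated in degree one, with a $2$-dimensional space of minimal relations sitting in degree $3$, so if it were isomorphic as a graded algebra to a potential algebra it would be $A_G$ for a homogeneous $G\in\mathcal{P}_{2,4}$ and hence would appear in Theorem~\ref{main24-p}; only {\rm (P24)}, {\rm (P25)}, {\rm (P26)} there have Hilbert series $\frac{1+t+t^2}{1-t-t^2}$. The invariant I will use to separate $A^a$ from these is the set $C(R)=\{v\in R: v=w^{\otimes 3}\text{ for some degree-one }w\}$, where $R$ is the cubic relation space: a linear substitution maps $C(R)$ bijectively to the corresponding set for the image, and since $\K$ is algebraically closed $C(R)$ is the line $\spann\{x^3\}$ for $A^a$, two-dimensional for {\rm (P24)}, and $\{0\}$ for {\rm (P26)}, so only {\rm (P25)} can match. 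For {\rm (P25)}, $C(R)=\spann\{x^3\}$ as well, so any graded isomorphism $\phi$ would have to carry $\spann\{x^3\}$ to $\spann\{x^3\}$, forcing $\phi(x)=\alpha x$; writing $\phi(y)=\gamma x+\delta y$ with $\alpha\delta\neq0$ and applying $\phi$ to $r_1$ produces $\alpha^2\gamma(1+a+a^2)x^3+\alpha^2\delta(x^2y+axyx+a^2yx^2)$, whose $(x^2y,xyx,yx^2)$-coefficients $\alpha^2\delta(1,a,a^2)$ must be proportional to the $(1,1,1)$ of the {\rm (P25)} relation $x^2y+xyx+yx^2$, giving $a=1$ — excluded. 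Hence $A^a$ is non-potential. Pairwise non-isomorphism runs identically: a graded isomorphism $A^a\to A^b$ forces $\phi(x)=\alpha x$, $\phi(y)=\gamma x+\delta y$, and then $\phi(r_1)$ has $(x^2y,xyx,yx^2)$-coefficients $\alpha^2\delta(1,a,a^2)$, which must be proportional to $(1,b,b^2)$, so $a=b$.

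The only mildly laborious step is the Gr\"obner-basis verification together with the normal-word bookkeeping behind the Hilbert series; the rest is short linear algebra once Theorem~\ref{main24-p} is available. The conceptual point making the isomorphism arguments painless is that the line $\spann\{x^3\}$ of cubic relations is a genuine isomorphism invariant that already determines the restriction of any isomorphism to the degree-one component, after which matching a single remaining relation pins down the parameter.
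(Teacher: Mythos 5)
Your proof is correct and follows essentially the same route as the paper's: read off the presentation, check that the two relations form a Gr\"obner basis to obtain $H_{A^a}=\frac{1+t+t^2}{1-t-t^2}$, and use the fact that $x^3$ is the unique cube (up to scalar) in the space of cubic relations to force $x\mapsto\alpha x$, after which matching the $(x^2y,xyx,yx^2)$-coefficients $(1,a,a^2)$ settles both pairwise non-isomorphism and non-potentiality via comparison with (P24--P26). Your explicit derivation of non-properness from $\dim A^a_4=10\neq 2^4-2\cdot 2^2+1$ via Lemma~\ref{gen2} supplies a step the paper's proof leaves implicit.
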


\begin{proof} Clearly, $A^a$ is presented by generators $x,y$ and relations $x^2y+axyx+a^2yx^2$ and $x^3$. It is easy to check that the defining relations of $A^a$ form a Gr\"obner basis in the ideal of relations. Knowing the leading monomials $x^3$ and $x^2y$ of the members of a Gr\"obner basis, we easily confirm that $H_{A^a}=\frac{1+t+t^2}{1-t-t^2}$. Next, we show that algebras $A^a$ are pairwise non-isomorphic. Indeed, assume that a linear substitution facilitates an isomorphism between $A^a$ and $A^b$. As $x^3$ is the only cube (up to a scalar multiple) in the space of cubic relations for both $A^a$ and $A^b$, our sub must map $x$ to its own scalar multiple. Now it is elementary to check that that any such substitution leaves invariant each space $R_a$ spanned by $x^2y+axyx+a^2yx^2$ and $x^3$. Since $R_a$ are pairwise distinct, an isomorphism between $A^a$ and $A^b$ does exist only if $b=a$. Same type argument shows that each $A^a$ is not isomorphic to any of three algebras from (P24--P26) of the already proven Theorem~\ref{main24-p}. Since these three algebras are the only cubic potential algebras on two generators with the Hilbert series $\frac{1+t+t^2}{1-t-t^2}$, it follows that $A^a$ are non-potential.
\end{proof}

\begin{lemma}\label{firsttouch2} Each $F\in\K\langle x,y\rangle$ listed in {\rm (T24--T33)} of Theorem~$\ref{main24-t}$ is a proper twisted potential such that the Jordan normal form of the corresponding twist is one block with eigenvalue $-1$ for $F$ from {\rm (T25)}, one block with eigenvalue $1$ for $F$ from {\rm (T26--27)}, diagonalizable in all other cases with the two eigenvalues being $a,a^{-1}$ for $F$ from {\rm (T24)}, $a,-a^{-1}$ for $F$ from {\rm (T28)}, $\theta,1$ for $F$ from {\rm (T29)}, $\theta^2,1$ for $F$ from {\rm (T30)}, $\xi_8,-\xi_8$ for $F$ from {\rm (T31)}, $i\xi_8,-i\xi_8$ for $F$ from {\rm (T32)}, $-1,-1$ for $F$ from from {\rm (T33)}. Moreover $A_F$ is exact, non-potential and has the Hilbert series $(1+t)^{-1}(1-t)^{-3}$ for every $F$ from  {\rm (T24--T33)}.
\end{lemma}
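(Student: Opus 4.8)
The plan is to verify the four assertions for each of the ten potentials separately, though in a uniform way. First, for each $F$ in (T24--T33) I would check that $F\in{\cal P}^*_{2,4}$, i.e. that the linear span of $\delta_xF,\delta_yF$ equals that of $\delta^R_xF,\delta^R_yF$; this is a direct computation of the four one-sided derivatives, and along the way it produces the twist matrix $M$ via \eqref{qua}. Bringing $M$ to Jordan normal form (or just reading off its eigenvalues) gives the claimed normal forms. The non-degeneracy of each $F$ (hence that each $M\in GL_2(\K)$) will be visible from the fact that the two listed relations $\delta_xF,\delta_yF$ are manifestly linearly independent.

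\textbf{Exactness and the Hilbert series.} By Lemma~\ref{ms1}, it suffices to show for each $F$ that $H_{A_F}=(1+t)^{-1}(1-t)^{-3}$ and that $A_F$ has no non-trivial right annihilators. For most rows the quickest route is Lemma~\ref{smth}: I would check that, after possibly swapping $x$ and $y$, the potential $F$ contains $x^2y^2$ and $yx^2y$ with non-zero coefficients while $x^4,x^3y,x^2yx,yx^3$ are absent (or the mirror condition). Inspecting (T24--T33) one sees several of them satisfy this outright; for the remaining ones I would first apply a scaling or a small linear substitution (of the type described in the introduction) to put $F$ into a shape where Lemma~\ref{smth} applies — the eigenvalue data already tells us which substitutions preserve twisted-potentiality. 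For any row where no such substitution is available, I fall back to computing the reduced Gr\"obner basis of the ideal of relations directly, using Lemma~\ref{sy}: the two degree-$3$ relations have leading monomials exhibiting exactly one overlap of degree $4$, which by Lemma~\ref{sy} must resolve, so the defining relations already form a Gr\"obner basis; this pins down $H_{A_F}=(1+t)^{-1}(1-t)^{-3}$ and, since no leading monomial starts with the ``small'' variable, rules out non-trivial right annihilators. Exactness then follows from Lemma~\ref{ms1}.

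\textbf{Properness.} Given exactness, Lemma~\ref{commin} (the ``moreover'' clause, applied with $n=2$, $k=4$) immediately yields that $A_F$ is proper; alternatively Lemma~\ref{gen2} gives properness from $\dim A_F{}_{,4}=n^4-2n^2+1=9$, which is read off from $H_{A_F}=(1+t)^{-1}(1-t)^{-3}$.

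\textbf{Non-potentiality.} Here the twist data does the work: a potential algebra arises precisely when \eqref{qua} holds with $M=\mathrm{Id}$, and for none of the ten rows is the twist the identity (the eigenvalue lists all contain a value $\neq 1$, or a non-trivial Jordan block). Since $F$ is proper, Lemma~\ref{gen2} says $F$ is determined by $A_F$ up to a scalar and any graded isomorphism $A_F\to A_G$ transports $F$ to $G$ up to a scalar; in particular the twist of $F$ is a graded-isomorphism invariant up to the conjugation described in Remark~\ref{rer}, which cannot turn a non-identity twist into the identity. Hence $A_F$ is not graded-isomorphic to any potential algebra, i.e. $A_F$ is non-potential. \textbf{The main obstacle} I anticipate is the handful of rows (likely the Sklyanin-flavoured ones such as (T31)--(T33) and the $\theta$-deformations (T29)--(T30)) where Lemma~\ref{smth} does not apply after an obvious scaling: for those one must actually run the Gr\"obner basis computation and, if the two defining relations do \emph{not} already form a Gr\"obner basis, introduce a central element and pass to a quotient exactly as in the proofs of Lemmas~\ref{wei} and~\ref{nwei}, bounding $\dim A_n$ recursively. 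That bookkeeping is the only genuinely laborious part; everything else is a finite, mechanical check.
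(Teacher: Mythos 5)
Your overall route is the paper's: compute the one-sided derivatives to identify the twist and its Jordan form, dispose of most rows via Lemma~\ref{smth}, get exactness from the Hilbert series plus absence of right annihilators (Lemma~\ref{ms1}/Lemma~\ref{commin}), properness from Lemma~\ref{gen2} or the ``moreover'' clause of Lemma~\ref{commin}, and non-potentiality from properness plus a non-identity twist. All of that is sound and matches the paper; in particular the paper handles (T24--T28) and (T33) exactly by Lemma~\ref{smth} (note (T33) is \emph{not} one of the hard rows: it contains $x^2y^2$ and $yx^2y$ and none of $x^4,x^3y,x^2yx,yx^3$, so the first half of Lemma~\ref{smth} applies verbatim).

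The one step that fails as stated is your fallback for the remaining rows. You claim that for those rows ``the two degree-$3$ relations have leading monomials exhibiting exactly one overlap of degree $4$, which by Lemma~\ref{sy} must resolve, so the defining relations already form a Gr\"obner basis.'' For (T29)--(T32) this is false: e.g.\ for (T29) the leading monomials are $x^3$ and $x^2y$, which overlap several times ($x^4$, $x^3y$, \dots), Lemma~\ref{sy} only guarantees that \emph{some} degree-$4$ overlap resolves, and in fact the reduced Gr\"obner basis is strictly larger than the set of defining relations (for (T29)--(T30) it acquires $xy^3-y^3x$ and a degree-$5$ element; for (T31)--(T32) it has seven members). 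What saves the argument --- and what the paper actually does --- is that in each of these cases the reduced Gr\"obner basis is still \emph{finite}, so one computes it outright, reads off $H_A=(1+t)^{-1}(1-t)^{-3}$ from the leading monomials, and observes that none of those leading monomials begins with $y$, whence no non-trivial right annihilators. Your plan~C (central element and quotient as in Lemmas~\ref{wei} and~\ref{nwei}) is not needed here and would be considerably heavier; but your plan~B as literally written (``the defining relations already form a Gr\"obner basis'') would lead you to the wrong Hilbert series for (T29)--(T32), so the finite-Gr\"obner-basis computation cannot be skipped.
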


\begin{proof} It is straightforward and elementary to check that each $F$ is a twisted potential with the Jordan normal form of the twist being as specified. For $F$ from (T24--T28) and (T33), a direct application of  Lemma~\ref{smth} shows that $A_F$ is exact and has the Hilbert series $(1+t)^{-1}(1-t)^{-3}$. Assume now that $F=x^3y+yx^3+axyx^2+a^2x^2yx+y^4$ with $a^3=1\neq a$. This covers (T29) and (T30). Then $A=A_F$ is presented by generators $x,y$ and relations $x^3+y^3$ and $x^2y+a^2xyx+ayx^2$. A direct computation shows that the defining relations together with $xy^3-y^3x$ and $xyxy^2-ayxyxy+a^2y^2xyx+2ay^3x^2$ form a Gr\"obner basis in the ideal of relations of $A$. This allows to compute the Hilbert series  $H_A=(1+t)^{-1}(1-t)^{-3}$ and to observe in the usual way that there are no non-trivial right annihilators in $A$. By Lemma~\ref{commin}, $A$ is exact. Next, assume that $F=x^4-ayx^3-y^2x^2+ay^3x+y^4+xy^3+x^2y^2+x^3y$ with $a^2=-1$. This covers (T31) and (T32). Again, the ideal of relations of $A=A_F$ has a finite Gr\"obner basis with the leading monomials of its members being $x^3$, $x^2y$, $xyx^2$, $xy^4$, $xy^2x^2$, $xyxy^2$ and $xy^2xy^2$ (7 members in total). We skip spelling out the exact formulas for the members of the basis this once since some of them turn out to be rather long. For instance, the last one is the sum of 9 terms. Anyway, knowing the above leading terms allows to  compute the Hilbert series  $H_A=(1+t)^{-1}(1-t)^{-3}$ and to observe in the usual way that there are no non-trivial right annihilators in $A$. By Lemma~\ref{commin}, $A_F$ is exact for $F$ from (T29--T32). By Lemma~\ref{gen2}, each $A_F$  for $F$ from (T24--T33) is proper. Since the corresponding twist (it is uniquely determined by $A_F$) is non-trivial, none of $A_F$ is potential.
\end{proof}

\begin{lemma}\label{1block2} Let $G\in{\cal P}^*_{2,4}$ be non-degenerate, $M\in GL_2(\K)$ be the unique matrix providing the twist for $G$ and assume that $A=A_G$ is non-potential. Assume also that the normal Jordan form of $M$ consists of one block. If $A$ is non-proper, then $A$ is isomorphic to an $A_F$ with $F$ from {\rm (T34)} of Theorem~$\ref{main24-t}$. If $A$ is proper, then $A$ is isomorphic to $A_F$ for $F$ from {\rm (T25--T27)} of Theorem~$\ref{main24-t}$. Moreover, algebras $A_F$ for $F$ from {\rm (T24--T27)} are pairwise non-isomorphic.
\end{lemma}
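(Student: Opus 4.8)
The plan is to normalise the twist, enumerate the twisted potentials compatible with it, and then match the outcome against the table of Theorem~\ref{main24-t}.

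\emph{Normalising the twist.} By Remark~\ref{rer} we may apply a linear substitution after which $M=N_\lambda$, the Jordan block of size $2$ with eigenvalue $\lambda\in\K^*$. Writing $F=\sum_{i,j,k,l\in\{1,2\}}c_{ijkl}x_ix_jx_kx_l$ and expanding the component relations $\delta^R_{x_a}F=\sum_b(N_\lambda)_{ab}\,\delta_{x_b}F$ coming from $(\ref{qua})$, one finds that these are equivalent to the linear conditions $c_{pqra}=\sum_b(N_\lambda)_{ab}c_{bpqr}$ for all indices $p,q,r,a$. Iterating this identity four times rotates the indices back to their original positions while applying $N_\lambda$ to each tensor slot once, so the coefficient tensor of any $F\in\mathcal P^*_{2,4}(N_\lambda)$ is fixed by $N_\lambda^{\otimes4}$; since $\lambda^4$ is the only generalised eigenvalue of $N_\lambda^{\otimes4}$, a non-zero such $F$ forces $\lambda^4=1$, i.e. $\lambda\in\{1,-1,i,-i\}$.

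\emph{Enumeration and reduction.} For each of the four admissible values of $\lambda$ we solve the above linear system explicitly, obtaining a finite dimensional space $\mathcal P^*_{2,4}(N_\lambda)$, and we let the stabiliser of $N_\lambda$ (by Remark~\ref{rer}, the substitutions whose transpose commutes with $N_\lambda$ up to a scalar) act on its projectivisation; this is a small group, and the orbit analysis reduces every non-degenerate $G$ to a normal form. For $\lambda=-1$ one gets exactly the one-parameter family (T25). For $\lambda=1$ one gets the one-parameter families (T26) and (T27), together with the family whose relation space is $\spann\{x^2y+axyx+a^2yx^2,\ x^3\}$ with $a\in\K$, and a few orbits for which $G$ is in fact cyclically invariant. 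For $\lambda=\pm i$ every non-degenerate orbit, after a further (presentation-changing, hence twist-changing) substitution, has relation space of this same shape. In that family the value $a=1$ makes the potential cyclically invariant (it is ${x^3y}^\rcirclearrowleft$, giving the potential algebra P25 of Theorem~\ref{main24-p}) and $a=0$ makes it degenerate; both are discarded, and for $a\in\K\setminus\{0,1\}$ we recognise precisely (T34), while the cyclically invariant orbits yield potential algebras, excluded by the standing hypothesis that $A=A_G$ is non-potential. We expect this orbit reduction --- extracting exactly the normal forms (T25), (T26), (T27) and the (T34)-family with the correct parameter exclusions --- to be the main technical obstacle.

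\emph{Properness and the final statement.} By Lemma~\ref{gen2}, with $n=2$ and $\deg F=4$, the algebra $A_G$ is proper iff $\dim(A_G)_4=2^4-2\cdot2^2+1=9$. By Lemma~\ref{firsttouch2} the algebras (T25)--(T27) are exact with Hilbert series $(1+t)^{-1}(1-t)^{-3}$, whose degree $4$ coefficient is $9$, hence proper; by Lemma~\ref{d126} the algebras (T34) have Hilbert series $\frac{1+t+t^2}{1-t-t^2}$, whose degree $4$ coefficient exceeds $9$, hence non-proper. Combined with the reduction this gives the two asserted cases. For the last claim, all of (T24)--(T27) are proper (Lemma~\ref{firsttouch2}), so by Lemma~\ref{gen2} a graded isomorphism between two of them is a substitution carrying one potential to a scalar multiple of the other; hence, by Remark~\ref{rer}, the conjugacy class of the twist is an invariant within this list. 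The twists are $\mathrm{diag}(a,a^{-1})$ for (T24), $N_{-1}$ for (T25) and $N_1$ for (T26) and (T27): since $N_{\pm1}$ is not diagonalisable, (T24) is isomorphic to none of (T25)--(T27), and since $\mathrm{char}\,\K\neq2$ the matrices $N_{-1}$ and $N_1$ are not conjugate, so (T25) is isomorphic to neither (T26) nor (T27). Finally, an isomorphism between algebras of (T26) and (T27) would carry $F$ to a scalar multiple of $F'$ and hence $F^{\rm ab}$ to a scalar multiple of ${F'}^{\rm ab}$; but $F^{\rm ab}=4a\,x^3y$ for (T26) while $F^{\rm ab}=a\,x^4$ for (T27), which for $a\neq0$ lie in the distinct $GL_2(\K)$-classes (C3) and (C2) of Lemma~\ref{co2-4}, and the residual parameter values are settled by directly comparing relation spaces.
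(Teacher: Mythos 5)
Your overall strategy coincides with the paper's: normalise the twist to a single Jordan block $N_\lambda$ via Remark~\ref{rer}, observe that the coefficient tensor of $G$ is fixed by $N_\lambda^{\otimes4}$ so that $\lambda^4=1$, solve the resulting linear system for each admissible $\lambda$, reduce by the stabiliser of $N_\lambda$, and then separate the outcomes by properness (read off from the degree $4$ coefficient of the Hilbert series via Lemma~\ref{gen2}) and by the conjugacy class of the twist. The $N_\lambda^{\otimes4}$ framing of the eigenvalue restriction and the abelianization argument separating (T26) from (T27) are clean. The problem is that the step you yourself flag as ``the main technical obstacle'' --- computing ${\cal P}_{2,4}(N_\lambda)$ for each $\lambda$ and carrying out the orbit reduction --- is essentially the entire content of the paper's proof, and you do not execute it; worse, the outcome you predict for it is partly wrong. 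For $\lambda=1$ the solution space is $\{s({x^2y^2}^\rcirclearrowleft-xyxy^\rcirclearrowleft-xyx^2+x^2yx)+tx^3y^\rcirclearrowleft+rx^4\}$, and since $\delta_x$ of the $s$-part involves $xy^2+y^2x-2yxy$, the only members whose relation space has the shape $\spann\{x^2y+axyx+a^2yx^2,\,x^3\}$ are those with $s=0$; these are cyclically invariant and excluded by the non-potentiality hypothesis. So no (T34)-family ``with $a\in\K$'' arises at $\lambda=1$: the non-proper outputs of this lemma are only the isolated members of (T34) with $a\in\{i,-i,-1\}$, coming from $\lambda=\pm i$ and from the $t=0$ stratum at $\lambda=-1$. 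This does not contradict the statement being proved, but it shows the enumeration was guessed rather than derived.

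The final claim is also only half addressed. You distinguish the four families from one another, but ``pairwise non-isomorphic'' also requires that distinct parameter values within (T25), within (T26) and within (T27) give non-isomorphic algebras, and nothing in your last paragraph touches this. The paper's argument is that, by Lemma~\ref{gen2} and Remark~\ref{rer}, an isomorphism between two proper algebras from the same family must be induced by a substitution whose matrix transpose commutes with $N_\lambda$, i.e.\ $x\to px$, $y\to py+qx$ with $p\in\K^*$, $q\in\K$, and that no such substitution changes the parameter $a$. This is exactly the orbit computation you deferred, so both gaps live in the same omitted step; also note that your abelianization test for (T26) versus (T27) degenerates at $a=0$ (both abelianizations vanish), so the ``residual'' case you wave at genuinely needs the relation-space comparison.
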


\begin{proof} By Remark~\ref{rer}, we can without loss of generality assume that
$$
M=\left(\begin{array}{cc}\alpha&1\\ 0&\alpha\end{array}\right).
$$
If $F=\sum\limits_{j,k,m,n=1}^2 a_{j,k,m,n}x_jx_kx_mx_n$, then the inclusion $F\in {\cal P}_{2,4}(M)$ is equivalent to the following system of linear equations on the coefficients of $F$:
\begin{equation}\label{b1-24}
a_{j,k,m,2}=\alpha a_{2,j,k,m}\ \ \text{and}\ \ a_{j,k,m,1}=\alpha a_{1,j,k,m}+a_{2,j,k,m}\ \ \text{for}\ \ 1\leq j,k,m\leq 2.
\end{equation}
One easily sees that (\ref{b1-24}) has only zero solution unless $\alpha^4=1$. This leaves three cases to consider: $\alpha^2=-1$, $\alpha=-1$ and $\alpha=1$.

If $\alpha^2=-1$, the space of solutions of (\ref{b1-24}) is one-dimensional, corresponding to ${\cal P}_{2,4}(M)$ being spanned by $G=yx^3+\alpha x^3y-\alpha xyx^2-x^2yx+\frac{1+\alpha}{2}x^4$. One easily sees that $A_G$ is isomorphic to the algebra from (T34) with $a=\alpha$. By Lemma~\ref{d126}, it is non-proper.

If $\alpha=-1$, solving (\ref{b1-24}), we see that
$$
\textstyle {\cal P}_{2,4}(M)=\{F_{s,t}=s\bigl(\frac12 x^4+yx^3-x^3y-xyx^2+x^2yx\bigr)+t(xyx^2-x^2yx-yx^2y-xy^2x+x^2y^2+y^2x^2):s,t\in\K\}.
$$
If $t=0$, $A_F$ with $F=F_{s,t}$ is easily seen to be isomorphic to the algebra from (T34) with $a=-1$. Thus we can assume that $t\neq 0$. A scaling turns $t$ into $1$, leaving us to deal with $F_{a,1}$ for $a\in\K$. These are twisted potentials from (T25). By Lemma~\ref{firsttouch2}, the corresponding algebras are proper. Using Remark~\ref{rer} and Lemma~\ref{gen2}, we see that only substitutions of the form $y\to py+qx$, $x\to px$ with $p\in\K^*$ and $q\in\K$ can provide an isomorphism between algebras in (T25). However none of these substitutions changes the parameter $a$. Hence algebras in (T25) are pairwise non-isomorphic.

If $\alpha=1$, solving (\ref{b1-24}), we see that
$$
{\cal P}_{2,4}(M)=\{F_{s,t,r}=s({x^2y^2}^\rcirclearrowleft-xyxy^\rcirclearrowleft-xyx^2+x^2yx)+tx^3y^\rcirclearrowleft+rx^4:s,t,r\in\K\}.
$$
If $s=0$, $F_{s,t,r}$ is a potential. Thus we can assume that $s\neq 0$. A substitution $x\to x$, $y\to y+bx$ with an appropriate $b\in\K$ kills $r$. Now a scaling turns $F$ into one of the twisted potentials from (T26) if $t\neq 0$ or to the twisted potential from (T27) if $t=0$. By Lemma~\ref{firsttouch2}, the corresponding algebras are proper. Same argument as above shows that they are pairwise non-isomorphic.

Algebras from (T25) and (T26--T27) can not be isomorphic since they are proper and the Jordan normal forms of the twists do not match.
\end{proof}

\begin{lemma}\label{2block2} Let $G\in{\cal P}^*_{2,4}$ be non-degenerate, $M\in GL_2(\K)$ be the unique matrix providing the twist for $G$ and assume that $A=A_G$ is non-potential. Assume also that $M$ is diagonalizable. If $A$ is non-proper, then $A$ is isomorphic to an $A_F$ with $F$ from {\rm (T34)} of Theorem~$\ref{main24-t}$. If $A$ is proper, then $A$ is isomorphic to $A_F$ for $F$ from {\rm (T24)} or {\rm (T28--T33)} of Theorem~$\ref{main24-t}$. Moreover, the corresponding algebras $A_F$ are pairwise non-isomorphic.
\end{lemma}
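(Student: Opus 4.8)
The plan is to mimic the structure of the proof of Lemma~\ref{1block2}, but with a diagonal twist $M=\mathrm{diag}(\alpha,\beta)$, which I may assume via Remark~\ref{rer}. Writing $F=\sum a_{j,k,m,n}x_jx_kx_mx_n$, the condition $F\in{\cal P}_{2,4}(M)$ becomes a system of linear equations $a_{j,k,m,2}=\beta a_{2,j,k,m}$ and $a_{j,k,m,1}=\alpha a_{1,j,k,m}$ for $1\le j,k,m\le 2$ (i.e.\ each occurrence of the twist corresponds to moving the last letter to the front and multiplying by the corresponding eigenvalue). The first step is to solve this system in full generality: a monomial $u$ of degree $4$ in $x,y$ survives in ${\cal P}^*_{2,4}(M)$ only if, going cyclically around it, the product of the eigenvalues attached to the letters equals $1$ in the appropriate sense, which forces constraints like $\alpha^2\beta^2=1$, $\alpha^3\beta=1$, $\alpha^4=1$ etc.\ depending on how many $x$'s and $y$'s the monomial contains. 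Enumerating the pairs $(\alpha,\beta)$ (up to swapping, since swapping $x\leftrightarrow y$ just transposes $M$) for which ${\cal P}_{2,4}(M)\neq 0$ gives a short list: $(\alpha,\alpha^{-1})$; $(\alpha,-\alpha^{-1})$; $(\theta,1)$ and $(\theta^2,1)$; $(\xi_8,-\xi_8)$ and $(i\xi_8,-i\xi_8)$; $(-1,-1)$; together with degenerate overlaps of these (e.g.\ $\alpha=\pm1$, or $\alpha=\beta$ a root of unity) which must be checked separately. This enumeration is bookkeeping but is the backbone of the argument.

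For each surviving $(\alpha,\beta)$ the second step is to write down ${\cal P}_{2,4}(M)$ explicitly as a (typically low-dimensional) vector space, then strip away the potential locus (where the twist is trivial, i.e.\ $M=\mathrm{Id}$, handled by the hypothesis that $A=A_G$ is non-potential) and normalize the remaining parameters using the residual symmetry group. The residual group is exactly the stabilizer of the diagonal form of $M$ under the conjugation-by-$C^{\mathsf T}$ action of Remark~\ref{rer}: scalings $x\to px$, $y\to qy$, plus the swap $x\leftrightarrow y$ when it preserves the conjugacy class of $M$ (which interchanges $\alpha\leftrightarrow\beta$, hence identifies $(\alpha,\alpha^{-1})$ with $(\alpha^{-1},\alpha)$ and so on). Carrying out these normalizations, the generic (proper) members fall into (T24) for twist $(a,a^{-1})$, (T28) for $(a,-a^{-1})$, (T29)/(T30) for $(\theta,1)$/$(\theta^2,1)$, (T31)/(T32) for $(\xi_8,-\xi_8)$/$(i\xi_8,-i\xi_8)$ and (T33) for $(-1,-1)$; any non-generic value of the parameter, or a one-dimensional ${\cal P}_{2,4}(M)$, collapses either to a potential (excluded) or, as in the $\alpha^2=-1$ one-block case, to the single family (T34) via an explicit substitution identifying it with $F_a$ of that row — this is exactly parallel to the $\alpha^2=-1$ analysis in Lemma~\ref{1block2}. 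The required facts that these $A_F$ are exact, non-potential, proper (resp.\ non-proper for (T34)) and have Hilbert series $(1+t)^{-1}(1-t)^{-3}$ (resp.\ $(1+t+t^2)(1-t-t^2)^{-1}$) are supplied by Lemmas~\ref{firsttouch2} and~\ref{d126}.

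The third step is the non-isomorphism claim. Here Lemma~\ref{gen2} is the key tool: since all algebras in (T24), (T28--T33) are proper, any graded isomorphism $A_F\cong A_{F'}$ must send $F$ to a scalar multiple of $F'$, hence must be realized by an element of the group $H$ from Lemma~\ref{co2-4} acting on the abelianizations (after discarding the cyclic-versus-abelianized discrepancy, cf.\ Lemma~\ref{cyab24}), and moreover it must conjugate one twist into the other — so the twists must be in the same conjugacy class in $GL_2(\K)$, and this already separates the rows with distinct eigenvalue patterns ($(a,a^{-1})$ vs $(a,-a^{-1})$ vs $(\theta,1)$ vs $(\theta^2,1)$ vs $(\xi_8,-\xi_8)$ vs $(i\xi_8,-i\xi_8)$ vs $(-1,-1)$, and all of these from the one-block rows (T25--T27)). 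Within a single family the parameter $a$ is then pinned down because, by Lemma~\ref{gen2} again, the allowed substitutions are precisely those preserving the diagonal twist up to conjugacy, i.e.\ scalings (and the occasional swap), and one checks directly that these do not move $a$ — except for the already-recorded identifications (e.g.\ $a\leftrightarrow -a$ in (T28), (T31), (T32)) coming from the swap, which are exactly the ``Isomorphisms'' column entries; finally, distinctness from (T34) follows on dimension grounds since (T34) is non-proper while (T24), (T28--T33) are proper. I expect the main obstacle to be the first step: organizing the case analysis of which diagonal $(\alpha,\beta)$ yield a nonzero ${\cal P}_{2,4}(M)$ so that no degenerate coincidence (such as $\alpha=\beta$, or an eigenvalue equal to a low root of unity making extra monomials survive) is overlooked, and matching each resulting normal form correctly against the rows of the table.
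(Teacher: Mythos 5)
Your plan follows essentially the same route as the paper's proof: reduce to diagonal $M=\mathrm{diag}(\alpha,\beta)$ via Remark~\ref{rer}, translate $F\in{\cal P}_{2,4}(M)$ into the linear system on coefficients, enumerate the eigenvalue pairs for which the solution space is nonzero (the paper organizes this as $\alpha^3\beta=1$ versus $\alpha\beta=\pm1$ plus a finite list of coincidences), normalize within each ${\cal P}_{2,4}(M)$ by the stabilizer of $M$, match against the table, and get non-isomorphism from properness (Lemma~\ref{gen2}) plus conjugacy of twists, with Lemmas~\ref{firsttouch2} and~\ref{d126} supplying exactness, properness and Hilbert series. So the skeleton is right.

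Two caveats on the execution. First, your description of the residual group as ``scalings plus the occasional swap'' is only correct when $M$ is diagonal and non-scalar; in the cases $(\alpha,\beta)=(-1,-1)$ and $(\alpha,\beta)=(a,a)$ with $a^2=-1$ the twist is scalar, the stabilizer is all of $GL_2(\K)$, and ${\cal P}_{2,4}(M)$ is $3$- or $4$-dimensional. This is precisely where the real work of the paper's proof lies: one must compute the $GL_2$-action on the parameters explicitly and use orbit invariants (e.g.\ $u^2+4sr$ in the $(-1,-1)$ case) to decide whether a given $F$ lands in (T24), (T33) or (T34). Your general principle (``the stabilizer of $M$'') covers this, but the plan as elaborated would miss (T33) entirely. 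Second, the parenthetical ``$a\leftrightarrow -a$ in (T28), (T31), (T32)'' is off: (T31) and (T32) carry no parameter, the table of Theorem~\ref{main24-t} has no isomorphisms column, and the swap $x\leftrightarrow y$ acts on (T28) by $a\mapsto -a^{-1}$ rather than $a\mapsto -a$ (it conjugates $\mathrm{diag}(a,-a^{-1})$ to $\mathrm{diag}(-a^{-1},a)$); since the lemma asserts the listed algebras are pairwise non-isomorphic, you cannot leave unexamined identifications inside a family — you must either show the swap does not produce a new equivalence or account for it explicitly, which is exactly the point the paper settles by restricting to the substitutions whose transposes normalize the diagonal twist.
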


\begin{proof} By Remark~\ref{rer}, we can without loss of generality assume that
$$
M=\left(\begin{array}{cc}\alpha&0\\ 0&\beta\end{array}\right).
$$
If $F=\sum\limits_{j,k,m,n=1}^2 a_{j,k,m,n}x_jx_kx_mx_n$, then the inclusion $F\in {\cal P}_{2,4}(M)$ is equivalent to the following system of linear equations on the coefficients of $F$:
\begin{equation}\label{b2-24}
a_{j,k,m,2}=\beta a_{2,j,k,m}\ \ \text{and}\ \ a_{j,k,m,1}=\alpha a_{1,j,k,m}\ \ \text{for}\ \ 1\leq j,k,m\leq 2.
\end{equation}
One easily sees that (\ref{b2-24}) has only zero solution unless $1\in\{\alpha,\beta,\alpha^3\beta,\alpha^2\beta^2,\alpha\beta^3\}$. Moreover, the case $\alpha=\beta=1$ is excluded since $F\notin{\cal P}_{2,4}$. Now if $1\notin\{\alpha^3\beta,\alpha^2\beta^2,\alpha\beta^3\}$, then $F$ is both degenerate and a potential. Indeed, $F$ is either zero or is the fourth power of a degree $1$ element. The cases $\alpha^3\beta=1$ and $\alpha\beta^3=1$ transform to one another when we swap $x$ and $y$. Thus we have just two options to consider: $\alpha^3\beta=1$ or $\alpha^2\beta^2=1$.

First, assume that $\alpha^3\beta=1$. Then $x^3y+\alpha x^2yx+\alpha^2xyx^2+\alpha^3yx^3$ is in ${\cal P}_{2,4}(M)$. Moreover, analyzing (\ref{b2-24}), we see that ${\cal P}_{2,4}(M)$ is spanned by this one element unless $\alpha^3=1\neq \alpha$, or $\alpha^8=1\neq\alpha$. If $F$ is a scalar multiple of $x^3y+\alpha x^2yx+\alpha^2xyx^2+\alpha^3yx^3$, we fall into (T34) after scaling. The corresponding algebra is non-proper by Lemma~\ref{d126}.

Next, assume that $\alpha\beta=-1$. By (\ref{b2-24}), $x^2y^2+\alpha^2y^2x^2+\alpha xy^2x-\alpha yx^2y\in {\cal P}_{2,4}(M)$ and ${\cal P}_{2,4}(M)$ is spanned by this one element unless $\alpha^4=1$. If $F$ is a scalar multiple of $x^2y^2+\alpha^2y^2x^2+\alpha xy^2x-\alpha yx^2y$, a scaling sends $F$ to (T28). By Lemma~\ref{firsttouch2}, the corresponding algebras are proper. Using Remark~\ref{rer} and Lemma~\ref{gen2}, we see that only scalings can provide an isomorphism between algebras in (T28) (unless $\alpha=\pm i$ in which case a separate easy argument is needed; we skip it now since we study this case below in detail anyway). Now one sees that algebras in (T28) are pairwise non-isomorphic.

Next, assume that $\alpha\beta=1$. By (\ref{b2-24}), $x^2y^2{+}\alpha^2y^2x^2{+}\alpha xy^2x+\alpha yx^2y,xyxy+ayxyx\in {\cal P}_{2,4}(M)$ and ${\cal P}_{2,4}(M)$ is spanned by these two elements unless $\alpha=-1$. That is,
$$
F=s(x^2y^2+\alpha^2y^2x^2+\alpha xy^2x+\alpha yx^2y)+t(xyxy+\alpha yxyx)\ \ \text{with $s,t\in\K$}.
$$
If $s=0$, $A_F$ coincides with the potential algebra from (P26), which contradicts the assumptions. Thus $s\neq 0$. Now a scaling turns $F$ into the form (T24). By Lemma~\ref{firsttouch2}, the corresponding algebras are proper. Using Remark~\ref{rer} and Lemma~\ref{gen2}, we see that only scalings can provide an isomorphism between algebras in (T28) (unless $\alpha=\pm 1$ in which case a separate easy argument is needed; we skip it now since we study this case below in detail anyway). Now one sees that algebras in (T24) are pairwise non-isomorphic.

It remains to consider the following finite set of options for $(\alpha,\beta)$: $(1,-1)$, $(-1,-1)$, $(a,1)$ with $a^3=1\neq a$, $(a,a)$ with $a^2=-1$ and $(a,-a)$ with $a^4=-1$.

If $(\alpha,\beta)=(a,1)$ with $a^3=1\neq a$, then solving (\ref{b2-24}), we see that
$$
{\cal P}_{2,4}(M)=\{F_{s,t}=s(x^3y+yx^3+axyx^2+a^2x^2yx)+ty^4:s,t\in\K\}.
$$
If $s=0$, $F_{s,t}$ is a potential. If $t=0$, $F_{s,t}$ falls into (T34). Thus we can assume that $st\neq0$. Now a scaling transforms $F_{s,t}$ into $F_{1,1}$, which is the twisted potential from (T29) if $a=\theta$ and (T30) if $a=\theta^2$. In both cases, the corresponding algebra is proper by Lemma~\ref{firsttouch2}.

Now assume that $(\alpha,\beta)=(b,-b)$ with $b^4=-1$. Since changing $b$ by $-b$ corresponds to swapping $x$ and $y$, we can assume that $b\in\{\xi_8,i\xi_8\}$.
Solving (\ref{b2-24}), we see that
$$
{\cal P}_{2,4}(M)=\{G_{s,t}=s(x^2yx-b^3x^3y+b^2yx^3+bxyx^2)+t(y^3x-b^3xy^3+b^2yxy^2-by^2xy):s,t\in\K\}
$$
If $st=0$, the corresponding algebra is easily seen to fall into (T34): just scale or swap $x$ and $y$ and scale. By Lemma~\ref{d126}, the corresponding algebra is non-proper if $st=0$. If $st\neq 0$, a scaling turns both $s$ and $t$ into $1$. That is, if $st\neq 0$, $F_{s,t}$ is equivalent to $F_{1,1}$, which is easily seen to be proper. That is, $F_{s,t}$, if proper, is isomorphic to one of two algebras $F_{1,1}$ for $b=\xi_8$ or $F_{1,1}$ for $b=i\xi_8$. Denote $a=b^2$. That is, $a=i$ if $b=\xi_8$ and $a=-i$ if $b=i\xi_8$. In both cases $a^2=-1$. Note that the matrix
$$
N=\left(\begin{array}{cc}0&a\\ 1&0\end{array}\right)
$$
is conjugate to $M$. It is easy to see that $G=x^4-ayx^3-y^2x^2+ay^3x+y^4+xy^3+x^2y^2+x^3y$ belongs to ${\cal P}_{2,4}(N)$ and therefore $G$ is equivalent to a member of ${\cal P}_{2,4}(M)$. This produces two algebras from (T31) and (T32). By Lemma~\ref{firsttouch2}, they are proper and therefore they must be isomorphic to algebras given by $F_{1,1}$ for $b=\xi_8$ or $F_{1,1}$ for $b=i\xi_8$.

If $(\alpha,\beta)=(1,-1)$, then  solving (\ref{b2-24}), we see that
$$
{\cal P}_{2,4}(M)=\{F_{s,t}=s(x^2y^2+y^2x^2-xy^2x+yx^2y)+tx^4:s,t\in\K\}
$$
If $s=0$, $F_{s,t}$ is a potential. If $s\neq 0$, a substitution given by $x\to px$, $y\to py+qx$ with appropriate $p\in\K^*$ and $q\in\K$ turns $(s,t)$ into $(1,0)$. Then $F$ falls into (T28) with $a=1$.

Assume now that $(\alpha,\beta)=(a,a)$ with $a^2=-1$. Solving (\ref{b2-24}), we see that ${\cal P}_{2,4}(M)$ consists of
$$
G_{s,t,r}=s(x^3y{+}ayx^3{-xyx}^2{-}ax^2yx)+t(x^2y^2{+}ayx^2{-}y^2x^2{-}axy^2x)+r(y^3x{+}axy^3{-}yxy^2{-}ay^2xy)
$$
with $s,t,r\in\K$. If we perform a (non-degenerate) linear substitution $x\to \lambda x+\mu y$, $y\to \gamma x+\delta y$ with $D=\lambda\delta-\mu\gamma\in\K^*$, $G_{s,t,r}$ transforms into $G_{s',t',r'}$ with
$$
s'=D(\lambda^2s-\gamma^2r+(1+a)\lambda\gamma t),\ \ r'=D(-\mu^2s+\delta^2r-(1+a)\mu\delta t),\ \
t'=D((1-a)\lambda\mu s-(1-a)\gamma\delta r+(\lambda\delta+\mu\gamma)t).
$$
Now it is routine to show that if $at^2+2sr\neq 0$, the substitution can be chosen in such a way that $s'=r'=0$ and $t'=1$ (note that $F$ is non-degenerate and therefore $(s,t,r)\neq (0,0,0)$). Otherwise, a substitution can be chosen in such a way that $r'=t'=0$ and $r'=1$. In the first case $F$ falls to (T24) with $b=0$, while in the second case it falls into (T34).

Finally, assume that $(\alpha,\beta)=(-1,-1)$.  Solving (\ref{b2-24}), we see that ${\cal P}_{2,4}(M)$ consists of
$$
G_{s,t,r,u}=s(x^3y{-}yx^3{+}xyx^2{-}x^2yx)+t(x^2y^2{-}yx^2{+}y^2x^2{-}xy^2x)+r(y^3x{-}xy^3{+}yxy^2{-}y^2xy)+u(xyxy{-}yxyx).
$$
with $s,t,r,u\in\K$. If we perform a (non-degenerate) linear substitution $x\to \lambda x+\mu y$, $y\to \gamma x+\delta y$ with $D=\lambda\delta-\mu\gamma\in\K^*$, $G_{s,t,r,u}$ transforms into $G_{s',t',r',u'}$ with
$$
s'=D(\lambda^2s-\gamma^2r+\lambda\gamma u),\ \ r'=D(-\mu^2s+\delta^2r-\mu\delta u),\ \
t'=D^2t,\ \ u'=D(2\lambda\mu s-2\gamma\delta r+(\lambda\delta+\mu\gamma)u).
$$
It is routine to show that if $u^2+4sr\neq 0$, the substitution can be chosen in such a way that $s'=r'=0$ and $F$ transforms into the form (D8) or (D9) after an appropriate scaling. It remains to consider the case  $u^2+4sr=0$. Note that this property is invariant under linear substitutions (one easily sees that ${u'}^2+4s'r'=0$). Clearly a substitution can be chosen in such a way that $s'=0$. If it happens that $t'=0$, then $F$ falls into (T34) after swapping $x$ and $y$ and scaling. Thus we can assume that $t'\neq 0$. The equation ${u'}^2+4s'r'=0$ yields $u'=0$. Then a scaling turns $F$ into the twisted potential from (T33), which is proper by  Lemma~\ref{firsttouch2}.

As for algebras from (T24) and (T28--T33) being pairwise non-isomorphic it is a consequence of the following observations. By Lemma~\ref{gen2} the question reduces to pairwise non-equivalence of corresponding twisted potentials, which certainly holds when the twists have different Jordan normal form. As for two $F$'s from the same ${\cal P}_{2,4}(M)$, $M$ being in Jordan form, they are equivalent precisely when a linear substitution with a matrix whose transpose commutes with $M$ transforms one $F$ to the other. In the case of one Jordan block this leaves us to consider substitutions of the form $x\to px$, $y\to py+qx$ with $p\in\K^*$ and $q\in\K$. If $M$ is diagonal but not scalar, we are left only with scalings. Finally, if $M$ is scalar, we have to deal with the entire $GL_2(\K)$. Fortunately, this only concerns the cases $(\alpha,\beta)=(a,a)$ with $a^2=-1$ or $a=-1$ in which we gave the explicit formulae for how the substitutions act on ${\cal P}_{2,4}(M)$. Now the stated non-equivalence is a matter for a direct verification.
\end{proof}

\subsection{Proof of Theorem~\ref{main24-t}}

Recall that any two proper (degree-graded) twisted potential algebras with non-equivalent twisted potentials are non-isomorphic and a proper twisted potential algebra can not be isomorphic to a non-proper one. Next, if $F\in{\cal P}^*_{2,4}$ is degenerate, then it is easily seen that $F$ is either $0$ or is a fourth power of a degree $1$ element. In particular, $F$ is a potential and therefore $A_F$ does not satisfy the assumptions. Taking this into account, we see that Lemmas~\ref{d126}, \ref{firsttouch2}, \ref{1block2} and~\ref{2block2} imply Theorem~\ref{main24-t}.

\section{Twisted potential algebras $A_F$ with $F\in{\cal P}^*_{3,3}$}

This section is devoted to the proof of Theorem~\ref{main33-t}. We shall occasionally switch back and forth between denoting them $x,y,z$ or $x_1,x_2,x_3$ meaning $x=x_1$, $y=x_2$ and $z=x_3$. The reasons are aesthetic. In this section we always use the left-to-right degree-lexicographical order on monomials in $x,y,z$ assuming $x>y>z$.

\begin{lemma}\label{degen} The algebras $A$ given by {\rm (T19--T21)} of Theorem~$\ref{main33-t}$ are non-proper non-degenerate non-potential twisted potential algebras. They are pairwise non-isomorphic, PBW, Koszul and have Hilbert series $H_A=\frac{1+t}{1-2t}$ as specified in {\rm (T19--T21)}.
\end{lemma}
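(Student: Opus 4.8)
The plan is to handle each of the three algebras (T19), (T20), (T21) in turn by the same routine: exhibit the defining relations, check they form a Gröbner basis, read off the Hilbert series, and then verify the remaining structural claims (non-proper, non-degenerate, non-potential, pairwise non-isomorphic, PBW, Koszul). Concretely, for (T19) the twisted potential is $F=x^2y+axyx+a^2yx^2+z^3$ with $a\neq 0,1$, so $\delta_xF$, $\delta_yF$, $\delta_zF$ give the relations $xy+ayx$, $xx$, $zz$ (up to scalars and reordering); the leading monomials are $xy$, $xx$, $zz$, which have no overlaps whatsoever, so the defining relations are trivially a Gröbner basis. Then normal words are the monomials avoiding $xx$, $xy$, $zz$, and counting these gives $H_A=\frac{1+t}{1-2t}$ directly (equivalently, $A$ is a free product of $\K[y]$-type pieces; one can also just verify the rational function by the standard overlap count). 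For (T20) the relations are $xx$, $xy+ayx$, $xz+a^2zx+yy$ with leading monomials $xx$, $xy$, $xz$ — again no overlaps, so again a quadratic Gröbner basis, hence PBW, and the normal words coincide in number with those for (T19), giving the same Hilbert series. For (T21) the relations are $yy$, $xx+zz$, $xz-zx$ with leading monomials $yy$, $xx$, $xz$; the only potential overlap is among $\{xx,xz\}$, namely $xxz$, and one checks it resolves (or simply invokes Lemma~\ref{sy}, since $F\in\mathcal P^*_{3,3}$ and there is exactly one overlap it must resolve and have degree $3$), so once more we get a quadratic Gröbner basis and $H_A=\frac{1+t}{1-2t}$.

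Having a quadratic Gröbner basis in each case immediately yields that all three algebras are PBW, and PBW implies Koszul (recalled in Section~2). For non-properness: by Lemma~\ref{gen2}, $A_F$ with $F$ homogeneous of degree $k=3$ is proper iff $\dim A_3=n^3-2n^2+1=27-18+1=10$; but $H_A=\frac{1+t}{1-2t}=1+3t+6t^2+12t^3+\cdots$ gives $\dim A_3=12\neq 10$, so none is proper. For non-degeneracy: again by Lemma~\ref{gen2}, $F$ is non-degenerate iff $\dim A_2=n^2-n=6$, which holds since $\dim A_2=6$ from the Hilbert series; alternatively one checks directly that $\delta_xF,\delta_yF,\delta_zF$ are linearly independent in each case, which is obvious from the explicit relations. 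For non-potentiality one computes the twist matrix $M$ from (\ref{qua}) in each case and observes it is not the identity (indeed $M$ involves $a$ or $a^2$ nontrivially for (T19),(T20), and for (T21) the relation $xz-zx$ versus $\delta^R$ forces a sign/permutation making $M\neq\mathrm{Id}$); by Lemma~\ref{gen1} a potential would require $M=\mathrm{Id}$.

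The remaining point is that the three algebras (T19), (T20), (T21) are pairwise non-isomorphic as graded algebras. Since they share the same Hilbert series, one must distinguish them by finer invariants of the three-dimensional space of quadratic relations $R\subset V^{\otimes 2}$, $\dim V=3$. The natural invariant is the dimension of the subspace of $R$ spanned by squares of degree-one elements (equivalently, the geometry of the intersection of $\mathbb P(R)$ with the Veronese of rank-one tensors), together with the rank stratification of elements of $R$. For (T19) the space $R=\operatorname{span}\{xy+ayx,\,x^2,\,z^2\}$ contains (up to scalars) exactly two squares $x^2,z^2$ and these span a $2$-dimensional subspace of squares; for (T21) the space $R=\operatorname{span}\{y^2,\,x^2+z^2,\,xz-zx\}$ — here $y^2$ is a square, and $x^2+z^2=(x+iz)(x-iz)$ is not a square, but one must check how many genuine squares $R$ contains: a generic element $\alpha y^2+\beta(x^2+z^2)+\gamma(xz-zx)$ is a square only for $\beta=\gamma=0$, so $R$ contains a $1$-dimensional space of squares; for (T20) the space $R=\operatorname{span}\{x^2,\,xy+ayx,\,xz+a^2zx+yy\}$ — here $x^2$ is a square and one checks no other independent square lies in $R$ (the third generator contains $y^2$ but mixed with $xz,zx$), so $R$ again has a $1$-dimensional square-subspace, matching (T21). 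Thus (T19) is separated from (T20) and (T21) by the $2$-dimensional vs $1$-dimensional square-subspace. To separate (T20) from (T21), I would instead use the structure of the quotient: (T21) contains the commuting pair $x,z$ (the relation $xz-zx$) generating a polynomial subalgebra $\K[x,z]/(x^2+z^2)$ feature which (T20) lacks, or more robustly compare the dimension of the radical/annihilator structure or the number of linearly independent rank-two elements in $R$ versus rank-one; a clean choice is: in (T21) the orthogonal complement $R^\perp$ (the defining relations of the Koszul dual $A^!$) has a distinguished shape that differs from that of (T20). I expect this separation of (T20) from (T21) to be the only genuinely delicate bookkeeping step; everything else is the routine Gröbner-basis computation plus invocation of Lemma~\ref{gen2}, Lemma~\ref{sy}, and the PBW$\Rightarrow$Koszul implication.
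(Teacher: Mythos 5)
Your Gr\"obner-basis computation, the reading of the Hilbert series, and the use of Lemma~\ref{gen2} for non-properness and non-degeneracy all match the paper's route. But there are two genuine gaps and one recurring slip.

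First, your argument for non-potentiality does not work. You propose to compute the twist $M$ of the given presentation and observe $M\neq{\rm Id}$. That only shows the \emph{particular} twisted potential $F$ you wrote down is not cyclically invariant. Since these algebras are \emph{non-proper}, Lemma~\ref{gen2} gives no uniqueness of the (twisted) potential: the same graded algebra could a priori arise as $A_G$ for a genuine potential $G$, possibly after a linear substitution. The twist is an isomorphism invariant only for proper twisted potential algebras (this is exactly why the paper's Lemma~\ref{firsttouch} can use the twist for the proper cases but Lemma~\ref{degen} cannot). The paper instead invokes the already-proved Theorem~\ref{main33-p}: the only potential algebras with Hilbert series $\frac{1+t}{1-2t}$ are (P10--P13), and one checks by an elementary invariant argument that none of (T19--T21) is isomorphic to any of those four.

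Second, ``pairwise non-isomorphic'' here includes the one-parameter families: (T19) and (T20) each depend on $a$ with trivial isomorphism group, so you must show $A_a\not\cong A_b$ and $B_a\not\cong B_b$ for $a\neq b$ within each family. Your proposal never addresses this, and it is where most of the paper's work lies: one argues that any isomorphism must send $x$ to a scalar multiple of itself (being the unique square, up to scalar, in the relation space for (T20), or one of exactly two for (T19)), then tracks where $y$ can go and concludes the parameter $a$ is preserved.

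Two smaller points. Your overlap counts are wrong: a leading monomial $xx$ overlaps with itself (giving $xxx$) and with $xy$, $xz$ (giving $xxy$, $xxz$), and $yy$ gives $yyy$ in (T21); so there are several ambiguities in each case, not zero or one, and Lemma~\ref{sy} does not apply as you invoke it. The relations do still form a Gr\"obner basis, but you must check that these overlaps resolve. Finally, your separation of (T20) from (T21) is left as a menu of possibilities; the paper's clean invariant is that for (T21) there is a decomposition $V=V_1\oplus V_2$ with $\dim V_1=1$ (namely $V_1=\spann\{y\}$, $V_2=\spann\{x,z\}$) such that the space of quadratic relations lies in $V_1^2+V_2^2$, and (T20) admits no such decomposition.
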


\begin{proof} Algebras $A_a$ from (T20) are presented by the defining relations $xx$, $xy+ayx$ and $xz+a^2zx+yy$ with $a\neq 0$ and $a\neq 1$. Algebras $B_a$ from (T19) are presented by $xx$, $zz$ and $xy+ayx$ with $a\neq 0$ and $a\neq 1$. Finally the defining relations of the algebra $C$ from (T21) are $xx+zz$, $xz-zx$ and $yy$. For all these algebras, the defining relations form a Gr\"obner basis in the ideal of relations. Hence all algebras in question are PBW and therefore Koszul. Knowing the leading monomials for elements of a Gr\"obner basis, we can easily compute the Hilbert series: $H_A=\frac{1+t}{1-2t}$ in all cases. By Lemma~\ref{gen2} all these algebras are non-proper. By Theorem~\ref{main33-p}, in order to show that none of these algebras is potential, it is enough to verify that none of them is isomorphic to any of the four algebras (P10--P13) (the only potential algebras with the Hilbert series $\frac{1+t}{1-2t}$), which is an elementary exercise.

Since each $B_a$ has two linearly independent squares in the space of quadratic relations, while none of $A_a$ or $C$ has such a thing, $B_a$ is non-isomorphic to any $A_b$ or $C$. The latter is singled out by the existence of a decomposition $V=V_1\oplus V_2$ with $1$-dimensional $V_1$ for which the space of quadratic relations lies in $V_1^2+V_2^2$. It remains to verify that $A_a$ are pairwise non-isomorphic and that $B_a$ are pairwise non-isomorphic. Assume that $A_a$ is isomorphic to $A_b$. Since $x^2$ is the only square (up to a scalar multiple) in in the space of quadratic relations for both algebras, a linear substitution providing an isomorphism must map $x$ to its scalar multiple. Without loss of generality, $x$ is mapped to $x$. For both algebras, the quotient by the ideal generated by $x$ is presented by generators $y,z$ and one relation $y^2$. Hence our substitution must map $y$ to $\alpha y+\beta x$ with $\alpha,\beta\in\K$, $\alpha\neq 0$. It easily follows that $xy+ayx$ is mapped to a scalar multiple of itself plus a scalar multiple of $xx$. Thus $xy+ayx$ must be a relation of $A_{F_b}$, which yields $a=b$. Finally, assume that $B_a$ is isomorphic to $B_b$. Since $x^2$ and $z^2$ are the only squares (up to scalar multiples) in in the space of quadratic relations for both algebras, a linear substitution providing an isomorphism must either map $x$ and $z$ to their own scalar multiples or map $x$ to a scalar multiple of $z$ and $z$ to a scalar multiple of $x$. In the second case, $B_b$ has a relation of the form $zu+auz$ for some homogeneous degree one $u$ non-proportional to $z$, which is obviously nonsense ($u$ is the image of $y$ under our substitution). Hence $x$ and $z$ are mapped to their own scalar multiples. Now $B_b$ has a relation of the form $xu+aux$ with $u=y+\alpha z$ with $\alpha\in\K$. This is only possible if $b=a$ (and $\alpha=0$).
\end{proof}

\begin{lemma}\label{firsttouch} Each $F\in\K\langle x,y,z\rangle$ listed in {\rm (T1--T18)} of Theorem~$\ref{main33-t}$ is a proper twisted potential such that the Jordan normal form of the corresponding twist is one block with eigenvalue $1$ for $F$ from {\rm (T3--T4)}, two blocks of sizes $2$ and $1$ with eigenvalues $b$ and $b^{-2}$ respectively for $F$ from {\rm (T5)}, two blocks of sizes $2$ and $1$ with eigenvalues $-1$ and $1$ respectively for $F$ from {\rm (T6)}, two blocks of sizes $2$ and $1$ with both eigenvalues $1$ for $F$ from {\rm (T7)}, diagonalizable in all other cases with the three eigenvalues being $\frac{a}{b},\frac{b}{c},\frac{c}{a}$ for $F$ from {\rm (T1)}, $\frac{a}{b},\frac{b}{a},1$ for $F$ from {\rm (T2)}, $a,a,a^{-2}$ for $F$ from {\rm (T9)}, $1,-1,-1$ for $F$ from {\rm (T10)}, $a,-a,a^{-2}$ for $F$ from {\rm (T11)}, $-1,1,1$ for $F$ from {\rm (T16--T18)}, $i,-1,1$ for $F$ from from {\rm (T12)}, $-i,-1,1$ for $F$ from from {\rm (T13)}, $\xi_9,\xi_9^4,\xi_9^7$ for $F$ from from {\rm (T14)} and $\xi_9^2,\xi_9^5,\xi_9^8$ for $F$ from from {\rm (T15)}.

Moreover $A_F$ is Koszul, exact, non-potential and has the Hilbert series $(1-t)^{-3}$ for every $F$ from  {\rm (T1--T18)} and $A_F$ is PBW for $F$ from {\rm (T1--T11)} and {\rm (T16--T17)}.
\end{lemma}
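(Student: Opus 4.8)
```latex
\begin{proof}[Proof plan for Lemma~\ref{firsttouch}]
The plan is to proceed row by row through (T1--T18), treating the verification as a three-part bookkeeping task for each potential $F$: (a) check that $F\in{\cal P}^*_{3,3}$ with the stated twist, (b) identify the Jordan normal form of the twist, and (c) establish Koszulity, exactness, $H_{A_F}=(1-t)^{-3}$, non-potentiality, and PBW where claimed. Part (a) is purely linear: one computes $\delta_{x_j}F$ and $\delta^R_{x_j}F$ and exhibits the matrix $M\in GL_3(\K)$ realizing (\ref{qua}); since $F$ is built precisely so that such $M$ exists, this is a finite check for each row. Part (b) is then immediate once $M$ is written down — we read off the characteristic polynomial and Jordan structure. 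The eigenvalue lists in the statement are exactly what one gets; note the structural dichotomy (one $3\times3$ block for (T3--T4), a $2\oplus1$ split for (T5--T7), diagonalizable otherwise) dictated by which solutions of the relevant linear system are nonzero, in the same spirit as the case analysis in Lemmas~\ref{1block2} and~\ref{2block2}.

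For part (c), the key tool is the Gr\"obner basis / overlap machinery developed above, specialized via Lemma~\ref{sy} and Lemma~\ref{smth1}. For each $F$, with the left-to-right degree-lexicographical order ($x>y>z$) fixed throughout this section, I would identify the leading monomials of the three defining relations $\delta_{x_j}F$. The strategy bifurcates: (i) for the rows where Lemma~\ref{smth1} applies — i.e. the leading monomials are $\{xy,xz,yz\}$ up to the obvious role-of-generators relabelling — we immediately get that the relations form a quadratic Gr\"obner basis, hence $A_F$ is PBW, Koszul, $H_{A_F}=(1-t)^{-3}$, and by Lemma~\ref{ms2} (equivalence (K3)$\iff$(K1)) exact; this should cover (T1--T11) and (T16--T17), matching exactly the PBW claim in the statement. (ii) For the remaining rows (T12--T15) and (T18), where the leading monomials produce exactly one overlap but the relations are \emph{not} in the $\{xy,xz,yz\}$ pattern, Lemma~\ref{sy} still guarantees the single degree-$3$ overlap resolves, so the defining relations already form a (non-PBW-type but still finite quadratic) Gr\"obner basis; from the leading monomials one computes $H_{A_F}=(1-t)^{-3}$ and checks there are no nontrivial right annihilators (no leading monomial of the Gr\"obner basis starts with one distinguished generator), so Lemma~\ref{commin} gives exactness, and then Koszulity follows from (\ref{koex}) since exact proper cubic twisted potential algebras are Koszul. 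Properness of every $A_F$ in (T1--T18) then follows from Lemma~\ref{gen2}: $\dim(A_F)_3 = 10 = 3^3 - 2\cdot 3^2 + 1$, which we read off from $H_{A_F}=(1-t)^{-3}$. Finally, non-potentiality: since each $F$ is proper, the twist $M$ is uniquely determined by $A_F$ (Lemma~\ref{gen2}), and in every row $M\neq\mathrm{Id}$ (visible from the eigenvalue lists — none is $1,1,1$), so by Lemma~\ref{gen2} again $A_F$ cannot be isomorphic to any potential algebra.

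The main obstacle I anticipate is not conceptual but combinatorial: for a few rows — notably (T14), (T15) with the $\theta,\theta^2$ coefficients, and (T18) with its extra parameter — one must be careful that the single overlap genuinely resolves and that the leading-monomial count really yields $(1-t)^{-3}$ rather than dropping $\dim A_3$ to $9$ or fewer (which would happen if a nontrivial degree-$3$ element entered the Gr\"obner basis, contradicting properness). Verifying the ``exactly one overlap'' hypothesis of Lemma~\ref{sy} for each row requires knowing the precise leading monomials, and in a couple of cases (e.g. (T12--T13), where $zy-iyz$ type relations appear) the leading term of a relation is not the ``obvious'' one, so the overlap bookkeeping must be done with care. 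I would also double-check the PBW/non-PBW boundary: the claim is PBW exactly for (T1--T11) and (T16--T17), and (T12--T15) and (T18) are asserted non-PBW only later in the theorem's table (column entries Y/N/Y), so here it suffices to establish PBW where claimed and leave the non-PBW assertions to a separate argument (typically: no choice of generators and compatible order yields the $\{xy,xz,yz\}$ leading-monomial set, using Lemma~\ref{ome0}).
\end{proof}
```
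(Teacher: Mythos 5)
Your skeleton agrees with the paper's proof for the linear-algebra part (computing the twist and its Jordan form), for properness via Lemma~\ref{gen2}, for non-potentiality via uniqueness of the twist, and for the rows (T1--T10), where the defining relations as printed do have leading monomials $\{xy,xz,yz\}$ with a single overlap. But there are two genuine gaps. First, your claim that Lemma~\ref{smth1} covers (T11), (T16) and (T17) ``up to the obvious role-of-generators relabelling'' is false: for (T11) the relations are $xz+azx$, $yz-azy$, $xx+yy$, and the third has a \emph{square} as its leading monomial under every permutation of the generators, so no relabelling produces the leading-monomial set $\{xy,xz,yz\}$. A genuine linear substitution is required first (the paper uses $y\to y+ix$ for (T11) and (T16), and a swap of $y,z$ followed by $z\to z+iy$ for (T17)) to land in the one-overlap situation; only then do the relations form a quadratic Gr\"obner basis.

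The more serious gap is your treatment of (T12--T15) and (T18). You assert that there the leading monomials ``produce exactly one overlap'' and hence that the defining relations already form a Gr\"obner basis by Lemma~\ref{sy}. Both halves are wrong. For (T12), for instance, the leading monomials are $xx$, $xy$, $yz$ (or $xx$, $xy$, $xz$ after swapping $x$ and $y$), which exhibit two or three overlaps, and the monomial algebra on those three quadratic leading monomials has \emph{exponential} growth, nowhere near $(1-t)^{-3}$. Lemma~\ref{sy} only guarantees that at least one overlap resolves; when there are several, the others generically do not, and indeed the reduced Gr\"obner basis here contains extra elements: $yyz+zyy$ and $yzz+zzy$ for (T12--T13), four further elements of degrees $3$ and $4$ for (T14--T15), and three degree-$3$ elements for (T18). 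Without computing these additional members one cannot read off $H_A=(1-t)^{-3}$ nor the absence of non-trivial right annihilators, so the exactness and Koszulity claims for these five rows are unsupported in your argument. This is precisely the part of the proof where the paper has to do genuine Gr\"obner-basis computations rather than invoke the one-overlap shortcut.
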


\begin{proof} It is straightforward and elementary to check that each $F$ is a twisted potential with the Jordan normal form of the twist being as specified. For $F$ from (T1--T10), the defining relations as given in Theorem~\ref{main33-t} are easily seen to form a Gr\"obner basis in the ideal of relations. Thus for such $F$, $A_F$ are PBW and therefore Koszul and we immediately get $H_A=(1-t)^{-3}$. For $F$ from (T11), we perform the substitution $x\to x$, $y\to y+ix$, $z\to z$, which turns the defining relations into $xz+azx$, $yz-azy-2aizx$ and $xy+yx-iyy$. Now they form a Gr\"obner basis in the ideal of relations, showing that $A_F$ is PBW, Koszul and has the Hilbert series $(1-t)^{-3}$. For $F$ from (T16), we perform the same substitution $x\to x$, $y\to y+ix$, $z\to z$, which turns the defining relations into $xz-zx$, $yz+zy-2izx$ and $xy+yx-iyy-izz$, which now form a Gr\"obner basis in the ideal of relations. Thus $A_F$ is PBW, Koszul and has the Hilbert series $(1-t)^{-3}$. For $F$ from (T17), we first swap $y$ and $z$ turning the defining relations into $yy+zz$, $xy-yx$ and $xz+zx+zz$. Next, we follow up with the substitution $x\to x$, $y\to y$ and $z\to z+iy$, turning the defining relations into $xy-yx$, $yz+zy-izz$ and $xz+zx+2iyx-yy$. Now they form a Gr\"obner basis in the ideal of relations, showing that $A_F$ is PBW, Koszul and has the Hilbert series $(1-t)^{-3}$. Note also that for $F$ from (T1--T11) and (T16--T17) $A_F$ has no non-trivial right annihilators as no leading monomial of an element of the above quadratic Gre\"obner bases starts with $z$.

Now we shall show that for $F$ from (T12--T15) and (T18), $A_F$ has the Hilbert series $(1-t)^{-3}$ and has no non-trivial right annihilators. For $F$ from (T12--T13), we swap $x$ and $y$ to bring the defining relations to the form $xx+yy$, $xy-yx+zz$ and $xz\pm izx$. A direct computation shows that the defining relations together with $yyz+zyy$ and $yzz+zzy$ form a Gr\"obner basis in the ideal of relations of $A_F$. This allows to confirm that the Hilbert series of $A_F$ is indeed $(1-t)^{-3}$. Since none of the leading monomials of elements of the above Gr\"obner basis starts with $z$, $A_F$ has non non-trivial right annihilators. The cases of $F$ from (T14) and (T15) are identical (just swap $\theta$ and $\theta^2$). The substitution $x\to x$, $y\to y-\alpha x$, $z\to z$ turns the defining relations of $A_F$ for $F$ from (T14) into $xx-\theta^2yx-zy$, $xy+\theta^2xz-yx-\theta zx+(1-\theta)zy$ and $yz-\theta zy$. Again, we are in a finite Gr\"obner basis situation. Namely, the defining relations together with $xzy+\theta xzz-\theta^2yxz-zxz+(1-\theta)zzy$, $xzx+xzz-\theta yxz-\theta^2zxz-zzy$, $xzzz-\theta yxzz-\theta^2 zxzz+\theta^2zzyx+(1-\theta)zzzy$ and $xzzy-\theta^2 zzyx$  form a Gr\"obner basis in the ideal of relations of $A_F$. As above this allows to conclude that $(1-t)^{-3}$ is the Hilbert series of $A_F$ and that $A_F$ has non non-trivial right annihilators. It remains to consider $A_F$ with $F$ from (T18). After swapping $y$ and $z$, the defining relations of $A_F$ for $F$ from (T18) take shape $xy-yx$, $xx+ayy+yz+zy$ and $yy+zz$. A direct computation shows that the defining relations together with $yzz-zzy$, $xzz-zzx$ and $xzy+yxz-yzx-zyx$ form a Gr\"obner basis in the ideal of relations of $A_F$. As above, $(1-t)^{-3}$ is the Hilbert series of $A_F$ and that $A_F$ has non non-trivial right annihilators. Now Lemma~\ref{commin} implies that $A_F$ for $F$ from (T1--T18) is exact and Koszul, while Lemma~\ref{gen2} says that it is proper. As each $A_F$ is proper and the corresponding twist (it is uniquely determined by $A_F$) is non-trivial, none of $A_F$ is potential.
\end{proof}

On few occasions we need to show that certain quadratic algebras are non-PBW.

\begin{lemma}\label{non-pbwb-all}
Let $F$ be the twisted potential from {\rm (T12--T15)} or {\rm (T18)} of Theorem~$\ref{main33-t}$ and $A=A_F$. Then $A$ is non-PBW.
\end{lemma}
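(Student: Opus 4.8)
The plan is to show non-PBW-ness of these five algebras by a uniform argument based on the invariance of the set of leading monomials under change of basis, combined with Lemma~\ref{ome0}. Recall from Lemma~\ref{ome0} that a quadratic algebra $A$ on three generators with $\dim A_3=10$ (which holds here since, by Lemma~\ref{firsttouch}, each of these $A_F$ is proper and hence $\dim A_3=10$ by Lemma~\ref{gen2}) is PBW if and only if there is \emph{some} basis $x,y,z$ of $V$ and \emph{some} compatible well-ordering for which the set of leading monomials of the space of relations $R$ is exactly $\{xy,xz,yz\}$. Equivalently: $A_F$ is PBW iff one can choose a basis of $V$ in which $R$ has a basis of three relations whose leading terms, in some admissible order, are the three distinct ``descending-pair'' monomials. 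So to prove $A_F$ is non-PBW it suffices to show that for \emph{no} choice of basis of $V$ does $R_F$ admit a set of leading monomials of the form $\{xy,xz,yz\}$.

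First I would reformulate the obstruction more concretely. Fix a basis and let $R\subseteq V\otimes V$ be the $3$-dimensional space of relations. The set of leading monomials being $\{xy,xz,yz\}$ forces, in particular, that $R$ contains no nonzero element supported only on the ``ascending or square'' monomials $\{x^2,y^2,z^2,yx,zx,zy\}$ — more precisely, the leading-monomial condition says $R$ is a graph of a linear map from $\spann\{xy,xz,yz\}$ to $\spann\{x^2,y^2,z^2,yx,zx,zy\}$ (relative to the chosen order). The key point is that this is a \emph{Zariski-open} (indeed, determined by non-vanishing of a $3\times 3$ minor) condition on the triple $(\text{basis of }V,\ R)$; running over all six admissible orders of the monomial ``overlap types'' and all bases, PBW-ness becomes: the $GL_3(\K)$-orbit of $R$ meets one of finitely many explicit open strata. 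I would then exploit the eigenspace structure of the twist. By Lemma~\ref{firsttouch}, for $F$ in (T12), (T13), (T14), (T15) the twist $M$ is diagonalizable with pairwise distinct eigenvalues ($i,-1,1$; $-i,-1,1$; $\xi_9,\xi_9^4,\xi_9^7$; $\xi_9^2,\xi_9^5,\xi_9^8$), and for (T18) with eigenvalues $-1,1,1$. By Remark~\ref{rer}, the set of bases we may restrict to is severely constrained: the twist is an isomorphism invariant, so any basis change must conjugate $M$ within its class, and when $M$ has distinct eigenvalues the only substitutions preserving the twisted-potential shape are the ones whose transpose commutes with $M$ — i.e. scalings (plus permutations of eigenlines of equal eigenvalue for (T18), plus the overall scalar). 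Thus, up to the already-classified symmetries, it is enough to check the PBW criterion for the \emph{single} normalized relation triple appearing in the table (and, for (T18), for a one-parameter family with a residual $\Z_2$-action $a\mapsto -a$), against all $3!=6$ choices of which generator is biggest, smallest, middle and all admissible tie-breaking.

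Concretely, I would take the explicit defining relations listed in Theorem~\ref{main33-t} (in the normalized form worked out in the proof of Lemma~\ref{firsttouch}, e.g. for (T12--T13) the relations $xx+yy$, $xy-yx+zz$, $zx\pm iyz$ after swapping $x,y$), and for each of the six orderings of $x,y,z$ compute the leading monomials of a reduced basis of $R$; one verifies by inspection that in every case the set of leading monomials either contains a square of a variable, or contains a pair $uv,vu$ of ``conjugate'' monomials, or equals $(ba,cb,ac)$-type, so it can never be $\{xy,xz,yz\}$-type — exactly the dichotomy already used in the proof of Lemma~\ref{ome0}. This, together with $\dim A_3=10$, contradicts (\ref{ome0}.3), so $A_F$ is non-PBW. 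The main obstacle is purely organizational: there is no single slick argument that covers all five algebras, so the proof is a finite but somewhat tedious case analysis over bases and orders; the mathematical content is light (it is the same monomial combinatorics as in Lemma~\ref{ome0}), but one must be careful to use Remark~\ref{rer} to cut the ``all bases'' quantifier down to the finite list of residual symmetries, since otherwise the claim ``no basis works'' is not something one can check by brute force.
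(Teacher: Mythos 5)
There is a genuine gap, and it sits exactly at the step you flag as the one requiring care. You propose to use Remark~\ref{rer} to ``cut the `all bases' quantifier down to the finite list of residual symmetries'' --- scalings, permutations of eigenlines, etc. --- on the grounds that the twist is an isomorphism invariant and only substitutions whose transpose commutes with $M$ preserve the twisted-potential shape. But the PBW property, as defined in the paper, quantifies over \emph{arbitrary} bases of $V$: one asks whether there exists \emph{some} basis $x_1,\dots,x_n$ of $V$ and \emph{some} compatible ordering for which a basis of $R$ is a Gr\"obner basis. A candidate PBW basis has no obligation whatsoever to keep $F$ in its normalized twisted-potential form or to conjugate $M$ nicely; after an arbitrary substitution the algebra is still the same algebra $A(V,R)$, just with $R$ written in new coordinates, and that is all that matters for testing condition (\ref{ome0}.3). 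Remark~\ref{rer} tells you how the twist transforms under a substitution; it does not say that twist-incompatible substitutions are irrelevant to PBW-ness. So your reduction to a finite check of scalings and permutations proves only that the \emph{given} generators (up to those symmetries) do not furnish a PBW basis, which is strictly weaker than the lemma.

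This is precisely why the paper's proof cannot avoid the tedium: it takes a completely general non-degenerate substitution $x\mapsto ux+\alpha_1y+\beta_1z$, $y\mapsto vx+\alpha_2y+\beta_2z$, $z\mapsto wx+\alpha_3y+\beta_3z$, extracts from the requirement that $xx$ (the largest degree-$2$ monomial) be absent from every relation a polynomial constraint on $(u,v,w)$, normalizes, and then uses explicit coefficient tables for the six monomials $xx,xy,yx,xz,zx,yy$ to show that the leading-monomial set forced by (\ref{lemo1}) is unattainable --- a case analysis over the nine free parameters of the substitution, not over a finite symmetry group. Your opening reductions (properness via Lemma~\ref{firsttouch}, $\dim A_3=10$ via Lemma~\ref{gen2}, and the reformulation through Lemma~\ref{ome0}) are correct and match the paper, but without replacing the invalid finite-basis reduction by an argument valid for all of $GL_3(\K)$, the proof does not go through.
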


\begin{proof} Since the PBW property is preserved when one passes to the opposite multiplication and the algebras from (T12) and (T13) as well as the algebras from (T14) and (T15) are isomorphic to each other's opposites, it is enough to deal with $F$ from (T12), (T14) and (T18). That is, $A$ is presented by the generators $x,y,z$ and three quadratic relations $r_1$, $r_2$ and $r_3$ from the following list:
\begin{itemize}\itemsep=-2pt
\item[\rm (1)] $r_1=xx+yy$, $r_2=xy-yx+zz$ and $r_3=xz+izx;$
\item[\rm (2)] $r_1=xz-zx$, $r_2=xx+yz+zy+azz$ and $r_3=yy+zz$, where $a\in\K$, $a^2+4\neq 0;$
\item[\rm (3)] $r_1=yx+\theta zy+\theta^2zx$, $r_2=xy+zy+\theta^2xz$ and $r_3=yx+yz+\theta xz.$
\end{itemize}

Assume the contrary: $A$ is PBW. By Lemma~\ref{firsttouch}, $H_A=(1-t)^{-3}$ and therefore $\dim A_1=3$, $\dim A_2=6$ and $\dim A_3=10$. By Lemma~\ref{ome0}, there exists a well-ordering $\leq$ on the $x,y,z$ monomials compatible with multiplication and satisfying $x>y>z$ (this we can acquire by permuting the variables) and a non-degenerate linear substitution $x\mapsto ux+\alpha_1 y+\beta_1z$, $y\mapsto vx+\alpha_2y+\beta_2 z$, $z\mapsto wx+\alpha_3y+\beta_3 z$ such that the leading monomials $m_1,m_2,m_3$ of the new space of defining relations satisfy
\begin{equation}\label{lemo1}
\{m_1,m_2,m_3\}{\in}\bigl\{\!\{xy,xz,yz\},\{xy,xz,zy\},\{xy,zx,zy\},
\{yx,yz,xz\},\{yx,yz,zx\},\{yx,zy,zx\}\!\bigr\}.
\end{equation}
Note that we do not assume that $m_1>m_2>m_3$ here. Since $xx$ is the biggest degree $2$ monomial,
\begin{equation}\label{noxx}
\text{$xx$ is absent in each of $r_j$ after the substitution.}
\end{equation}
Since the order satisfies $x>y>z$ and is compatible with multiplication,
\begin{equation}\label{4big2}
\text{four biggest degree $2$ monomials are either $xx,xy,yx,xz$ or $xx,xy,yx,zx$}
\end{equation}
(not necessarily in this order).

{\bf Case 1:} \ $r_j$ are given by (1). In this case (\ref{noxx}) reads $0=uw=w^2=u^2+v^2$. Since our substitution is non-degenerate $(u,v,w)\neq (0,0,0)$. By scaling $x$ (this does not effect the leading monomials), we can assume that $u=1$. Then $w=0$ and $v^2=-1$. The following table gives the coefficients in $r_j$ in front of certain monomials:
$$
\begin{matrix}
&\vrule&xx&xy&yx&xz&zx&yy\\
\noalign{\hrule}
r_1\hhhhh&\vrule&0&\alpha_1+v\alpha_2&\alpha_1+v\alpha_2&\beta_1+v\beta_2&\beta_1+v\beta_2&\alpha_1^2+\alpha_2^2\\
r_2&\vrule&0&\alpha_2-v\alpha_1&-v\alpha_1-\alpha_2&\beta_2-v\beta_1&-v\beta_1-\beta_2&\alpha^2_3\\
r_3&\vrule&0&\alpha_3&i\alpha_3&\beta_3&i\beta_3&\alpha_1\alpha_3(1+i)
\end{matrix}
$$

Using the fact that our sub is non-degenerate, we easily see that if $\alpha_1+v\alpha_2\neq 0$, then the both $3\times 3$ matrices of coefficients of $xy$, $yx$ and $xz$ and of $xy$, $yx$ and $zx$ are non-degenerate. By (\ref{4big2}), in the case $\alpha_1+v\alpha_2\neq 0$, the set of leading monomials of the relations is either $\{xy,yx,xz\}$ or $\{xy,yx,zx\}$, contradicting (\ref{lemo1}). Hence we must have $\alpha_1+v\alpha_2=0$. Using the fact that $v=\pm i$, we see that the above table takes the following form:
$$
\begin{matrix}
&\vrule&xx&xy&yx&xz&zx&yy\\
\noalign{\hrule}
r_1\hhhhh&\vrule&0&0&0&\beta_1+v\beta_2&\beta_1+v\beta_2&0\\
r_2&\vrule&0&0&0&\beta_2-v\beta_1&-v\beta_1-\beta_2&\alpha^2_3\\
r_3&\vrule&0&\alpha_3&i\alpha_3&\beta_3&i\beta_3&\alpha_1\alpha_3(1+i)
\end{matrix}
$$

Then $\alpha_3\neq 0$ (otherwise both $xy$ and $yx$ are not among the leading monomials, contradicting (\ref{lemo1})) and $\beta_1+v\beta_2\neq 0$ (otherwise both $xz$ and $zx$ are not among the leading monomials, contradicting (\ref{lemo1})). Now, one easily sees that the $yy$-column of the above matrix is not in the linear span of any of the following pair of columns: $xy$ and $xz$, $xy$ and $zx$, $yx$ and $xz$ or $yx$ and $zx$. Since $yy>yz$, $yy>zy$ and $yy>zz$, it follows that $yy$ is among the leading monomials of the relations, which contradicts (\ref{lemo1}). This contradiction completes the proof in Case~1.

{\bf Case 2}  \ $r_j$ are given by (2). In this case (\ref{noxx}) reads
$$
0=uv+\theta vw+\theta^2uw=uv+vw+\theta^2uw=uv+vw+\theta uw,
$$
which is equivalent to $uv=vw=uw=0$. Hence exactly two of $u$, $v$ and $w$ are zero and we can normalize to make the third equal $1$. Since cyclic permutations of the variables composed with appropriate scalings provide automorphisms of our algebra, we can without loss of generality assume that $u=1$ and $v=w=0$.
The following table gives the coefficients in $r_j$ in front of certain monomials:
$$
\begin{matrix}
&\vrule&xx&xy&yx&xz&zx&yy\\
\noalign{\hrule}
r_1\hhhhh&\vrule&0&0&\alpha_2+\theta^2\alpha_3&0&\beta_2+\theta^2\beta_3&\alpha_1\alpha_2+\theta\alpha_2\alpha_3+\theta^2\alpha_1\alpha_3\\
r_2&\vrule&0&\alpha_2+\theta^2\alpha_3&0&\beta_2+\theta^2\beta_3&0&\alpha_1\alpha_2+\alpha_2\alpha_3+\theta^2\alpha_1\alpha_3\\
r_3&\vrule&0&\theta\alpha_3&\alpha_2&\theta\beta_3&\beta_2&\alpha_1\alpha_2+\alpha_2\alpha_3+\theta\alpha_1\alpha_3
\end{matrix}
$$

Using the fact that our sub is non-degenerate, we easily see that if $\alpha_2+\theta^2\alpha_3\neq 0$, then the both $3\times 3$ matrices of coefficients of $xy$, $yx$ and $xz$ and of $xy$, $yx$ and $zx$ are non-degenerate. By (\ref{4big2}), in the case $\alpha_2+\theta^2\alpha_3\neq 0$, the set of leading monomials of the relations is either $\{xy,yx,xz\}$ or $\{xy,yx,zx\}$, contradicting (\ref{lemo1}). Hence we must have $\alpha_2+\theta^2\alpha_3=0$. The above table takes the following form.
$$
\begin{matrix}
&\vrule&xx&xy&yx&xz&zx&yy\\
\noalign{\hrule}
r_1\hhhhh&\vrule&0&0&0&0&\beta_2+\theta^2\beta_3&-\theta\alpha_3^2\\
r_2&\vrule&0&0&0&\beta_2+\theta^2\beta_3&0&-\theta^2\alpha_3^2\\
r_3&\vrule&0&\theta\alpha_3&\alpha_2&\theta\beta_3&\beta_2&*
\end{matrix}
$$

Now unless $\alpha_3(\beta_2+\theta^2\beta_3)=0$, all six $3\times 3$ matrices of coefficients of $xy$, $xz$ and $yy$; $yx$, $xz$ and $yy$; $xy$, $zx$ and $yy$; $yx$, $zx$ and $yy$; $xy$, $xz$ and $zx$; $yx$, $xz$ and $yy$ are non-degenerate. The latter means that either $xz$ and $zx$ or $yy$ are among the leading monomials of the defining relations, which contradicts (\ref{lemo1}). Thus we must have $\alpha_3(\beta_2+\theta^2\beta_3)=0$ and $\alpha_2+\theta^2\alpha_3=0$, which contradicts the fact that our substitution is non-degenerate. This contradiction completes the proof in Case~2.

{\bf Case 3:} \ $r_j$ are given by (3).

Since this class of algebras is closed (up to an isomorphism) with respect to passing to the opposite multiplication and the two options in (\ref{4big2}) reduce to one another via passing to the opposite multiplication, for the rest of the proof we can assume that
\begin{equation}\label{4big}
\text{four biggest degree $2$ monomials are $xx,xy,yx,xz$.}
\end{equation}

In the current case (\ref{noxx}) reads $0=u^2+2vw+aw^2=v^2+w^2$. Since $(u,v,w)\neq (0,0,0)$, we have $v\neq 0$, which allows to normalize: $v=1$. Then $w\in\{i,-i\}$ and $a=2w-u^2$. Since $a^2+4\neq 0$ and $w^2=-1$, we have $u\neq 0$. It is easy to see that we can split our substitution into two consecutive substitutions: first, $x\mapsto ux$, $y\mapsto x+y$, $z\mapsto wx+z$ and, second, $x\mapsto x+\alpha_1 y+\beta_1z$, $y\mapsto \alpha_2y+\beta_2 z$, $z\mapsto \alpha_3y+\beta_3 z$ ($\alpha_j$ and $\beta_j$ are not the same as before).
After the first substitution, the defining relations are spanned by $r_1=xz-zx$, $r_2=u^2(xz+zx)+w(yz+zy)+yy-(1+wu^2)zz$ and $r_3=u^2(xy+yx)+(u^2-w)yy+(yz+zy)+wzz$.
The following table gives the coefficients in $r_j$ in front of certain monomials after the second substitution:

$$
\begin{matrix}
&\vrule&xx&xy&yx&xz&zx\\
\noalign{\hrule}
r_1\hhhhh&\vrule&0&\alpha_3&-\alpha_3&\beta_3&-\beta_3\\
r_2&\vrule&0&u^2\alpha_3&u^2\alpha_3&u^2\beta_3&u^2\beta_3\\
r_3&\vrule&0&u^2\alpha_2&u^2\alpha_2&u^2\beta_2&u^2\beta_2
\end{matrix}
$$

Using the fact that our sub is non-degenerate, we easily see that if $\alpha_3\neq 0$, then the $3\times 3$ matrix of coefficients of $xy$, $yx$ and $xz$ is non-degenerate. By (\ref{4big}), both $xy$ and $yx$ are among the leading monomials, contradicting (\ref{lemo1}). Hence we must have $\alpha_3=0$. The above table with the extra $yy$-column takes the following form:

$$
\begin{matrix}
&\vrule&xx&xy&yx&xz&zx&yy\\
\noalign{\hrule}
r_1\hhhhh&\vrule&0&0&0&\beta_3&-\beta_3&0\\
r_2&\vrule&0&0&0&u^2\beta_3&u^2\beta_3&\alpha_2^2\\
r_3&\vrule&0&u^2\alpha_2&u^2\alpha_2&u^2\beta_2&u^2\beta_2&(u^2-w)\alpha_2^2
\end{matrix}
$$

Same way as in Case 2, it follows that unless $\alpha_2\beta_3=0$, either $xz$ and $zx$ or $yy$ feature among the leading monomials. Thus $\alpha_2\beta_3=\alpha_3=0$, which contradicts the fact that our substitution is non-degenerate. This contradiction completes the proof in the final Case~3.
\end{proof}

Now we deal with the possibilities for the Jordan normal form of the twist for $F\in{\cal P}^*_{3,3}$ one by one.

\begin{lemma}\label{1block} Let $G\in{\cal P}^*_{3,3}$ be non-degenerate, $M\in GL_3(\K)$ be the unique matrix providing the twist for $G$ and assume that $A=A_G$ is non-potential. Assume also that the normal Jordan form of $M$ consists of one block. If $A$ is non-proper, then $A$ is isomorphic to $A_F$ with $F$ from {\rm (T20)} of Theorem~$\ref{main33-t}$ with $a^3=1\neq a$. If $A$ is proper, then $A$ is isomorphic to $A_F$ for $F$ from {\rm (T3)} or {\rm (T4)} of Theorem~$\ref{main33-t}$. Moreover, algebras $A_F$ for $F$ from {\rm (T3)} and {\rm (T4)} are pairwise non-isomorphic.
\end{lemma}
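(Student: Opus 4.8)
The plan is to follow the scheme of Lemma~\ref{1block2}. By Remark~\ref{rer} we may assume $M$ is the standard $3\times3$ Jordan block with a single eigenvalue $\alpha$. Write $G=\sum_{j,k,m}a_{j,k,m}x_jx_kx_m$. Since $\delta_{x_j}G=\sum_{k,m}a_{j,k,m}x_kx_m$ and $\delta^R_{x_i}G=\sum_{k,m}a_{k,m,i}x_kx_m$, the twist equation (\ref{qua}) is equivalent to the linear system $a_{k,m,i}=\sum_jM_{ij}a_{j,k,m}$, that is, $a_{k,m,1}=\alpha a_{1,k,m}+a_{2,k,m}$, $a_{k,m,2}=\alpha a_{2,k,m}+a_{3,k,m}$ and $a_{k,m,3}=\alpha a_{3,k,m}$ for $1\leq k,m\leq3$. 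Iterating these relations (each one shifts one index position while applying $M$) and using $G\neq0$, a routine elimination forces $\alpha^3=1$. So we are left with two cases: $\alpha\in\{\theta,\theta^2\}$ and $\alpha=1$.

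The linear substitutions that keep the twist equal to $M$ are precisely those whose matrix $C$ has $C^T$ commuting with $M$; these are the transposes of the invertible polynomials in the nilpotent part of $M$, a two-parameter family modulo scalars. In the case $\alpha^3=1\neq\alpha$ one solves the above system explicitly, obtaining a small, explicitly spanned ${\cal P}_{3,3}(M)$; discarding its degenerate and its potential members (which are not our $G$) and applying this substitution group together with a scaling, every remaining $G$ is brought to the twisted potential of (T20) of Theorem~\ref{main33-t} with $a$ a primitive cube root of unity, so $a^3=1\neq a$. By Lemma~\ref{degen} --- or directly, since a finite Gr\"obner basis computation gives $\dim A_3=11$, whence Lemma~\ref{gen2} applies --- these algebras are non-proper. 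In the case $\alpha=1$ the solution space is larger; after removing $G=0$ and the cubes of degree-$1$ homogeneous elements (which are degenerate, hence excluded by hypothesis), the same substitution group and a scaling reduce every remaining $G$ to one of the forms (T3) or (T4), which by Lemma~\ref{firsttouch} are proper (in fact exact and Koszul with $H_A=(1-t)^{-3}$). Since a proper graded twisted potential algebra is never isomorphic to a non-proper one (Remark~\ref{proper}), and since the twist need not be preserved under graded isomorphisms of non-proper algebras, this yields exactly the asserted dichotomy: non-proper $A$ lands in (T20) with $a^3=1\neq a$, proper $A$ lands in (T3) or (T4).

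Finally, to see that the algebras $A_F$ with $F$ from (T3) and (T4) are pairwise non-isomorphic, note first that all of them are proper, so by Lemma~\ref{gen2} any graded isomorphism between two of them must transform one twisted potential into a scalar multiple of the other; moreover the twist is a graded-isomorphism invariant and equals the fixed Jordan block $M$ for all of them, so the only admissible substitutions are, up to an overall scalar, the transposes of the invertible polynomials in the nilpotent part of $M$. Substituting the general such matrix into $F_{\mathrm{T3},a}$ and into $F_{\mathrm{T4},a}$ and matching coefficients, one verifies that $F_{\mathrm{T3},a}$ is equivalent to $F_{\mathrm{T3},a'}$ only if $a=a'$, that $F_{\mathrm{T4},a}$ is equivalent to $F_{\mathrm{T4},a'}$ only if $a=a'$, and that no $F_{\mathrm{T3},a}$ is equivalent to any $F_{\mathrm{T4},a'}$. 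This final coefficient-matching verification, together with the clean solution of the two linear systems, is the only laborious step; there is no conceptual obstacle.
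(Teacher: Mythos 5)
Your proposal is correct and follows essentially the same route as the paper's own proof: reduce the twist to the single Jordan block via Remark~\ref{rer}, solve the resulting linear system on the coefficients to force $\alpha^3=1$, normalize within each ${\cal P}_{3,3}(M)$ using the substitutions whose transposed matrices commute with $M$, and then invoke Lemma~\ref{degen} (landing in (T20)) in the case $\alpha^3=1\neq\alpha$ and Lemma~\ref{firsttouch} together with Lemma~\ref{gen2} (landing in (T3)/(T4) and giving pairwise non-isomorphism) in the case $\alpha=1$. Two small inaccuracies that do not affect the argument: for (T20) one has $\dim A_3=12$ rather than $11$ (still $\neq 10$, so non-properness follows), and in the case $\alpha=1$ what must be discarded is the entire cyclically invariant locus $s=0$ (excluded because $A$ is assumed non-potential), not merely $G=0$ and cubes of degree-one elements.
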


\begin{proof} By Remark~\ref{rer}, we can without loss of generality assume that
$$
M=\left(\begin{array}{ccc}\alpha&1&0\\ 0&\alpha&1\\ 0&0&\alpha\end{array}\right)\ \ \ \text{with $\alpha\in\K^*$.}
$$
If $G=\sum\limits_{j,k,m=1}^3 a_{j,k,m}x_jx_kx_m$, then the inclusion $G\in {\cal P}_{3,3}(M)$ is equivalent to the following system of linear equations on the coefficients of $G$:
\begin{equation}\label{b1-33}
a_{j,k,3}=\alpha a_{3,j,k},\ \
a_{j,k,2}=\alpha a_{2,j,k} +a_{3,j,k}\ \ \text{and}\ \ a_{j,k,1}=\alpha a_{1,j,k}+a_{2,j,k}\ \ \text{for}\ \ 1\leq j,k\leq 3.
\end{equation}
One easily sees that (\ref{b1-33}) has only zero solution unless $\alpha^3=1$. This leaves two cases to consider: $\alpha^3=1\neq\alpha$ and $\alpha=1$.

If $\alpha^3=1\neq \alpha$, solving (\ref{b1-33}), we see that $G$ belongs to ${\cal P}_{3,3}(M)$ precisely when
$$
\textstyle G=sx^2z+\alpha^2szx^2+\alpha sxzx-\alpha^2sxy^2-sy^2x-\alpha syxy+tx^2y+\alpha(\alpha t-s)yx^2+\alpha(t-s)xyx+\frac{\alpha t-s}{\alpha^2-1}x^3
$$
with $s,t\in\K$. Since $G$ is non-degenerate, $s\neq 0$. By scaling, we can turn $s$ into $1$. Now the space of quadratic relations of $A=A_G$ is spanned by $xx$, $xy+\alpha^2yx$ and $xz+\alpha zx-\alpha^2yy+pyx$, where $p=(\alpha-\alpha^2)t-\alpha$. Now the substitution $x\to x$, $y\to y$ and $z\to vz+uy$ with appropriate $u\in\K$ and $v\in\K^*$ turns the defining relations of $A$ into $xx$, $xy+\alpha^2yx$ and $xz+\alpha zx+yy$. Thus $A$ is isomorphic to $A_F$ with $F$ from {\rm (T20)} of Theorem~$\ref{main33-t}$ with $a=\alpha^2$. By Lemma~\ref{degen}, it is non-proper.

It remains to consider the case $\alpha=1$. Solving (\ref{b1-33}), we see that
$$
\textstyle {\cal P}_{3,3}(M)=\{G_{s,t,r}=sxyz^\rcirclearrowleft{-}sxzy^\rcirclearrowleft{+}tx^2z^\rcirclearrowleft{-}sxzx
{+}\frac{s{-}t}{2}{xy^2}^\rcirclearrowleft{-}syxy{+}tx^2y{+}\frac{t{-}s}2xyx{+}rx^3:s,t,r\in\K\}
$$
Clearly, $G_{s,t,r}$ is non-degenerate precisely when $(s,t)\neq(0,0)$. By Remark~\ref{rer}, two such twisted potentials are equivalent if and only if they are obtained from one another by a linear substitution with the matrix, whose transpose commutes with $M$. That is, we have to look only at substitutions $x\to ux$, $y\to u(vx+y)$, $z\to u(wx+vy+z)$ with $u\in\K^*$, $v,w\in\K$. A direct computation shows that this sub transforms $G_{s,t,r}$ to $G_{s',t',r'}$ with $s'=u^3s$, $t'=u^3t$ and $r'=u^3(r+(3t-s)(w+\frac12v-\frac12v^2))$. If $s=0$, $G_{s,t,r}$ is cyclicly invariant and therefore $A$ is potential. Since this contradicts the assumptions, $s\neq 0$. Now the above observation shows that $G_{s,t,r}$ is equivalent to precisely one of the following: $G_{1,t,0}$ for $t\neq \frac13$ or $G_{1,1/3,r}$ for $r\in\K$. Swapping of $x$ and $z$ brings the latter to the forms (T3) or (T4) of Theorem~$\ref{main33-t}$. By Lemma~\ref{firsttouch}, these algebras are proper. Since we already know that these twisted potentials are pairwise non-equivalent, Lemma~\ref{gen2} implies that the corresponding algebras are pairwise non-isomorphic.
\end{proof}

\begin{lemma}\label{2block} Let $G\in{\cal P}^*_{3,3}$ be non-degenerate, $M\in GL_3(\K)$ be the unique matrix providing the twist for $G$ and assume that $A=A_G$ is non-potential and the normal Jordan form of $M$ consists of two blocks. If $A$ is non-proper, then $A$ is isomorphic to $A_F$ with $F$ from {\rm (T20)} of Theorem~$\ref{main33-t}$. If $A$ is proper, then $A$ is isomorphic to $A_F$ for $F$ from {\rm (T5--T8)} of Theorem~$\ref{main33-t}$. Moreover, algebras $A_F$ for $F$ with different labels from {\rm (T5--T8)} are non-isomorphic and the isomorphism conditions of Theorem~$\ref{main33-t}$ concerning each of {\rm (T5--T8)} are satisfied.
\end{lemma}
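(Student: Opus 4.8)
The plan is to run the same argument used for Lemma~\ref{1block}. By Remark~\ref{rer} we may assume the twist is already in Jordan normal form, and since it has two blocks,
$$M=\left(\begin{array}{ccc}\alpha&1&0\\ 0&\alpha&0\\ 0&0&\beta\end{array}\right),\qquad \alpha,\beta\in\K^*.$$
Writing $G=\sum_{j,k,m=1}^3 a_{j,k,m}x_jx_kx_m$, the condition $G\in{\cal P}_{3,3}(M)$ becomes a homogeneous linear system on the coefficients $a_{j,k,m}$ in the spirit of (\ref{b1-33}), namely $a_{j,k,1}=\alpha a_{1,j,k}+a_{2,j,k}$, $a_{j,k,2}=\alpha a_{2,j,k}$ and $a_{j,k,3}=\beta a_{3,j,k}$ for $1\leq j,k\leq 3$. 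Iterating these identities one checks that the system has only the zero solution unless $(\alpha,\beta)$ satisfies one of finitely many polynomial relations (the analogue of ``$\alpha^3=1$'' in Lemma~\ref{1block}); moreover, non-degeneracy of $G$ forces the surviving monomials to genuinely involve $x_3$, which narrows the admissible configurations to the generic families $\alpha^2\beta=1$ and $\alpha\beta^2=1$ together with the coincidence cases $\alpha^3=1$, $\alpha=\pm1$, $\beta=\pm1$, $\alpha=\beta$ and their intersections, where the solution space jumps.

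For each admissible type of $(\alpha,\beta)$ I would solve the coefficient system explicitly, obtaining ${\cal P}_{3,3}(M)$ as an explicit finite-parameter family whose dimension grows as $(\alpha,\beta)$ becomes more special. Within a fixed family the only linear substitutions keeping the twist in Jordan form are those whose transpose commutes with $M$ (Remark~\ref{rer}); for a genuine size-$2$ block with $\alpha\neq\beta$ these are $x\mapsto px$, $y\mapsto p(qx+y)$, $z\mapsto rz$ with $p,r\in\K^*$, $q\in\K$, and the full stabiliser otherwise. Pushing a general member of the family through these substitutions, discarding the cyclicly invariant ones (for which $A_G$ is potential, excluded by hypothesis), I expect to be left with: strata whose $A_G$ is non-proper, which after one further linear substitution become $A_F$ with $F$ from (T20) — with Lemma~\ref{degen} confirming such algebras are indeed non-proper, non-degenerate and non-potential; and strata whose $A_G$ is proper, which reduce to the normal forms (T5--T8), Lemma~\ref{firsttouch} supplying properness, exactness, PBW, the Hilbert series $(1-t)^{-3}$, the stated Jordan type of the twist and hence non-potentiality. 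Concretely I anticipate $\alpha^2\beta=1$ with generic $\alpha$ landing in (T5) after scaling, $\alpha=-1$, $\beta=1$ in (T6), $\alpha=\beta=1$ splitting between (T7) and (T8) according to which orbit of the restricted substitution group the potential lies in, and the remaining configurations collapsing onto (T20) or onto potential/degenerate cases ruled out by the hypotheses.

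For distinctness and the isomorphism conditions: all of (T5--T8) are proper by Lemma~\ref{firsttouch}, so by Lemma~\ref{gen2} two of them are graded-isomorphic precisely when the corresponding twisted potentials are $GL_3(\K)$-equivalent, and any such equivalence conjugates the twists and hence preserves the Jordan normal form. By Lemma~\ref{firsttouch} the twists of (T5), (T6) and (T7)--(T8) have pairwise non-conjugate Jordan forms in the relevant parameter ranges, which separates those labels; (T7) from (T8) needs a separate argument inside the common Jordan type $\alpha=\beta=1$, which I would settle from the restricted-substitution analysis of ${\cal P}_{3,3}(M)$ for that $M$ (the two sub-families lie in different orbits). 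The isomorphism conditions inside each of (T5--T8) — in particular the involution $(a,b)\mapsto(a,-b)$ in (T7) and triviality in the others — fall out of tracking the action of the small group of substitutions whose transpose commutes with $M$ on the parameter space of ${\cal P}_{3,3}(M)$, exactly as in the proofs of Lemmas~\ref{1block} and~\ref{nco3-3}.

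The main obstacle is bookkeeping rather than ideas: one must run through all the special configurations of $(\alpha,\beta)$, solve the coefficient system in each, reduce the resulting multi-parameter family to normal form using only the commutation-restricted substitutions, correctly peel off the non-proper stratum mapping to (T20), and decide whether a proper orbit lands in (T7) or (T8). Keeping accurate track of which scalings and variable swaps survive the commutation constraint — and hence of the final group actions on parameters — is where care is needed; all the homological and combinatorial content (exactness, Hilbert series, Gr\"obner bases, PBW, non-potentiality) is delegated to Lemmas~\ref{firsttouch}, \ref{degen} and~\ref{gen2}.
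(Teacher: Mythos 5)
Your strategy coincides with the paper's: put $M$ in two-block Jordan form via Remark~\ref{rer}, translate $G\in{\cal P}_{3,3}(M)$ into a linear system on the coefficients, solve it case by case in $(\alpha,\beta)$, normalize within each family using only substitutions whose transpose commutes with $M$, and delegate properness, exactness, the Hilbert series and non-potentiality to Lemmas~\ref{firsttouch}, \ref{degen} and~\ref{gen2}. The landing pattern you anticipate (generic $\alpha^2\beta=1$ giving (T5), $\alpha=-1$, $\beta=1$ giving (T6), $\alpha=\beta=1$ splitting into (T7) and (T8), the rest collapsing onto (T20) or potential/degenerate cases) is essentially the paper's, except that in the sub-case $\alpha^3=1\neq\alpha$ some orbits land back in (T5) with $b^3=1\neq b$ rather than only in (T20), so the catch-all for the remaining configurations is not quite complete.

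Two concrete points in your argument would need repair in execution. First, the reduction of admissible $(\alpha,\beta)$: the binding constraint is not that the surviving monomials involve $x_3$ but that they involve $x_2=y$, the generalized-eigenvector variable of the size-$2$ block; the system forces $y$ to be absent from every element of ${\cal P}_{3,3}(M)$ unless $\alpha^2\beta=1$, which kills non-degeneracy outright, so the family $\alpha\beta^2=1$ contributes nothing and is not a separate generic stratum. Second, and more seriously, your claim that the Jordan forms of the twists separate (T5) from (T6) and from (T7)--(T8) fails at the boundary parameters: (T5) with $b=1$ has the same twist (two blocks, both eigenvalues $1$) as (T7) and (T8), and (T5) with $b=-1$ has the same twist as (T6). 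The separation there must come from the orbit analysis inside the fixed ${\cal P}_{3,3}(M)$ under the commutant-restricted substitutions --- for instance the presence or absence of the $z^3$ term distinguishes (T6) from (T5) at $b=-1$, and for $\alpha=\beta=1$ the strata with $z^3$-coefficient nonzero, with it zero and the $xzz^\rcirclearrowleft$-coefficient zero, and with it zero but that coefficient nonzero give the distinct orbits (T7), (T5) with $b=1$, and (T8) respectively. You already plan this analysis for (T7) versus (T8); it must also carry the weight for (T5) against the others at those special parameters. Beyond that, the substance of the lemma is precisely the case-by-case computation you defer, so the plan is sound but the proof lives in the bookkeeping.
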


\begin{proof} By Remark~\ref{rer}, we can without loss of generality assume that
$$
M=\left(\begin{array}{ccc}\alpha&1&0\\ 0&\alpha&0\\ 0&0&\beta\end{array}\right)\ \ \text{with $\alpha,\beta\in\K^*$.}
$$
If $G=\sum\limits_{j,k,m=1}^3 a_{j,k,m}x_jx_kx_m$, then the inclusion $F\in {\cal P}_{3,3}(M)$ is equivalent to the following system of linear equations on the coefficients of $G$:
\begin{equation}\label{b2-33}
a_{j,k,3}=\beta a_{3,j,k},\ \
a_{j,k,2}=\alpha a_{2,j,k}\ \ \text{and}\ \ a_{j,k,1}=\alpha a_{1,j,k}+a_{2,j,k}\ \ \text{for}\ \ 1\leq j,k\leq 3.
\end{equation}
One easily sees that (\ref{b2-33}) has only zero solution if $1\notin\{\alpha,\beta,\alpha^2\beta,\alpha\beta^2\}$. Furthermore, ${\cal P}_{3,3}(M)$ contains no non-degenerate elements unless $\alpha^2\beta=1$. Indeed, if $\alpha^2\beta\neq 1$, $y$ does not feature at all in members of ${\cal P}_{3,3}(M)$.
Thus for the rest of the proof, we can assume that $\alpha^2\beta=1$. That is, $\beta=\alpha^{-2}$. By (\ref{b2-33}),
$$
F_{s,t}=s(xyz{+}\alpha yzx{+}\alpha^2zxy{-}\alpha xzy{-}yxz{-}\alpha^2zyx{-}xzx)+t(xxz{+}\alpha^2zxx{+}\alpha xzx)\in {\cal P}_{3,3}(M)\ \ \text{for $s,t\in\K$}.
$$
Furthermore, there are no other elements in ${\cal P}_{3,3}(M)$ unless $\alpha^3=1$ or $\alpha^2=1$. If $s=0$, $F_{s,t}$ is degenerate. Thus, we can assume that $s\neq 0$. By scaling, we can make $s=1$. By Remark~\ref{rer}, two  twisted potentials $F_{1,t}$ and $F_{1,t'}$ are equivalent precisely when they are obtained from one another by a linear substitution with the matrix, whose transpose commutes with $M$. That is, in the case $\alpha\neq\beta$ (equivalently, $\alpha^3\neq 1$), we have to look only at substitutions $x\to ux$, $y\to u(vx+y)$, $z\to wz$ with $u,w\in\K^*$, $v\in\K$. A direct computation shows that this sub transforms $F_{1,t}$ to $F_{1,t'}$ if and only if $t=t'$. That is, in the case $\alpha^3\neq 1$, $F_{1,t}$ are pairwise non-equivalent. Swapping of $x$ and $z$ turns $F_{1,a}$ into
$$
G_{a,b}=zyx+byxz+b^2xzy-bzxy-yzx-b^2xyz+(ab-1)zxz+azzx+ab^2xzz\ \ \text{with $a,b\in\K$, $b^3\neq 1$},
$$
where $b=\alpha$ and $a=t$, which is precisely the twisted potential from (T5) with $b^3\neq 1$. By Lemma~\ref{firsttouch}, these algebras are proper. Since we already know that the corresponding twisted potentials are non-equivalent, Lemma~\ref{gen2} implies that the algebras themselves are pairwise non-isomorphic.

It remains to consider the cases $\alpha=-1$, $\alpha=1$ and $\alpha^3=1\neq\alpha$. If $\alpha=-1$, then $\beta=1$. By (\ref{b2-33}),
$$
{\cal P}_{3,3}(M)=\{F_{s,t,r}=s(xyz{-}yzx{+}zxy{+}xzy{-}yxz{-}zyx{-}xzx)+t(xxz{+}zxx{+}xzx)+rzzz:s,t,r\in\K\}.
$$
Since $F_{s,t,r}$ is degenerate for $s=0$, we can assume that $s\neq 0$. If $r=0$, we are back to the previous considerations (with $\alpha=-1$). Thus we can assume that $r\neq 0$. By scaling, we can make $s=r=1$, which leaves us with $F_{1,t,1}$. Same argument as above shows that $F_{1,t,1}$ are pairwise non-equivalent. Swapping of $x$ and $y$ turns $F_{1,a,1}$ into
$$
G_a=yxz-xzy+zyx+yzx-xyz-zxy+(a-1)yzy+ayyz+azyy+zzz\ \ \text{with $a\in\K$,}
$$
which is precisely the twisted potential from (T6). By Lemma~\ref{firsttouch}, the corresponding algebras are proper. Since we already know that these twisted potentials are pairwise non-equivalent, Lemma~\ref{gen2} implies that the corresponding algebras are pairwise non-isomorphic.

Next, consider the case $\alpha^3=1\neq\alpha$. Then $\beta=\alpha$. Solving (\ref{b2-33}), we see that ${\cal P}_{3,3}(M)$ consists of
$$
\begin{array}{c}F_{s,t,p,q}=s(xyz+\alpha yzx+\alpha^2zxy-\alpha xzy-yxz-\alpha^2 zyx-xzx)+p(xxz+\alpha^2 zxx+\alpha xzx)\\  \textstyle
+t(xzz+\alpha^2zxz+\alpha zzx)+q(xxy+\alpha^2yxx+\alpha xyx+\frac{\alpha^2}{1-\alpha}xxx)\ \ \ \text{with $s,t,p,q\in\K$.}\end{array}
$$
The only linear substitutions with the matrix, whose transpose commutes with $M$ have the form $x\to ux$, $y\to vx+uy+wz$, $z\to cz+dx$ with $u,c\in\K^*$ and $v,w,d\in\K$. A direct computation shows that this substitution transforms $F_{s,t,p,q}$ to $F_{s',t',p',q'}$ with $s'=su^2c$, $t'=tuc^2+s(1-\alpha)ucw$, $q'=qu^3+s(\alpha^2-\alpha)u^2d$ and $p'=pu^2c+qu^2w-t\alpha udc+s(\alpha^2-\alpha)udw$. If $s=0$ to begin with, a substitution of the above form allows to kill $p$ (make $p'=0$) unless $q=t=0$. In the latter case $F_{s,t,p,q}=F_{0,0,p,0}$ is degenerate. This leaves $F_{0,t,0,q}$. If $tq=0$, then again $F$ is degenerate. Thus $tq\neq 0$. A sub of the above form then allows to turn $t$ and $q$ into $1$. For $G=F_{0,1,0,1}$, the space of defining relations is spanned by $xx$, $xz+\alpha^2zx$ and $xy+\alpha yx+zz$. Swapping $y$ and $z$ now provides an isomorphism of $A$ and an algebra from (T20) with $a=\alpha^2$. It remains to consider the case $s\neq 0$. A substitution of the above form now can be chosen in such a way that $s'=1$ and $q'=t'=0$. Thus it remains to consider the case $G=F_{1,0,a,0}$ with $a\in\K$. It is easy to see that the above substitutions can not transform $F_{1,0,a,0}$ into $F_{1,0,a',0}$ with $a\neq a'$: $F_{1,0,a,0}$ are pairwise non-equivalent. Swapping $x$ and $z$ turns $F_{1,0,a,0}$ into
$$
G=zyx+\alpha yxz+\alpha^2xzy-\alpha zxy-yzx-\alpha^2 xyz+(\alpha a-1)zxz+azzx+\alpha^2 axzz,
$$
which are exactly the twisted potentials from (T5) with $b^3=1\neq b$.

It remains to consider the case $\alpha=\beta=1$. By (\ref{b2-33}),
$$
\textstyle
{\cal P}_{3,3}(M)=\{F_{s,t,p,q,r}=s(xyz^\rcirclearrowleft{-}xzy^\rcirclearrowleft{-}xzx)+pxxz^\rcirclearrowleft+txzz^\rcirclearrowleft+qxxx+rzzz:s,t,p,q,r\in\K\}.
$$
As above, the only linear substitutions with the matrix, whose transpose commutes with $M$ have the form $x\to ux$, $y\to vx+uy+wz$, $z\to cz+dx$ with $u,c\in\K^*$ and $v,w,d\in\K$. A direct computation shows that this substitution transforms $F_{s,t,p,q,r}$ to $F_{s',t',p',q',r'}$ with $s'=su^2c$,
$t'=tuc^2+rc^2d$, $q'=qu^3+rd^3+(3p-s)u^2d+3tud^2$, $p'=pu^2c+2tudc+rcd^2$ and $r'=rc^3$. If $s=0$, $F_{s,t,p,q,r}$ is cyclicly invariant and therefore the corresponding algebra is potential. Thus we can assume $s\neq 0$. If $r\neq 0$, we can find a substitution of the above shape such that $t'=0$ and $s'=r'=1$. Thus we have to consider $F_{1,0,p,q,1}$. Analyzing the action of the above substitutions on these, wee see that $F_{1,0,p,q,1}$ and $F_{1,0,p',q',1}$ are equivalent if and only if $p'=p$ and $q'=\pm q$. After swapping $x$ and $y$, we arrive to twisted potentials
$$
G_{a,b}=xzy^\rcirclearrowleft-xyz^\rcirclearrowleft-yzy+ayyz^\rcirclearrowleft+by^3+z^3
$$
with $a,b\in\K$, which are precisely the twisted potentials from (T7). By Lemma~\ref{firsttouch}, the corresponding twisted potential algebras are proper. Since we already know when their twisted potentials are equivalent, Lemma~\ref{gen2} implies the isomorphism condition for (T7). It remains to consider the case $s\neq 0$ and $r=0$. The case $t=0$ yields algebras from (T5) (with $b=1$, $a\neq0$) (after a scaling and swapping $x$ with $z$). Thus we can assume that $t\neq 0$. Now we can easily find a substitution of the above form for which $s'=t'=1$ and $r'=p'=0$.  Thus we have to consider $F_{1,1,0,q,0}$. Analyzing the action of the above substitutions on these, wee see that $F_{1,1,0,q,0}$ and $F_{1,1,0,q',0}$ are pairwise non-equivalent. After swapping $x$ and $y$, we arrive to twisted potentials
$$
G_{a}=xzy^\rcirclearrowleft-xyz^\rcirclearrowleft-yzy+yzz^\rcirclearrowleft+ay^3
$$
with $a\in\K$. If $a=0$, we are back to (T5). Thus we can assume that $a\neq 0$. Now we have precisely the twisted potentials from (T8). By Lemma~\ref{firsttouch}, the corresponding algebras are proper. Since we already know that their twisted potentials are pairwise non-equivalent, Lemma~\ref{gen2} implies these algebras are pairwise non-isomorphic.
\end{proof}

\begin{lemma}\label{3block-det1} Let $G\in{\cal P}^*_{3,3}$ be non-degenerate, $M\in GL_3(\K)$ be the unique matrix providing the twist for $G$ and assume that $A=A_G$ is non-potential. Assume also that $M$ is diagonalizable and has determinant $1$. If $A$ is non-proper, then $A$ is isomorphic to $A_F$ with $F$ from {\rm (T20)} of Theorem~$\ref{main33-t}$. If $A$ is proper, then $A$ is isomorphic to $A_F$ for $F$ from {\rm (T1--T2)} or {\rm (T9--T10)} of Theorem~$\ref{main33-t}$ with different labels corresponding to non-isomorphic algebras. Furthermore, the relevant isomorphism statements of Theorem~$\ref{main33-t}$ hold.
\end{lemma}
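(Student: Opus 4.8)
The plan is to follow the template of Lemmas~\ref{1block} and~\ref{2block}. By Remark~\ref{rer} we may assume $M={\rm diag}(\alpha,\beta,\gamma)$ with $\alpha\beta\gamma=1$ (determinant one, diagonalizable). Writing $G=\sum_{j,k,m=1}^3 a_{j,k,m}x_jx_kx_m$, the inclusion $G\in{\cal P}_{3,3}(M)$ is equivalent to the linear system $a_{j,k,1}=\alpha a_{1,j,k}$, $a_{j,k,2}=\beta a_{2,j,k}$, $a_{j,k,3}=\gamma a_{3,j,k}$ for $1\leq j,k\leq3$, the diagonal counterpart of (\ref{b1-33}) and (\ref{b2-33}). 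Iterating these relations around a cyclic orbit of monomials shows that a monomial of multidegree $(d_1,d_2,d_3)$ can occur in $G$ only if $\alpha^{d_1}\beta^{d_2}\gamma^{d_3}=1$, which after dividing by $(\alpha\beta\gamma)^{d_3}=1$ is a single multiplicative condition on $\alpha$ and $\beta$. Hence ${\cal P}_{3,3}(M)$ is spanned by the appropriately twisted cyclizations of the monomials whose multidegree meets the pertinent condition, and non-degeneracy of $G$ forces each of $x,y,z$ to appear in $G$.

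First I would treat the generic stratum, where no extra character equation holds: only the two orbits of multidegree $(1,1,1)$ survive, ${\cal P}_{3,3}(M)$ is two-dimensional, $G$ is automatically proper by Lemma~\ref{firsttouch}, and a scaling brings $G$ to the twisted potential of (T1). The remaining non-scalar strata are then enumerated by which further equation is forced --- some $\alpha^3=1$ (a cube reappears), a coincidence of the form $\beta=\alpha^{-2}$ producing a repeated eigenvalue and an extra $(2,1,0)$-orbit (leading to (T9)), one eigenvalue equal to $1$ (leading to (T2), with the special point $(1,-1,-1)$ giving (T10)), and their overlaps. In each case one solves the linear system explicitly, discards the degenerate elements (those in which some variable is absent --- these are forced to be potentials or zero) and normalizes the rest using the only admissible substitutions, namely those whose transpose commutes with $M$: scalings when the eigenvalues are distinct, a $GL_2\times GL_1$ block group when two coincide, possibly composed with the variable permutations realizing the allowed permutations of the eigenvalue multiset. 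This identifies the normal forms among (T1), (T2), (T9), (T10) for proper $G$ and with (T20) for non-proper $G$, and at the same time produces the parameter maps of the isomorphism columns: the cyclic shift $(a,b,c)\mapsto(b,c,a)$ of (T1) comes from cycling $x\to y\to z\to x$, while $(a,b,c)\mapsto(a^{-1},c^{-1},b^{-1})$ comes from the swap $x\leftrightarrow z$ (which rescales the (T1)-potential by $-abc$) followed by a cyclic relabeling. Properness and the Hilbert series $(1-t)^{-3}$ are then supplied by Lemma~\ref{firsttouch}, and Lemma~\ref{gen2} upgrades the already known pairwise non-equivalence of the normal-form potentials to pairwise non-isomorphism of the algebras.

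The main obstacle is the scalar case $M=\theta I$ (or $\theta^2I$), which has determinant $\theta^3=1$ and hence lies in the scope of the lemma. Here every cube is killed but all six orbits of multidegree $(2,1,0)$ and both orbits of multidegree $(1,1,1)$ survive, so ${\cal P}_{3,3}(\theta I)$ is eight-dimensional and its non-degenerate elements must be classified up to the action of the full group $GL_3(\K)$ rather than of a torus. I expect to handle this by descending to the induced action on the abelianization --- a ternary cubic with no cube terms, hence of restricted Weierstrass type by Lemma~\ref{co3-3} --- to cut the orbit problem down, and then checking directly that the surviving proper algebras are precisely those members of (T1) whose eigenvalue triple $a/b,b/c,c/a$ is scalar (that is, $a/b=b/c=c/a\in\{\theta,\theta^2\}$), the non-proper ones again collapsing to (T20). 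A secondary, purely organizational difficulty is the bookkeeping of the many lower-dimensional strata on the surface $\alpha\beta\gamma=1$ together with the permutation symmetries relating ${\cal P}_{3,3}(M)$ for $M$ and its conjugates by permutation matrices; as in Lemma~\ref{2block}, I would exploit that a cyclic permutation of $x,y,z$ is an automorphism of each relevant family to collapse most of these cases into one another.
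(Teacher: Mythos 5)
Your treatment of the non-scalar diagonal strata is essentially the paper's own argument: the same linear system on the coefficients of $G$, the same stratification of the triples $(\alpha,\beta,\gamma)$ with $\alpha\beta\gamma=1$ according to which extra multiplicative relations among the eigenvalues hold, normalization by the substitutions whose transpose commutes with $M$ (supplemented by Lemma~\ref{1-dim} for the repeated-eigenvalue families, which is what your ``$GL_2\times GL_1$ block group'' amounts to), and Lemmas~\ref{firsttouch} and~\ref{gen2} to supply properness and upgrade non-equivalence of potentials to non-isomorphism of algebras. That part matches the paper and is sound.

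The gap is in your plan for the case you yourself single out as the main obstacle, $M=\theta I$ with $\theta^3=1\neq\theta$. You propose to descend to the abelianization and invoke Lemma~\ref{co3-3}. But every element of ${\cal P}_{3,3}(\theta I)$ has \emph{zero} abelianization: the space is spanned by the twisted cyclizations $uvw+\theta vwu+\theta^2wuv$ of monomials, and each of these abelianizes to $(1+\theta+\theta^2)uvw=0$. You note that the cubes are killed, but the same cancellation kills all eight orbits, so the induced ternary cubic is identically zero and yields no reduction of the $GL_3(\K)$-orbit problem at all. The paper instead handles this eight-dimensional family by an explicit two-step normalization: it first shows that a suitable linear substitution annihilates the coefficients of the two orbits through $x^2y$ and $x^2z$ --- the existence of that substitution rests on solving a genuinely cubic equation in one of the substitution parameters, using that $\K$ is algebraically closed --- and then applies Lemma~\ref{1-dim} to the residual degree-two tensor in $y,z$, after which every surviving non-degenerate element is either equivalent to the (T20) normal form (non-proper) or lands back in a family already treated earlier in the proof; in particular the proper ones are exactly the members of (T1) with scalar eigenvalue-ratio triple, as you predicted. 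So your expected answer for the scalar case is correct, but the route you sketch to it does not work and must be replaced by an explicit argument of this kind.
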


\begin{proof} By Remark~\ref{rer}, we can without loss of generality assume that
$$
M=\left(\begin{array}{ccc}\alpha&0&0\\ 0&\beta&0\\ 0&0&\gamma\end{array}\right)
$$
with $\alpha,\beta,\gamma\in\K^*$. If $G=\sum\limits_{j,k,m=1}^3 a_{j,k,m}x_jx_kx_m$, then the inclusion $F\in {\cal P}_{3,3}(M)$ is equivalent to the following system of linear equations on the coefficients of $G$:
\begin{equation}\label{b3-33}
a_{j,k,3}=\gamma a_{3,j,k},\ \
a_{j,k,2}=\beta a_{2,j,k}\ \ \text{and}\ \ a_{j,k,1}=\alpha a_{1,j,k}\ \ \text{for}\ \ 1\leq j,k\leq 3.
\end{equation}
Since $M$ has determinant $1$, we have $\alpha\beta\gamma=1$. Since we are not interested in potentials, $(\alpha,\beta)\neq (1,1)$.

Analyzing (\ref{b3-33}), we see that
$$
F_{s,t}=s(xyz+\alpha yzx+\alpha\beta zxy)+t(yxz+\beta xzy+\alpha\beta zyx)\in {\cal P}_{3,3}(M)\ \ \text{for $s,t\in\K$}.
$$
Furthermore, there are no other elements in ${\cal P}_{3,3}(M)$ unless either $1$ is among the eigenvalues or at least two of the eigenvalues are equal.

If $s=t=0$, then $F_{s,t}$ is degenerate. If $st=0$ and $(s,t)\neq (0,0)$, then the corresponding twisted potential algebra is easily seen to be isomorphic to the algebra from (P12) and therefore is potential. Thus we can assume that $st\neq 0$. By scaling, we can make $s=1$. Then $G=F_{1,t}$ acquires the form (T1) with $a=t$, $b=\frac t\alpha$ and $c=\beta t$. Since $(\alpha,\beta)\neq(1,1)$, we have the condition $(a-b,a-c)\neq (0,0)$ of (T1). By Lemma~\ref{firsttouch}, the algebras from (T1) are proper. By Lemma~\ref{gen2}, two algebras from (T1) are isomorphic precisely when their twisted potentials are equivalent. Using Remark~\ref{rer}, we see that if the eigenvalues of $M$ are pairwise distinct, then the only substitutions transforming a corresponding $F$ from (T1) to another $F$ from (T1) are scalings composed with permutations of the variables. The isomorphism condition in (T1) is now easily verified. The case when some of the eigenvalues coincide leads to a bigger group of eligible substitutions, however the result in terms of isomorphic members of (T1) is easily seen to be the same.

It remains to consider two options for the triple $(\alpha,\beta,\gamma)$ of the eigenvalues of $M$ to which all the remaining options are reduced by a permutation of the variables: $(\alpha,\alpha^{-1},1)$ and $(\alpha^{-2},\alpha,\alpha)$ with $\alpha\in\K^*$, $\alpha\neq 1$.

Consider the case when the eigenvalues of $M$ are $(\alpha,\alpha^{-1},1)$. Solving (\ref{b3-33}), we see that
$$
F_{s,t,r}=s(xyz+\alpha yzx+zxy)+t(yxz+\alpha^{-1}xzy+zyx)+rz^3\in{\cal P}_{3,3}(M)\ \ \text{for $s,t,r\in\K$}.
$$
Furthermore, there are no other elements in ${\cal P}_{3,3}(M)$ unless $\alpha=-1$ (the case $\alpha=1$ is already off the table). If $r=0$, we fall back into the previous case. Thus we can assume $r\neq 0$. If $s=t=0$, $F_{s,t,r}$ is degenerate (and potential to boot). Thus $(s,t)\neq (0,0)$. If $st=0$, a scaling (if $t=0$) or a scaling composed with the swap of $x$ and $y$ turns $F_{s,t,r}$ into $xyz+\alpha yzx+zxy+z^3$. Now the corresponding twisted potential algebra is easily seen to be isomorphic to the potential algebra from (P14). Hence  $str\neq 0$ and by a scaling we can turn  $s$ and $r$ into $1$, leaving us with $F_{1,t,1}$, which is a scalar multiple of the twisted potential from (T2) with $a=\frac t\alpha$ and $b=t$. By Lemma~\ref{firsttouch}, the algebras from (T2) are proper. By Lemma~\ref{gen2}, algebras from (T2) are isomorphic if and only if their twisted potentials are equivalent. By Remark~\ref{rer}, this happens precisely when they can be transformed into one another by a linear substitution with the matrix whose transpose commutes with $M$. Now it is easy to verify the isomorphism condition from (T2).

Next, consider the case when the eigenvalues of $M$ are $(\alpha^{-2},\alpha,\alpha)$. Solving (\ref{b3-33}), we see that
$$
\begin{array}{c}
F_{s,t,p,q}=s(xyz+\alpha^{-2}yzx+\alpha^{-1}zxy)+t(xzy+\alpha^{-2}zyx+\alpha^{-1}yxz)\\
+p(xyy+\alpha^{-2}yyx+\alpha^{-1}yxy)+q(xzz+\alpha^{-2}zzx+\alpha^{-1}zxz)\in{\cal P}_{3,3}(M)\ \ \text{for $s,t,p,q\in\K$}.
\end{array}
$$
Furthermore, there are no other elements in ${\cal P}_{3,3}(M)$ unless $\alpha=-1$ or $\alpha^3=1\neq \alpha$ (again, the case $\alpha=1$ is off). Applying Lemma~\ref{1-dim} to $syz+tzy+pyy+qzz$, we see that by a linear substitution, which leaves both $x$ and the linear span of $y,z$ invariant, $(s,t,p,q)$ can be transformed into exactly one of the following forms: $(0,0,0,0)$, $(0,0,0,1)$, $(1,t,0,0)$ with $t\in\K$ or $(1,-1,0,1)$. All cases except for the last one either give a degenerate twisted potential or one that has been already dealt with earlier in this proof. This leaves us with $(s,t,p,q)=(1,-1,0,1)$:
$$
G=xyz+\alpha^{-2}yzx+\alpha^{-1}zxy-xzy-\alpha^{-2}zyx-\alpha^{-1}yxz+xzz+\alpha^{-2}zzx+\alpha^{-1}zxz,
$$
which is the twisted potential from (T9) with $a=\alpha$. By Lemma~\ref{firsttouch}, these twisted potentials are proper. Using Lemma~\ref{gen2}, as above on a number of occasions, we see that the corresponding twisted potential algebras are pairwise non-isomorphic.

At this points it remains to deal with three specific triples of eigenvalues of $M$: $(-1,-1,1)$ and $(\alpha,\alpha,\alpha)$ with $\alpha^3=1\neq \alpha$. We start with the case when the eigenvalues of $M$ are $(-1,-1,1)$. Solving (\ref{b3-33}), we see that ${\cal P}_{3,3}(M)$ is the space of
$$
F_{s,t,p,q,r}=s(xyz-yzx+zxy)+t(yxz-xzy+zyx)+p(xxz-xzx+zxx)+q(yyz-yzy+zyy)+rzzz
$$
with $s,t,p,q,r\in\K$. Applying Lemma~\ref{1-dim} to $sxy+tyx+pxx+qyy$, we see that by a linear substitution, which leaves both $z$ and the linear span of $x,y$ invariant, $(s,t,p,q)$ can be transformed into exactly one of the following forms: $(0,0,0,0)$, $(0,0,0,1)$, $(1,t,0,0)$ with $t\in\K$ or $(1,-1,0,1)$. If either $r=0$ or any of the first three of the last four cases occurs, then either our $F$ is degenerate or it is a twisted potential that has been already dealt with earlier in this proof (up to a possible permutation of variables). An additional scaling allows to turn $r$ into $1$ leaving us with
$$
G=xyz-yzx+zxy-yxz+xzy-zyx+yyz-yzy+zyy+zzz,
$$
which is the twisted potential from (T10). By Lemmas~\ref{firsttouch}, the corresponding twisted potential algebra is proper.

This leaves us with the final case when the eigenvalues of $M$ are $(\alpha,\alpha,\alpha)$ with $\alpha^3=1\neq \alpha$. For the sake of convenience, we use the following notation: $\cial{uvw}=uvw+\alpha vwu+\alpha^2wuv$. Note that $\cial{u^3}=0$ since $1+\alpha+\alpha^2=0$.  Solving (\ref{b3-33}), we see that
$$
F_u=u_1\cial{x^2y}+u_2\cial{x^2z}+u_3\cial{y^2x}+u_4\cial{y^2z}+u_5\cial{z^2x}+u_6\cial{z^2y}+u_7\cial{xyz}+u_8\cial{xzy}
$$
for $u=(u_1,\dots,u_8)\in\K^8$ comprise ${\cal P}_{3,3}(M)$. Since $M$ is central in $GL_3(\K)$, every linear substitution preserves this general form of a twisted potential, changing the coefficients however. First, we shall verify that there always is a linear substitution, which kills $u_1$ and $u_2$ (=turns both of them into $0$). If $u_5=u_6=0$, then swapping $x$ and $z$ achieves the objective. Thus we can assume that $(u_5,u_6)\neq(0,0)$. If $u_5=0$, the substitution $x\to x$, $y\to y+x$, $z\to z$ makes both $u_5$ and $u_6$ non-zero. If $u_6=0$, the substitution $x\to x+y$, $y\to y$, $z\to z$ makes both $u_5$ and $u_6$ non-zero. Thus we can assume $u_5u_6\neq 0$. Now the substitution $x\to x$, $y\to y$, $z\to z+\frac{u_2}{u_5}x+\frac{u_4}{u_6}y$ is easily seen to kill both $u_2$ and $u_4$, while leaving $u_5$ and $u_6$ unchanged. Thus we can assume that $u_2=u_4=0$ and $u_5u_6\neq 0$. If $u_1=0$, the job is already done. Thus we can assume $u_1\neq 0$. If $u_3=0$, then swapping $x$ and $y$ we turn both $u_1$ and $u_2$ into zero. Thus we can assume that $u_3\neq 0$. Performing a scaling, we can turn both $u_1$ and $u_3$ into $1$, while the conditions $u_2=u_4=0$ and $u_5u_6\neq 0$ remain unaffected. Thus we have $u_1=u_3=1$, $u_2=u_4=0$ and $u_5u_6\neq 0$. Using the fact that $\K$ is algebraically closed, we can find $s,t\in\K$ such that $w_1=1-\alpha^2s+(u_8+\alpha^2u_7)t+u_6t^2=0$ and $w_2=-u_6st-u_5t+(u_7+\alpha^2u_8)=0$. Indeed, the first equation amounts to expressing $s$ in terms of $t$. Plugging this into the second equation yields a genuinely cubic equation on $t$: the $t^3$-coefficient is $-\alpha^2u_6\neq 0$. Now the substitution $x\to x$, $y+sx$, $z\to z+tx$ transforms $(u_1,u_2)$ into $(w_1,w_2)$ thus killing both $u_1$ and $u_2$. That is, no matter the case, a linear substitution kills both $u_1$ and $u_2$. By Lemma~\ref{1-dim} applied to $f=u_3yy+u_5zz+u_7\alpha yz+u_8\alpha zy$, there is a linear substitution on the variables $y,z$ turning $f$ into (exactly) one of the following four forms: $0$, $zz$, $yz-azy$ with $a\in\K$ or $yz-zy+zz$. The same substitution augmented by $x\to x$ transforms $F_u$ into one of the following forms:
$$
\begin{array}{l}
G_1=p\cial{y^2z}+q\cial{z^2y};
\\
G_2=\cial{z^2x}+p\cial{y^2z}+q\cial{z^2y};
\\
G_3=\cial{yzx}-a\cial{zyx}+p\cial{y^2z}+q\cial{z^2y};
\\
G_4=\cial{yzx}-\cial{zyx}+\cial{z^2x}+p\cial{y^2z}+q\cial{z^2y}
\end{array}\qquad \text{with $a,p,q\in\K$.}
$$
We can disregard $G_1$, since it is degenerate. The twisted potential $G_2$ is degenerate if $p=0$. Thus we can assume that $p\neq 0$. The substitution $x\to x-qy$, $y\to y$, $z\to z$ kills $q$ in $G_2$. A scaling turns $p$ into $1$ yielding the twisted potential $\cial{z^2x}+\cial{y^2z}$, which falls into (T20) after a permutation of variables.
As for $G_3$, if $a\neq \alpha$, a substitution $x\to x+sy$, $y\to y$, $z\to z$ with an appropriate $s\in\K$ kills $p$, while if $a\neq \alpha^2$, a substitution $x\to x+sz$, $y\to y$, $z\to z$ with an appropriate $s\in\K$ kills $q$. In any case, we can assume that $pq=0$, which lands us (up to a permutation of variables) into cases already considered above in this very proof. Finally, a substitution $x\to x+sy+tz$, $y\to y$, $z\to z$ with an appropriate $s,t\in\K$, applied to $G_4$, kills both $p$ and $q$ and again we arrive to a situation already dealt with earlier. Annoyingly, the last case required quite a bit of work while producing no extra twisted potentials.
\end{proof}

\begin{lemma}\label{3block-detn1} Let $G\in{\cal P}^*_{3,3}$ be non-degenerate, $M\in GL_3(\K)$ be the unique matrix providing the twist for $G$ and assume that $A=A_G$ is non-potential. Assume also that $M$ is diagonalizable and has determinant different from $1$. If $A$ is non-proper, then $A$ is isomorphic to $A_F$ with $F$ from {\rm (T19--T21)} of Theorem~$\ref{main33-t}$. If $A$ is proper, then $A$ is isomorphic to $A_F$ for $F$ from {\rm (T11--T18)} of Theorem~$\ref{main33-t}$ with different labels corresponding to non-isomorphic algebras. Furthermore, the relevant isomorphism statements of Theorem~$\ref{main33-t}$ hold.
\end{lemma}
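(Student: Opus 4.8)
The plan is to follow the scheme of Lemma~\ref{3block-det1}. By Remark~\ref{rer} I may assume $M=\mathrm{diag}(\alpha,\beta,\gamma)$ with $\alpha,\beta,\gamma\in\K^*$; since $\det M\neq1$ we have in particular $M\neq\mathrm{Id}$, consistent with $A$ being non-potential. Writing $G=\sum_{j,k,m}a_{j,k,m}x_jx_kx_m$, membership $G\in{\cal P}_{3,3}(M)$ is equivalent to the linear system
\begin{equation*}
a_{j,k,3}=\gamma a_{3,j,k},\qquad a_{j,k,2}=\beta a_{2,j,k},\qquad a_{j,k,1}=\alpha a_{1,j,k}\qquad(1\leq j,k\leq3),
\end{equation*}
exactly parallel to (\ref{b3-33}). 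Iterating it three times yields $a_{j,k,m}=\lambda_j\lambda_k\lambda_m\,a_{j,k,m}$ with $\lambda_1=\alpha$, $\lambda_2=\beta$, $\lambda_3=\gamma$, so $a_{j,k,m}\neq0$ forces $\lambda_j\lambda_k\lambda_m=1$. Non-degeneracy of $G$ requires, for each $i$, at least one monomial $x_ix_kx_m$ to occur in $G$, hence that some product $\lambda_i\lambda_k\lambda_m$ other than the forbidden $\alpha\beta\gamma$ equals $1$. Running through which of $\alpha^3,\beta^3,\gamma^3,\alpha^2\beta,\dots,\gamma^2\beta$ can equal $1$ simultaneously, one gets a finite list of eigenvalue configurations for $M$ (up to permutation of the variables and simultaneous rescaling); every other configuration yields a ${\cal P}_{3,3}(M)$ with no non-degenerate element.

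For each admissible configuration I would solve the system explicitly, obtaining the general member of ${\cal P}_{3,3}(M)$ as a combination of a few twisted-cyclic orbits, and then reduce modulo the admissible equivalences. These are the linear substitutions whose transpose commutes with $M$ (Remark~\ref{rer}): for non-scalar diagonal $M$ they are the scalings, augmented — only when two eigenvalues coincide — by the substitutions mixing the two-dimensional eigenspace, and, when comparing different orderings of a fixed eigenvalue multiset, by permutations of the variables. When a configuration carries a two-parameter quadratic-form degree of freedom in two of the generators (this happens for the eigenvalue pattern $(-1,1,1)$, mirroring the last case of Lemma~\ref{3block-det1}), I would invoke Lemma~\ref{1-dim} to bring that block to one of its four normal forms and finish by scalings. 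In each branch one discards the parameter values giving a degenerate $G$ (outside the hypothesis) and those giving a cyclically invariant $G$ (then $A$ is potential, contrary to assumption); the surviving canonical forms are exactly (T11)--(T18) when $A$ is proper and (T19)--(T21) when $A$ is non-proper (the latter recognized by the Hilbert series $\frac{1+t}{1-2t}$). For the two $9$th-root-of-unity configurations $(\xi_9,\xi_9^4,\xi_9^7)$ and $(\xi_9^2,\xi_9^5,\xi_9^8)$ leading to (T14)--(T15), as well as the one leading to (T12)--(T13), I would pin down the stated normal form by exhibiting a representative in a conveniently conjugated ${\cal P}_{3,3}(N)$, exactly as with the matrix $N$ in the proof of Lemma~\ref{2block2}.

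Properness, non-potentiality and the Hilbert series of the output algebras then come for free: Lemma~\ref{firsttouch} shows each of (T11)--(T18) is proper and exact with $H_A=(1-t)^{-3}$, and since the twist is non-trivial none of these is potential, while Lemma~\ref{degen} gives the corresponding statements for (T19)--(T21). For the non-isomorphism assertions, proper twisted potential algebras are isomorphic iff their twisted potentials are equivalent (Lemma~\ref{gen2}); algebras whose twists have different Jordan type or different eigenvalue multiset are automatically non-isomorphic, and within a single ${\cal P}_{3,3}(M)$ one reads off the orbits of the admissible substitution group on the parameters, which reproduces precisely the isomorphism data recorded in the table of Theorem~\ref{main33-t}; the degenerate-twist (T19)--(T21) part is covered by Lemma~\ref{degen}.

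The main obstacle will be the length and delicacy of the case split, concentrated in two spots. First, the configuration $(-1,1,1)$: as in Lemma~\ref{3block-det1} an auxiliary quadratic form forces an application of Lemma~\ref{1-dim}, and several sub-branches collapse onto (T16)--(T18) or onto non-proper forms only after substantial substitution chasing. Second, the primitive $9$th-root configurations, where identifying the exact shape of the twisted potential in (T14)--(T15) and verifying mutual non-equivalence under the (small) admissible group requires the conjugation trick and careful coefficient tracking. The generic configurations — three distinct eigenvalues tied by a single relation, producing (T11) — are comparatively painless.
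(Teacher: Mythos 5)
Your plan reproduces the paper's argument in all essentials: normalize $M$ to diagonal form via Remark~\ref{rer}, observe that the coefficient system forces $\lambda_j\lambda_k\lambda_m=1$ for every surviving monomial, enumerate the resulting eigenvalue configurations (generic ones giving (T11), (T19), (T20), plus the special triples $(-1,1,1)$, $(\alpha,-1,1)$ with $\alpha^2=-1$, the ninth-root configurations, and the coincident-eigenvalue cases), reduce by substitutions whose transpose commutes with $M$, use Lemma~\ref{1-dim} for the $(-1,1,1)$ block, and conclude via Lemmas~\ref{firsttouch}, \ref{degen} and \ref{gen2}. The only deviation is cosmetic: for (T12)--(T15) the paper pins down the normal forms by direct scaling, variable permutations and matching the eigenvalues of the twist of the (already known to be proper) table entries, rather than by the conjugated-$\mathcal{P}_{3,3}(N)$ trick you import from Lemma~\ref{2block2}; both routes work.
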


\begin{proof} Applying Remark~\ref{rer} in the same way as in the last proof, we can assume that $M$ is diagonal with $\alpha,\beta,\gamma\in\K^*$ on the main diagonal. Since the determinant of $M$ is different from $1$, we have $\alpha\beta\gamma\neq 1$. Analyzing (\ref{b3-33}), we see that ${\cal P}_{3,3}(M)$ contains only degenerate twisted potentials unless the eigenvalues of $M$ in some order are $(\alpha,\alpha^{-2},1)$ with $\alpha\neq 1$, or $(\alpha,-\alpha,\alpha^{-2})$, or $(\alpha,\alpha^{-2},\alpha^4)$ with $\alpha^3\neq 1$ (everywhere $\alpha\in\K^*$).

First, assume that the eigenvalues of $M$ are $(\alpha,\alpha^{-2},1)$ with $\alpha\neq 1$. Solving (\ref{b3-33}), we see that
$$
F_{s,t}=s(xxy+\alpha xyx+\alpha^2 yxx)+tz^3\in{\cal P}_{3,3}(M)\ \ \text{for $s,t\in\K$}.
$$
Furthermore, there are no other elements in ${\cal P}_{3,3}(M)$ unless $\alpha^4=1$ or $\alpha^3=1$.
If $st=0$, then $F_{s,t}$ is degenerate. Thus we can assume that $st\neq 0$. By scaling, we can make $s=t=1$. That is, $G$ is equivalent to $F_{1,1}$, which falls into (T19) and is non-proper according to Lemma~\ref{degen}.

Assume now that the eigenvalues of $M$ are $(\alpha,-\alpha,\alpha^{-2})$. According to (\ref{b3-33}),
$$
F_{s,t}=s(xxz+\alpha xzx+\alpha^2 zxx)+t(yyz-\alpha yzy+\alpha^2zyy)\in{\cal P}_{3,3}(M)\ \ \text{for $s,t\in\K$}.
$$
Furthermore, there are no other elements in ${\cal P}_{3,3}(M)$ unless $\alpha^6=1$. If $st=0$, then $F_{s,t}$ is degenerate and we can assume that $st\neq 0$. By scaling, we can make $s=t=1$. That is, $G$ is equivalent to $F_{1,1}$, which falls into (T11) with $a=\alpha$. By Lemma~\ref{firsttouch}, the corresponding algebras are proper. Since the isomorphic proper twisted potential algebras must have conjugate twists, the isomorphism of two algebras from (T11) corresponding to parameters $a$ and $a'$ is only possible if $a'=a$ or $a'=-a$. In the latter case the swap of $x$ and $y$ provides a required isomorphism.

Next, assume that the eigenvalues of $M$ are $(\alpha,\alpha^{-2},\alpha^4)$ with $\alpha^3\neq 1$. Solving (\ref{b3-33}), we see that
$$
F_{s,t}=s(xxy+\alpha xyx+\alpha^2 yxx)+t(\alpha^4yyz+\alpha^2 yzy+zyy)\in {\cal P}_{3,3}(M)\ \ \text{for $s,t\in\K$}.
$$
Furthermore, there are no other elements in ${\cal P}_{3,3}(M)$ unless $\alpha^6=1$ or $\alpha^4=1$ or $\alpha^9=1$. If $st=0$, then $F_{s,t}$ is degenerate and we can assume that $st\neq 0$. By scaling, we can make $s=t=1$. That is, $G$ is equivalent to $F_{1,1}$, which falls into (T20) after a permutation of variables. By Lemma~\ref{degen}, the corresponding algebra is non-proper.

It remains to deal with few specific triples of eigenvalues of $M$: $(-1,1,1)$, $(\alpha,\alpha,-\alpha)$ with $\alpha^3=1\neq \alpha$, $(\alpha,\alpha,1)$ with $\alpha^3=1\neq\alpha$, $(\alpha,-1,1)$ with $\alpha^2=-1$ and $(\alpha,\alpha^4,\alpha^7)$ with $\alpha^9=1\neq\alpha^3$. This are the triples for which there are more solutions than in the generic cases considered above.

First, assume that the eigenvalues of $M$ are $(\alpha,\alpha,1)$ with $\alpha^3=1\neq\alpha$. By (\ref{b3-33}),
$$
{\cal P}_{3,3}(M)=\{F_{s,t,r}=s(xxy+\alpha xyx+\alpha^2 yxx)+t(yyx+\alpha yxy+\alpha^2 xyy)+rz^3:s,t,r\in\K\}.
$$
One easily sees that a linear substitution leaving both $z$ and the space spanned by $x$ and $y$ invariant can be chosen to kill $t$. This places the twisted potential within the framework of the very first case considered in this proof.

Next, assume that the eigenvalues of $M$ are $(\alpha,\alpha,-\alpha)$ with $\alpha^3=1\neq\alpha$. Solving (\ref{b3-33}), we see that
$$
F_{s,t,p,q}=s(xxy+\alpha xyx+\alpha^2 yxx)+t(yyx+\alpha yxy+\alpha^2 xyy)+p(zzx-\alpha zxz+\alpha^2 xzz)+q(zzy-\alpha zyz+\alpha^2 yzz)
$$
with $s,t,p,q\in\K$ comprise the space ${\cal P}_{3,3}(M)$. One can easily verify that a linear substitution leaving both $z$ and the space spanned by $x$ and $y$ invariant can be chosen to kill either $t$ and $p$ or $s$ and $p$. In both cases we fall into situations already dealt with in this proof.

Now assume that the eigenvalues of $M$ are $(\alpha,-1,1)$ with $\alpha^2=-1$. According to (\ref{b3-33}),
$$
{\cal P}_{3,3}(M)=\{F_{s,t,r}=s(xxy+\alpha xyx-yxx)+t(yyz-yzy+zyy)+rz^3:s,t,r\in\K\}.
$$
If $rt=0$, we are back to the already considered cases. If $s=0$, our twisted potential is degenerate. Thus we can assume $str\neq 0$. By a scaling we can make $s=t=r=1$. That is, $G$ is equivalent to $F_{1,1,1}$. By swapping $x$ and $z$, we see that $G$ is equivalent to the twisted potential
$$
F=zzy+\alpha zyz-yzz+yyx-yxy+xyy+x^3\ \ \text{with $\alpha=\pm i$},
$$
which are the two twisted potentials from (T12) and (T13). By Lemma~\ref{firsttouch}, they are proper.

Next, assume that the eigenvalues of $M$ are $(\alpha,\alpha^4,\alpha^7)$ with $\alpha^9=1\neq\alpha^3$. By (\ref{b3-33}),
$$
{\cal P}_{3,3}(M)=\{F_{s,t,r}=s(xxz{+}\alpha xzx{+}\alpha^2 zxx)+t(yyx{+}\alpha^4yxy{+}\alpha^8xyy)+r(zzy{+}\alpha^7zyz{+}\alpha^5yzz):s,t,r\in\K\}.
$$
If $str=0$, we are back to the already considered cases and we know that the corresponding twisted potential algebra is non-proper. If $str\neq 0$, by a scaling we can make $s=t=r=1$. Thus $G$ in this case is equivalent to $F_{1,1,1}$. An easy computation shows that this time the corresponding twisted potential algebra is proper. Note that the assumption $\alpha^9=1\neq\alpha^3$ is the same as $\alpha\in\{\xi_9,\xi_9^2,\xi_9^4,\xi_9^5,\xi_9^7,\xi_9^8\}$. Since cyclic permutations of $x$, $y$ and $z$ provide equivalence of $F_{1,1,1}$ for $\alpha\in\{\xi_9,\xi_9^4,\xi_9^7\}$ as well as for $\alpha\in\{\xi_9^2,\xi_9^5,\xi_9^8\}$, we have just two twisted potentials to deal with in this case: $F_{1,1,1}$ for $\alpha=\xi_9$ and $F_{1,1,1}$ for $\alpha=\xi^2_9$. By Lemma~\ref{firsttouch}, the algebras in (T14) and (T15) are proper and their respective twists have eigenvalues $\xi_9,\xi_9^4,\xi_9^7$ and $\xi_9^2,\xi_9^5,\xi_9^8$. Thus they are isomorphic to $F_{1,1,1}$ for $\alpha=\xi_9$ and $\alpha=\xi_9^2$ respectively.

It remains to deal with the final case when the eigenvalues of $M$ are $(-1,1,1)$. By (\ref{b3-33}),
$$
{\cal P}_{3,3}(M)=\{G_{w}=w_1y^3\!+w_2{y^2z}^\rcirclearrowleft\!\!+w_3{yz^2}^\rcirclearrowleft\!\!+w_4z^3\!+w_5(x^2y{-}xyx{+}yx^2)+w_6(x^2z{-}xzx{+}zx^2):w\in\K^6\}.
$$
If $w_5=w_6=0$, $G_w$ is degenerate. Thus we can assume that $(w_5,w_6)\neq (0,0)$. Now it is easy to see that a substitution leaving both $x$ and the linear span of $y,z$ intact preserves the form of $G_w$ and turns $(w_5,w_6)$ into $(0,1)$. Now we have only to consider
$$
F_{u}=u_1y^3+u_2{y^2z}^\rcirclearrowleft+u_3{yz^2}^\rcirclearrowleft+u_4z^3+x^2z-xzx+zx^2\ \ \ \text{with $u=(u_1,\dots,u_4)\in\K^4$.}
$$
The only substitutions which preserve this general shape of a twisted potential are given by $x\to sx$, $z\to s^{-2}z$, $y\to py+qz$ with $s,p\in\K^*$, $q\in\K$. This substitution transforms $F_u$ into $F_{u'}$ with $u'_1=p^3u_1$, $u'_2=p^2s^{-2}u_2+p^2qu_1$, $u'_3=ps^{-4}u_3+2pqs^{-2}u_2+pq^2u_1$ and $u'_4=s^{-6}u_4+3s^{-4}qu_3+3s^{-2}q^2u_2+q^3u_1$. Now it is easy to see that a general $F_u$ can be transformed into one of the following forms $F_{1,0,1,a}$ with $a\in\K$, $F_{1,0,0,1}$, $F_{1,0,0,0}$, $F_{0,1,0,1}$, $F_{0,1,0,0}$, $F_{0,0,1,0}$, $F_{0,0,0,1}$ and $F_{0,0,0,0}$ among which there are no equivalent ones except for $F_{1,0,1,a}$ being equivalent to $F_{1,0,1,-a}$ for $a\in\K$. Among these $F_{0,0,0,1}$ and $F_{0,0,0,0}$ are degenerate, while  $F_{0,1,0,0}$, $F_{0,0,1,0}$ and $F_{1,0,0,0}$ fall into the cases already considered in this proof. This leaves us to deal with $F_{1,0,1,a}$ with $a\in\K$, $F_{1,0,0,1}$ and $F_{0,1,0,1}$. First, $F_{1,0,0,1}=y^3+z^3+x^2z-xzx+zx^2$ is non-proper and features as (T21). Next,
$$
G=F_{0,1,0,1}={y^2z}^\rcirclearrowleft+z^3+x^2z-xzx+zx^2
$$
features as (T16) and is proper by Lemma~\ref{firsttouch}. Since $F_{1,0,1,a}$ and $F_{1,0,1,-a}$ are equivalent, the case of $G=F_{1,0,1,a}$ with $a^2+4=0$ reduces to $G=F_{1,0,1,2i}=y^3+{yz^2}^\rcirclearrowleft+2iz^3+x^2z-xzx+zx^2$. The substitution $x\to z$, $z\to ix$, $y\to x+y$ followed by an appropriate scaling turns the latter into the twisted potential $y^3+{xy^2}^\rcirclearrowleft+z^2x-zxz+xz^2$ of (T17), which is proper by Lemma~\ref{firsttouch}. This leaves only $G=F_{1,0,1,a}$ with $a^2+4\neq 0$:
$$
G=F_{1,0,1,a}=y^3+{yz^2}^\rcirclearrowleft+az^3+x^2z-xzx+zx^2\ \ \text{with $a\in\K$, $a^2+4\neq 0$}.
$$
The latter are twisted potentials from (T18). By Lemmas~\ref{firsttouch} they are proper. Knowing which of them are equivalent justifies the isomorphism condition in (T18).

Finally, the absence of isomorphism for algebras with different labels follows from the fact that proper twisted potential algebras with non-conjugate twist can not be isomorphic.
\end{proof}

\subsection{Proof of Theorem~$\ref{main33-t}$}

Theorem~\ref{main33-t} is just an amalgamation of Lemmas~\ref{23}, \ref{degen}, \ref{firsttouch}, \ref{non-pbwb-all}, \ref{1block}, \ref{2block}, \ref{3block-det1} and \ref{3block-detn1}.

\section{Concluding remarks}

\begin{remark} \label{re1} Note that according to Theorem~\ref{main33-p}, there is only one (up to an isomorphism) proper quadratic potential algebra on three generators, which fails to be exact. Namely, it is the algebra given by (P9). Furthermore, there are exactly two non-Koszul (up to an isomorphism) quadratic potential algebras on three generators:  (P9) and (P14). By Theorem~\ref{main24-p}, there is only one (up to an isomorphism) proper cubic potential algebra on two generators, which fails to be exact: it features with the label (P23). However, we do not expect this pattern to extend to higher degrees or higher numbers of generators.
\end{remark}

\begin{remark} \label{rere1} By Theorems~\ref{main33-p} and~\ref{main24-p}, both sets $\{H_{A_F}:F\in {\cal P}_{3,3}\}$ and $\{H_{A_F}:F\in {\cal P}_{2,4}\}$ are finite. Indeed, the first set has $7$ elements, while the second has $5$ elements. By Proposition~\ref{commin2}, $\{H_{A_F}:F\in {\cal P}_{2,3}\}$ is a $3$-element set. This leads to the following question (we expect an affirmative answer).
\end{remark}

\begin{question}\label{quest2} Let $n\geq 2$ and $k\geq 3$. Is it true that the set $\{H_{A_F}:F\in {\cal P}_{n,k}\}$ is finite?
\end{question}

\begin{remark} \label{rere2} By Theorems~\ref{main33-p} and~\ref{main24-p}, $H_{A_F}$ is rational for every $F\in {\cal P}_{3,3}$ as well as for every $F\in {\cal P}_{2,4}$.
Proposition~\ref{commin2}, the same holds for $F\in {\cal P}_{2,3}$. This prompts the following question (again, we believe the answer to be affirmative).
\end{remark}

\begin{question}\label{quest3} Is it true that the Hilbert series of every degree-graded potential algebra is rational?
\end{question}

The above question resonates with the following issue. It was believed at some point that graded finitely presented algebras must have rational Hilbert series. This conjecture was disproved by Shearer \cite{she}, who produced an example of a quadratic algebra with non-rational Hilbert series. However his algebra as well as any of the later examples fail to be potential or Koszul. Note that the question whether the Hilbert series of a Koszul algebra must be rational is a long-standing open problem, see, for instance, \cite{popo}.

\begin{remark} \label{re000} By Theorems~\ref{main33-t} and~\ref{main24-t}, every non-potential proper twisted potential algebra $A_F$ with $F\in{\cal P}^*_{3,3}\cup{\cal P}^*_{2,4}$ is exact. Furthermore, every non-potential twisted potential algebra $A_F$ with $F\in{\cal P}^*_{3,3}$ is Koszul.
\end{remark}


\bigskip 

{\bf Acknowledgements.} \ We are grateful to IHES and MPIM for hospitality, support, and excellent research atmosphere.
This work was partially funded by the ERC grant 320974, EPSRC grant EP/M008460/1 and the ESC
Grant N9038.

\vfill\break

\small\rm

%
%
%
%
%
%
%

\end{document}